\documentclass[11pt]{article}
\usepackage[leqno]{amsmath}
\usepackage{amssymb}
\usepackage{amsfonts} 
\usepackage{amstext} 
\usepackage{amsthm} 
\usepackage{euscript} 
\usepackage{eufrak} 
\usepackage{array} 
\usepackage{multirow} 
\usepackage{graphicx}

\setcounter{MaxMatrixCols}{20} 

\makeatletter
\renewcommand{\theenumi}{\roman{enumi}}

\renewcommand{\p@enumii}{\theenumi--}
\makeatother

\newcommand{\Z}{\ensuremath{\mathbb{Z}}}
\newcommand{\N}{\ensuremath{\mathbb{N}}}

\newcommand{\R}{\ensuremath{\mathbb{R}}} 
\newcommand{\C}{\ensuremath{\mathbb{C}}}

\newcommand{\dP}{\ensuremath{\mathbb{P}}}

\newcommand{\cC}{\ensuremath{\mathcal{C}}}
\newcommand{\cD}{\ensuremath{\mathcal{D}}} 
\newcommand{\cE}{\ensuremath{\mathcal{E}}}
\newcommand{\cF}{\ensuremath{\mathcal{F}}} 
\newcommand{\cG}{\ensuremath{\mathcal{G}}} 
\newcommand{\cH}{\ensuremath{\mathcal{H}}}

\newcommand{\cL}{\ensuremath{\mathcal{L}}}
\newcommand{\cM}{\ensuremath{\mathcal{M}}} 
\newcommand{\cN}{\ensuremath{\mathcal{N}}}

\newcommand{\cR}{\ensuremath{\mathcal{R}}} 

\newcommand{\cT}{\ensuremath{\mathcal{T}}} 
 
\newcommand{\cV}{\ensuremath{\mathcal{V}}}

\newcommand{\cZ}{\ensuremath{\mathcal{Z}}}

\newcommand{\eg}{\ensuremath{\textsf{g}}}

\newcommand{\fD}{\ensuremath{\EuFrak{D}}}

\newcommand{\ff}{\ensuremath{\EuFrak{f}}}

\newcommand{\pa}{\ensuremath{\partial}} 

\newcommand{\h}{\ensuremath{\theta}} 
\newcommand{\vh}{\ensuremath{\vartheta}}

\newcommand{\br}{\ensuremath{\bar{r}}} 
\newcommand{\bx}{\ensuremath{\bar{x}}} 
\newcommand{\by}{\ensuremath{\bar{y}}} 
\newcommand{\bth}{\ensuremath{\bar{\theta}}} 
\newcommand{\bD}{\ensuremath{\bar{D}}} 
\newcommand{\bJ}{\ensuremath{\bar{J}}} 
\newcommand{\bX}{\ensuremath{\bar{X}}}

\newcommand{\g}{\ensuremath{\gamma}} 
 
\newcommand{\e}{\ensuremath{\epsilon}}

\newcommand{\s}{\ensuremath{\sigma}}

\begin{document}

\begin{center} 

{\Large \bf  Surgery and Invariants of Lagrangian Surfaces} \\ 

\vspace{.3in} 
Mei-Lin Yau \footnote{Research 
Supported in part by National Science Council grant  99-2115-M-008-005-MY2

{\em 2000 Mathematics Subject Classification}. Primary: 53D12; secondary: 57R52, 57R65, 57R17. 

{\em Key words and phrases}. Lagrangian surface;  Lagrangian attaching disk; Lagrangian disk surgery; Hamiltonian isotopy; Lagrangian Grassmannian;  crossing domain, $\mu_2$-index, relative phase, $y$-index.} 
\\  

\end{center} 
%
%
%
%
%
%
%
%
%
%
%
%
%
%
%
\vspace{.2in} 
\begin{abstract} 

We considered a surgery, called {\em $la$-disk surgery}, that 
can be applied to a Lagrangian surface $L$ at the presence of 
a Lagrangian attaching disk $D$, to obtain a new 
Lagrangian surface $L':=\eta_D(L)$ which is always smoothly isotopic to 
$L$. We showed that this type of surgery includes all even 
generalized Dehn twists as constructed by Seidel. We also constructed 
a new symplectic invariant, called $y$-index, for orientable 
closed Lagrangian surfaces immersed in a 
parallelizable symplectic 4-manifold $W$. With $y$-index we proved 
that $L$ and $\eta_D(L)$ are not Hamiltonian isotopic under 
this setup. We also obtained new examples of 
smoothly isotopic nullhomologous Lagrangian 
tori which are  not Hamiltonian isotopic pairwise. 

\end{abstract} 
\newtheorem{prop}{Proposition}[subsection]
\newtheorem{theo}[prop]{Theorem}
\newtheorem{cond}[prop]{Condition}
\newtheorem{defn}[prop]{Definition}
\newtheorem{exam}[prop]{Example}
\newtheorem{lem}[prop]{Lemma}
\newtheorem{cor}[prop]{Corollary}
\newtheorem{ques}[prop]{Question}
\newtheorem{conj}[prop]{Conjecture}

\newtheorem{rem}[prop]{Remark}
\newtheorem{notn}[prop]{Notation}
\newtheorem{fact}[prop]{Fact}

\tableofcontents

\section{Introduction} 


\subsection{Isotopy of Lagrangian surfaces} 

A Lagrangian submanifold in a symplectic manifold $(W,\omega)$ 
is a submanifold $L\overset{\iota}{\hookrightarrow}W$ such that 
$\iota^*\omega=0$ and $\dim L=\frac{1}{2}\dim W$. 
In symplectic topology 
Lagrangian submanifolds distinguish themselves from other 
types of submanifolds in that, though the symplectic form $\omega$ 
vanishes on them, their mere presence uniquely 
determine the nearby symplectic structure, as 
stated in Weinstein's Lagrangian neighborhood theorem \cite{W} (see also \cite{MS}): 

\begin{theo}[{\bf Lagrangian neighborhood theorem}]
Let $L$ be an embedded compact Lagrangian submanifold of a 
symplectic manifold $(W,\omega)$. Then there exist  tubular 
neighborhoods $U\subset W$ of $L$, $V\subset T^*L$ of the 
zero section $L_0=L\subset T^*L$, and a diffeomorphism $\phi:V\to U$ such that $\phi(L_0)=L$ and $\phi^*\omega=\omega_{can}$, where $\omega_{can}$ is the canonical symplectic structure of the cotangent bundle $T^*L$. 
\end{theo}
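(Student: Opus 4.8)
The plan is to prove this by \emph{Moser's homotopy method}, reducing the statement to the existence of a single primitive $1$-form that vanishes along $L$. First I would set up the linear model. For the Lagrangian $L$ the symplectic form induces a bundle isomorphism between the normal bundle $NL = TW|_L/TL$ and the cotangent bundle $T^*L$, sending a normal vector $v$ to the covector $\omega(v,\cdot)|_{TL}$; nondegeneracy of $\omega$ together with the Lagrangian condition $\omega|_{TL}=0$ shows the kernel of $v\mapsto \omega(v,\cdot)|_{TL}$ is exactly $(TL)^\omega = TL$, so the map descends to an isomorphism $NL\cong T^*L$. Choosing a compatible almost complex structure, or any splitting $TW|_L = TL\oplus JTL$, realizes this identification geometrically, and the tubular neighborhood theorem then produces a diffeomorphism $\phi_0:V_0\to U_0$ from a neighborhood of the zero section to a neighborhood of $L$, restricting to the inclusion on $L_0$ and inducing the chosen identification of normal bundles. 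The first real task is to arrange that $\phi_0^*\omega$ and $\omega_{can}$ agree \emph{as forms at every point of $L_0$}, not merely that both make $L_0$ Lagrangian; this is a pointwise linear-symplectic normalization carried out fiberwise along $L$, where I would need the identification chosen compatibly with the canonical pairing of $TL$ with $T^*L$.

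With $\omega_0 := \omega_{can}$ and $\omega_1 := \phi_0^*\omega$ two symplectic forms agreeing along $L_0$, I would run the Moser trick on the linear interpolation $\omega_t = \omega_0 + t(\omega_1-\omega_0)$. Each $\omega_t$ is closed and agrees with $\omega_0$ on $L_0$, hence is nondegenerate on a neighborhood of $L_0$; since $L$ is compact, shrinking $V_0$ makes $\omega_t$ symplectic for all $t\in[0,1]$ simultaneously. The closed form $\omega_1-\omega_0$ vanishes identically along $L_0$, so a \emph{relative} Poincar\'e lemma, using the fiberwise radial homotopy of the tubular neighborhood onto $L_0$, yields a $1$-form $\sigma$ with $\omega_1-\omega_0=d\sigma$ and, crucially, $\sigma_p=0$ for every $p\in L_0$. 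I then define a time-dependent vector field $X_t$ by $\iota_{X_t}\omega_t=-\sigma$; nondegeneracy makes $X_t$ well-defined, and the vanishing of $\sigma$ on $L_0$ forces $X_t$ to vanish there, so its flow $\psi_t$ fixes $L_0$ pointwise and exists for all $t\in[0,1]$ on a possibly smaller neighborhood.

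The Cartan computation then finishes the argument: since $d\omega_t=0$ and $d\iota_{X_t}\omega_t = -d\sigma = -(\omega_1-\omega_0)$, we get $\frac{d}{dt}\psi_t^*\omega_t = \psi_t^*\big(\mathcal{L}_{X_t}\omega_t + \dot\omega_t\big) = \psi_t^*\big(-(\omega_1-\omega_0)+(\omega_1-\omega_0)\big)=0$, so that $\psi_1^*\omega_1=\omega_0$. Setting $\phi := \phi_0\circ\psi_1$ then gives $\phi^*\omega = \psi_1^*\phi_0^*\omega = \psi_1^*\omega_1 = \omega_0 = \omega_{can}$ together with $\phi(L_0)=\phi_0(\psi_1(L_0))=\phi_0(L_0)=L$, after restricting to suitable tubular neighborhoods $V\subset T^*L$ and $U\subset W$. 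I expect the main obstacle to be the first step, namely producing $\phi_0$ with $\phi_0^*\omega=\omega_{can}$ exactly along $L_0$, since the Moser argument is robust but genuinely requires the two forms to coincide to zeroth order on the zero section; the relative Poincar\'e lemma guaranteeing $\sigma|_{L_0}=0$ is the other point deserving care, as it is precisely what keeps $L_0$ fixed throughout the isotopy.
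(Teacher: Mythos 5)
Your argument is correct: it is the standard Moser-trick proof of Weinstein's theorem — normal-bundle identification $NL\cong T^*L$, zeroth-order normalization along the zero section via a Lagrangian complement $JTL$, relative Poincar\'e lemma, and the Moser flow — exactly as in the reference \cite{MS} that the paper cites. The paper itself states this theorem without proof, so there is no alternative argument to compare against; your write-up correctly isolates the two delicate points (pointwise agreement of the forms along $L_0$ and the vanishing of the primitive $\sigma$ on $L_0$).
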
 

In symplectic category one would like to classify Lagrangian 
submanifolds up to symplectic isotopies or even Hamiltonian 
isotopies. 
Two Lagrangian submanifolds $L,L'$ 
are {\em symplectically isotopic} if there exist a family 
of diffeomorphisms $\phi_t$, $t\in[0,1]$, $\phi_0=id$, $\phi_t^*\omega=\omega$ for all $t\in[0,1]$, such that $\phi_1(L)=L'$. The maps $\phi_t$, being symplectomorphisms, are 
generated by symplectic vector fields $X_t$ defined by the 
condition $d(\iota(X_t)\omega)=0$. 
If moreover, $\iota(X_t)\omega=-dH_t$ is exact, then $L,L'$ are {\em Hamiltonian 
isotopic}. When $H^1(W,\R)=0$, symplectically isotopic Lagrangian surfaces in $W$ are automatically Hamiltonian isotopic. 
Clearly symplectically isotopic Lagrangian 
submanifolds are {\em smoothly isotopic} 
if we drop 
the condition of $\phi_t$ being $\omega$-preserving. 
Note that an alternative definition for  $L$ and $L'$ being smoothly isotopic is if $L=L_0$ and $L'=L_1$ can be 
included in a smooth $1$-parameter family of submanifolds 
$L_t\subset W$, $t\in[0,1]$, and all $L_t$ diffeomorphic to $L$. The family $L_t$ is called a {\em smooth isotopy}  
between $L$ and $L'$. We mention here that there is a 
middle ground in between called {\em Lagrangian isotopy} 
where we request that all $L_t$ are Lagrangian too. 
 
 Smoothly isotopic Lagrangian surfaces need not be 
 symplectically isotopic. It is our goal here to 
better understand the subtle difference between symplectic 
isotopy and smooth isotopy and we will focus on the case 
when the Lagrangian submanifold is a 2-dimensional surface, 
i.e, a {\em Lagrangian knot} as in \cite{EP3}.

For earlier results concerning smoothly but not symplectically isotopic Lagrangian surfaces, please see \cite{C, Se, H1, H2, AF, Y5, Kna}. Here we consider a different approach. We ask 
the following two questions: 

\begin{ques} \label{Q1} 
Find a procedure that can be applied to a given Lagrangian surface $L$ (as general as possible) to yield a new one $L'$ 
which is always smoothly isotopic to $L$ but potentially 
not symplectically so. 
\end{ques} 

\begin{ques} \label{Q2} 
Construct a symplectic isotopy invariant for Lagrangian surfaces 
and use it to show that $L,L'$ as in Question \ref{Q1} are 
not symplectically isotopic. 
\end{ques}

\subsection{Main constructions and results} 

To answer Question \ref{Q1} we consider a Lagrangian surface  
$L$ which admits an {\em Lagrangian attaching  disk} 
($la$-disk) $D$. We 
classify $D$ into three types: {\em parabolic}, {\em hyperbolic}, and 
{\em elliptic}, according to the homology/homotopy type of the 
boundary $C:=\pa D\subset L$ in $L$. Each type of a $la$-disk 
$D$ also comes with two flavors which we call the {\em polarity}  (see Definition \ref{polarity}) of $D$. With $D$ 
we construct the {\em $la$-disk surgery} $\eta_D$ 
which can be applied to  $L$ to 
get a new Lagrangian surface $L':=\eta_D(L)$.  Roughly speaking this 
surgery cut out a collar neighborhood of $C$ in $L$ and glue in a different Lagrangian annulus along the boundary. We remark 
here that our surgery here is closely related to the {\em Lagrangian surgery}  as defined in \cite{P2}.  In \cite{P2} a  2-dimensional Lagrangian surgery is to remove a positive self-intersection point of a Lagrangian surface. In contrast our 
$la$-disk surgery is about the transition between the two 
Lagrangian surfaces $L,L'$ resulting from different ways of 
de-singularizing a Lagrangian surface at a positive self-intersection point.

 We obtained the following results among other things: 
\begin{itemize} 
\item The resulting Lagrangian surface $L':=\eta_D(L)$ is 
smoothly isotopic to $L$ ({\bf Proposition} \ref{smooth-iso}). 

Dually, $L'$ has a $la$-disk $D'$ of the 
same type as $D$ but with different relative polarity, and $\eta_{D'}(L')=L$. Note the actual surface $L':=\eta_DL$ depends on 
the size of the surgery, nevertheless its Lagrangian isotopy 
class is unique.

\item If $L$ is monotone then under suitable conditions $L'$ is 
also monotone ({\bf Proposition} \ref{L'monotone}). 

As examples, we have (i) if $L\subset \R^4$ is a {\em Chekanov 
torus} as defined in \cite{C}, then $L'$ is a monotone Clifford torus; (ii) if $L\subset T^*S^2$ is a monotone Clifford torus, 
and $D$ is a {\em unstable} (see Definition \ref{stable}) parabolic $la$-disk of $L$, then (up to scaling) $\eta_D(L)$ is the non-displaceable torus constructed in \cite{AF}.  

\item If $L$ is a null-homologous Lagrangian torus, then the 
maximal number of mutually disjoint $la$-disks of $L$ imposes 
some restriction on the homology of $W$ ({\bf Proposition} \ref{nulltorus}, {\bf Corollary} \ref{atmost1}). 

\item If $D$ is {\em elliptic} then $\eta_D$ is equal to  the {\em square of a positive or negative generalized Dehn twist} 
as considered in \cite{Se}, 
where the sign is determined by the polarity of $D$ ({\bf Proposition}  
\ref{dehn}). 

\end{itemize} 

The $la$-disk surgery provides a general way of potentially 
changing the symplectic isotopy type of a Lagrangian surface 
without affecting its smooth isotopy type. 
On the other hand, $la$-disks seems intimately related to the 
build-up of a symplectic $4$-manifold around a Lagrangian 
surface. For example, in the integrable system considered in 
Section \ref{integrable}, a Chekanov torus $L$ lives in the 
boundary of the Stein domain diffeomorphic to $S^1\times B^3$ 
the product of $S^1$ with a $3$-ball, $\R^4$ can be obtained 
by attaching to $S^1\times B^3$  a Lagrangian $2$-handle such 
that the core disk of the $2$-handle is a stable parabolic 
$la$-disk of $L$ in $\R^4$. Similarly, $T^*S^2$ is obtained by 
attaching along the boundary $S^3=\pa B^4$ of a $4$-ball 
a Lagrangian $2$-handle whose core disk is again a 
stable parabolic $la$-disk in $T^*S^2$ of a monotone Clifford torus $L'\subset S^3$.

For Question \ref{Q2} we were able to construct a new symplectic isotopy invariant, called {\em $y$-index}, 
for {\em orientable} compact Lagrangian 
surfaces {\em immersed} in $W$ provided that $W$ is {\em parallelizable}. 

First of all, 
if $W$ is parallelizable then we can fix a {\em $\omega$-compatible unitary framing}  
$\ff:=(J,u,v)$, where $J$ is an $\omega$-compatible almost 
complex structure over $W$, $(u,v)$ is a $J$-complex unitary 
basis of $TW$. 
 The framing $\ff$ allows us to define the 
{\em projected Lagrangian Gauss map} (PLG-map)   
\[ 
g'_L:L\to \dP(K')\cong S^2 
\] 
for any oriented immersed Lagrangian surface $L$ in $W$. 
Here $\dP(K')$ is an $S^2$-family of oriented Lagrangian planes 
which are $K'$-complex for some orthogonal complex structure 
$K'$. If $L$ is also closed then we define the $\mu_2$-index of 
$L$ relative to $\ff$ to be the degree of the map $g'_L$: 
\[ 
\mu_2(L;\ff):=\deg(g'_L)\in\Z. 
\] 
This $\mu_2$-index seems classical but 
we do not know if it has appeared elsewhere 
in the literature.

We obtained the following results concerning 
$\mu_2$: 

\begin{prop} 
\begin{enumerate} 
\item $\mu_2(L;\ff)$ is independent of the orientation of $L$. 
\item $\mu_2(L;\ff)$ depends only on the homotopy 
class of $\ff$ in $\cF^\omega$ the set of all $\omega$-compatible unitary framings. 
\item $\mu_2(L,\ff)$ is invariant under the $la$-disk surgery. 
\end{enumerate} 
\end{prop}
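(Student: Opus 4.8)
The plan is to treat the three statements in increasing order of difficulty, with (iii) as the real content. Throughout I identify, via the framing $\ff=(J,u,v)$, each tangent space $T_pW$ with $\R^4=\C^2$, and I use the splitting of the oriented Grassmannian of $2$-planes $\widetilde{Gr}(2,\R^4)\cong S^2_+\times S^2_-$ induced by the decomposition $\La^2\R^4=\La^2_+\oplus\La^2_-$ into self-dual and anti-self-dual parts. Since the compatible metric $g=\w(\cdot,J\cdot)$ makes $\w$ self-dual, $\w$ spans a line in $\La^2_+$, and a plane is Lagrangian exactly when its self-dual $2$-vector lies on the equatorial circle of $S^2_+$ orthogonal to $\w$. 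Consequently the anti-self-dual component of a Lagrangian plane is never zero, the target $S^2$-family $\dP(K')$ of $K'$-complex (hence Lagrangian) planes is precisely the factor $S^2_-$, and the PLG-map $g'_L$ is the composite of the Gauss map $L\to S^2_+\times S^2_-$ with projection to $S^2_-$.

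For (i), reversing the orientation of $L$ reverses the orientation of every tangent plane $T_pL$, which sends the decomposable $2$-vector $e_1\wedge e_2$ to its negative and hence carries $g'_L(p)$ to its antipode in $S^2_-=\dP(K')$; thus the PLG-map of the reversed surface is $a\circ g'_L$, where $a$ is the orientation-reversing antipodal involution. The first thing I would verify is this geometric fact that orientation reversal of a Lagrangian plane acts antipodally on $\dP(K')$, a direct computation in $\C^2$. The degree then picks up two compensating signs, one from $\deg(a)=-1$ and one from reversing the domain orientation, so $\mu_2(-L;\ff)=\mu_2(L;\ff)$. For (ii), I fix a path $\ff_s$, $s\in[0,1]$, in $\cF^\w$; the construction of $g'_L$ involves only the trivialization and orthogonal projection onto $\La^2_-$, both continuous in $\ff$, so $s\mapsto g'_{L,\ff_s}$ is a homotopy of maps $L\to S^2$ and homotopy invariance of degree forces $\mu_2$ to be constant on each component of $\cF^\w$, i.e. to depend only on the homotopy class of $\ff$.

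The crux is (iii). Because the $la$-disk surgery is supported in a collar $N(C)$ of $C=\pa D$, the surfaces $L$ and $L'=\eta_D(L)$ coincide outside $N(C)$, and by construction the glued-in annulus $A'$ agrees with the excised annulus $A$ to first order along the two boundary circles; hence $g'_L=g'_{L'}$ off $N(C)$ and $g'_A=g'_{A'}$ on $\pa A=\pa A'$. The difference of degrees therefore localizes to the relative quantity
\[
\mu_2(L';\ff)-\mu_2(L;\ff)=\deg\bigl(G\colon T\to S^2\bigr),
\]
where $T:=A'\cup_{\pa}(-A)$ is the torus obtained by gluing the two surgery annuli along their common boundary and $G$ equals $g'_{A'}$ on one half and $g'_A$ on the other. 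The plan is to show $\deg G=0$ by producing, in the standard normal form of the surgery, an explicit homotopy rel $\pa$ from $g'_A$ to $g'_{A'}$ inside $\dP(K')$; equivalently, one checks that in this model the tangent planes swept out by the surgery are confined to a one-parameter sub-family of $\dP(K')$, so that $G$ misses a point and is null-homotopic. I expect this to be the main obstacle, precisely because it demands the explicit local model of the $la$-disk surgery rather than a formal argument.

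A conceptual shortcut, which I would record as an alternative, is that $\mu_2(L;\ff)=\deg(g'_L)$ is the degree of the anti-self-dual Gauss projection and is thus computable by a Gauss--Bonnet-type formula from $\chi(L)$ and the framed normal Euler number of $L$, both smooth-isotopy invariants; invariance under surgery would then follow immediately from the smooth isotopy $L\simeq L'$ of Proposition \ref{smooth-iso}. The only delicate point in this route is that the projected Gauss map must stay defined along the connecting isotopy — no intermediate tangent plane may become self-dual — which can again be arranged within the standard model, so the two approaches rest on the same local analysis.
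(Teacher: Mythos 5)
Your treatment of (i) and (ii) is correct and essentially the paper's own: for (i) the paper likewise writes $g'_{-L}=A\circ g'_L\circ r$ with $A$ the antipodal map of $\dP(K')$ (arising there as $\xi\mapsto\xi^\perp$ after absorbing a central rotation $c_{\pi/2}$), and the two signs $\deg(A)=\deg(r)=-1$ cancel; for (ii) the paper uses the same homotopy-of-Gauss-maps argument along a path $\ff_t$. Your identification of $\dP(K')$ with the anti-self-dual sphere $S^2_-$ and of $\pi'$ with the anti-self-dual projection is also legitimate, since the central circle $\cC$ acts trivially on the anti-self-dual forms.

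For (iii), your localization $\mu_2(L';\ff)-\mu_2(L;\ff)=\deg\bigl(G:T\to S^2\bigr)$ with $T=A'\cup_{\pa}(-A)$ is fine, but the mechanism you propose for $\deg G=0$ fails. In the standard model the excised annulus $A=Orb_\cG(\gamma)$ and the glued-in annulus $A'=Orb_\cG(M(\gamma))$ are $\cG$-orbits of arcs with $\Delta\varphi=\pm\pi$, so by the computation leading to (\ref{mu2Lgamma1})--(\ref{mu2Lgamma2}) each has PLG-degree $\pm 2$; in particular $g'_A$ and $g'_{A'}$ are each \emph{surjective} onto $\dP(K')$, their images are not confined to a one-parameter subfamily, and $G$ misses no point. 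The paper's actual argument is that $A'=M(A)$ for the anti-symplectic rotation $M$ of (\ref{M}), and $M$ descends to an orientation-preserving self-map of the $S^2$-factor $\dP(K')$ (it reverses only the $S^1$-factor of $\Lambda^+$), so $\deg(g'_{A'})=\deg(g'_A)$ and the two halves of $T$ contribute cancelling degrees. By contrast, your ``alternative shortcut'' is in fact a complete and cleaner proof, and the delicate point you flag there is vacuous: every oriented $2$-plane in $\R^4$ has self-dual and anti-self-dual components of equal norm $1/\sqrt{2}$ (because $E\wedge E=0$ for a decomposable $2$-vector), so the projection of the full oriented Grassmannian onto $S^2_-$ extending $\pi'$ is defined everywhere, $\mu_2$ is the degree of a globally defined Gauss projection, and its invariance under $\eta_D$ follows at once from the smooth isotopy of Proposition \ref{smooth-iso}. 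I would discard your first route for (iii) and keep the second.
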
 
 Moreover, if $H_1(W,\Z)=0=H_3(W,\Z)$ 
then $\cF^\omega$ is connected and $\mu_2(L,\ff)$ is 
independent of $\ff$. 

Clearly $\mu_2$-index is invariant under 
regular homotopy of immersed Lagrangian surfaces, hence 
not sensitive enough to 
distinguish Hamiltonian or symplectic isotopy classes of Lagrangian surfaces. 
For example, both Chekanov tori and Clifford tori have their 
$\mu_2$-indexes equal to $0$ (see Section \ref{WhTo}).

A closer inspection on the map $g'_L$ reveals that a $la$-disk surgery 
seems make some essential change that can be described in terms of the variation of the intersection 
subspaces between complex planes and  Lagrangian planes, 
leading to the definition of the $y$-index which we 
sketch below:

The framing $\ff=(J,u,v)$ associates a unique family of 
$J$-complex line bundles 
\[ 
E_\h=u_\h\wedge Ju_\h, \quad \h\in\R/\pi\Z, 
\] 
where $u_\h:=u\cos \h+v\sin \h$. Let $v_\h:=Ju_\h$. The pair $(u_\h, v_\h)$ 
is well-defined up to a simultaneous change of $\pm$-signs. 
This sign ambiguity however will not hinder our construction 
below. 
Observe  that $E_{\h+\frac{\pi}{2}}$ is orthogonal to $E_\h$, 
so we also denote 
\[ 
\h^\perp:=\h+\frac{\pi}{2} \ \text{ and } \  E^\perp_\h:=
E_{\h^\perp}, \quad  \text{for } \h\in\R\mod \pi. 
\] 
Let $L\subset W$ be 
an oriented compact immersed Lagrangian surface. Then 
for any $p\in L$, there exists $\h,\h^\perp\in \R/\Z$ such 
that 
\[ 
\dim T_pL\cap E_\h|_p=1=\dim T_pL\cap E_{\h^\perp}|_p. 
\] 
We call the set 
\[ 
\Gamma_\h:=\{ p\in L\mid \dim T_pL\cap E_\h=1\}
\] 
 the {\em intersection locus of $E_\h$} or the {\em $E_\h$-locus}  in $L$. It turns out that we can orient a subset $\check{\Gamma}_\h$ of $\Gamma_\h$, call the {\em proper $E_\h$-locus}, in a consistent way for all 
$\h$ so that the positively oriented proper loci, denoted as $\check{\Gamma}^+_\h$, satisfy 
\[ 
\check{\Gamma}^+_{\h^\perp}=-\check{\Gamma}^+_\h=:\check{\Gamma}^-_\h, 
\] 
with 
$\check{\Gamma}^-_\h$ denotes the negatively oriented 
$\check{\Gamma}_\h$.  
Each $E_\h$ comes with a trivialization 
given by $(u_\h,v_\h)$, with which we can count the total 
angle of variation $\alpha^+_\h$ (resp. $\alpha^-_\h$) in $E_\h$ 
(resp. in $E_{\h^\perp}$) of the intersection subspace $T_pL\cap
 E_\h$ (resp. $T_pL\cap E_{\h^\perp}$) as we traverse all 
 components of $\check{\Gamma}^+_\h$ once. The difference 
 \[ 
 \alpha_\h:=\alpha^+_\h-\alpha^-_\h
 \] 
  which we call the {\em relative $E_\h$-phase along $\check{\Gamma}^+_\h$} is independent of $\h$ 
 and is an integral multiple of $2\pi$.
Observe that we get the other uniform 
orientation for all $\check{\Gamma}_\h$ by simultaneously reversing 
the orientations of all $\check{\Gamma}^+_\h$. The sign of 
$\alpha_\h$ will be revered if we change the uniform orientation.

That $\check{\Gamma}^+_\h$ can be uniformly oriented comes 
from  a decomposition of $L$ into a finite number of {\em crossing domains} (see 
Definition \ref{pdomain}) 
$L_i$ $i\in I$,  
of $g'_L$, and the existence of a symmetric function $\varepsilon :I\times I\to \{ \pm 1\}$ is defined so that  $\varepsilon(i,j)\varepsilon(j,k)=\varepsilon(i,k)$ for 
all $i,j,k\in I$. 
The choice of a 
reference {\em crossing domain} $L_{i_0}\subset L$ 
determines a uniform orientation of $\check{\Gamma}^+_\h$. 
Choosing another $L_i$ gives the same uniform orientation 
iff $\varepsilon(i_0,i)=1$.

Fix a reference crossing domain $L_{i_0}$ (and hence a uniformly 
oriented $\check{\Gamma}^+_\h$), we define the $y$-index of 
$L$ to be 
\[ 
 y(L,q;\ff):=\frac{1}{2\pi}\alpha_\h. 
 \] 
Here $q\in L_{i_0}$ is a regular point of $g'_L$, and it is to 
represent the uniform orientation determined by $L_{i_0}$.  
Clearly $y(L,p,\ff)=y(L,q,\ff)$ if $p\in L_i$ and $q\in L_j$ 
satisfy $\varepsilon(i,j)=1$. 
We define the {\em absolute 
 $y$-index} to be 
\[ 
\bar{y}(L;\ff):=|y(L,q;\ff)|. 
\]

For each $i$, the degree $d_i\in\Z$ of the restricted 
PLG-map $g'_L|_{L_i}:L_i\to \dP(K')$ is defined. 
Let $L_{i_0}$ and $q$ be as above.  Then $y(L,q,\ff)$ 
can also be expressed as 
\[ 
 y(L,q;\ff)= \sum_{j\in I}\epsilon(i_0,j)d_j . 
\] 
So $y(L,q;\ff)$ can be 
defined by summing up the signed degrees of crossing 
domains of $L$ relative to a reference crossing domain. 
It is this extra sign 
that 
set the $y$-index apart from the $\mu_2$-index.

Given two oriented immersed Lagrangian surfaces $L,L'$, 
suppose  that $L\cap L'$ contains an open domain $U$ 
on which $L$ and $L'$ induces the same orientation, 
and suppose that $q\in U$ is a regular value for $g'_L$ 
(and hence for $g'_{L'}$, then a {\em relative $y$-index} 
is defined: 
\[ 
y(L',L,q;\ff):=y(L',q;\ff)-y(L,q;\ff). 
\]

We have the following results: 

\begin{theo} 
\begin{enumerate} 
\item
$y(L,q;\ff)$ and $y(L',L,q;\ff)$ are  independent of the orientation of $L$ and $L'$.  

\item For $L,L'$ fixed, $y(L,q;\ff)$, $\bar{y}(L;\ff)$ 
 and $y(L',L,q;\ff)$ depends only on the path 
connected components of $\ff)$.

\item If $L$ and $L'$ are symplectically isotopic, then 
$\bar{y}(L;\ff)=\bar{y}(L';\ff)$ for $\ff\in\cF^\omega$. 

\item If $L'=\eta_D(L)$ then 
\[ 
|y(L', q;\ff)-y(L,q;\ff)|=4, 
\] 
\end{enumerate} 
\end{theo}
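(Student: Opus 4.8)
The plan is to exploit the locality of the $la$-disk surgery to reduce the global difference $y(L',q;\ff)-y(L,q;\ff)$ to a single computation in a standard local model, and then to extract the factor $4$ from the way the two angle contributions $\al^+_\h$ and $\al^-_\h$ reinforce one another.

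First I would record that $\eta_D$ modifies $L$ only inside a collar neighborhood $N$ of $C=\pa D$: outside $N$ the surfaces $L$ and $L'=\eta_D(L)$ coincide, hence so do the PLG-maps $g'_L$, $g'_{L'}$ and the intersection loci $\Gamma_\h$. Choosing the reference crossing domain $L_{i_0}$ (equivalently the basepoint $q$) to lie outside $N$, the uniform orientations of $\check{\Gamma}^+_\h$ for $L$ and for $L'$ agree on $L\setminus N$, and the entry/exit points of each arc on $\pa N$ match up. Therefore the relative phases $\al_\h(L)=\al^+_\h(L)-\al^-_\h(L)$ and $\al_\h(L')$ differ only through the arcs of $\check{\Gamma}^+_\h$ passing through $N$, so that $y(L',q;\ff)-y(L,q;\ff)=\frac{1}{2\pi}\bigl(\Delta\al^+_\h-\Delta\al^-_\h\bigr)$, where $\Delta$ denotes the change of the $N$-localized contribution under the surgery.

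Next I would fix an explicit local model. Using a Weinstein chart about $C$ I write $L\cap N$ as a Lagrangian annulus and describe $p\mapsto T_pL$ as a path of Lagrangian planes; the surgery replaces this annulus by one of opposite polarity, which by Proposition \ref{dehn} (in the elliptic case, and by the universality of the local picture in all three types) is modelled on the square of a generalized Dehn twist. Tracking how $T_pL$ winds relative to the fixed family $E_\h$, a single generalized Dehn twist contributes $\pm2\pi$ to the $E_\h$-phase, so its square contributes $\Delta\al^+_\h=\pm4\pi$. The antipodal relation $\check{\Gamma}^+_{\h^\perp}=-\check{\Gamma}^+_\h$ forces the corresponding change in the $E_{\h^\perp}$-phase to be $\Delta\al^-_\h=-\Delta\al^+_\h=\mp4\pi$, so the two terms reinforce in $\al_\h=\al^+_\h-\al^-_\h$ and $\Delta\al^+_\h-\Delta\al^-_\h=\pm8\pi$. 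Dividing by $2\pi$ gives $|y(L',q;\ff)-y(L,q;\ff)|=4$, the sign being governed by the polarity of $D$.

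The main obstacle will be the orientation bookkeeping needed to guarantee that $\Delta\al^+_\h$ and $\Delta\al^-_\h$ reinforce rather than cancel: I must verify that the uniform orientation of $\check{\Gamma}^+_\h$ anchored outside $N$ propagates consistently through the surgery region, i.e. that no spurious sign flip of $\varepsilon(i_0,\cdot)$ is introduced by the new crossing domains created inside $N$. I would cross-check the phase computation against the degree formula $y(L,q;\ff)=\sum_{j}\varepsilon(i_0,j)\,d_j$. Since $\mu_2(L;\ff)=\sum_j d_j$ is invariant under $la$-disk surgery, any change in $y$ must arise entirely from the signs $\varepsilon$ together with the redistribution of the local degrees $d_j$ across $N$; matching this constraint with the direct computation pins down both the magnitude $4$ and its independence of the type of $D$.
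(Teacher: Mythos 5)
Your proposal addresses only part (iv) of the theorem. Parts (i)--(iii) are left untouched, and they are not formalities: part (ii), the invariance of $y(L,q;\ff)$ under deformation of the framing within a path component of $\cF^\omega$, is the most delicate point in the paper's argument. It requires classifying how crossing $p$-curves can degenerate along a generic path $\ff_t$ (merge--split and birth--death), and then checking that the graph $\Lambda_L$ of crossing domains, the evenness of its loops, the parity function $\varepsilon(i,j)$, and the PLG-degrees $d_i$ all conspire to leave $\sum_j\varepsilon(i_0,j)d_j$ unchanged through each elementary degeneration. Part (i) likewise needs the explicit identity $g'_{-L}=A\circ g'_L\circ r$ with $A$ the antipodal map. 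None of this is recoverable from what you wrote, so as a proof of the stated theorem the proposal is incomplete.

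For part (iv) itself, your skeleton (locality of $\eta_D$, anchoring $q$ and the uniform orientation outside the surgery region, reducing to a local phase computation) is the same as the paper's, but the decisive quantitative step is asserted rather than derived. You claim that a single generalized Dehn twist contributes $\pm 2\pi$ to the $E_\h$-phase and that squaring gives $\pm4\pi$; no computation supports this, and phases of intersection loci do not obviously add under composition of twists. Worse, the Dehn-twist model is only available when $D$ is \emph{elliptic} (Proposition \ref{dehn} requires $\pa D$ to bound a disk in $L$ so that a Lagrangian sphere $S$ exists); for parabolic and hyperbolic $la$-disks there is no $\tau_S$ to invoke, so "universality of the local picture" forces you back to the direct model anyway. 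The paper instead computes in the $\cG$-invariant standard model: $\check{\Gamma}^+_0\cap L_\gamma$ consists of the four arcs $\gamma,\ -\gamma_{\pi/2},\ \gamma_{\pi},\ -\gamma_{3\pi/2}$, each with relative $E_0$-phase $(\Delta\varphi)_\gamma=\pm\pi$, giving $(\Delta\varphi)_\Xi=\pm4\pi$; the sign reversal $(\Delta\varphi)_{\gamma'}=-(\Delta\varphi)_\gamma$ comes from the identity $M(E_\tau)=E_{\tau^\perp}$, whence the difference $\mp 8\pi$ and $|y(L',q;\ff)-y(L,q;\ff)|=4$. Your intermediate claim that $\Delta\al^-_\h=-\Delta\al^+_\h$ is "forced by the antipodal relation $\check{\Gamma}^+_{\h^\perp}=-\check{\Gamma}^+_\h$" is also off: $\al^-_\h$ measures the rotation of $T_pL\cap E_{\h^\perp}$ along the \emph{same} oriented locus $\check{\Gamma}^+_\h$, and its opposition to $\al^+_\h$ comes from the model computation on $\lambda^+_\h$ (rotation by $\pi$ in $E_\h$ versus $-\pi$ in $E_{\h^\perp}$), not from the relabelling $\h\mapsto\h^\perp$. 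Finally, the "orientation bookkeeping" you flag as the main obstacle is genuinely resolved in the paper by noting that $\pa Q=\pa(M(Q))$, so the boundary crossing $p$-curves and hence the graph $\Lambda_L$ and $\varepsilon$ are unchanged by the surgery; you identify the issue but do not close it.
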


When $H_1(W,\Z)=0=H_3(W, \Z)$  every 
$\omega$-compatible almost complex structure $J$, the 
set of $J$-complex unitary framings $(u,v)$ is path connected. 
Since the set of all $\omega$-compatible almost complex 
structures over $W$ is contractible, the set $\cF^\omega$ 
is connected. Then the $y$-index $y(L,q;\ff)$ is independent 
of $\ff\in \cF^\omega$ and is invariant under symplectomorphisms 
of $L$. In this case we will usually omit $\ff$ and simply denote the $y$-index of $L$ 
as $y(L,q)$, and $\bar{y}(L;\ff)$ as $\bar{y}(L)$, etc.. 

By computing the relative $y$-index we reproved that iterations 
of the generalized Dehn twist can produce an infinite number 
of smoothly isotopic Lagrangian surfaces representing distinct 
symplectic/Hamiltonian isotopy classes. 

\begin{theo} \label{LvS} 
Let $W$ be the plumbing of the cotangent bundles of a smooth 
orientable surface $L$ and a sphere $S$ at their intersection 
point. Let $L^n:=\tau^{2n}_S(L)$ denote the Lagrangian surface 
in $W$ obtained by applying the $2n$-generalized Dehn twist 
along $S$ to $L$, $n\in \Z$, $L^0=L$.Then 
for a suitable common point $q$ of $L^m$'s, 
\[ 
y(L^m,L^n,q;\ff)=4(m-n), \quad m,n\in \Z. 
\] 
In particular, 
\[ 
y(L^n,L,q;\ff)=-4n, \quad n\in \Z. 
\] 
This implies that 
there are infinitely many Lagrangian surfaces in $W$ which are pairwise Hamiltonian non-isotopic, but all smoothly isotopic.  
\end{theo}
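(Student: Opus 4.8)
The plan is to reduce the computation of the relative $y$-index along the whole sequence $L^n$ to a single elementary step and then iterate. First I would invoke Proposition \ref{dehn}: since the square of a generalized Dehn twist along the Lagrangian sphere $S$ equals an $la$-disk surgery $\eta_D$ for an \emph{elliptic} attaching disk $D$, with polarity dictated by the sign of the twist, each passage $L^n\mapsto L^{n+1}=\tau^2_S(L^n)$ is realized as $L^{n+1}=\eta_{D_n}(L^n)$ for an elliptic $la$-disk $D_n$ of $L^n$. Because the twist $\tau_S$ and the sphere $S$ are fixed, all the $D_n$ carry the \emph{same polarity}; this uniformity is what will keep the sign of each step from oscillating. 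Since $\tau_S$ is supported near $S$, every $L^n$ agrees with $L$ outside that neighbourhood, so I would fix once and for all a common point $q$ in that shared region which is regular for every $g'_{L^n}$. This $q$ then selects a consistent reference crossing domain, hence a consistent uniform orientation of the proper loci $\check{\Gamma}^+_\h$ for the whole family, which is exactly the data the relative $y$-index requires.

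With this setup the argument is a telescoping. By item (iv) of the preceding $y$-index theorem, each elementary surgery satisfies $|y(L^{n+1},q;\ff)-y(L^n,q;\ff)|=4$. The content beyond the magnitude $4$ is the \emph{sign}: relative to the fixed reference domain through $q$, every step must contribute the same signed amount. Here the signed-degree formula
\[
y(L,q;\ff)=\sum_{j\in I}\varepsilon(i_0,j)\,d_j
\]
is the tool: one analyses how $\eta_{D_n}$ creates or modifies the crossing domains of $g'_{L^{n+1}}$ and their signed degrees $d_j$ in terms of the fixed polarity of $D_n$, and checks that the net change carries a sign depending only on that polarity. Granting a constant step value, additivity of the relative index,
\[
y(L^m,q;\ff)-y(L^n,q;\ff)=\sum_{k=n}^{m-1}\bigl(y(L^{k+1},q;\ff)-y(L^k,q;\ff)\bigr),
\]
immediately yields the asserted linear relation $y(L^m,L^n,q;\ff)=4(m-n)$, and setting $L^0=L$ gives the special case $y(L^n,L,q;\ff)=-4n$.

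I expect the sign bookkeeping to be the main obstacle. Item (iv) only delivers the magnitude $4$, so one must genuinely rule out cancellation between consecutive steps, and this forces an examination of the orientation of $\check{\Gamma}^+_\h$ induced by the reference domain together with the way the fixed polarity of $D_n$ pins down the sign of the signed degrees created by the surgery. Once it is verified that the single reference point $q$ persists through every surgery and that the polarity of $D_n$ is constant along the family, the signs cannot flip and the linear formula follows by the telescoping above.

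Finally I would deduce the isotopy statement. All $L^n$ are smoothly isotopic to $L$ by applying Proposition \ref{smooth-iso} inductively to $L^{n+1}=\eta_{D_n}(L^n)$. For the symplectic side, the computation gives $y(L^n,q;\ff)=y(L,q;\ff)-4n$, so the absolute index $\bar y(L^n;\ff)=|y(L,q;\ff)-4n|$ is unbounded in $n$ and therefore takes infinitely many distinct values. By item (iii) of the preceding theorem, symplectically isotopic Lagrangian surfaces have equal absolute $y$-index, so any $L^m,L^n$ with $\bar y(L^m;\ff)\neq\bar y(L^n;\ff)$ are not symplectically, hence not Hamiltonian, isotopic. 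Extracting a sequence of indices $n$ along which $|y(L,q;\ff)-4n|$ is strictly increasing then produces infinitely many pairwise Hamiltonian non-isotopic, yet smoothly isotopic, Lagrangian surfaces in $W$.
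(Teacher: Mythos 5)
Your overall strategy is the same as the paper's: realize each passage $L^n\to L^{n+1}$ as an elliptic $la$-disk surgery with constant polarity (this is Proposition \ref{dehn} together with the nesting construction of Corollary \ref{inf-order}), fix a reference point $q$ outside the support of the twist, apply the single-surgery formula, and telescope. The problem is that the one step you explicitly defer --- ruling out sign cancellation between consecutive surgeries --- is the entire content of the theorem beyond what is already contained in Proposition \ref{effect}, and ``all the $D_n$ carry the same polarity, hence the signs agree'' is not by itself a proof. Proposition \ref{effect} delivers $y(L^{k+1},L^k,q;\ff)=\pm 4$ with the sign equal to $-\frac{4}{\pi}(\Delta\varphi)_{\gamma_k}$, where $\gamma_k$ is the relevant arc of the proper $E_\h$-locus in the surgery annulus $U_k$, oriented by the \emph{uniform} orientation that the reference domain through $q$ induces on $\check{\Gamma}^+_\h$. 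That orientation reaches $U_k$ through the chain of crossing domains and the parity function $\varepsilon(i_0,\cdot)$, and the crossing-domain decomposition of $L^k$ changes at every step (each surgery creates new crossing $p$-curves between $q$ and the next surgery region). So one must actually verify that $(\Delta\varphi)_{\gamma_k}$, measured in this propagated orientation, equals $+\pi$ at every step; polarity of $D_k$ alone does not pin down $\varepsilon(i_0,i_{U_k})$.

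The paper closes exactly this gap by brute force: it builds a $\tilde{G}$-equivariant framing on the plumbing, isotops $L^k$ so that the $k$-th surgery region is an explicit $S^1$-invariant annulus $U_k=Orb_{\tilde G}(\gamma_k)$ with $(\Delta\varphi)_{\gamma_k}=\pi$ nested inside the previous one, and computes the relative $E'_0$-phases $\alpha_{0,U_k}=4\pi$ and $\alpha_{0,U'_k}=-4\pi$ directly at every stage (Section \ref{plumb}, Figures \ref{pdehn1}--\ref{pdehn2}), so the step value is $-4$ by explicit calculation rather than by a polarity argument. To complete your proof you would need to supply an equivalent computation, or a lemma showing that the parity $\varepsilon(i_0,i_{U_k})$ and the local phase of $\gamma_k$ transform consistently under the nesting $U_{k+1}\subset U'_k$. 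Two smaller remarks: your two displayed conclusions $y(L^m,L^n,q;\ff)=4(m-n)$ and $y(L^n,L,q;\ff)=-4n$ are mutually inconsistent as written (the first with $n=0$ gives $+4m$); a constant step of $-4$ yields $y(L^m,L^n,q;\ff)=-4(m-n)$, matching the second formula (the theorem statement itself has the same index transposition). Your closing deduction via $\bar y(L^n;\ff)=|y(L,q;\ff)-4n|$ is fine and is in fact slightly more careful than the paper's.
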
 

Let $W_n$ denote the cotangent bundle of the $A_n$-configuration of $n$'s $2$-spheres, or equivalently, the pluming of 
cotangent bundles $T^*S_j$ of $n$'s $2$-spheres $S_1,...,S_n$, so that in $W_n$, $S_i\cap S_j=\emptyset$ unless $j=i+1$, 
$S_i$ intersects transversally with $S_{i+1}$ and in one point. 
We call $W_n$ the $A_n$-manifold. 
Observe that  the above result also applies to  the 
plumbing $T^*L$ with $W_n$ for $n\geq 2$. 
We remark here that a relevant result was proved by Seidel 
\cite{Se} by way of Lagrangian Floer homology.

The $la$-disk surgery and the relative $y$-index enable us to 
construct new nullhomologous monotone Lagrangian tori beyond the known ones. 

\begin{theo} \label{tori} 
Let $W_n$ be the $A_n$-manifold, $n\geq 0$, $W_0=\R^4$. Then on $W_n$ there are 
$n+2$ smoothly isotopic 
nullhomologous monotone Lagrangian tori, $T_{-1}, T_0, 
T_1,...,T_n$, with a common domain containing a regular point $q$, such that 
\[ 
y(T_k,T_j,q)=4(j-k), \quad -1\leq k,j\leq n,  
\] 
hence are pairwise Hamiltonian (and symplectically) non-isotopic. 
\end{theo}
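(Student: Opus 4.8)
The plan is to realize the $n+2$ tori as a single chain in which adjacent members are related by an $la$-disk surgery,
\[
T_{-1}\ \overset{\eta_{D_0}}{\longleftrightarrow}\ T_0\ \overset{\eta_{D_1}}{\longleftrightarrow}\ T_1\ \longleftrightarrow\ \cdots\ \overset{\eta_{D_n}}{\longleftrightarrow}\ T_n ,
\]
where $D_0$ is a parabolic $la$-disk and each $D_k$ ($1\le k\le n$) is elliptic with core supported near the sphere $S_k$ (the surgeries being reversible by the duality noted after Proposition~\ref{smooth-iso}). Concretely I would start from the monotone Clifford torus $T_0$ in the $T^*S^2$-neighborhood of $S_1$, so that $T_0$ links $S_1$ as in example (i)--(ii), take $T_{-1}$ to be its parabolic partner, and set $T_k:=\eta_{D_k}(T_{k-1})$ for $1\le k\le n$. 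By Proposition~\ref{dehn} each of the latter equals a Dehn twist square $\tau_{S_k}^{2}$, the sign being fixed once and for all by choosing the $D_k$ with a common polarity. The key geometric point is \emph{propagation along the chain}: because consecutive spheres $S_k,S_{k+1}$ meet in a single point, twisting along $S_k$ repositions $T_k$ so that it links $S_{k+1}$ and hence carries the next elliptic disk $D_{k+1}$. This is why $n$ spheres produce exactly $n$ twists --- reflecting the restriction of Corollary~\ref{atmost1} on the number of disjoint $la$-disks a nullhomologous torus can carry --- and hence $n+2$ tori in all.

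Next I would verify the three structural claims. Smooth isotopy is immediate: each arrow is an $\eta_D$, which preserves the smooth isotopy class by Proposition~\ref{smooth-iso}, and I compose the steps. For monotonicity the chain splits into two kinds of moves. The elliptic steps satisfy $T_k=\tau_{S_k}^{2}(T_{k-1})$, so $T_k$ is the image of $T_{k-1}$ under a symplectomorphism of $W_n$; such a map preserves symplectic areas and Maslov indices, hence monotonicity, and by the Picard--Lefschetz formula it fixes the zero class, giving $[T_k]=(\tau_{S_k})_*^{2}[T_{k-1}]=0$ once $[T_{k-1}]=0$. The single parabolic step is the genuine surgery; there I invoke Proposition~\ref{L'monotone} together with example (i) to keep $T_{-1}$ monotone, and I arrange the base torus nullhomologous from the outset (Proposition~\ref{nulltorus}). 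Thus all $T_k$ are smoothly isotopic, nullhomologous and monotone, and since every surgery is supported near its disk I can fix a common open domain $U$ avoided by all of them and a regular value $q\in U$ of $g'_L$.

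The heart of the argument is the relative $y$-index. The relation $|y(\eta_D(L),q;\ff)-y(L,q;\ff)|=4$ gives a jump of $4$ at every step, and for the elliptic steps Theorem~\ref{LvS} records the precise value $y(\tau_S^{2}L,L,q;\ff)=-4$. The real work is \emph{sign coherence}: having chosen all the $D_k$ with the same polarity, every step moves $y(\cdot,q)$ in the same direction, so that, writing $y_k:=y(T_k,q;\ff)$, the values form a strictly monotone arithmetic progression. I would establish this by keeping the reference crossing domain inside $U$ fixed throughout, so that the uniform orientation of the proper loci $\check\Gamma^{+}_\h$ --- and hence the signs of the crossing-domain degrees $d_j$ created by successive surgeries --- is the same for every $T_k$; the signed degrees then add rather than cancel. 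Telescoping gives
\[
y(T_k,T_j,q;\ff)=y_k-y_j=4(j-k),\qquad -1\le k,j\le n ,
\]
as claimed.

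Finally, I conclude non-isotopy from the fact that $\bar y(\cdot;\ff)$ is a symplectic, hence Hamiltonian, isotopy invariant. Using the base-case value $y(T_{-1},q;\ff)=0$ supplied by the computation of Section~\ref{WhTo}, the progression reads $y_k=-4(k+1)$, so the absolute indices $\bar y(T_k;\ff)=4(k+1)$ are pairwise distinct; no two of the tori can therefore be Hamiltonian or symplectically isotopic. I expect the main obstacle to lie in the geometric realization rather than in the index bookkeeping: I must show that after each twist the repositioned torus $T_k$ genuinely carries an elliptic $la$-disk of the prescribed polarity linked with $S_{k+1}$, so that the whole chain can be performed inside $W_n$ with all disks missing the common domain $U$. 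A secondary but essential point is pinning the base value in Section~\ref{WhTo}: the relative indices alone determine the $y_k$ only up to a common shift, and it is the explicit computation placing $0$ at the endpoint $T_{-1}$ that prevents the progression from straddling zero and so guarantees the $\bar y(T_k;\ff)$ are pairwise distinct, rather than merely the relative indices being nonzero.
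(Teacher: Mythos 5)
There are two genuine gaps, one structural and one in the endgame.

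First, your mechanism for the steps $k\ge 1$ cannot work as stated. You propose to realize $T_k=\eta_{D_k}(T_{k-1})$ with $D_k$ an \emph{elliptic} $la$-disk, so that Proposition \ref{dehn} identifies the step with $\tau_{S_k}^{2}$. But every $T_k$ is required to be nullhomologous, and by Proposition \ref{[L]} a Lagrangian surface with a non-parabolic $la$-disk represents a class of infinite order in $H_2(W,\Z)$ (an elliptic disk produces a Lagrangian sphere meeting $L$ transversally in one point, forcing $[L]\cdot[S]=\pm1$). So a nullhomologous torus admits \emph{only} parabolic $la$-disks, and Proposition \ref{dehn} is never applicable along your chain; nor does Theorem \ref{LvS} apply, since the Clifford-type torus meets $S_k$ in a circle rather than transversally in a point. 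The paper's chain in Section \ref{Wn} is instead built entirely from \emph{parabolic} stable disks: the first is a cotangent-fibre disk of $S_1$, and each subsequent one is a half-sphere of $S_k$; between surgeries one applies an explicit $\tilde{G}$-invariant Hamiltonian isotopy that pushes the freshly glued annulus $U'_{k-1}$ across the plumbing region so that it acquires a crossing domain $U_k$ encircling the next disk. This repositioning step is exactly the "propagation along the chain" you flag as the main obstacle, and it is where the actual work lies; the consistency of signs then comes from the computation $(\Delta\varphi)_{\gamma_k}=\pi$, $(\Delta\varphi)_{\gamma'_k}=-\pi$ in the $S^1$-invariant model, not from a choice of polarity of elliptic disks. (Also, the bound you attribute to Corollary \ref{atmost1} is the $\R^4$ case; the relevant statement for $W_n$ is Proposition \ref{nulltorus}, which permits up to $n+1$ disjoint parabolic disks, matching the $n+1$ surgeries.)

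Second, your concluding step misquotes the base value and then relies on it. Section \ref{WhTo} computes $y=4$ for the Chekanov torus and $y=0$ for the monotone Clifford torus, so the correct normalization is $y(T_k,q)=-4k$ with $y(T_{-1},q)=4$, $y(T_0,q)=0$ — not $y(T_{-1},q)=0$. Consequently the progression \emph{does} straddle zero and $\bar y(T_{-1})=\bar y(T_1)=4$, so pairwise distinctness of the absolute indices $\bar y(T_k;\ff)$ fails for the pair $(T_{-1},T_1)$ and cannot be the final argument. The non-isotopy must be concluded from the relative index $y(T_k,T_j,q)=4(j-k)\neq 0$ for $k\neq j$, which is the Hamiltonian-isotopy invariant the paper actually uses (as in Proposition \ref{effect}); the telescoping identity itself is fine once the correct base value and the parabolic-surgery computation are in place.
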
 
Note that $T_{-1}$ is a Chekanov torus, $T_0$ a Clifford torus, 
and $T_1$ is Hamiltonian isotopic to the torus in $T^*S^2$ 
constructed in \cite{AF}.

With $y$-index and its relative version we reproved earlier examples of smooth 
but not symplectically isotopic monotone Lagrangian tori 
obtained by Chekanov \cite{C} and Albers-Frauenfelder \cite{AF}, 
and spheres by Seidel \cite{Se}. Their methods include 
{\em symplectic capacities} \cite{EH1, EH2, V1, V2} applied in \cite{C} and 
{\em Lagrangian (intersection) Floer cohomology}   
\cite{O1, O2, Poz, KS} used in \cite{Se, AF}. We remark here that by computing {\em superpotentials}  
Auroux \cite{Aur1} proved that the monotone Clifford torus and Chekanov torus are not Hamiltonian isotopic in $\C P^2$.  
Complement to current methods,  
our $y$-index provides an alternative and 
simpler way of distinguishing Lagrangian 
surfaces in symplectic $4$-manifolds with vanishing Chern 
classes. Extension of the $y$-index to general symplectic 
$4$-manifolds is yet to be explored. 

It is expected that a contact version of the $la$-disk surgery 
can be defined for Legendrian 
surfaces in a contact $5$-manifold $(M,\xi)$, and the same to  the $y$-index  provided that the 
contact distribution $\xi$ is parallelizable. 
We also expect that both the $la$-disk surgery and the $y$-index can be generalized  to  Lagrangian submanifolds immersed in higher dimensional parallelizable symplectic manifolds.


\subsection{Outline of this paper}

This paper is organized as follows: 
In Section \ref{ladisk} we introduce  the notion of a Lagrangian 
attaching disk ($la$-disk) and analyze the types of such disks.  
A standard model for a $la$-disk is established in Section \ref{model} to define the $la$-disk surgery $\eta_D$ in 
Section \ref{surg-la} and to compare $\eta_D$ with the 
generalized Dehn twists $\tau^n_S$ in Section \ref{ddtwist}. 
In Section \ref{prop-surg} we study the effect of $\eta_D$ on 
Maslov class, Liouville class and monotonicity of a Lagrangian 
surface. In Section \ref{LG} we consider an equivariant  factorization of 
oriented Lagrangian Grassmannian $\Lambda^+$. In Section \ref{intCL} 
 the intersection loci between complex and  Lagrangian planes are analyzed and applied to define a coordinate system 
for $\Lambda^+$ in Section \ref{sph} and to study maps into 
$\dP(K')$ in Section \ref{PK'}. In Section \ref{PK'} we define 
{\em crossing $p$-curves} and analyze their deformations. Also 
defined are {\em crossing domains} which will be used to define 
the $y$-index in Section \ref{y}. 
Staring from Section \ref{paraM} 
we will work with parallelizable symplectic 4-manifolds. The 
definitions and basic properties of the $\mu_2$-index and 
the $y$-index are presented in Section \ref{mu2index} and Section \ref{y} 
respectively. Effects of a $la$-surgery on both indexes 
are discussed in Section \ref{la-effect}. 
Examples are computed in Section 5. In Section \ref{R4} indexes of 
tori and Whitney spheres in $\R^4$ are determined. Indexes of 
the zero section of $T^*S^2$ are obtained in Section \ref{T*S}. 
Section \ref{plumb} is devoted to prove 
Theorem \ref{LvS}, and Section \ref{Wn} Theorem \ref{tori}. 

\vspace{.1in} 
\noindent
{\bf Acknowledgments.} \  
Part of the work was done during the author's visit 
at University of California, Berkeley in 2009-2010, which was 
partially founded by NSC grants 98-2918-I-008-003 and  99-2115-M-008-005-MY2. 
The author would like to thank Department of Mathematics 
of UC Berkeley  and in particular  Professor Robion Kirby for their hospitality. 

%
%

%
%

%
%
\section{Surgery of a Lagrangian surface} \label{surg}

Lagrangian surgery, viewed as attaching Lagrangian handlebodies along contact boundary, is an important ingredient in the construction/decomposition of of Stein manifolds 
\cite{E1}. In a different meaning it was defined by Polterovich  in \cite{P2} as a 
Lagrangian de-singularization procedure. In fact, it was already 
hidden in the 
Hamiltonian integrable system literature (see for example 
\cite{Du1,Ba1}) and is responsible for the monodromy of the 
integrable system. Yet its effect on isotopy of Lagrangian 
surfaces (the so called {\em Lagrangian knots}) has not been fully explored, and this is the 
direction we will pursue here. 
We will consider a 2-dimensional Lagrangian surgery 
as a surgery on a Lagrangian surface in the presence of an 
embedded Lagrangian attaching disk.

Recall that a submanifold $\iota:C\hookrightarrow W$ of a 
symplectic manifold $(W,\omega)$ is called {\em isotropic} 
if $\iota^*\omega=0$. One dimensional submanifolds are 
always isotropic. For an isotropic submanifold $C$ we denote 
\[ 
(TC)^\omega:=\{ v\in T_CW\mid v\in T_pW \text{ for some $p\in C$}, \ \omega (v, v')=0 \ \forall v'\in T_pC\}.  
\] 
Then $(TC)^\omega$ is a vector subbundle of rank $\dim W-\dim 
C$ over $T_CW$, and it contains $TC$ as its subbundle. 
The quotient bundle 
\[ 
N^\omega_C:=(TC)^\omega/TC 
\] 
is a symplectic vector bundle over $C$, called the {\em 
symplectic normal bundle} of $C$ in $W$. 

We will also denote the normal bundle of $C\subset D$ by 
$N_{C/D}$ when $C$ is viewed as a submanifold of a manifold 
$D$. 

\subsection{Lagrangian attaching disk} \label{ladisk}

Let $L\subset (W,\omega)$ be a closed Lagrangian surface immersed in a symplectic 
4-manifold $(W,\omega)$. 

\begin{defn} \label{attachD}
{\rm A closed embedded Lagrangian disk $D\subset W$ is called a {\em Lagrangian 
attaching disk} ($la$-disk) of $L$ if 
\begin{enumerate} 
\item $C:=\pa D=D\cap L$, and 
\item $D\pitchfork _CL$, i.e. $D$ is transversal to $L$ along 
$C$ in the sense that the normal bundles $N_{C/D}$ and 
$N_{C/L}$ are transversal along $C$, or equivalently, the intersection 
$L\cap D=C$ is {\em clean}, i.e. $T_CL\cap T_CD=TC$.  
\end{enumerate} 
We call  $C\subset L$ a {\em vanishing cycle} of $L$ if there exists a $la$-disk 
$D$ of $L$ with $\pa D=C$. 
} 
\end{defn} 

The following are two simple propositions regarding a vanishing cycle of $L$ and its neighborhood. 

\begin{prop} \label{v-cycle} 
If $C$ is a vanishing cycle of $L\subset W$ then 
\begin{enumerate} 
\item $C=\pa D\subset L$ is $\omega$-exact, and 
\item the Maslov index of the loop of Lagrangian planes $T_CL$ ($C$ is oriented in 
either way) with respect to $[D]\in H_2(W,L;\Z)$ is 
\[ 
\mu(T_CL, D)=\mu(T_CD, D)=0. 
\] 
\end{enumerate} 
In particular, if $c_1(W)=0$ then $\mu(T_CL)=0$ is well-defined, independent of the 
choice of a la-disk $D$ with $\pa D=C$. 
\end{prop}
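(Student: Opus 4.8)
The plan is to handle the two assertions in turn, using only that $D$ is an embedded Lagrangian disk meeting $L$ cleanly along $C$, and then to read off the concluding remark. Assertion (i) is immediate: since $D$ is Lagrangian, $\iota_D^*\omega = 0$, so $\int_D\omega = 0$, i.e. $C = \partial D$ bounds a disk of vanishing symplectic area, which is exactly the statement that $C$ is $\omega$-exact.

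For (ii) I would first dispose of the second equality $\mu(T_C D, D) = 0$. Here $\mu(\,\cdot\,, D)$ is computed by trivializing $TW$ symplectically over the contractible disk $D$ and measuring the winding in $\Lambda(2)$ of the resulting boundary loop of Lagrangian planes. As $D$ is Lagrangian, its tangent planes $T_qD$, $q\in D$, define a map $D\to\Lambda(2)$ extending the boundary loop $T_C D$; hence $T_C D$ is null-homotopic and $\mu(T_C D, D) = 0$.

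The core is the first equality $\mu(T_C L, D) = \mu(T_C D, D)$. Since both numbers are computed in the single trivialization furnished by $D$, their difference is the framing-independent relative Maslov index of the pair $(T_C L, T_C D)$. Both loops contain the common isotropic line subbundle $TC$ (because $C\subset L$ and $C = \partial D\subset D$), so I would pass to the symplectic reduction along $TC$: over $C\cong S^1$ form $N^\omega_C = (TC)^\omega/TC$ and set $\bar L = T_C L/TC$, $\bar D = T_C D/TC$, a pair of Lagrangian line subbundles. By compatibility of the Maslov index with reduction, the relative index of $(T_C L, T_C D)$ equals that of $(\bar L, \bar D)$. Now the clean-intersection condition $T_C L\cap T_C D = TC$ says exactly that $\bar L$ and $\bar D$ are pointwise transverse in $N^\omega_C$; trivializing $N^\omega_C$ over $S^1$, they become loops $S^1\to\Lambda(1) = \R P^1$ that never coincide, so writing them as angles $\phi(t),\psi(t)\in\R/\pi\Z$, transversality forces the lift of $\phi - \psi$ to avoid $\pi\Z$ and thus to stay in one interval of length $\pi$; hence $\phi$ and $\psi$ have equal winding, the pair has no crossings, and the relative index vanishes. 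This gives $\mu(T_C L, D) = \mu(T_C D, D) = 0$, for either orientation of $C$.

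For the final remark, the computation yields $\mu(T_C L, D) = 0$ for every $la$-disk $D$ bounding $C$; the Maslov index of $T_C L$ depends a priori on the disk-framing, but two framings differ by $2\langle c_1(W), [S]\rangle$ with $S$ the sphere obtained by gluing the two disks along $C$, so when $c_1(W) = 0$ this correction vanishes and $\mu(T_C L)$ is well-defined, equal to $0$. I expect the main obstacle to be making the reduction step precise, i.e. verifying that the relative Maslov index of $(T_C L, T_C D)$ equals that of $(\bar L, \bar D)$ with conventions matched so that the constant common summand $TC$ contributes nothing; granting this, the transversality computation and the remaining parts are routine.
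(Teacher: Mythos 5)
The paper states this proposition without proof (it is offered as one of "two simple propositions"), so there is no argument of the author's to compare against; judged on its own, your proof is correct. Part (i) and the vanishing $\mu(T_CD,D)=0$ are exactly as you say, and the reduction step you flag as the main obstacle does go through: since $TC$ is a trivial line bundle over $C\cong S^1$ and $Sp(4,\R)$ acts transitively on lines, you may choose a symplectic trivialization of $T_CW$ in which $TC$ is the constant line $\R e_1$; then any Lagrangian plane containing $\R e_1$ lies in $(\R e_1)^\omega=\R e_1\oplus\R e_2\oplus\R f_2$ and splits as $\R e_1\oplus\ell$ with $\ell$ a line in the symplectic complement $\R e_2\oplus\R f_2\cong N^\omega_C$, and $\det{}^2$ of the unitary frame $(e_1,e^{i\theta}e_2)$ is $e^{2i\theta}$, so the Maslov winding of $TC\oplus\ell$ is exactly the winding of $\ell$ in $\R P^1$. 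Your transversality argument (the angle difference $\phi-\psi$ stays in an interval of length $\pi$, so the two windings agree) and the $2\langle c_1(W),[D\cup(-D')]\rangle$ comparison of cappings for the final remark are both standard and correct.
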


\begin{prop} \label{annulus} 
The outward normal vector field $\nu$ of  a la-disk of $L$ along $C=\pa D$  induces a trivialization of the 
symplectic normal bundle $N^\omega_C$ over $C$. Since 
$N_{C/L}\subset N^\omega_C$ as a subbundle, this implies 
that a collar neighborhood of $C\subset L$ is an annulus, and in particular, not a M\"{o}bius band. 
\end{prop}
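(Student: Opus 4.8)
The goal is purely topological once the symplectic input is extracted: a collar of the circle $C=\pa D$ inside $L$ is an annulus precisely when the normal line bundle $N_{C/L}$ is orientable (over a circle an orientable line bundle is trivial and its disk bundle is $S^1\times(-1,1)$, while the non-orientable one gives a M\"obius band). So the plan is to reduce everything to showing that $N_{C/L}$ is orientable, and to extract that orientation from the symplectic geometry encoded in $N^\omega_C$ together with the outward normal $\nu$.

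First I would record that both $T_CL$ and $T_CD$ sit inside $(TC)^\omega$. This is immediate from the Lagrangian condition: since $\omega$ vanishes on $T_CL$ and on $T_CD$, and $TC\subset T_CL\cap T_CD$, every vector of $T_CL$ or of $T_CD$ is $\omega$-orthogonal to $TC$. Passing to the quotient $N^\omega_C=(TC)^\omega/TC$, the surfaces $L$ and $D$ therefore contribute two line subbundles, namely $N_{C/L}=T_CL/TC$ and the image of $N_{C/D}=T_CD/TC$, which is spanned by the class $[\nu]$ of the outward normal. Because $\nu\notin TC$ pointwise, $[\nu]$ is a nowhere-zero section, so it trivializes the line subbundle it spans.

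The next step is to use the clean-intersection hypothesis $T_CL\cap T_CD=TC$ to see that these two lines are complementary in the rank-$2$ bundle $N^\omega_C$: if a class in $N_{C/L}$ were proportional to $[\nu]$, a representative would lie in $T_CL\cap T_CD=TC$ and hence vanish in the quotient. This yields the splitting $N^\omega_C=N_{C/L}\oplus\langle[\nu]\rangle$. Now I would invoke the fact that a rank-$2$ symplectic vector bundle carries a canonical orientation, namely the fibrewise area form $\omega$. Combining the orientation of $N^\omega_C$ with the orientation of $\langle[\nu]\rangle$ furnished by $\nu$, the direct-sum decomposition forces an orientation on the complementary factor $N_{C/L}$: at each point one selects the generator $a$ of $N_{C/L}$ for which $\omega(a,[\nu])>0$, and this choice varies continuously, hence globally orients $N_{C/L}$. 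Equivalently, $\omega$ restricts to a nondegenerate pairing $N_{C/L}\otimes\langle[\nu]\rangle\to\R$ identifying $N_{C/L}$ with the dual of the trivial bundle $\langle[\nu]\rangle$, which is again trivial. Either way $N_{C/L}$ is orientable, completing the reduction; the asserted trivialization of $N^\omega_C$ itself is then obtained by completing $\nu$ to a symplectic frame $(\nu,e)$ with $\omega([\nu],e)=1$, the choices of $e$ forming a contractible affine family of sections.

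The main point to be careful about is exactly the one hidden in the word ``induces'': being a line subbundle of a trivialized rank-$2$ bundle is by itself \emph{not} enough to force triviality, since a trivial symplectic plane bundle over $S^1$ does admit M\"obius line subbundles. The real content is therefore the combination of two facts, each of which I would state explicitly: (i) the clean-intersection condition produces the \emph{transverse} splitting $N^\omega_C=N_{C/L}\oplus\langle[\nu]\rangle$ rather than a mere inclusion, and (ii) the symplectic structure orients the ambient $N^\omega_C$, so that trivializing one summand automatically orients the other. I expect no further obstacle: once the splitting and the two compatible orientations are in place, the passage from ``$N_{C/L}$ orientable'' to ``collar is an annulus'' is the standard classification of line bundles over a circle.
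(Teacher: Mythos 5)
Your proof is correct. The paper offers no proof of Proposition \ref{annulus} at all --- it is introduced as one of ``two simple propositions'' and the only argument on record is the one-line reasoning embedded in the statement itself ($\nu$ trivializes $N^\omega_C$; $N_{C/L}$ is a subbundle; hence the collar is an annulus). Your writeup is a faithful expansion of that reasoning, and it does the one thing that genuinely needs doing: you observe that ``line subbundle of a trivial(ized) plane bundle over $S^1$'' does \emph{not} by itself imply triviality (the trivial plane bundle admits M\"obius line subbundles), and you supply the missing ingredients --- the clean-intersection hypothesis $T_CL\cap T_CD=TC$, which upgrades the inclusion to a direct-sum splitting $N^\omega_C=N_{C/L}\oplus\langle[\nu]\rangle$, and the canonical orientation of the rank-$2$ symplectic bundle $N^\omega_C$, which then orients the complementary summand $N_{C/L}$ via the nondegenerate pairing $\omega(\cdot,[\nu])$. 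All the intermediate claims check out ($T_CL,T_CD\subset(TC)^\omega$ by the Lagrangian condition; $\omega(a,[\nu])\neq 0$ for $0\neq a\in N_{C/L}$ by nondegeneracy of $\omega$ on $N^\omega_C$ together with isotropy of each line; completion of $[\nu]$ to a symplectic frame over a $1$-complex). So your argument is, if anything, more careful than what the paper implicitly relies on.
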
 

\noindent
{\bf Type of Lagrangian attaching disk.} \ 
According to the free homotopy type of its boundary cycle $C$, 
a $la$-disk $D$  of a Lagrangian surface $L$ is called 

\begin{itemize} 
\item {\em parabolic} if $0\neq [C]\in H_1(L,\Z)$, 
\item {\em elliptic} if $0=[C]\in\pi_1(L)$ (hence $C$ bounds a disk in $L$), 
\item {\em hyperbolic} if $[C]=0\in H_1(L,\Z)$ but $0\neq [C]\in \pi_1(L)$. 
\end{itemize}

\noindent
{\bf Polarity of a $la$-disk.} \ 
The outward normal vector field $\nu$ of $D$ along $\pa D=C$ induces an orientation 
of the $\R$-bundle $N^\omega_C/N_{C/L}=(TC)^\omega/T_CL$ over $C$, thus we have  a dichotomy for each of the three types of the $la$-disks according to the orientation of 
$(TC)^\omega/T_CL$ induced by $\nu$. 

\begin{defn} \label{polarity} 
{\rm 
We say two la-disks $D,D'$ of $L$ with $\pa D=C=\pa D'$ have 
different {\em polarity} if their corresponding outward normals give different orientations 
of the bundle $(TC)^\omega/T_CL$ over $C$. 
} 
\end{defn}

Recall that $\omega=d\lambda$ is exact when restricted to a 
neighborhood of a Lagrangian surface $L\overset{\iota}{\hookrightarrow} W$. The pullback 1-from $\iota^*\lambda$ 
is closed in $L$ and its cohomology class in $H^1(L,\Z)$ is 
 called the {\em Liouville class} of $L$.

 Assume that $H^1(W,\Z)$ and the first Chern class $c_1(W)=0$ vanishes for the moment. 
Then the Liouville class $\lambda_L\in H^1(L,\R)$ is 
independent of the choice of a local primitive $\lambda$ of 
$\omega$ near $L$, and the Maslov class $\mu_L$ of $L$ 
is a cohomology class in $H^1(L,\Z)$. We say $L$ is 
{\em monotone} if there exists a number $c>0$ such that 
$\lambda_L=c\cdot\mu_L$. 

In this case, if $L$ is a torus 
and $D$ is a parabolic 
$la$-disk of $L$, the polarity of $D$ can be described as follows:  
Parametrize $L$ as $\R^2_{x_1,x_2}/\Z^2$ so that the Liouville form 
is $adx_1\in \Omega^1(L)$ for some $a>0$, 
and $C$ is  identified with $\{ x_1=0\}$. 
Let $(y_1,y_2)$ be the fiber coordinates of $T^*L$ dual to $(x_1,x_2)$. 
The canonical symplectic 
form of $T^*L$ is then $\sum_{j=1}^2dx_j\wedge dy_j$. 
Let $\nu$ denote the outward normal vector field of $D$ along $C=\pa D$. 

\begin{defn} \label{stable} 
{\rm Let $D$ be a parabolic $la$-disk of a Lagrangian torus $L$  as in above. 
We say that $D$ is 
\begin{itemize} 
\item {\em stable} if the $\pa_{y_1}$-component of $\nu$ is positive; 
\item {\em unstable} if the $\pa_{y_1}$-component of $\nu$ is negative. 
\end{itemize} 
}
\end{defn} 
The terminologies come from the observation that, with Lagrangian neighborhood theorem, 
we can take a local 
primitive 1-form of $\omega$ (defined near $L$) to be $(a-y_1)dx_1$. Then we get a 
family of monotone Lagrangian tori $L_t$ defined by $y_1=t$ and $y_2=0$. The Liouville 
class of $L_t$ increases (resp. decreases) in the direction of $-\pa_{y_1}$ 
(resp. $\pa_{y_1}$), 
so an unstable $D$ indicates that the Liouville class grows if we let $L_t$ vary along 
the direction of $\nu$, whilst a stable $D$ suggests the opposite.

Also defined is the notion of  {\em relative polarity} associated 
to a Lagrangian disk surgery. See Section \ref{surg-la} for detail.


\begin{exam} \label{cheka} 
{\rm 
Let $L\subset \R^4$ be the Chekanov torus defined as the orbit of the plane curve 
$\gamma=\{ (x_1-2)^2+y_1^2=1\}\subset \R^2_{x_1,y_1}\times \{ 0\}\subset\R^4$ 
under the $S^1$ group action induced by the Hamiltonian vector field 
\begin{equation} \label{X_G}
X_G:=x_1\pa_{x_2}-x_2\pa_{x_1}+y_1\pa_{y_2}-y_2\pa_{y_1}
\end{equation} 
defined by  
\[ 
\iota(X_G)\omega=-dG, 
\] 
where $\omega=\sum_{j=1}^2dx_j\wedge dy_j$ is the standard 
symplectic form on $\R^4$, and 
\[ 
G:=x_2y_1-x_1y_2 :\R^4\to \R 
\] 
is the corresponding Hamiltonian function. 
The Lagrangian disk $D:=\{ x_1^2+x_2^2\leq 1, \ y=0\}$ is a {\em stable} 
parabolic $la$-disk of $L$.   
}
\end{exam} 

\begin{exam} \label{cliff}
{\rm 
Let $L'\subset \R^4$ be the monotone Clifford torus defined as the orbit of the plane curve 
$\gamma:=\{ x_1^2+y_1^2=1\}\subset \R^2_{x_1,y_1}\times \{ 0\}\subset\R^4$ 
under the $S^1$ group action induced by the Hamiltonian vector field $X_G$ as defined in (\ref{X_G}).  
The Lagrangian disks $D:=\{ x_1^2+x_2^2\leq 1, \ y=0\}$ and 
$D':=\{ y_1^2+y_2^2\leq 1, \ x=0\}$ are both {\em unstable} parabolic 
$la$-disks of $L'$. 
}
\end{exam} 

\begin{exam} \label{geode}
{\rm 
Let $L''\subset T^*S^2\cong TS^2$ be the union of graphs of geodesics on $S^2$ passing 
through the north pole $q_+$ and south pole $q_-$ and with unit speed. $L''$ is an embedded monotone Lagrangian torus, $0=[L]\in H_2(T^*S^2,\Z)$. The normal 
disks $D_\pm:=\{ (q_\pm,p)\mid p\in T^*_{q_{\pm}}S^2, \ |p|\leq 1\}$ are disjoint 
unstable parabolic $la$-disks of $L''$. 
}
\end{exam} 

In examples above the tori are all nullhomologous and the 
$la$-disks are parabolic. 
Proposition \ref{[L]} below shows that indeed nullhomologous Lagrangian surfaces allow 
only parabolic $la$-disks. 
%
%
\subsection{Standard model and topological implications} \label{model}

We start with a standard model for $L$ near its $la$-disk 
$D$.

 Let $(x_1,y_1,x_2,y_2)$ be coordinates of $\R^4$ 
so that $x_1+iy_1, x_2+iy_2$ are the corresponding complex 
coordinates of $\C^2\cong \R^4$. 
Consider the $S^1$-group $\cG\subset SU(2)$ 
whose matrix representation with respect to the complex basis 
$\{ \pa_{x_1},\pa_{x_2}\}$ is 
\begin{equation} \label{Ggroup} 
\cG=\Big\{ g_\theta:=\begin{pmatrix} \cos\theta & -\sin\theta\\ 
\sin\theta & \cos\theta \end{pmatrix}\mid \theta\in\R/2\pi\Z \Big\}. 
\end{equation} 
Note that $g_\theta$ is the time $\theta$ map $X_G^\theta$ 
of the flow generated by the Hamiltonian vector field 
$X_G$ defined in (\ref{X_G}). One can check easily the following 
fact.  

\begin{fact} \label{revolutionL} 
The 
$\cG$-orbit of any curve immersed in $\R^2_{x_1y_1}$ is 
an immersed Lagrangian surface in $\R^4$. 
\end{fact}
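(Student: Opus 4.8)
The plan is to parametrize the $\cG$-orbit of an immersed curve $\gamma(s)=(x_1(s),y_1(s),0,0)$ in $\R^2_{x_1y_1}$ by the map $\Phi(s,\theta):=g_\theta(\gamma(s))$, and to verify two things: that $\Phi$ is an immersion, and that $\Phi^*\omega=0$. Since a $2$-dimensional isotropic submanifold of the $4$-dimensional $(\R^4,\omega)$ is automatically Lagrangian (isotropic of half dimension), these two facts together give the claim.

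The two coordinate tangent vectors are immediate. Because $g_\theta$ is the time-$\theta$ map of the flow of $X_G$ (as noted just above the statement), one has $\pa_\theta\Phi=X_G(\Phi)$, while $\pa_s\Phi=d(g_\theta)_{\gamma(s)}(\gamma'(s))$. To collapse the verification onto a single point of the curve I would exploit two structural facts: first, $g_\theta$ is a symplectomorphism, being the time-$\theta$ flow of a Hamiltonian vector field; and second, $X_G$ is invariant under its own flow, so that $X_G(g_\theta(p))=d(g_\theta)_p(X_G(p))$. Combining the two,
\[
\Phi^*\omega(\pa_s,\pa_\theta)=\omega\big(d g_\theta(\gamma'(s)),\,d g_\theta(X_G(\gamma(s)))\big)=\omega\big(\gamma'(s),\,X_G(\gamma(s))\big),
\]
so the whole question reduces to evaluating $\omega$ on these two vectors at the base point $\gamma(s)$ itself.

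At such a point $x_2=y_2=0$, so $\gamma'(s)$ has only $\pa_{x_1},\pa_{y_1}$-components, whereas $X_G(\gamma(s))=x_1\pa_{x_2}+y_1\pa_{y_2}$ has only $\pa_{x_2},\pa_{y_2}$-components. Pairing these through $\omega=dx_1\wedge dy_1+dx_2\wedge dy_2$, every term vanishes: the first summand annihilates $X_G(\gamma(s))$ and the second annihilates $\gamma'(s)$, giving $\Phi^*\omega\equiv 0$. The same complementarity, namely $\gamma'(s)$ lying in the $x_1y_1$-plane and $X_G(\gamma(s))$ in the transverse $x_2y_2$-plane, also settles the immersion claim: the two are linearly independent exactly when $\gamma'(s)\neq 0$ (which holds since $\gamma$ is immersed) and $(x_1(s),y_1(s))\neq(0,0)$.

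The only point requiring care is this last condition. The $\cG$-action fixes the origin, which is its unique fixed point, so the orbit map degenerates precisely where the curve meets the origin. For curves avoiding the origin, such as those producing the Chekanov and Clifford tori in the Examples, the orbit is a genuine immersed Lagrangian surface; in general one restricts to the complement of the preimage of the origin. There is no serious obstacle here: once the reduction via the $\cG$-invariance of $X_G$ and the symplectomorphism property of $g_\theta$ is in place, the isotropy is a one-line computation, which is why the statement is advertised as easily checked.
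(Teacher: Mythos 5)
Your proof is correct. The paper offers no argument for this Fact (it is stated with ``one can check easily''), and your verification is the natural one: reduce $\Phi^*\omega(\pa_s,\pa_\theta)$ to the base point using that $g_\theta$ is symplectic and that $X_G$ is invariant under its own flow, then observe that $\gamma'(s)$ lies in the $x_1y_1$-plane while $X_G(\gamma(s))=x_1\pa_{x_2}+y_1\pa_{y_2}$ lies in the $x_2y_2$-plane, so $\omega$ pairs them to zero and they are independent wherever $\gamma(s)\neq(0,0)$. Your caveat about the origin is a genuine (if minor) sharpening of the statement as literally written: the parametrization $\Phi$ degenerates on $\gamma^{-1}(0)$, and this is consistent with the paper's own usage, where the model curves stay in $\{x_1>0\}$ and the one curve through the origin (the Whitney sphere) is explicitly treated as producing an immersed surface with a nodal point rather than via this Fact alone.
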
 

\begin{notn} 
{\rm 
For a group $\cG$ and a set $\gamma$ we denote by 
$Orb_\cG(\gamma)$ the $\cG$-orbit of $\gamma$. 
} 
\end{notn} 

Let $D$ be a $la$-disk of $L$. 
We can pick an open neighborhood $U\subset W$ of $D$ 
which  can be symplectically identified with an open domain $V\subset T^*\R^2$ so that 
under this identification $U$ contains the closed  ball  
$B_r$ of radius $r$ with center $0\in \R^4$, such that 
\begin{itemize} 
\item $D=\{ x_1^2+x_2^2\leq (\sqrt{2}-1)^2r^2, y_1=0=y_2\}$. 
\item $Q:=L\cap B_r=Orb_\cG(\gamma)$, where  $\gamma:(\frac{3\pi}{4},\frac{5\pi}{4})\to \R^2_{x_1y_1}$  is  the 
curve defined by 
\begin{equation} \label{gamma}
\gamma (s):=(x_1=\sqrt{2}r+r\cos s, \ y_1=r\sin s). 
\end{equation} 
\end{itemize} 
Note that $D,Q, C:=L\cap D$ are all invariant under the Hamiltonian $\cG$-action. Without loss of generality we may 
also assume that $U$ and $L\cap U$ are also $\cG$-invariant. Objects with such symmetry can be  
viewed as $X_G$-orbits of their sections in 
(the right half-space of ) $\R^2_{x_1,y_1}$. 

For example, $D$ is the $\cG$-orbit of the line segment 
\[ \ell:=\{ (x_1,0)\in\R^2_{x_1y_1}\mid 0\leq x_1\leq (\sqrt{2}-1)r\}, 
\] 
and 
the complement $(L\cap B_r)\setminus C$ consists of two annuli 
\[ 
Q':=Orb_\cG(\gamma([\frac{3\pi}{4},\pi))), \quad 
Q'':=Orb_\cG(\gamma((\pi,\frac{5\pi}{4}])). 
\] 

Below we construct in $U$  a pair of 
$\cG$-invariant Lagrangian disks  
to be used in Proposition \ref{[L]}. 

First observe that $Q'\cup D$ is a piecewise smooth Lagrangian disk which is the $\cG$-orbit of the broken 
curve $\ell\cup \gamma([\frac{3\pi}{4},\pi)))$. We smooth out the broken curve at the corner to get a new smooth curve 
$\sigma'$. Then $D':=Orb_\cG(\sigma')$ is a smooth Lagrangian 
disk tangent to $L$ near its boundary. Similarly, $Q''\cup D$ is the $\cG$-orbit of the broken 
curve $\ell\cup\gamma((\pi,\frac{5\pi}{4}])$. Let $\sigma''$ be 
the smooth curve obtained by smoothing out the corner. Then 
$D'':=Orb_\cG(\sigma'')$ is another smooth Lagrangian disc 
tangent to $L$ near its boundary. 
We may perturb $D',D''$ 
in an $\cG$-invariant way (by perturbing $\sigma',\sigma''$) 
so that both $D',D''$ are contained in $B_r\subset U$, $D'\pitchfork D''$, and $D'\cap D''$ consists of a 
single point: the origin of $\R^4$ in our local model. 
 Let $C':=\pa D'$ and $C'':=\pa D''$.


\begin{prop} \label{[L]} 
Let $L$ be a closed oriented Lagrangian surface in a symplectic 
manifold $(W,\omega)$. Suppose that $L$ admits a non-parabolic  $la$-disk,  
then $[L]\in H_2(W,\Z)$ is nontrivial and of infinite order. In other words, if $[L]\in 
H_2(W,\Z)$ is a torsion then $L$ has only parabolic $la$-disks. 
\end{prop}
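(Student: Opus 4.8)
The plan is to exploit the auxiliary Lagrangian disks $D',D''$ constructed above --- whose only intersection is a single transverse point and which are tangent to $L$ near their boundaries --- together with the hypothesis that a non-parabolic $la$-disk has a separating boundary cycle. First I would record that in both the elliptic and the hyperbolic cases one has $0=[C]\in H_1(L,\Z)$ (elliptic gives $0=[C]\in\pi_1(L)$, hyperbolic gives $[C]=0\in H_1$ directly), and that an embedded circle on a closed oriented surface is null-homologous precisely when it separates. Pushing $C$ off itself inside the annulus $L\cap B_r=Q$ I obtain the two parallel copies $C'=\pa D'$ and $C''=\pa D''$, together with a decomposition $L=P_1\cup A\cup P_2$ into two outer pieces $P_1,P_2$ and a middle annulus $A=Q'\cup C\cup Q''$ lying inside $B_r$.

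The key construction is to cap the outer pieces by the two disks. Since $D'$ is tangent to $L$ along $C'$ and coincides there with the collar $Q'$, the union $\Sigma_1:=P_1\cup D'$ is a smoothly immersed closed surface; as $P_1\subset L$ and $D'$ are both Lagrangian and their tangent planes agree along $C'$, the surface $\Sigma_1$ is in fact a closed immersed \emph{Lagrangian} surface, and likewise $\Sigma_2:=P_2\cup D''$. I would then establish two homological facts about this pair. First, $[L]=[\Sigma_1]+[\Sigma_2]$ in $H_2(W,\Z)$: summing the two capped cycles returns $[L]$ up to the class of the sphere $S$ obtained by capping the annulus $A$ with $D'$ and $D''$, and since $D',D'',A$ all lie in the contractible neighborhood $U\supset B_r$ we have $[S]=0$. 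Second, $[\Sigma_1]\cdot[\Sigma_2]=\pm 1$: after arranging $P_1\cap P_2=\emptyset$ and $P_1\cap D''=\emptyset=P_2\cap D'$ in the standard model, the only intersection of $\Sigma_1$ with $\Sigma_2$ is the single transverse point $D'\cap D''$.

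The proposition then follows by a parity argument. Because $\Sigma_i$ is a closed immersed Lagrangian surface, its normal bundle is isomorphic to $T^*\Sigma_i$, so its homological self-intersection is
\[
[\Sigma_i]\cdot[\Sigma_i]=e(\nu_{\Sigma_i})+2d_i=-\chi(\Sigma_i)+2d_i,
\]
where $d_i$ is the algebraic count of double points; as $\chi(\Sigma_i)$ is even, this is even. Combining with $[L]=[\Sigma_1]+[\Sigma_2]$ gives
\[
[L]\cdot[\Sigma_1]=[\Sigma_1]\cdot[\Sigma_1]+[\Sigma_1]\cdot[\Sigma_2]\equiv 0+(\pm 1)\equiv 1 \pmod 2,
\]
so $[L]\cdot[\Sigma_1]$ is odd, hence nonzero. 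Since the intersection pairing annihilates torsion, $[L]$ cannot be a torsion class, so $[L]$ is nonzero and of infinite order. I expect the main obstacle to be the geometric bookkeeping in the standard model: one must guarantee that the capped surfaces meet in exactly one transverse point with no stray intersections (so that $[\Sigma_1]\cdot[\Sigma_2]=\pm 1$ on the nose) and that the correction sphere $S$ is confined to the contractible neighborhood $U$ (so that $[L]=[\Sigma_1]+[\Sigma_2]$). Once these are pinned down, the Lagrangian self-intersection parity closes the argument cleanly, and the contrapositive --- torsion $[L]$ admits only parabolic $la$-disks --- is immediate.
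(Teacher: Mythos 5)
Your construction is the same as the paper's: you build the two auxiliary Lagrangian caps $D',D''$ meeting in one transverse point, split $L$ along the separating cycle $C$ into two pieces plus a middle annulus, cap the pieces to get closed Lagrangian surfaces $\Sigma_1,\Sigma_2$, and establish exactly the two homological facts the paper uses, namely $[L]=[\Sigma_1]+[\Sigma_2]$ and $[\Sigma_1]\cdot[\Sigma_2]=\pm1$. Where you genuinely diverge is the endgame. The paper evaluates $[L]^2=[\Sigma_1]^2+2[\Sigma_1]\cdot[\Sigma_2]+[\Sigma_2]^2=(2g'-2)+2+(2g''-2)=2g-2$, which settles every genus except $g=1$, and then has to run a separate linear-independence argument for the torus case using the intersection matrix of $[\Sigma_1],[\Sigma_2]$. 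You instead compute $[L]\cdot[\Sigma_1]=[\Sigma_1]^2+[\Sigma_1]\cdot[\Sigma_2]\equiv 1\pmod 2$, using only the parity of the Lagrangian self-intersection $-\chi+2d$, and conclude directly that $[L]$ pairs nontrivially with an integral class and hence cannot be torsion. This is cleaner: it treats all genera uniformly, it does not require the caps to produce embedded surfaces (the $2d$ correction absorbs double points), and it isolates the one fact that really matters, that a closed oriented Lagrangian has even self-intersection while the cross term is odd. The bookkeeping you flag as the main obstacle (no stray intersections between $P_1$ and $D''$, etc., and confinement of the correction sphere to the contractible chart $U$) is exactly what the paper's standard model of Section 2.2 provides, so your argument closes.
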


\begin{proof} 
Let $D$ be a non-parabolic $la$-disk of $L$ and let $C:=\pa D\subset L$. Then $L\setminus C$ consists of two connected 
components. 
Take $D' ,D''$ as constructed above. Let $Q\subset L$ denote the annulus containing $C$ 
with $\pa Q=C'\cup C''$. 
Let $L',L''$ be the two connected components of $L\setminus Q$ with genus 
$g',g''$ respectively, so that $\pa L'=C'$ and $\pa L''=C''$. 
Both $L'$ and $L''$ are equipped with 
the induced orientation coming from that of $L$. 
Now let $\tilde{L}':=L'\cup D'$ and $\tilde{L}'':=L''\cup D''$.  The two Lagrangian 
surfaces $\tilde{L}',\tilde{L}''$  intersect transversally and in a single point, with  intersection number $1$ (with orientations 
induced from $L$). Thus  $\tilde{L}'$ and $\tilde{L}''$ represent nontrivial elements of $H_2(W,\Z)$ 
of infinite order. Let $g:=g'+g''$ denote the genus of $L$. 
Since $[L]=[\tilde{L}']+[\tilde{L}'']\in H_2(W,\Z)$ we have 
$[L]^2=([\tilde{L}']+[\tilde{L}''])^2=(2g'-2)+2+(2g''-2)=2g-2$. 
If $g\neq 1$ then $[L]$ is not a torsion class. If $g=1$, then 
up to a change of notation we may assume that $g'=0$ and 
$g''=1$. Since $[\tilde{L}']^2=2g'-2=-2\neq 0=2g''-2=[\tilde{L}'']^2$ and both $[\tilde{L}']$ and $[\tilde{L}'']$ are of infinite 
order, $[\tilde{L}']$ and $[\tilde{L}'']$ are 
linearly independent over $\Z$, hence $[L]$ is of 
infinite order. This completes the proof. 
\end{proof} 

So it is impossible to find a non-parabolic $la$-disk for a monotone Lagrangian torus $L$ in $\R^4$, as $L$ is nullhomologous. Note that an orientable nullhomologous Lagrangian surface 
must be a torus. It turns out that there is a uniform upper bound to the maximal number of 
disjoint parabolic $la$-disks of any nullhomologous Lagrangian torus $L\subset W$, provided 
that $W$ has bounded topology. 

\begin{prop} \label{nulltorus} 
Let $L\subset W$ be a Lagrangian torus with $[L]=0\in H_2(W,\Z)$. Suppose that $L$ 
possesses $n+1$ pairwise disjoint (parabolic) $la$-disks, $n\in\N$. We denote these disks as 
$D_0,D_1,D_2,...,D_n$ in a cyclic order (so $D_n=D_{-1}$ and $D_{n+1}=D_0$, etc.) and their corresponding boundaries as 
$C_0,C_1,C_2,...,C_n$. Let $B_i\subset L$, $1\leq i\leq n$, denote the annulus bounded by 
$C_{i-1}$ and $C_i$. 
Then there are $n$ embedded Lagrangian spheres 
$S_1,S_2,...,S_n$ in $W$ such that under suitable orientations 
\begin{enumerate} 
\item $[S_i]=[D_{i-1}\cup B_i\cup D_i]\in H_2(W,\Z)$ for $1\leq i\leq n$; 
\item $S_i\pitchfork S_j$ for $1\leq i<j\leq n$; and 
\item 
\[ S_i\cap S_j= \begin{cases} 
\emptyset & \text{ if $|i-j|\neq 1$  (mod n),} \\ 
\{ pt\} & \text{ if $|i-j|= 1$}. 
\end{cases} 
\] 
\end{enumerate} 
In particular, the second Betti number of $W$ is $b_2(W)\geq n$. 
\end{prop}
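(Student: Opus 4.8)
The plan is to replace each prospective class $[D_{i-1}\cup B_i\cup D_i]$ by a genuine embedded Lagrangian sphere built from the $\cG$-equivariant smoothing of Section~\ref{model}, and then to read off $b_2(W)\ge n$ from the fact that the intersection form of these spheres is, up to signs, an $A_n$ Cartan matrix. I would begin with the torus combinatorics: the curves $C_0,\dots,C_n$ are pairwise disjoint and parabolic, hence isotopic essential simple closed curves on $L\cong T^2$; they are therefore mutually parallel and cut $L$ into $n+1$ cyclically arranged annuli, with $B_i$ the annulus bounded by $C_{i-1}$ and $C_i$.

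Next, for each $1\le i\le n$ I would construct $S_i$ by capping the annulus $B_i\subset L$ with the two disks $D_{i-1},D_i$ and smoothing the two resulting corners. Near each $C_j$ the standard model presents $L$ as $Orb_\cG(\gamma)$ and $D_j$ as $Orb_\cG(\ell)$; grafting $\ell$ onto the $B_i$-branch of $\gamma$ and smoothing exactly as in the construction of $\sigma',\sigma''$ (hence of $D',D''$) yields a smooth $\cG$-invariant Lagrangian disk tangent to $L$ near its boundary. Performing both smoothings on the $B_i$-side produces a smooth \emph{embedded} Lagrangian sphere $S_i\cong S^2$ (Lagrangianity from Fact~\ref{revolutionL}); since smoothing a corner does not change the homology class, $[S_i]=[D_{i-1}\cup B_i\cup D_i]$, which is claim (i).

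For the intersection pattern, the only possible overlaps come from a shared capping disk, so only adjacent indices can meet. At $C_i$ the spheres $S_i$ and $S_{i+1}$ both cap the circle but from the two opposite sides of $L$, so near $C_i$ they are precisely the pair $D',D''$ of the standard model, which were arranged to meet transversally in the single point at the origin; hence $S_i\pitchfork S_{i+1}$ in one point. For $|i-j|\ge 2$ the relevant disks are distinct and the annuli $B_i,B_j\subset L$ are disjoint, so $S_i\cap S_j=\emptyset$. This gives claims (ii) and (iii).

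Finally, each $S_i$ is an embedded Lagrangian sphere, so its normal bundle is $T^*S^2$ and $[S_i]^2=-\chi(S^2)=-2$, while adjacent spheres meet in one transverse point, giving $[S_i]\cdot[S_{i+1}]=\pm1$ and all other off-diagonal products zero. The resulting $n\times n$ intersection matrix $M$ is symmetric and tridiagonal with $-2$ on the diagonal and $\pm1$ immediately off it; being tridiagonal and symmetric its determinant depends only on the products $M_{i,i+1}M_{i+1,i}=1$, so it equals that of the negative $A_n$ Cartan matrix, namely $(-1)^n(n+1)\neq 0$. Hence $[S_1],\dots,[S_n]$ are linearly independent in $H_2(W;\Q)$ and $b_2(W)\ge n$. (The hypothesis $[L]=0$ enters through Proposition~\ref{[L]}, which forces all the $la$-disks to be parabolic, and is reflected in the relation $\sum_i[S_i]=[L]=0$ over the full cyclic family of $n+1$ spheres, which is why only $n$ of them are retained.) The genuinely delicate step is the construction itself: arranging the local smoothings to yield embedded spheres with exactly the prescribed single transverse intersections while keeping non-adjacent spheres disjoint. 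Once the spheres are in place, the torus combinatorics, the self-intersection $-2$, and the Cartan determinant are all routine.
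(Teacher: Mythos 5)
Your proof follows the paper's argument essentially verbatim: the same capping of each annulus $B_i$ by the $\cG$-equivariantly smoothed disks $D'_{i-1},D''_i$ of Section \ref{model}, and the same reading of the intersection pattern off the single transverse intersection point of the pair $D'_i,D''_i$ near $D_i$. The only difference is that you make explicit the final linear-independence step (via $[S_i]^2=-\chi(S^2)=-2$ and the nonvanishing determinant $(-1)^n(n+1)$ of the resulting tridiagonal $A_n$-type matrix), which the paper states without elaboration; your elaboration is correct.
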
 

\begin{proof} 
For $0\leq i\leq n$ pick an open neighborhood $U_i$ of $D_i$ 
such that closures of $U_i$ are pairwise disjoint and for each $i$, $L\cap U_i$ 
is an open annulus. In each $U_i$ we construct a pair embedded Lagrangian disks 
$D'_i,D''_i$ as before, so that interiors of $D'_i,D''_i$ are disjoint from $L$, both 
disks are tangent to $L$ along their corresponding boundaries 
$C'_i,C''_i$, and 
$D'_i,D''_i$ intersect transversally and in a single point. 
By interchanging the notations $D'_i, D''_i$ if necessary, we may 
assume that 
\[ 
C'_i\subset B_{i+1},\quad C''_i\subset B_i. 
\] 
Let $Q_i$ denote the 
annulus containing $C_i$ and with boundary $\pa Q_i=C'_i\cup C''_i$. 
Let $\tilde{B}_i:=B_i\setminus(Q_{i-1}\cup Q_i)$ and 
\[ 
S_i:=D'_{i-1}\cup \tilde{B}_i\cup D''_i, \quad 0\leq i\leq n. 
\] 
Note that outward normals of $D'_{i-1}$ and $D''_i$ point into 
the interior of $\tilde{B}_i$. 
Then each of $S_i$ is an embedded Lagrangian sphere. With suitable orientations we have  
$[S_i]=[D_{i-1}\cup B_i\cup D_i]\in H_2(W,\Z)$ and $0=[L]=\sum_{i=0}^n[S_i]\in H_2(W,\Z)$, so is verified (i). 
One sees that (ii) and (iii) also follow easily from the construction of $S_i$. The intersection 
pattern among $S_i$ implies that $[S_1],...,[S_n]$ are linearly independent over $\Z$ as 
elements in $H_2(W,\Z)$. This completes the proof. 
\end{proof} 

\begin{rem}
{\rm if $n=0$ then $(L\setminus Q_0)\cup D'_0\cup D''_0$ is a Lagrangian sphere with one nodal point, i.e., a Lagrangian Whitney $2$-sphere. 
} 
\end{rem}

\begin{cor} \label{atmost1} 
Let $L\subset \R^4$ be an embedded Lagrangian torus in the standard symplectic 4-space. 
Then any two la-disks of $L$ must intersect, i.e., the maximal number of disjoint la-disks of 
$L$ is $\leq 1$. 
\end{cor}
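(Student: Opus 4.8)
The plan is to derive this immediately from the two preceding propositions, exploiting the fact that $\R^4$ is homologically trivial. First I would observe that since $H_2(\R^4,\Z)=0$, any embedded Lagrangian torus $L\subset\R^4$ satisfies $[L]=0$, so $L$ is nullhomologous; in particular $[L]$ is a torsion class, and Proposition \ref{[L]} then tells us that \emph{every} $la$-disk of $L$ is parabolic. This disposes of the one hypothesis of Proposition \ref{nulltorus} that was not automatic, namely that the disjoint $la$-disks be parabolic.

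Next I would argue by contradiction. Suppose $L$ admits two disjoint $la$-disks $D_0,D_1$. By the previous step both are parabolic, so the hypotheses of Proposition \ref{nulltorus} are satisfied with $n+1=2$, i.e.\ $n=1$. The proposition then produces an embedded Lagrangian sphere $S_1\subset\R^4$ together with the numerical bound $b_2(W)\geq n=1$. But $W=\R^4$ has $b_2(\R^4)=0$, a contradiction. Hence $L$ cannot possess two disjoint $la$-disks, which is precisely the assertion that any two $la$-disks of $L$ must intersect, so the maximal number of pairwise disjoint $la$-disks is $\leq 1$.

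I expect the only point deserving care to be the bookkeeping that disjointness of the two $la$-disks is exactly what feeds the case $n=1$ of Proposition \ref{nulltorus}, and that the resulting bound $b_2(W)\geq 1$ already contradicts the topology of $\R^4$; there is no genuine technical obstacle here, as all the work has been done in Propositions \ref{[L]} and \ref{nulltorus}. As a remark I would note that one could alternatively bypass the Betti-number count and instead invoke the nonexistence of closed exact Lagrangian submanifolds of $(\R^4,\omega)$: the Lagrangian sphere $S_1$ would be exact, since $H^1(S^2,\R)=0$ forces its Liouville class to vanish, which is impossible. The Betti-number route is cleaner to write, however, being entirely internal to the machinery of this section and requiring no appeal to symplectic rigidity.
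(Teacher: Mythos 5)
Your argument is correct and is exactly the intended derivation: the paper states this as an immediate corollary of Proposition \ref{nulltorus} (with the parabolicity of all $la$-disks supplied by Proposition \ref{[L]} and the nullhomology of $L$ in $\R^4$), applied with $n=1$ against $b_2(\R^4)=0$. No essential difference from the paper's (implicit) proof.
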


The following two questions appear to be open (at least to the author). 

\begin{ques} 
It is true that every embedded monotone Lagrangian torus in $\R^4$ has a $la$-disk? 
\end{ques}

\begin{ques} 
Is it true that any two $la$-disks of a given monotone Lagrangian torus in $\R^4$ have  the same polarity?
\end{ques}

%
%
\subsection{Surgery via a $la$-disk} \label{surg-la}

Below we define a surgery on $L$ via a $la$-disk $D$ 
of $L$. Note that the union $L\cup D$ cannot be contained in any 
cotangent neighborhood of $L$, and neither is the new Lagrangian surface which we will construct below, hence the 
surgery is "not local" from the point of view of $L$. On the 
other hand, the surgery takes place in a symplectic chart (the cotangent neighborhood of $D$) and hence can  be described 
explicitly in local coordinates. \\ 

%
%

Let $D$ be a $la$-disk of $L$. Recall the standard model 
from Section \ref{ladisk}.

Let $M\in SO(4)$ be the anti-symplectic linear map whose matrix 
representation with respect to the basis $\{ \pa_{x_1},\pa_{x_2},
\pa_{y_1},\pa_{y_2}\}$ is 
\begin{equation} \label{M} 
M=\begin{pmatrix} O & I \\ I & O \end{pmatrix} 
\end{equation}
where $I,O\in SO(2,\R)$, $I$ is the identity matrix, and 
$O$ is the zero matrix. Note that $M$ commutes with $\cG$. 
Let 
\begin{equation} \label{gamma'}
\gamma':=M(\gamma). 
\end{equation}

\begin{figure}[th] 
 \begin{center} 
 \includegraphics[scale=0.6]{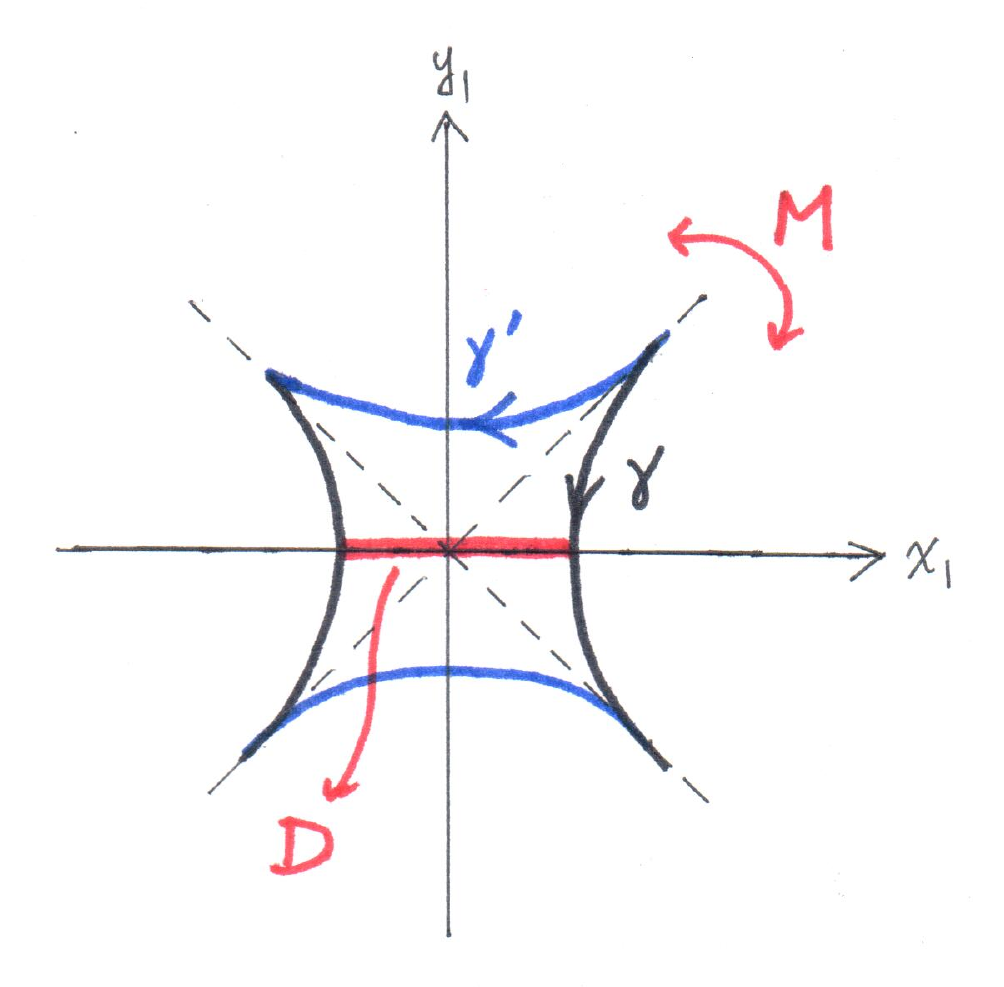}
\caption{$la$-disk surgery.  \label{laDsurg1}} 
\end{center} 
\end{figure}

Now define 
\begin{equation} \label{L'} 
\eta_D(L):=(L\setminus Orb_\cG(\gamma)) \cup Orb_\cG(\gamma'). 
\end{equation} 
Note that  $Q=Orb_\cG(\gamma)$ and $M(Q)=Orb_\cG(\gamma')$  
are tangent along their boundary $\pa Q=\pa (M(Q))=M(\pa Q)$, and both tangent to the pair of Lagrangian planes 
\[ 
E_+:=(\pa_{x_1}+\pa_{y_1})\wedge (\pa_{x_2}+\pa_{y_2}), \quad 
E_-:=(-\pa_{x_1}+\pa_{y_1})\wedge (\pa_{x_2}-\pa_{y_2}). 
\]

\noindent
{\bf Notation alert:} \ The notations $L',Q',C'$ that will be used 
below have noting to do with the same notations from Section 
\ref{model}.

\begin{prop} \label{smooth-iso} 
Let $L$ be a Lagrangian surface and $D$ a $la$-disk of $L$.  
Let $\eta_D(L)$ be obtained from  $L$ by performing a Lagrangian 
surgery on $L$ via $D$ as described above. 
Then $\eta_D(L)$ is Lagrangian surface smoothly isotopic to $L$. 
\end{prop}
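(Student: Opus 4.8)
The plan is to prove two things: first, that $\eta_D(L)$ is genuinely Lagrangian, and second, that it is smoothly isotopic to $L$. Both reduce to the local model, since $\eta_D(L)$ differs from $L$ only inside the symplectic chart $U\cong V\subset T^*\R^2$, where the surgery replaces the piece $Q=Orb_\cG(\gamma)$ by $M(Q)=Orb_\cG(\gamma')$, with $\gamma'=M(\gamma)$. Outside $U$ the two surfaces coincide, so every assertion is local near $Q$.

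For the Lagrangian property, I would argue as follows. By Fact \ref{revolutionL}, the $\cG$-orbit of any immersed plane curve in $\R^2_{x_1y_1}$ is an immersed Lagrangian surface, so $Orb_\cG(\gamma')$ is Lagrangian away from the gluing locus. Since $M\in SO(4)$ is anti-symplectic (so $M^*\omega=-\omega$), it sends Lagrangian planes to Lagrangian planes; hence $M(Q)$ is Lagrangian. The only point needing care is smoothness and the Lagrangian condition along the seam $\pa Q$. Here the key observation, already recorded in the text, is that $Q$ and $M(Q)$ are tangent along their common boundary $\pa Q=M(\pa Q)$ and both are tangent there to the \emph{same} pair of Lagrangian planes $E_+,E_-$. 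Concretely, $\gamma$ and $\gamma'=M(\gamma)$ meet at the endpoints $s=\tfrac{3\pi}{4},\tfrac{5\pi}{4}$ with matching tangent directions, because $M$ fixes the diagonal and anti-diagonal lines $\{x_1=y_1\}$ and $\{x_1=-y_1\}$ through those endpoints; I would verify this endpoint tangency by a direct computation with \eqref{gamma} and the matrix \eqref{M}. Granting $C^1$-matching of the orbit curves, the resulting surface is $C^1$ and Lagrangian; one then smooths it in a $\cG$-invariant way near the seam, keeping it Lagrangian, exactly as in the smoothing constructions of Section \ref{model}.

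For the smooth isotopy, the natural approach is to exhibit an explicit isotopy in the plane from $\gamma$ to $\gamma'$ and revolve it by $\cG$. The curve $\gamma$ is an arc of the circle of radius $r$ centered at $(\sqrt{2}r,0)$, and $\gamma'=M(\gamma)$ is its image under $M$; both have the same endpoints on the lines $x_1=\pm y_1$. I would connect them by a smooth family $\gamma_t$ of arcs with fixed endpoints (for instance interpolating the two circular arcs, staying embedded in the right half-plane), apply $Orb_\cG$ to obtain a family of Lagrangian annuli $Orb_\cG(\gamma_t)$ agreeing with $L$ on $\pa Q$, and glue this to the fixed part $L\setminus Q$ to produce a smooth isotopy $L_t$ from $L$ to $\eta_D(L)$. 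Since each $\gamma_t$ is embedded and revolution by $\cG$ preserves embeddedness away from the $x_1$-axis, each $L_t$ is an embedded (hence diffeomorphic) copy of $L$.

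The main obstacle I anticipate is the seam analysis: confirming that $\gamma$ and $\gamma'$ agree to first order at the two endpoints, so that the glued surface is actually $C^1$ (and then smoothable) rather than merely continuous, and checking that the $\cG$-invariant smoothing can be performed without destroying the Lagrangian condition. The isotopy step is comparatively routine once the curves are understood, because everything is $\cG$-equivariant and one only needs to isotope arcs in $\R^2_{x_1y_1}$ rel endpoints. I would also note that the statement claims only smooth isotopy; the text separately asserts uniqueness of the \emph{Lagrangian} isotopy class, which I would treat as a refinement rather than part of this proposition.
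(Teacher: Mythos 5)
Your treatment of the Lagrangian property is essentially the paper's (which simply invokes Fact \ref{revolutionL}), and your extra attention to the seam is legitimate: both annuli are tangent to the planes $E_\pm$ along the common boundary circles, which gives the $C^1$ matching you need before smoothing. The problem is in the isotopy step, and it is a genuine one. The claim that $\gamma$ and $\gamma'=M(\gamma)$ ``have the same endpoints on the lines $x_1=\pm y_1$'' is false at the anti-diagonal: from \eqref{gamma} and \eqref{M} one computes $\gamma(5\pi/4)=(\tfrac{\sqrt{2}}{2}r,-\tfrac{\sqrt{2}}{2}r)$ while $\gamma'(5\pi/4)=(-\tfrac{\sqrt{2}}{2}r,\tfrac{\sqrt{2}}{2}r)$. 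Indeed $M$ fixes the diagonal $\{x_1=y_1\}$ pointwise but acts as $-1$ on the anti-diagonal, so these two endpoints are antipodal; they lie on the same $\cG$-orbit circle in $E_-$ (which is why $\pa Q=\pa M(Q)$ as sets), but they are distinct points of $\R^2_{x_1y_1}$. Consequently no isotopy of arcs in the plane rel endpoints can join $\gamma$ to $\gamma'$: the $E_-$ boundary circle meets the $x_1y_1$-plane in exactly those two antipodal points, so a planar arc whose endpoint is constrained to that circle has its endpoint pinned. A naive interpolation $(1-t)\gamma+t\gamma'$ drives that endpoint through the origin at $t=\tfrac12$, where the $\cG$-orbit degenerates and the boundary circle of the annulus collapses. (Note also that $\gamma'$ itself leaves the right half-plane, since its $x_1$-coordinate is $r\sin s$.)

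The missing idea is precisely the paper's: realize $M$ as $M_\pi$ in the circle subgroup $\cM=\{M_\h\}\subset SO(4)$ of \eqref{A_theta}, which commutes with $\cG$, fixes $E_+$ pointwise, and rotates $E_-$ by $\h$. Then $Q_\h:=M_\h(Q)=Orb_\cG(M_\h(\gamma))$ is a family of embedded annuli with $\pa Q_\h=\pa Q$ for all $\h$ and tangent to $E_\pm$ along that boundary, so $L_\h:=(L\setminus Q)\cup Q_\h$ is the desired smooth isotopy from $L$ to $\eta_D(L)$. For $0<\h<\pi$ the curve $M_\h(\gamma)$ leaves the $x_1y_1$-plane and $Q_\h$ is not Lagrangian; this is harmless because the proposition claims only a smooth isotopy, and it is consistent with $L$ and $\eta_D(L)$ failing to be Hamiltonian isotopic. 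Your overall strategy (move the generating arc, take $\cG$-orbits, glue to the fixed part) does survive, but the arc must be allowed to leave the plane so that its $E_-$ endpoint can travel half-way around the boundary circle.
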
 

\begin{proof}
It is easy to see that $L':=\eta_D(L)$ is Lagrangian by Fact \ref{revolutionL}. 
To show that $L'$ is smoothly isotopic to $L$, observe 
that $M= M_\pi$ is an element of  the 
$S^1$-subgroup $\cM:=\{ M_\h\mid \h\in\R/2\pi\Z\}$ of $SO(4)$ whose elements in matrix form with 
respect to the orthonormal basis $\{ \pa_{x_1},\pa_{x_2},
\pa_{y_1},\pa_{y_2}\}$ are 
\begin{equation} \label{A_theta} 
M_\h:=\frac{1}{2}\begin{bmatrix} I+R_\h & I-R_\h\\ 
I-R_\h & I+R_\h\end{bmatrix}\in SO(4,\R), 
\end{equation}  
where 
\[ 
R_\h:=\begin{bmatrix} \cos\h & -\sin\h \\ \sin\h & \cos\h
\end{bmatrix} \in SO(2,\R). 
\] 

Both $E_\pm$ are preserved by all elements of $\cM$: 
$M_\h$ fixes the plane $E_+$ pointwise and rotates the (oriented) plane 
$E_-$ 
by an angle of $\h$ radians. 
Moreover, $\cM$ commutes with $\cG$.  
Let 
\[ 
Q_\h:=Orb_\cG(M_\h(\gamma))=M_\h(Orb_\cG(\gamma)). 
\] 
For $\h\in[0,\pi]$ define 
\[ 
L_\h:=(L\setminus Q)\cup Q_\h  
\] 
Each of $L_\h$ is diffeomorphic to $L$, 
$L_0=L$, $L_\pi=L'$. 
Then $L_\h$ is a smooth isotopy between $L$ and $L'$. 
So $L$ and $\eta_D(L)=L'$ are smoothly isotopic.  
\end{proof} 

%
%
%
%
%
%

\noindent
{\bf Dual $la$-disk surgery} \ 
Let $L$, $M$, and $L':=\eta_D (L)$ be as above. 
Let $D':=M(D)$ and $C':=M(C)=\pa D'$. Then $C'\subset 
L'$ is a vanishing cycle, $D'$ is a $la$-disk
of $L'$ along $C'$. Applying the standard model for $\eta_D$ one sees that 
\[ 
L=\eta_{D'}(L')=\eta_{D'}(\eta_D(L)). 
\]

\begin{rem}[{\bf Relation with Polterovich's Lagrangian surgery}] 
\label{Polter} 
{\rm 
In \cite{P2} Polterovich defined a Lagrangian surgery (for all dimensions) as a way of removing transversal self-intersection 
points of Lagrangian submanifolds. In the $2$-dimensional case  the surgery is done by first cutting off a neighborhood 
of the nodal point, which is a union of two embedded Lagrangian 
disks intersecting transversally and in a single point, and then 
closing up the two boundary circles by gluing a Lagrangian annulus to the complement of the nodal neighborhood along the 
boundary circles. Compared with our $la$-disk surgery, the 
nodal neighborhood is precisely $E_+\cup E_-$, the gluing 
annulus can be either $Q$ or $M(Q)$, and the 
resulting Lagrangian surface (with one nodal point removed) is 
 $L$ (resp. $\eta_D(L)$) if $Q$ (resp. $M(Q)$) is used for 
 gluing. 
}
\end{rem} 

\noindent
{\bf Relative polarity.} \ 
It is easy to check that $D,D'$ are of the same type as $la$-disks of $L,L'$ respectively, This can be seen by observing 
that the isotopy $L_\h$ takes $C=\pa D$ to $C'=\pa D'$, and induces  isomorphisms 
$H_1(L,\Z)\cong H_1(L',\Z)$ and $\pi_1(L)\cong \pi_1(L')$. 
As for polarity, 
there is a way to compare the polarity of $D,D'$ which we describe as follows: 

Fix an orientation of the annulus $Q\subset L$. 
Also fix an orientation of 
$C\subset Q$. Since $Q$ and $Q':=M(Q)\subset L'=\eta_D(L)$ are tangent along $\pa Q=\pa Q'$, orientation of $Q$ induces 
an orientation of $Q'$ so the two orientations coincides 
on $T_{\pa Q}Q=T_{\pa Q'}Q'$. It is easy to see that $C'$ also 
inherit an orientation compatible with that of $C$. Indeed, the 
orientations of the pair $(Q',C')$ are obtained by 
transporting the orientations of $(Q,C)$ via the 
isotopy $M_\h$. 

Let $\nu_C\subset T_CQ$ be a normal 
vector field to $C$ in $Q\subset L$ so that the ordered pair 
$(\nu_C, \dot{C})$ is a positive basis of $T_CQ$. Here 
$\dot{C}$ denotes the tangent vector field of $C$ with 
respect to some parameterization compatible with the orientation 
of $C$. Likewise Let $\nu_{C'}\subset T_{C'}Q'$ be a normal 
vector field to $C'$ in $Q'\subset L'$ so that the ordered pair 
$( \nu_{C'}, \dot{C}')$ is a positive basis of $T_{C'}Q'$.
We also denote by $\nu$ the outward normal to $D$ along $C$, 
by $\nu'$ the outward normal to $D'$ along $C'$. 
Note that the symplectic normal bundle $N^\omega_C$ is spanned by $\nu$ and $\nu_C$, and similarly $N^\omega_{C'}$ by $\nu'$ and $\nu_{C'}$.

\begin{defn} \label{rel-polarity} 
{\rm 
We say that $D$ and $D'$ {\em have the same relative polarity} 
if $\omega (\nu,\nu_C)$ and $\omega(\nu',\nu_{C'})$ are of the 
same $\pm$ sign; and $D$ and $D'$ {\em have opposite relative polarities} 
if $\omega (\nu,\nu_C)$ and $\omega(\nu',\nu_{C'})$ are of  
different $\pm$ signs. 
The notion of relative polarity is independent of the choices of 
orientations of $Q$ and $C$. 
} 
\end{defn}

\begin{prop} 
Let $D\subset L$ and $D'\subset L'=\eta_D(L)$ be the 
corresponding $la$-disks in the $la$-disk surgery. Then 
$D$ and $D'$ have  opposite relative polarities. 
\end{prop}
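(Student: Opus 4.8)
The plan is to reduce everything to the single observation that the boundary data at $C'$ is the image, under the linear anti-symplectic map $M$ of (\ref{M}), of the corresponding data at $C$, and then to exploit $M^*\omega=-\omega$. Recall from the dual surgery discussion that $D'=M(D)$, $C'=M(C)$, and that the annulus $Q':=M(Q)\subset L'$ carries $C'$. Thus $M$ restricts to a diffeomorphism of the pair $(D,C)$ onto $(D',C')$ and of $(Q,C)$ onto $(Q',C')$, and I will use this to identify the four normal vectors $\nu,\nu_C,\nu',\nu_{C'}$ appearing in Definition \ref{rel-polarity}.

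First I would show $\nu'=M(\nu)$. Since $M$ is a linear isometry carrying $D$ onto $D'$ and $C$ onto $C'$, it carries the interior of $D$ to the interior of $D'$, hence sends the outward $C$-normal of $D$ to the outward $C'$-normal of $D'$; that is, $\nu'=M(\nu)$ up to a positive scalar, which is irrelevant for the sign.

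Next, and this is the step that requires care, I would show $\nu_{C'}=M(\nu_C)$ up to a positive scalar. Here $\nu_C$ is pinned down by requiring $(\nu_C,\dot{C})$ to be a positive basis of $T_CQ$, and $\nu_{C'}$ analogously for $T_{C'}Q'$, where the orientations of $Q'$ and $C'$ are, by the construction recalled above, obtained by transporting those of $Q$ and $C$ along the path $M_\h$, $\h\in[0,\pi]$, of (\ref{A_theta}) with $M_\pi=M$. Because this path \emph{defines} the orientations of $Q'$ and $C'$, its endpoint $M$ carries a positive basis of $T_CQ$ to a positive basis of $T_{C'}Q'$; applied to $(\nu_C,\dot{C})$ it yields the positive basis $(M(\nu_C),\dot{C}')$, whence $M(\nu_C)$ and $\nu_{C'}$ are positive multiples of one another. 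The delicate point throughout is to keep the \emph{orientation} bookkeeping separate from the symplectic one: $M$ lies in $SO(4)$ and respects the transported surface orientations, yet it reverses $\omega$, and it is exactly this mismatch that produces the sign flip.

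Finally I would combine the two identifications with the anti-symplectic property $M^*\omega=-\omega$:
\[
\omega(\nu',\nu_{C'})=\omega\!\big(M(\nu),M(\nu_C)\big)=(M^*\omega)(\nu,\nu_C)=-\,\omega(\nu,\nu_C).
\]
Hence $\omega(\nu,\nu_C)$ and $\omega(\nu',\nu_{C'})$ have opposite signs, which by Definition \ref{rel-polarity} means $D$ and $D'$ have opposite relative polarities. As a cross-check one can run the standard local model directly: there $\nu=\pa_{x_1}$ and $\nu_C=\pm\pa_{y_1}$ at the base point of $C$, so $\omega(\nu,\nu_C)=\pm1$, while $\nu'=M(\nu)=\pa_{y_1}$ and $\nu_{C'}=\pm\pa_{x_1}$, giving $\omega(\nu',\nu_{C'})=\mp1$, in agreement. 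I expect the third step---verifying that the isotopy-induced orientations are exactly the $M$-images, so that $M$ sends the positive normal to the positive normal---to be the only real obstacle; the sign computation and the asserted independence of the choice of orientations of $Q$ and $C$ (both $\omega(\nu,\nu_C)$ and $\omega(\nu',\nu_{C'})$ reverse together under such a change) are then routine.
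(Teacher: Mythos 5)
Your proposal is correct and follows essentially the same route as the paper: the paper works in the standard local model, takes $\nu=\pa_{x_1}$, $\nu_C=\pa_{y_1}$ at a $\cG$-orbit representative point of $C$, applies $M$ to get $\nu'=\pa_{y_1}$, $\nu_{C'}=\pa_{x_1}$, and reads off $\omega_p(\nu,\nu_C)=1>0$ while $\omega_{p'}(\nu',\nu_{C'})=-1<0$. Your argument merely packages that one-point computation as the general identity $\omega(M\nu,M\nu_C)=(M^*\omega)(\nu,\nu_C)=-\omega(\nu,\nu_C)$, with the orientation-transport step supplying the identification $\nu_{C'}=M(\nu_C)$ (up to positive scalar) that the paper passes over with "without loss of generality."
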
 

\begin{proof} 
Let $D=\{ x_1^2+x_2^2\leq (\sqrt{2}-1)^2r^2, \ y_1=0=y_2\}$ and $Q=Orb_\cG\gamma$ be as in the standard model. Then $D'=\{ y_1^2+y_2^2\leq (\sqrt{2}-1)^2r^2, \ x_1=0=x_2\}$, $Q'=Orb_\cG\gamma'$. Since $D,D',Q,Q'$ are $\cG$-invariant, we only 
need to compare $\omega (\nu,\nu_C)$ at the point 
$p:=(x_1=(\sqrt{2}-1)r, y_1=0, x_2=0, y_2=0)$ and $\omega(\nu',\nu_{C'})$ at the point $p'=M(p)=(x_1=0,y_1=(\sqrt{2}-1)r, x_2=0,y_2=0)$. 
Without loss of generality we may take 
\[ 
\nu=\pa_{x_1}, \quad \nu_C=\pa_{y_1} \quad \text{ at $p$}. 
\] 
Then by applying $M$, we get 
\[ 
\nu'=\pa_{y_1}, \quad \nu_{C'}=\pa_{x_1} \quad \text{ at $p'$}. 
\] 
Since $\omega=\sum_{j=1}^2dx_j\wedge dy_j$, 
\[ 
\omega_p( \nu, \nu_C)=1>0, \quad \omega_{p'}(\nu',\nu_{C'})=-1<0. 
\] 
So $D$ and $D'$ have opposite relative polarities. 
\end{proof}

 In particular, if $L\subset W$ is a nullhomologous torus in $W$ 
 with $H^1(W,\Z)=0$, 
 and if $D$ is a stable (resp. unstable) 
$la$-disk of $L$, then $D'$ is a unstable (resp. stable) $la$-disk of $L'$.

\begin{exam}
{\rm Let $L\subset \R^4$ be the Chekanov torus and $D$ 
as defined in Example \ref{cheka}. Then a $la$-disk surgery along 
$D$ changes $L$ to $L'=\eta_D(L)\subset \R^4$ a monotone Clifford 
torus monotone Lagrangian isotopic to the one in Example \ref{cliff}. 
} 
\end{exam} 

\begin{exam} 
{\rm Consider the cotangent bundle $T^*S^2$ and regard 
the zero section $S^2=D\cup D'$ as a union of two closed Lagrangian disks $D,D'$ with $\pa D=C=\pa D'$ as the 
equator. Let $U\subset T^*S^2$ be a cotangent neighborhood 
of $D$ and $L'\subset U$ be a monotone Clifford torus with 
$D$ as its unstable $la$-disk, and $D'$ as its stable $la$-disk. 
Then a $la$-disk along $D'$ turns $L'$ into a monotone torus 
$L''\subset T^*S^2$ monotone Lagrangian which, up to a 
scaling by a Liouville vector field, is  isotopic to the 
geodesic torus as defined in Example \ref{geode}. 
} 
\end{exam}

%
%
\subsection{Relation with generalized  Dehn twists} \label{ddtwist}

A generalized Dehn twist is defined at the presence of an 
embedded Lagrangian sphere. 
Below we describe the model generalized Dehn twist following 
\cite{Se}. 

{\bf Generalized Dehn twist.} \ Identify 
a small neighborhood $U_S$ of an embedded Lagrangian $2$-sphere $S$ symplectically  with a neighborhood $V_0$ of the 
$0$-section of the cotangent bundle $T^*S^2$, and $S$ identified with $S^2$. Use the model 
\[ 
T^*S^2=\{ (q,p)\in \R^3\times \R^3\mid |q|=1  \text{ and } 
 \langle q,p\rangle =0\}, 
\] 
in which $\omega =\sum_i dq_i\wedge dp_i$. For $x\in \R^3\setminus \{ 0\}$ and $t\in \R$, let $R^t(x)\in SO(3)$ be the 
rotation with axis $x/|x|$ and angle $t$. Define 
\[ 
\sigma^{it}(q,p)=(R^t(q\times p)q,R^t(q\times p)p). 
\] 
Take a function $C^\infty(\R,\R)$ such that $\psi(t)+\psi(-t)=2\pi$ for all $t$, $\psi(t)=0$ for $t\gg 0$, $\psi(t)=\pi$ for 
small $|t|$, and $\psi(|\xi|)=0\mod 2\pi$ for $\xi\not\in V_0$. Then the model generalized Dehn twist 
$\tau_S:W\to W$ is a symplectomorphism with compact support 
contained in $U_S$, and its restriction to $U_S\cong V_0$ 
is 
\[ 
\tau_S(\xi) :=\begin{cases} 
\sigma(e^{i\psi(|\xi|)})(\xi), & \xi\in V_0\setminus S^2, \\ 
A \xi, & \xi\in S^2,  
\end{cases} 
\] 
where $A$ is the antipodal map on $S^2$. For $n\in \Z$ the  $2n$-th power of $\tau_S$ is 
\[ 
\tau^{2n}_S=\begin{cases} 
\sigma(e^{2ni\psi(|\xi|)})(\xi), & \xi\in V_0\setminus S^2, \\ 
\xi, & \xi\in S^2. 
\end{cases}
\]

{\bf Elliptic $la$-disk.}  \ 
Let $D$ be an {\em elliptic}  $la$-disk of a Lagrangian surface $L$ with boundary 
$C=\partial D\subset L$ which bounds an embedded disk 
$\Delta\subset L$. 
Identify a neighborhood $U$ of $\Delta$ symplectically with an open domain $V\subset 
T^*\R^2$ so that under this identification 
\begin{align*} 
\Delta & =\{ x_1^2+x_2^2\leq 1, \ y_1=0=y_2\}, \\ 
U & =\{ x_1^2+x_2^2<1+\epsilon_1, \ y_1^2+y_2^2<\epsilon_2^2\}, 
\end{align*} 
where $(y_1,y_2)$ are the corresponding fiber coordinates 
for $T^*\Delta$, and 
$D\cap U$ is one of the following two sets according to the 
polarity of $D$: 
\begin{itemize} 
\item $A_+:=\{ x^2=1, \ x_1y_2=x_2y_1, \ x_1y_1\geq 0\}\cap U$, 
\item  $A_-:=\{ x^2=1, \ x_1y_2=x_2y_1, \ x_1y_1\leq 0\}\cap U$. 
\end{itemize}

\begin{rem} 
{\rm 
Note that $(A_+ \setminus C) \sqcup (A_- \setminus C)$ 
is isomorphic to the quotient $\R$-bundle $(TC)^\omega/
T_C\Delta$ with the zero section $C$ deleted. 
The polarization of   $((TC)^\omega/T_C\Delta) \setminus C$ 
by assigning 
$\pm$ signs to each of the two connected components as defined above is independent of the choice 
of a coordinate system for $\Delta$. 
}
\end{rem} 

\begin{rem} 
{\rm 
The above sign assignment is even 
independent of the choice of $\Delta\subset L$ with 
$\partial \Delta=C$. Indeed, 
suppose there is another embedded disk $\hat{\Delta}\subset L$ with $\partial \hat{\Delta}=C$, and this happens precisely when $L$ is a sphere. Parameterize $\hat{\Delta}$ with coordinates $(\hat{x}_1,\hat{x}_2)\in \R^2$ so that $\hat{\Delta}=\{ (\hat{x}_1)^2+(\hat{x}_2)^2\leq 1\}$. Let $(\hat{y}_1,\hat{y}_2)$ be the corresponding fiber coordinates for $T^*\hat{\Delta}$. We may assume that along $C=\partial D=\partial \hat{\Delta}$ 
\[  
x_1=-\hat{x}_1, \quad x_2=\hat{x}_2. 
\] 

Let $\pi:T^*L\to L$ denote the canonical projection. 
For $p\in\pi^{-1}(C)$, 
its $(x,y)$ coordinates and $(\hat{x},\hat{y})$ coordinates are 
related by the equations 
\[ 
x_1=-\hat{x}_1, \ x_2=\hat{x}_2, \ y_1=-\hat{y}_1, \ y_2=\hat{y}_2. 
\] 
The equation $x_1y_2=x_2y_1$ is then equivalent to 
$\hat{x}_1\hat{y}_2=\hat{x}_2\hat{y}_1$. In addition $x_1y_1=\hat{x}_1\hat{y}_1$, 
so the sign assignment is independent of the choice of 
$\Delta\subset L$. 
}
\end{rem}

We proceed to analyze the surgery $\eta_D$ applied to $L$.

\begin{prop}  \label{dehn} 
Let $L\subset (W,\omega)$ be an embedded oriented Lagrangian surface.  Let $D$ be 
an elliptic $la$-disk to $L$ with $\partial D=C$. 
Let $\Delta\subset L$ be an embedded disk with $\pa\Delta=C$. 
Then the union 
$D\cup \Delta$ associates an embedded Lagrangian sphere 
$S$, $S$ is unique up to Hamiltonian isotopy,  such that $S$ intersects with $L$ 
transversally and in a single point.  Let $\nu$, $\nu'$ denote 
respectively the outward normal of $D$ and $\Delta$ along $C$. 
Let $\eta_D(L)$ denote the Lagrangian surface obtained by applying to $L$ the Lagrangian 
surgery on $L$ via $D$. 
We have the following conclusions: 
\begin{enumerate} 
\item Assume that $\omega(\nu,\nu')>0$, then $\eta_D(L)$ is Hamiltonian isotopic to 
$\tau^2_S(L)$, where $\tau_S$ is the positive generalized double Dehn twist along $S$. 
\item  Assume that $\omega(\nu,\nu')<0$, then $\eta_D(L)$ is Hamiltonian isotopic to 
$\tau^{-2}_S(L)$, the squared negative generalized double Dehn twist along $S$. 
\end{enumerate} 
Moreover, $\eta_D(L)\pitchfork S$ and $\eta_D(L)\cap S=L\cap S$. 
\end{prop}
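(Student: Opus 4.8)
The plan is to reduce everything to the $\cG$-invariant local model, where both the generalized Dehn twist and the $la$-disk surgery can be written down explicitly and compared curve by curve. First I would set up the identification of a neighborhood $U_S$ of the Lagrangian sphere $S=D\cup\Delta$ with a neighborhood of the zero section of $T^*S^2$, making sure it is compatible with the local model of Section \ref{model}: the elliptic $la$-disk $D$ is one of the two hemispheres of the zero section, $\Delta\subset L$ the complementary hemisphere, and $C$ the equator. Because $D$ is elliptic and $C$ bounds $\Delta$ in $L$, the pair $D\cup\Delta$ closes up to an embedded Lagrangian sphere; uniqueness up to Hamiltonian isotopy follows because any two Lagrangian disks with the same Legendrian boundary are Hamiltonian isotopic rel boundary in a Weinstein neighborhood, so I would cite that and move on.

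Next I would carry out the central comparison. The surgery $\eta_D$ replaces the annular piece $Q=Orb_\cG(\gamma)$ of $L$ near $C$ with $M(Q)=Orb_\cG(\gamma')$, where $M$ is the anti-symplectic reflection of (\ref{M}). The key observation is that, within the $\cM$-family $M_\h$ of (\ref{A_theta}), the passage from $\h=0$ to $\h=\pi$ realizes exactly a rotation by $\pi$ in the $E_-$-plane while fixing $E_+$; on the normal geodesic flow picture of $T^*S^2$ this is precisely the half-turn that the map $\sigma^{i\psi}$ performs, i.e. the monodromy of $\tau^2_S$ restricted to the region swept out near $C$. So I would show that $M_\h$ restricted to the relevant normal directions agrees, after the symplectic identification, with the time-dependent flow $\sigma(e^{\h i})$ defining $\tau^2_S$, so that the smooth isotopy $L_\h$ from Proposition \ref{smooth-iso} coincides with the image of $L$ under $\tau^{2}_S$ followed by the cutoff. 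Matching the two explicit formulas---$\sigma(e^{2i\psi(|\xi|)})$ on $V_0\setminus S^2$ versus $Orb_\cG(M_\h\gamma)$---and tracking that $\tau^2_S$ is supported in $U_S$ while $\eta_D$ is supported in $U$, I would conclude $\eta_D(L)$ and $\tau^2_S(L)$ agree up to a compactly supported Hamiltonian isotopy.

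The sign in (i) versus (ii) is governed by the relative orientation $\omega(\nu,\nu')$ of the two outward normals along $C$. Here I would argue that $\omega(\nu,\nu')>0$ fixes the cyclic orientation of $S$ and hence the sign of the rotation angle in $\sigma^{it}$, pinning down $\tau_S$ as the positive Dehn twist; reversing the polarity of $D$ reverses $\nu$, hence reverses $\omega(\nu,\nu')$ and the direction of the half-turn, giving $\tau^{-2}_S$. Concretely I would check that replacing $A_+$ by $A_-$ in the description of $D\cap U$ flips the sign of the $E_-$-rotation, matching $M_\h\mapsto M_{-\h}$. Finally the transversality claim $\eta_D(L)\pitchfork S$ with $\eta_D(L)\cap S=L\cap S$ follows because $S$ meets $L$ in a single point $q$ lying outside the surgery region $U$ (the surgery only alters $L$ near $C$, away from the unique intersection point of $S$ with $L$), so the intersection is unchanged and remains transverse.

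The main obstacle I expect is the precise bookkeeping in the central comparison: reconciling the anti-symplectic reflection $M$ and its $\cM$-interpolation $M_\h$ in the flat local coordinates of Section \ref{model} with the geodesic-flow rotation $\sigma^{it}$ and the cutoff function $\psi$ used to define $\tau_S$ on $T^*S^2$. These live in two different explicit coordinate systems, and showing that the two one-parameter families induce the same Lagrangian isotopy---rather than merely isotopic endpoints---requires verifying that the identification $U_S\cong V_0$ intertwines the $E_\pm$-splitting with the eigen-splitting of $d\sigma^{it}$ along $S$, and that the radial cutoff in $\psi$ can be absorbed into a Hamiltonian isotopy supported away from $L\cap S$. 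Getting the rotation angles to match (a full $\pi$ per single Dehn twist on $E_-$, doubled to $2\pi$ for $\tau^2_S$) and confirming this equals the $\h\colon 0\to\pi$ sweep of $M_\h$ is the delicate point on which the whole identification---and the factor of $2$ in $\tau^{\pm2}_S$---rests.
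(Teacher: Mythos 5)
Your setup (the $\cG$-invariant local model, the polarity discussion for the sign of the twist, and the transversality claim at the single intersection point away from the surgery collar) matches the paper. But the central step of your plan does not work as stated. You propose to show that the interpolating family $M_\h$ of (\ref{A_theta}) ``agrees, after the symplectic identification, with the time-dependent flow $\sigma(e^{\h i})$ defining $\tau^2_S$,'' i.e.\ that the two one-parameter families induce the same Lagrangian isotopy. They cannot: $\sigma(e^{it})$ is a symplectomorphism for every $t$, so $\sigma(e^{it})(L)$ is Lagrangian throughout, whereas $M_\h\in SO(4)$ is symplectic only at $\h=0$ and anti-symplectic at $\h=\pi$; for $0<\h<\pi$ the surfaces $L_\h=(L\setminus Q)\cup M_\h(Q)$ of Proposition \ref{smooth-iso} are merely smooth, not Lagrangian. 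So there is no identification of the two families to be had, and the ``delicate point'' you flag at the end is not a bookkeeping issue but an impossibility. (Your uniqueness argument for $S$ --- that any two Lagrangian disk fillings of a Legendrian boundary are Hamiltonian isotopic rel boundary --- is also false as a general principle, though here uniqueness only needs to be checked for the explicit corner-smoothings.)

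The paper's proof sidesteps this entirely by comparing only the \emph{endpoints}. Both $\tau^2_S$ and $\eta_D$ are arranged to be $\cG$-equivariant, so $R:=\tau^2_S(L)\cap U$ and $\Gamma:=\eta_D(L)\cap U$ are each the $\cG$-orbit of a curve in the $(x_1,y_1)$-slice $E$, with the same boundary and the same $180^\circ$-rotational symmetry. One then constructs a compactly supported Hamiltonian isotopy of $E$ carrying $R_E$ to $\Gamma_E$ rel boundary (possible because $[C]=0$ in $\pi_1(L)$, so the surgery size is irrelevant up to Hamiltonian isotopy and the enclosed areas can be matched), and extends it $\cG$-equivariantly to $W$. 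If you replace your flow-matching step with this endpoint comparison in the slice, the rest of your outline --- the sign analysis via $A_\pm$ and $\omega(\nu,\nu')$, and the transversality statement --- goes through as you describe.
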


\begin{proof}

\noindent
{\bf Case 1: \ $\omega(\nu,\nu')>0$. } \  
 In this case we have $D\cap U=A_+$.
We can construct an embedded Lagrangian sphere $S$  which intersects transversally 
with $L$ and in a single point. The construction is done 
by properly smoothing out the 
corner curve $C$ of the union $D\cup \Delta$ as follows. 

Let $f:[-1,1]\subset \R_{x_1}\to \R$ be a continuous function 
satisfying the following conditions: 
\begin{itemize} 
\item $f(-x_1)=-f(x_1)$, $0\leq |f|\leq \epsilon_2/2$,   
\item $f$ is smooth on $(-1,0)\cup(0,1)$, $f'(x_1)>0$ on $(-1,0)\cup(0,1)$,
\item  $\lim_{x_1\to 0}f'(x_1)=\infty$, $\lim_{x_1\to -1^+}f'(x_1)=\lim_{x_1\to 1^-} f '(x_1)=\infty$. 
\end{itemize} 
Let $\gamma_f\subset \R^2_{x_1y_1}$ denote the graph of $f$, and 
\[ 
S:=D_1\cup \Delta_f , 
\] 
where 
\begin{itemize} 
\item $D_1:=D - \{ y_1^2+y_2^2\leq |f(\pm1)|\}$, and 
\item $\Delta_f:=Orb_\cG(\gamma_f)$. 
\end{itemize} 
Then $S$ is an embedded Lagrangian sphere, and the 
conditions on $f$ ensure that  
\[ 
S\cap L=\{0\}\in \Delta \quad  \text{and} \quad S\pitchfork L.  
\] 
 Note that $0\neq [L]\in H_2(W,\Z)$. 
We can choose coordinates $(x_1,x_2)$ on $\Delta$ 
so that $dx_1\wedge dx_2$ is an area form. 

We can choose $f$ so that 
the projection of $S\cap U$ to the $(y_1,y_2)$-coordinate 
plane gives a coordinate system of $S\cap U$. One sees that $\{ -\pa_{x_1}, 
-\pa_{x_2}\}$ is the corresponding dual basis for $T^*_0S$. 

Recall $\tau^2_S$ the square of the positive generalized 
Dehn twist along $S$. We may assume that $\tau^2_S$ is supported in a cotangent 
neighborhood of $S$, and $\tau^2_S=id$ near $S$. 

Since on $U$ the sets $S\cap U$, $D\cap U$, 
$\Delta$ and the symplectic form $\omega|_U$ are invariant under the Hamiltonian 
$\cG$-action, we may assume 
that $\tau^2_S$ is also $S^1$-invariant when restricted to 
the intersection of a cotangent neighborhood of $S$ with $U$. Thus we can describe the effect of $\tau^2_S$ on $L$ 
by looking at the corresponding picture in the 
$(x_1,y_1)$-coordinate plane $E$. 

We may assume that $\text{Supp}(\tau^2_S)\cap L=\{ 0<\delta_1\leq x_1^2+x_2^2\leq \delta_2, y_1=0=y_2\}$. Here $\text{Supp}(\tau^2_S)$ denotes the support of $\tau^2_S$. 
Now for each $\theta\in[0,2\pi]$,  $\tau^2_S$ sends the oriented line segment 
$\ell_\theta(s)=(x_1=s\cos\theta, x_2=s\sin\theta, y_1=0,y_2=0)\subset\text{Supp}(\tau^2_S)\cap L $, 
$\delta_1\leq |s|\leq \delta_2$,  to a curve 
which projects to the simple geodesic circle in $S$ passing through the north pole 
$0\in \Delta$ and is oriented by the vector $\dfrac{d\ell}{ds}(\delta_1)$. Note that $\tau^2_S$ is independent of the orientation of $S$. 

With the $S_1$-symmetry associated with $\cG$ we can depict the $E$-slice of $R:=\tau^2_S(L)\cap U$ 
as the bold black curve in the right picture of Figure \ref{tau2S}. 
Let us denote by $R_E$ the $E$-slice of $R$. 

\begin{figure}[th] 
 \begin{center} 
 \includegraphics[scale=0.4]{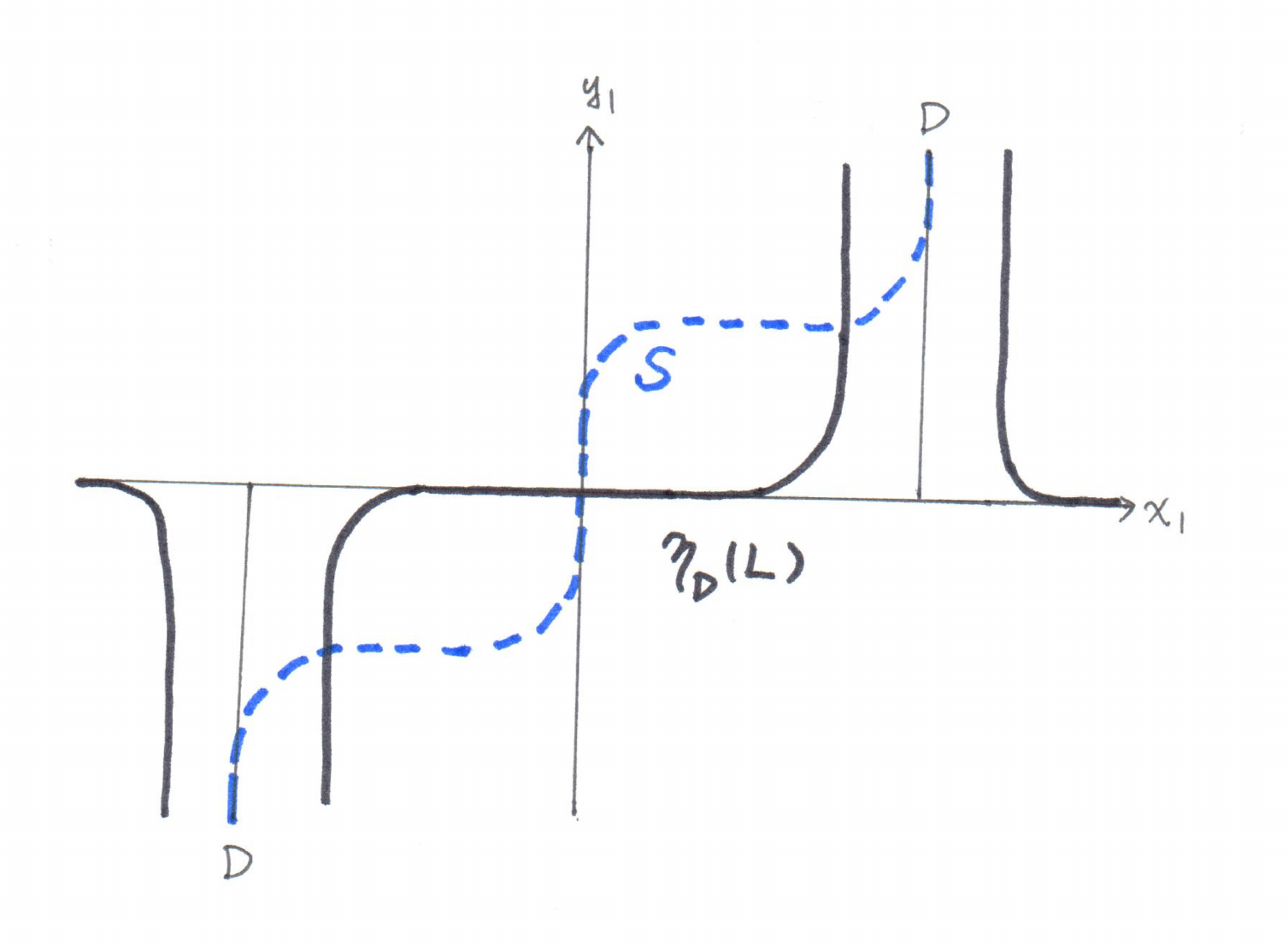}
\includegraphics[scale=0.4]{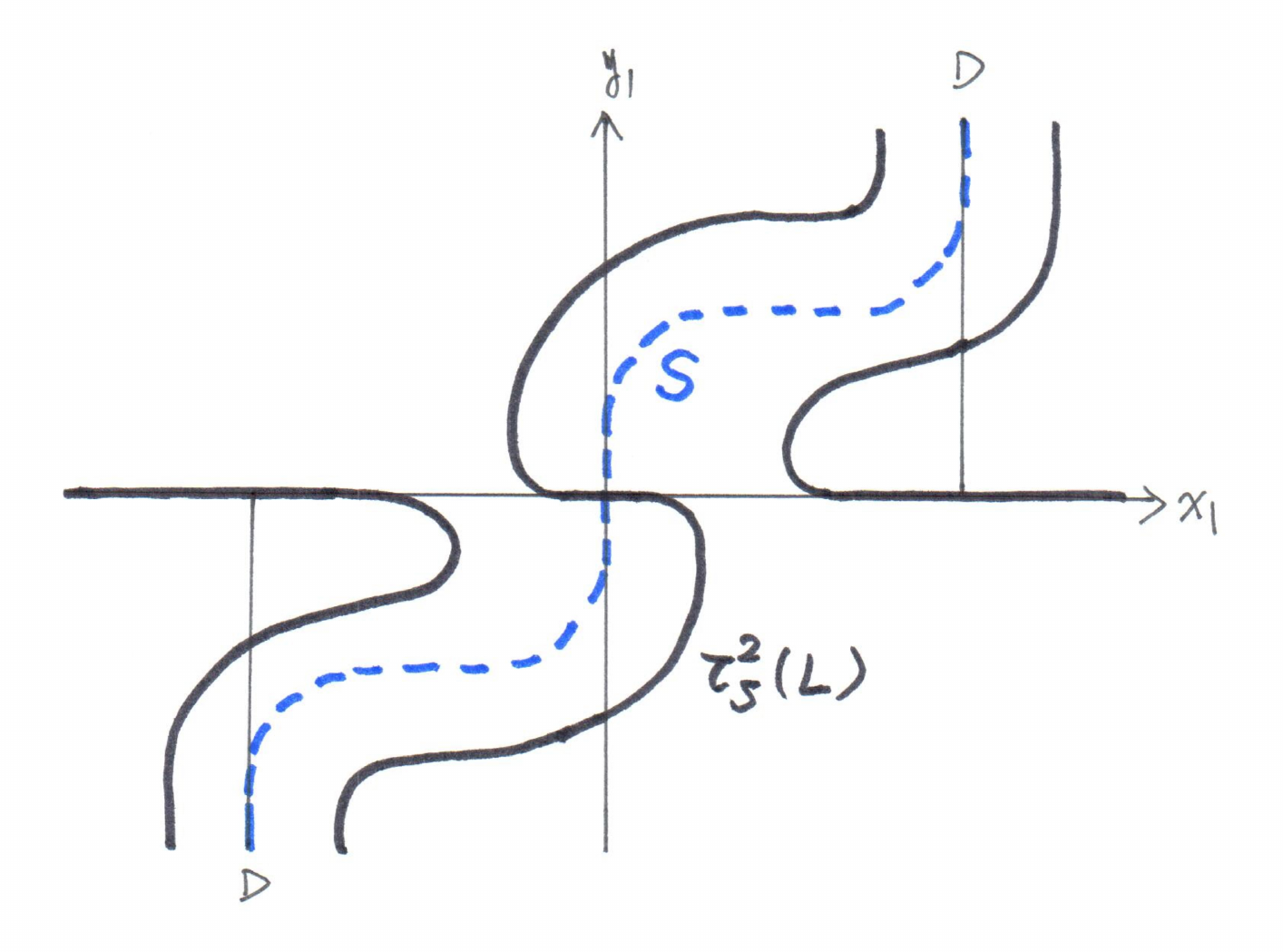} 
\caption{$\eta_D(L)$ and $\tau_S^2(L)$.  \label{tau2S}} 
\end{center} 
\end{figure}

Since $S=\Delta_f\cup D_1$ and we treat $0\in \Delta_f$ as the north pole of $S$, we can view $D_1$ as the southern hemisphere of $S$. Let $p_s\subset D_1$ denote 
the south pole of $S$. we can also parameterize $D$ so that $p_s$ is the origin 
of the 2-disk $D$. 
Note that the Lagrangian 
disk surgery $\eta_D$  replaces a collar neighborhood $U_C$ 
of $C=\pa D\subset L$ by a Lagrangian annulus symplectomorphic to 
the total space in $T^*D\cong TD$ of of oriented geodesics 
with a fixed constant speed 
in $D$ (with respect to the standard Euclidean metric) passing through 
$p_s\in D$. 
The annulus  $U_C\subset L$ can be symplectically identified with a neighborhood of the zero section of a 1-dimensional subbundle $V$ of $T^*_CL$ over $C$. Then $U_C$ has a fibration over $C$ with fiber $\ell_x\subset V_x$ over $x\in C$, and up to a 
Hamiltonian isotopy with $\partial U$ fixed, $\eta_D(\ell_x)\subset T^*_CD\cong T_CD$ is a curve that 
projects to an oriented geodesic line in $D$, passing through $p_s$, and with orientation 
uniquely specified by $x\in C$. In particular, $\ell_x$ and $\ell_{-x}$ correspond to the 
same geodesic line but with different orientations. Note that since $C\subset L$ is 
homologically trivial, up to Hamiltonian isotopy the resulting Lagrangian surface $\eta_D(L)$ 
does not depend on the precise surgery size of $\eta_D$. Thus we may assume that 
\begin{itemize}  
\item $\Gamma:=\eta_D(L)\cap U=Orb_\cG(\Gamma_E)$ is the orbit of $\Gamma_E$ 
(the bold black curve in the left picture of Figure \ref{tau2S}
) under the Hamiltonian $\cG$-action, 
$\Gamma_E$ is invariant under the $180^\circ$-rotation of $E=\pa_{x_1}\wedge\pa_{y_1}$ centered at 
$0\in \Delta$,   and $\pa \Gamma_E=\pa R_E$; 
\end{itemize}

   On $E$ there is a Hamiltonian isotopy $\phi_t$ supported in 
a compact set in $U\cap E$, $\phi_0=id$, $\phi_1(R)=\Gamma$, such that $\phi_t$ commutes 
with the said $180^\circ$-rotation and $\phi_t$ fixes $\partial \Gamma$ for all $t$. 
Observe that $\phi_t$ can be extended to a Hamiltonian isotopy on $W$, compactly 
supported in $U$, commuting with the Hamiltonian $S^1$-action (so that 
all $R_t:=\phi_t(R)$ are $S^1$-invariant), such that 
$\phi_1(R)=\Gamma$ and $\phi_t=id$ near $\pa R$ for all $t$. 
We obtain that $\eta_D(L)$ is Hamiltonian isotopic to 
$\tau^2_S(L)$. 
Hence up to Hamiltonian isotopy $\eta_D=\tau^2_S$ as surgeries on $L$.

{\bf Case 2: \  $\omega(\nu,\nu')<0$. } \  
The discussion goes almost parallel to Case 1, with several changes: 
 
 \begin{enumerate} 
 \item $D\cap U=A_-$. 
 \item For the construction of $S$ replace $f$ by $-f$.  
 \item $\tau^2_S$ is replaced by $\tau^{-2}_S$ the square of the {\em negative} 
 generalized Dehn twist along $S$. 
 
 \item The conclusion is that $\eta_D(L)$ and $\tau^{-2}_S(L)$ are Hamiltonian isotopic. 
 Hence $\eta_D|_L \cong \tau^{-2}_S|_L$.  
 \end{enumerate} 
 \end{proof}

\begin{rem} \label{L=S2} 
{\rm 
When $L=S^2$ is a Lagrangian sphere, the attaching circle 
$C=\pa D$ bounds two distinct 
disks $\Delta$ and $\hat{\Delta}$ in $L=\Delta\cup -\hat{\Delta}$. Let $S,\hat{S}$ be the Lagrangian sphere 
associated to $D\cup -\Delta$ and $D\cup \hat{\Delta}$ respectively. 
Then up to a choice of orientation, the homology classes of 
$S$ and $\hat{S}$ differ by the homology class of $L$. 
Nevertheless $\tau^2_{\hat{S}}(L)$ and $\tau^2_{S}(L)$ are 
Hamiltonian isotopic. 
} 
\end{rem}

 The polarity of an elliptic $la$-disk can be described in terms 
 of the $\pm$-sign of $\omega(\nu,\nu')$ as defined in 
 Proposition \ref{dehn}:

\begin{defn}[{\bf Polarity of an elliptic $la$-disk}] 
{\rm 
We say that an elliptic $la$-disk $D$ of $L$ is {\em positive} if it satisfies $\omega(\nu,\nu')>0$ as in Proposition \ref{dehn}(i), {\em negative} if 
it satisfies $\omega(\nu,\nu')<0$ instead. 
} 
\end{defn}

\begin{lem}[\bf Elliptic  pair] \label{Epair}  
Let $D$ be an elliptic $la$-disk of $L$ with 
$C:=\partial D$ bounds an embedded disk $\Delta\subset L$. 
Then there is another elliptic $la$-disk 
$D'$ of $L$ with $\pa D'=C$ such that 
\begin{enumerate} 
\item $D'$ and $D$ have opposite polarity; 
\item $\eta_{D'}(L)$ is Hamiltonian isotopic to 
$\tau^{-2}_S(L)$, where $\tau_S$ is the generalized 
double Dehn twist associated to $D$.  
\end{enumerate} 
\end{lem}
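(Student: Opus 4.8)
The plan is to reduce the statement to Proposition \ref{dehn} by producing a suitable partner disk and then identifying the two auxiliary spheres. Without loss of generality assume $D$ is positive, so that $D\cap U=A_+$ in the local model and, by Proposition \ref{dehn}(i), $D\cup\Delta$ determines an embedded Lagrangian sphere $S$ with $\eta_D(L)\ham\tau^2_S(L)$. I claim it suffices to construct a \emph{negative} elliptic $la$-disk $D'$ with $\pa D'=C$ whose associated sphere $S'$ satisfies $S'\ham S$. Indeed, since $D'\cap U=A_-$ while $D\cap U=A_+$, the sign of $\omega(\nu,\nu')$ is reversed, so $D'$ has opposite polarity to $D$, giving (i); and Proposition \ref{dehn}(ii) applied to $D'$ yields $\eta_{D'}(L)\ham\tau^{-2}_{S'}(L)$. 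If $\psi$ is a Hamiltonian isotopy with $\psi(S)=S'$ then $\tau_{S'}=\psi\circ\tau_S\circ\psi^{-1}$, and because Hamiltonian isotopy of Lagrangians is preserved both under symplectomorphisms and under composing with the Hamiltonian map $\psi$, one gets $\tau^{-2}_{S'}(L)\ham\tau^{-2}_S(L)$; hence $\eta_{D'}(L)\ham\tau^{-2}_S(L)$, which is (ii).

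To build $D'$ I work inside a Weinstein neighborhood $U_S\cong T^*S^2$ of $S$, in which $S$ is the zero section and $L$ meets $S$ cleanly at the single point $p=S\cap L$ along the cotangent fiber direction. Near $C$ I let $D'$ run into the half of the $\R$-bundle $(TC)^\omega/T_C\Delta$ opposite to the one occupied by $D$, i.e. $D'\cap U=A_-$; the interior of $D'$ is then completed to an embedded Lagrangian disk disjoint from $L$ (except along $C$) and contained in $U_S$. Smoothing the corner of $D'\cup\Delta$ exactly as in Proposition \ref{dehn}, now with $-f$ in place of $f$, produces the associated sphere $S'=D'_1\cup\Delta_{-f}$, the mirror across $L$ of the smoothing that built $S$.

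The heart of the argument, and the main obstacle, is to arrange this completion so that $S'\ham S$. I do this by choosing the far hemisphere $D'_1$ so that all of $S'$ is a graph, i.e. a section of $U_S\cong T^*S^2$ over the zero section $S$: under the identification the pushed-off cap $\Delta_{-f}$ is already a section over the $\Delta_f$-hemisphere, and over the complementary $D_1$-hemisphere I prescribe $D'_1$ to be the graph of a $1$-form, matching the first section smoothly across the gluing circle. Any such $S'$ is the graph of a closed $1$-form $\beta$ on $S\cong S^2$; since $H^1(S^2;\R)=0$, $\beta=dh$ is exact, and the family of graphs of $t\,dh$, $t\in[0,1]$, is a Hamiltonian isotopy from $S$ to $S'$. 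The delicate points are the smooth matching of the two sections across the equator and keeping $D'$ embedded, disjoint from $L$, and inside $U_S$; once these are arranged we obtain $S'\ham S$, and the reduction of the first paragraph finishes the proof. (Alternatively, every Lagrangian sphere in $T^*S^2$ is Hamiltonian isotopic to the zero section, by a theorem of Hind, so it would in fact be enough merely to place $S'$ somewhere inside $U_S$.)
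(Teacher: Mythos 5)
Your proposal is correct and follows essentially the same route as the paper: produce the opposite-polarity disk $D'$ with $D'\cap U=A_-$, show its associated sphere $S'$ is Hamiltonian isotopic to $S$, and conclude via Proposition \ref{dehn}(ii). The only difference is one of detail: where the paper simply points to Figure \ref{DD'} and asserts $S'\ham S$, you justify it by realizing $S'$ as the graph of a closed (hence exact, since $H^1(S^2;\R)=0$) one-form over $S$ in a Weinstein neighborhood, which is a welcome elaboration rather than a new approach.
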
  

\begin{proof} 
Assume first that $D$ is positive. 
The negative 
elliptic $la$-disk $D'$ with $\pa D'=C$ can the obtained from $D$ as indicated in Figure \ref{DD'}. 

\begin{figure}[th] 
 \begin{center} 
 \includegraphics[scale=0.4]{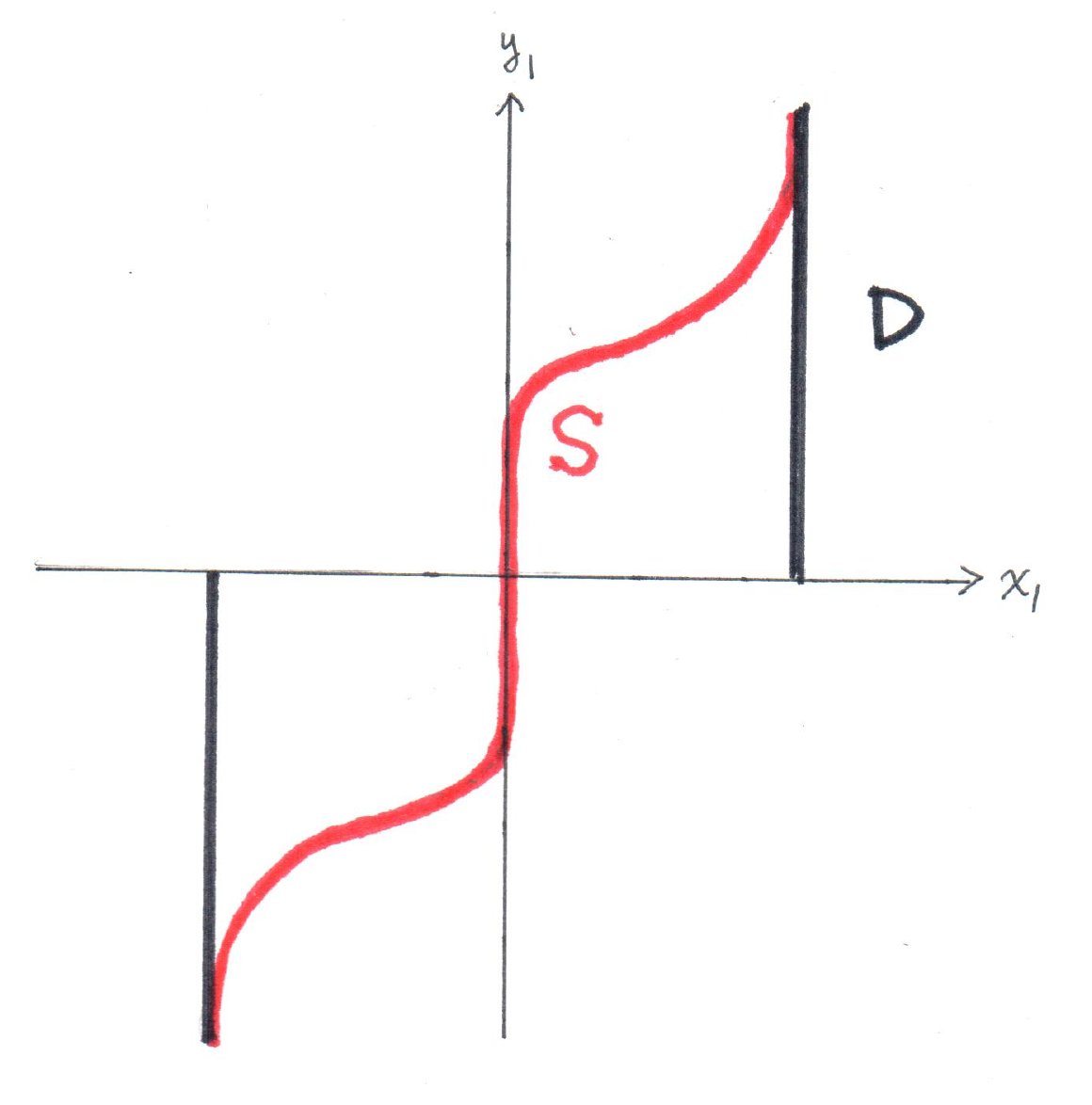}
\includegraphics[scale=0.4]{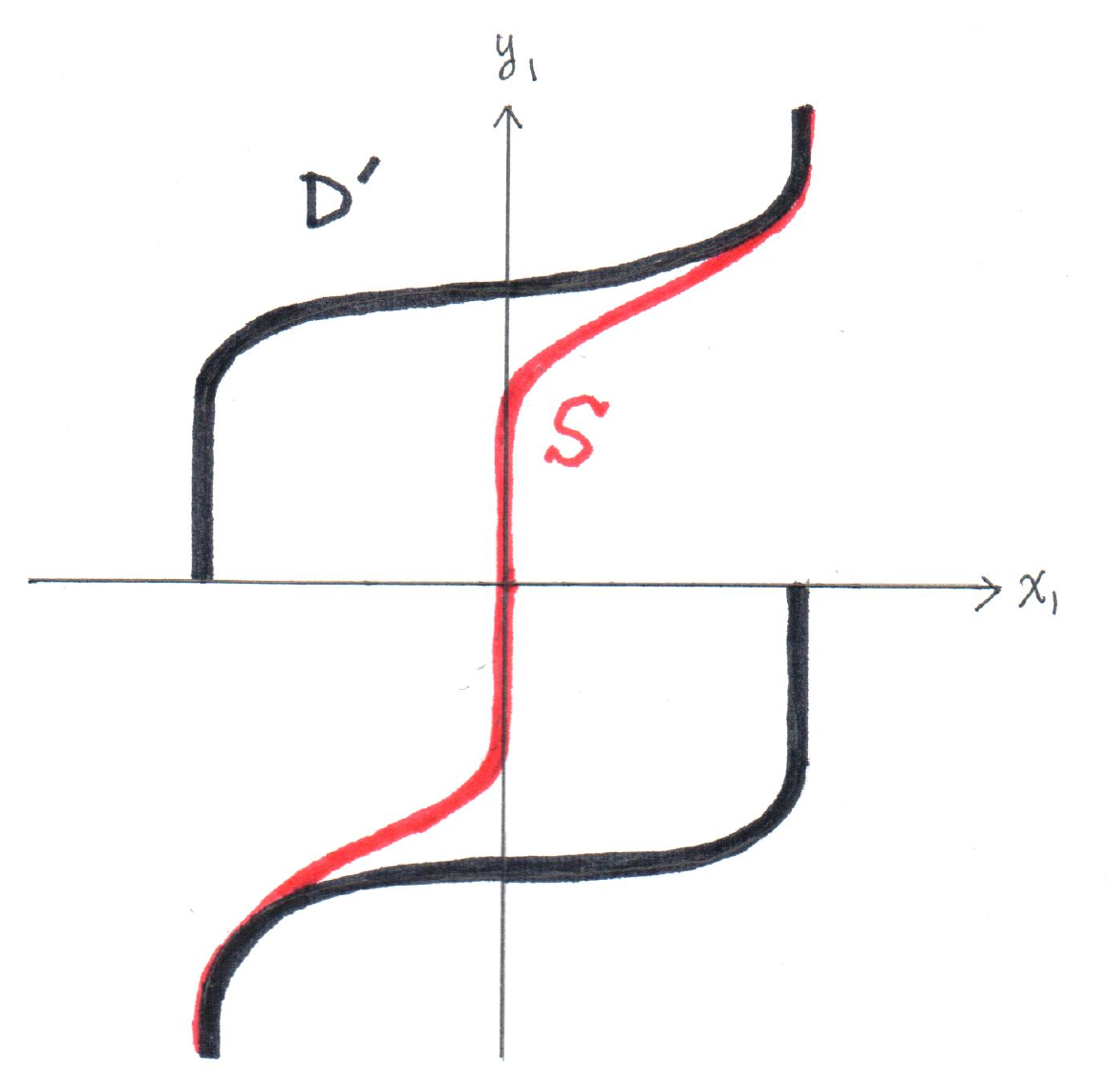} 
\caption{Positive $D$ and negative $D'$.  \label{DD'}} 
\end{center} 
\end{figure}

It is easy to see that the Lagrangian sphere  associated 
to $D'$ as depicted in Figure \ref{DD'} is Hamiltonian isotopic 
to the Lagrangian sphere $S$ associated to $D$.  That 
$\eta_{D'}(L)$ and $\tau^{-2}_S(L)$ being Hamiltonian isotopic 
follows from Proposition \ref{dehn}(ii). 

The negative $D$ case can be verified in a similar way. 
\end{proof}

\begin{cor} [\bf Infinite order] \label{inf-order} 
Let $D, D',L$ be as defined in Lemma \ref{Epair}. 
Assume that $D$ is positive and $D'$ is negative.  
Then for each $n\in\N$,  $\eta^n_D(L)$  is defined and is Hamiltonian isotopic to $\tau^{2n}_S(L)$.  
Similarly, $\eta_{D'}^n(L)$ is defined and is Hamiltonian isotopic 
to $\tau^{-2n}_S(L)$, $n\in \N$. 
\end{cor}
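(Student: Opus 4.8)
The plan is to argue by induction on $n$, taking the base case $n=1$ directly from Proposition \ref{dehn}(i), which already gives $\eta_D(L)\ham\tau^2_S(L)$. The entire iteration then rests on a single persistence claim: once we know $\eta^n_D(L)\ham\tau^{2n}_S(L)$, the disk $D$ is again a positive elliptic $la$-disk of the surgered surface with the same associated sphere $S$, so that $\eta^{n+1}_D(L)=\eta_D(\eta^n_D(L))$ is defined, and a second application of Proposition \ref{dehn}(i) produces one more factor of $\tau^2_S$.

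The geometric input that makes this work is that $\tau^{2n}_S=(\tau^2_S)^n$ equals the identity on a neighborhood $N$ of $S$. From the explicit model, $\tau^2_S(\xi)=\sigma(e^{2i\psi(|\xi|)})(\xi)$ with $\psi\equiv\pi$ for small $|\xi|$, so $\tau^2_S=\mathrm{id}$ near $S$ and hence $\tau^{2n}_S=\mathrm{id}$ near $S$. Since $D$, $\Delta$, the common boundary $C=\partial D=\partial\Delta$, and the transverse intersection point $S\cap L$ all lie in $N$, the twist $\tau^{2n}_S$ fixes them pointwise; because $\tau^{2n}_S$ restricts to the identity on $N$ it preserves $N$, whence $\tau^{2n}_S(L)\cap N=L\cap N$ and therefore $D\cap\tau^{2n}_S(L)=(D\cap L)\cap N=C$. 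Near $C$ the surfaces $\tau^{2n}_S(L)$ and $L$ coincide, so the cleanness of the intersection, the polarity sign $\omega(\nu,\nu')>0$, and the sphere $S$ built from $D\cup\Delta$ are all inherited unchanged. Thus $D$ is a positive elliptic $la$-disk of $\tau^{2n}_S(L)$, and Proposition \ref{dehn}(i) gives $\eta_D(\tau^{2n}_S(L))\ham\tau^2_S(\tau^{2n}_S(L))=\tau^{2(n+1)}_S(L)$.

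It remains to transport this across the Hamiltonian isotopy $\eta^n_D(L)\ham\tau^{2n}_S(L)$. Writing $\eta^n_D(L)=\Phi_n(\tau^{2n}_S(L))$ for some $\Phi_n\in\Ham(W)$, I would arrange — as is possible because every Hamiltonian isotopy entering the construction (those of Proposition \ref{dehn}, and inductively of $\Phi_{n-1}$) is compactly supported in the surgery region $U$ and can be cut off away from the core and from the interior of $D$ — that $\Phi_n$ is the identity on a neighborhood of $D$. Then $D=\Phi_n(D)$ is a positive elliptic $la$-disk of $\eta^n_D(L)$, the surgery is natural under the symplectomorphism $\Phi_n$, and
\[
\eta^{n+1}_D(L)=\eta_D(\Phi_n(\tau^{2n}_S(L)))=\Phi_n\big(\eta_D(\tau^{2n}_S(L))\big)\ham\Phi_n(\tau^{2(n+1)}_S(L))\ham\tau^{2(n+1)}_S(L),
\]
which closes the induction. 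The negative statement is entirely parallel: by Lemma \ref{Epair} the disk $D'$ is a negative elliptic $la$-disk of $L$ whose associated sphere is Hamiltonian isotopic to $S$ and $\eta_{D'}(L)\ham\tau^{-2}_S(L)$; since $\tau^{-2n}_S=\mathrm{id}$ near $S$ as well, the same persistence argument together with Proposition \ref{dehn}(ii) yields $\eta^n_{D'}(L)\ham\tau^{-2n}_S(L)$.

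I expect the genuine obstacle to be the cut-off step in the third paragraph: verifying that the Hamiltonian isotopies realizing $\eta_D\ham\tau^2_S$ can be chosen to fix a full neighborhood of $D$, not merely its boundary $C$, so that literally the same disk $D$ (rather than a Hamiltonian image of it) can be reused as the surgery datum at each stage. This is what promotes the abstract identity $(\tau^2_S)^n=\tau^{2n}_S$ to a statement about the iterated surgery $\eta^n_D$, and it is precisely where the localization of the twist near $S$ — so that $D$ sits in the region untouched by $\tau^{2n}_S$ — does the essential work; tracking that the associated sphere stays in a fixed Hamiltonian isotopy class, via Remark \ref{L=S2}, is a minor accompanying point.
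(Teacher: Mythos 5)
Your induction scheme is the same as the paper's, but the persistence claim it rests on --- that the original disk $D$ is again a positive elliptic $la$-disk of the surgered (or twisted) surface, with the same associated sphere $S$ --- is precisely the point that fails, and it fails for two separate reasons. First, $D$ is not an $la$-disk of $\eta_D(L)$ at all: the surgery deletes the collar $Orb_\cG(\gamma)$ of $C=\pa D$ from $L$ and glues in $Orb_\cG(\gamma')$, whose points all have nonvanishing fiber component over $D$, so $\eta_D(L)\cap D=\emptyset$ rather than $C$ (indeed the paper's ``dual surgery'' paragraph records that the $la$-disk of $\eta_D(L)$ is $D'=M(D)$, not $D$); consequently ``$\eta^2_D:=\eta_D\circ\eta_D$'' with a literally fixed $D$ is not even defined. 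Second, and more fundamentally, even after passing to the Hamiltonian-isotopic model $\tau^{2n}_S(L)$ and arranging that $D\cap\tau^{2n}_S(L)=C$, the disk that $C$ bounds in $\tau^{2n}_S(L)$ is no longer $\Delta$ but $\tau^{2n}_S(\Delta)$, which wraps $n$ times around $S$. (Your assertion that $\Delta$ lies in the identity zone $N$ and is fixed pointwise by the twist cannot be right: the support of $\tau^{2n}_S$ on $L$ is an annulus in the interior of $\Delta$, and if the twist fixed $\Delta$ it would fix $L$.) Proposition \ref{dehn} applied to the pair $(\tau^{2n}_S(L),D)$ therefore yields the double Dehn twist along the sphere obtained by smoothing $D\cup\tau^{2n}_S(\Delta)$, not along $S$; these two spheres are homologous but there is no reason for them to be Hamiltonian isotopic (compare Remark \ref{L=S2}, and recall that $\tau^{2}_S$ is not Hamiltonian isotopic to the identity), so the inductive step does not return a clean extra factor of $\tau^2_S$. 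The cut-off issue you flag as the ``genuine obstacle'' is real but secondary to this.

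The paper closes the gap by changing the surgery datum at each stage: $\tau^2_S(L)$ still contains a smaller \emph{untwisted} polar cap $\Delta_1\subset\Delta$ (the part of $\Delta$ inside the inner radius of the twist's support), and $C_1:=\pa\Delta_1$ bounds a new positive elliptic $la$-disk $D_1$ agreeing with $D$ outside $U$. Because $\Delta_1$ is untwisted, the sphere associated to $D_1\cup\Delta_1$ is a small perturbation of $S$, so Proposition \ref{dehn} again produces $\tau^2_S$, and one \emph{defines} $\eta^2_D(L):=\eta_{D_1}(\eta_D(L))$, iterating with a shrinking sequence of disks $D_0=D,D_1,D_2,\dots$. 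To salvage your version you would have to prove that the smoothing of $D\cup\tau^{2n}_S(\Delta)$ is Hamiltonian isotopic to $S$, which is a nontrivial claim not supplied by your argument.
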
 

\begin{proof} 
Let $L^1:=\eta_D(L)$. Up to Hamiltonian isotopy $L^1=\tau^2_S(L)$ as depicted in the right-hand picture of Figure \ref{tau2S}. There one observes that $\tau^2_S(L)$ contains a smaller disk $\Delta_1\subset \Delta$, and $C_1:=\pa \Delta_1$ 
is also the boundary of a positive elliptic $la$-disk $D_1$ of $\tau^2_S(L)$ such that $D_1=D$ outside $U$ (recall $\Delta$ and $U$ from the proof of Proposition \ref{dehn}). 
We can define $L^2:=\eta^2_D(L):=\eta_{D_1}\circ\eta_D(L)=
\eta_{D_1}(L^1)$. Since $\eta_{D_1}(L^1)$ is Hamiltonian isotopic to $\tau^2_S(L^1)$, we have that $L^2:=\eta^2_D(L)$ is Hamiltonian isotopic to $\tau^2_S(L^1)=\tau^4_S(L)$. 
Repeat  the process with  $L^1$ replaced by $L^2$ and so on  
so forth  we get infinitely many 
smoothly isotopic Lagrangian surfaces $L^n:=\eta_D^n(L)=
\eta_{D_{n-1}}(L^{n-1})$, $D_0=D$, $L^0=L$, $n\in \N$, 
and $L^n$ is Hamiltonian isotopic to $\tau^{2n}_S(L)$. 

The $\eta_{D'}$ case can be proved by exactly the same type 
of argument. This completes the proof. 
\end{proof} 

%
%
\subsection{Properties of $la$-disk surgery} \label{prop-surg}

Below we discuss the effect of $la$-disk surgery on the minimal 
Maslov number, the Liouville class, and the monotonicity of 
Lagrangian surfaces (see \cite{O1} for detailed definitions). For simplicity we assume that $c_1(W)=0$ 
and $\pi_1(W)=0$. We also assume that $L$ is orientable. Then the Maslov class $\mu_L$ of $L$ 
is an element of $H^1(L,\Z)$  and the Liouville class $\alpha_L$ 
is in $H^1(L,\R)$. If $\mu_L\neq 0$ then we define 
the minimal Maslov number of $L$ to be 
\[ 
m_L:=\min \{ \mu_L(\tau)\mid \tau\in H_1(L,\Z), \ \mu(\tau)>0\}, 
\] 
and we set $m_L=0$ if $\mu_L=0$. We say that $L$ is 
{\em monotone} if there is a constant $c>0$ such that 
\[ 
\alpha_L=c\cdot \mu_L. 
\] 

Let $L'=\eta_D(L)$ be obtained from $L$ by a $la$-disk surgery
 along a $la$-disk $D$ of $L$. 
If $L=S^2$ then both $\mu_L$ and $\alpha_L$ vanish, so are 
$\mu_{L'}$ and $\alpha_{L'}$. We may assume that the 
genus of $L$ is $g>0$. Then the intersection pairing 
\[ 
H_1(L;\Z)\times H_1(L;\Z)\to \Z , \quad (\s, \eta)\to \s\cdot\eta,  
\] 
is unimodular. For $\s\in H_1(L;\Z)$ we denote 
\[ 
\s^\perp:=\{ \zeta\in H_1(L;\Z)\mid \sigma \cdot \zeta =0\}. 
\]

\begin{prop} \label{L'monotone} 
Let $L,L'=\eta_D(L)$ be as in Proposition \ref{smooth-iso}. 
Assume in addition that $c_1(W)=0$ and $\pi_1(W)=0$. Then $L, L'$ have  the same 
minimal Maslov number.
If in addition that $L$ is orientable and monotone, then 
$L'$ is monotone iff the kernel  of the Liouville class 
$\alpha_L:H_1(L;\Z)\to \R$ satisfies  
\[ 
\ker(\alpha_L)\subset [C]^\perp.  
\] 
In particular, if $L$ is a monotone Lagrangian torus then so 
is $L'$. 
Moreover, if $[C]=0\in H_1(L,\Z)$, then $L'$ is monotone and 
$\alpha_{L'}$ is isomorphic to $\alpha_L$ as cohomology classes. 
\end{prop}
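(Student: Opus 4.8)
The plan is to analyze the effect of the surgery $\eta_D$ on the two relevant cohomology classes, $\mu_L$ and $\alpha_L$, separately, by working entirely within the $\cG$-invariant standard model from Section \ref{model}. Since the surgery replaces the annulus $Q = Orb_\cG(\gamma)$ by $M(Q) = Orb_\cG(\gamma')$ while leaving $L \setminus Q$ untouched, and since $M_\h$ provides a smooth isotopy $L_\h$ from $L$ to $L'$ (Proposition \ref{smooth-iso}), I would first fix the identification $H_1(L;\Z) \cong H_1(L';\Z)$ induced by this isotopy, so that both classes can be compared on the same lattice. The vanishing cycle $[C]$ is preserved under this identification, which is the crucial geometric anchor.

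First I would establish the Maslov statement. By Proposition \ref{v-cycle}, the Maslov index $\mu(T_CL, D) = 0$ along the vanishing cycle, and dually $\mu(T_{C'}L', D') = 0$. The key observation is that the surgery is supported near $D$, where the relevant loop of Lagrangian planes undergoes a controlled, Maslov-neutral modification: on the piece $M(Q)$ the tangent planes are obtained by applying the fixed linear map $M$ (equivalently, running through $M_\h$), and since $E_+$ is fixed pointwise and $E_-$ is merely rotated by $M_\h$, the net contribution to the Maslov class along any cycle is unchanged modulo the behavior along $[C]$, which contributes zero by Proposition \ref{v-cycle}. Concretely I would compute $\mu_{L'}(\s) - \mu_L(\s)$ for a basis of $H_1(L;\Z)$: cycles disjoint from $Q$ give zero trivially, and a cycle crossing $Q$ transversally picks up the same Maslov contribution before and after because the correction is measured against the fixed primitive direction. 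This yields $\mu_{L'} = \mu_L$ under the identification, hence equal minimal Maslov numbers.

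Next I would treat the Liouville class. Writing $\omega = d\lambda$ near the surgery region, the class $\alpha_{L'}$ differs from $\alpha_L$ only through the integral of $\lambda$ over cycles meeting $Q$; the difference is governed by the $\omega$-area swept between $\gamma$ and $\gamma' = M(\gamma)$. The essential computation is that $\alpha_{L'}(\s) - \alpha_L(\s)$ is proportional to the intersection number $\s \cdot [C]$, because the surgery changes the Liouville periods only in the direction transverse to $C$ (this is exactly the mechanism behind the stable/unstable dichotomy of Definition \ref{stable}). Granting this, $\alpha_{L'} = \alpha_L + t\,(\cdot \cap [C])$ for some real constant $t$ depending on the surgery size. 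Monotonicity of $L'$ means $\alpha_{L'} = c'\mu_{L'} = c'\mu_L$ for some $c' > 0$; combined with $\alpha_L = c\,\mu_L$, this forces $t\,(\s \cap [C]) = (c'-c)\,\mu_L(\s)$ for all $\s$. Intersecting with the kernel of $\alpha_L$ (equivalently, since $L$ is monotone, the kernel of $\mu_L$), this identity holds precisely when every $\s$ with $\alpha_L(\s) = 0$ also satisfies $\s \cap [C] = 0$, i.e. $\ker(\alpha_L) \subset [C]^\perp$. The torus case ($g=1$) is immediate because $H_1$ has rank two and $[C]$ is a primitive class, making the condition automatic; and when $[C] = 0$ the correction term vanishes identically, giving $\alpha_{L'} \cong \alpha_L$ and monotonicity for free.

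The main obstacle I anticipate is making precise the claim that $\alpha_{L'} - \alpha_L$ is exactly proportional to $\s \cap [C]$, with the correct proportionality sign and no spurious contribution along $[C]$ itself. This requires a careful $\cG$-invariant computation of the Liouville periods of $\gamma$ versus $\gamma'$ in the model, using that both are tangent along $\pa Q$ and that $\lambda$ can be chosen $\cG$-invariant. I would verify that the period of $\lambda$ along $C$ is unchanged (so no correction in the $[C]$-direction) and that the transverse period shifts by the controlled area, thereby isolating the dependence entirely in the intersection pairing with $[C]$.
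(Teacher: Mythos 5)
Your overall architecture is the same as the paper's: work in the $\cG$-invariant standard model, show the Maslov class is unchanged, and show the Liouville class changes by a term supported on the intersection pairing with $[C]$, so that $\alpha_{L'}=\alpha_L+t\,(\cdot\cap[C])$ and the monotonicity criterion $\ker(\alpha_L)\subset[C]^\perp$ drops out. However, there is a concrete gap in the step where you "compute $\mu_{L'}(\sigma)-\mu_L(\sigma)$ and $\alpha_{L'}(\sigma)-\alpha_L(\sigma)$ for a cycle crossing $Q$ transversally." A curve $\beta\subset L$ meeting $C$ once has $\beta\cap U_D$ equal to the arc $\gamma$, and the surgery replaces $\gamma$ by $M(\gamma)$; but $M$ fixes $\gamma(3\pi/4)$ while sending $\gamma(5\pi/4)$ to the $\cG$-antipodal point $g_\pi(\gamma(5\pi/4))$ on the other boundary circle of $Q$. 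So $(\beta\setminus\gamma)\cup M(\gamma)$ is not a closed curve, and there is no local-in-$U_D$ representative of the transported class $\sigma$ on which to run your comparison. The paper's proof resolves exactly this by representing $2\sigma$ by a $180^\circ$-symmetric pair of disjoint curves $\beta_\pm$ whose surgered arcs $\gamma'_\pm=M(\gamma_\pm)$ reconnect into a single closed curve $\beta'\subset L'$ with $[\beta']\cdot[C']=2$; it then proves $\mu_{L'}(\sigma')=\mu_L(2\sigma)$ and $\alpha_{L'}(\sigma')=2\alpha_L(\sigma)+\int_Z\omega$, where $Z$ is the planar region bounded by $\gamma_\pm\cup\gamma'_\pm$ (so your constant is $t=\tfrac12\int_Z\omega$). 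Without this doubling (or an equivalent device) your computation cannot be carried out as described.

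Two smaller points. First, your Maslov argument leans on Proposition \ref{v-cycle} and a claim of "Maslov-neutrality," but that proposition controls the loop $T_CL$ relative to $D$, not the Maslov contribution of the transverse arcs being swapped; the paper instead caps $\beta_\pm$ and $\beta'$ by disks (this is where the hypothesis $\pi_1(W)=0$ is actually used, which your proposal never invokes) and compares Maslov angles of $\gamma_\pm$ versus $\gamma'_\pm$ in compatible symplectic trivializations by direct computation. Second, in the torus case the condition is automatic not merely because $[C]$ is primitive of rank-two ambient lattice, but because $\alpha_L([C])=0$ (as $C$ bounds the Lagrangian disk $D$), so $\ker(\alpha_L)$ is rationally generated by $[C]$ and $[C]\cdot[C]=0$; you should make that explicit.
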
 

\begin{proof} 
Let $C:=\pa D$. Let $U_C\subset L$ be a collar neighborhood 
of $C\subset L$ such that $L\setminus U_C\subset L'$. 
If $\sigma\in 
H_1(L,\Z)$ satisfies $\sigma\cdot[C]=0$ then $\sigma 
=[\beta]$ for some curve $\beta\subset L$ disjoint from $U_C$. 
 Hence $\beta\subset L'$ and $\mu_{L'}([\beta])=\mu_{L}([\beta])$. 
 Similarly we have $\alpha_{L'}([\beta])=\alpha_L([\beta])$. 

Now let $\sigma$ be a generator of the quotient group of 
$H_1(L,\Z)$ by $[C]^\perp$. We may 
assume that $\sigma\cdot[C]=1$. Represent $2\sigma$ by two disjoint embedded curves $\beta_\pm
\subset L$ such that each of $\beta_\pm$ intersects 
with $C$ transversally at a single point, and 
$\gamma_\pm:=\beta_\pm
\cap U_D$ is contained in the $x_1y_1$-coordinate plane 
so that $\gamma_+=\gamma$ as defined in (\ref{gamma}), $\gamma_-\subset \{ 
x_1<0\}$, and $\gamma_+\cup \gamma_-$ is invariant under 
the $180^\circ$-rotation of the $x_1y_1$-plane with $(0,0)
\in\R^2_{x_1,y_1}$ as the center point. 

Since  $\pi_1(W)=0$, there exist smooth maps  
$\psi_\pm:(D^2,\partial D^2)\to  (W,\beta_\pm)$ of disks 
with boundary circles mapped to $\beta_\pm$ respectively.  
Let $D_\pm:=\psi_\pm(D^2)$. We may assume that 
$B_\pm:=D_\pm\cap U_D$ is contained in the $x_1y_1$-plane 
such that $B_+\subset \{ x_1>0\}$, $B_-\subset \{ 
x_1<0\}$, and $B_+\cup B_-$ is also invariant under 
the $180^\circ$-rotation described above. 
We may also assume that 
$\psi_\pm$ is a smooth embedding when 
restricted to $V_\pm:=\psi^{-1}_\pm(B_\pm)$.

Recall the anti-symplectic rotation $M$ as defined in (\ref{M}). 
Let $\gamma'_\pm:=M(\gamma_\pm$). Let $\beta'$ denote the 
closure of the union of $(\beta_\pm\setminus \gamma_\pm)
\cup \gamma'_\pm$. Then $\beta'\subset L'$ is a simple 
closed curve representing a class $\sigma'\in H_1(L',\Z)$ 
satisfying $\sigma'\cdot [C']=2$. Let $Z\subset \R^2_{x_1y_1}$  
denote the closed region 
bounded by $\gamma_\pm$ and $\gamma'_\pm$. 
Let $D':=D_-\cup Z\cup D_+$, then $D'$ is a disk with $\pa D'=\beta'$. 

Pick a symplectic trivialization 
$\Psi_\pm$ of $\psi_\pm^*TW$ so that $\Psi_\pm|_{V_\pm}$ 
is the standard symplectic trivialization of $TW$ on 
$B_\pm$. It is easy to see that $D'$ is the image of 
a smooth map $\psi':(D^2,\partial D^2)\to (W,\gamma')$ 
such that $\psi'$ is an embedding when restricted to $V':=
\psi^{-1}(B)$ where $B=B_-\cup Z\cup B_+$. Choose 
a symplectic trivialization $\Psi'$ of $\psi'^*TW$  so that 
$\Psi'|_{V'}$ is just the standard symplectic trivialization of  $TW$ on $B$ and $\Psi'=\Psi_\pm$ when restricted to 
the preimages of $D\setminus B_\pm$ respectively. 
Now the comparison of $\mu_L(\beta_-\cup \beta_+)$ and $\mu_{L'}(\beta')$ is reduced to the calculation of 
the Maslov angles of $\gamma_-\cup\gamma_+$ and $\gamma'_-\cup
\gamma'_+$. An easy computation shows the two angles are 
equal. Thus we have 
\[ 
\mu_{L'}(\sigma')=\mu_{L}(2\sigma). 
\] 
Note that $\sigma$ is primitive and $\sigma'$ is twice of a 
primitive class. We conclude that $\mu_L$ and $\mu_{L'}$ 
have the same minimal Maslov number. 

Also we have 
\[ 
\alpha_{L'}(\sigma')=2\cdot \alpha_L(\sigma)+\int_Z\omega, 
\] 
here the orientation of $Z$ is determined by the orientation of its 
boundary $\pa Z=\gamma'_+\cup(-\gamma_-)\cup\gamma'_-\cup(-\gamma_+)$. Here $-\gamma_-$ denotes $\gamma_-$ but 
with its orientation reversed, and $\bar{\gamma}_+$ is defined 
in a similar way. 
So $\int_Z\omega>0$  if $\pa Z$ is oriented counterclockwise, 
$\int_Z\omega<0$ if otherwise. 

Thus if $L$ is monotone then $L'$ is monotone iff 
$\ker(\alpha_L)\subset [C]^\perp$. In particular, this condition is 
met when $[C]=0\in H_1(L,\Z)$. On the other hand, if $L$ is a 
torus (and monotone) then the condition  $\ker(\alpha_L)\subset [C]^\perp$ is automatically satisfied, even though $[C]\neq 0$. 
So the monotonicity of a Lagrangian torus is preserved under 
$la$-disk surgery. 
\end{proof}

%

%
%

%
%
\section{Lagrangian Grassmannian}

%
%
\subsection{Decomposition and group action}  \label{LG}

In this section we review Lagrangian Grassmannian of 
a $4$-dimensional symplectic vector space $V$. Since $V$ is 
linearly isomorphic  to the standard symplectic $4$-space 
$(\R^4,\omega=\sum_{j=1}^2dx_j\wedge y_j)$, we will identify 
$V$ with $(\R^4,\omega)$ without further notice. Denote by 
$\Lambda^+$  the space of all oriented Lagrangian planes in 
$V$.

{\bf Factorization of $\Lambda^+$.} \ 
Let $J:V\to V$ be a complex structure compatible with $\omega$, i.e., $J$ is a linear map with $J^2=-Id$, and 
the composition $\omega\circ (Id\times J):V\times V\to \R$ is 
positive definite and symmetric. 
The pair $(\omega, J)$ associate  a unique inner product 
$g:=\omega\circ (Id\times J)$ on $V$, and the structure group 
is reduced to the unitary group $U(2)$ associated to $(\omega, J)$. 
For notational convenience, 
we will often represent an  oriented 2-dimensional subspace 
$E\subset V$ as a $2$-vector $E=v_1\wedge v_2$ formed by an 
oriented basis $\{ v_1,v_2\}$ of $E$. We will also denote 
by $E^\perp$ the oriented  2-dimensional subspace 
in $V$ $g$-orthogonal to $E$ such that the orientation of $E\wedge E^\perp$ is that of their ambient  symplectic vector space  $V=\R^4$.

Fix a unitary basis $( u,v)$. There is a unique $g$-orthogonal 
complex structure $K'$ on $V$ such that $u\wedge v$ is 
$K'$-complex. i.e., $v=K'u$. Let $K'':=JK'$. We have 
$K'K''=J$, $K''J=K'$. The triple 
$(J,K',K'')$  generate 
an $S^2$-family of $g$-orthogonal complex structures 
\[ 
J_{a,b,c}:=aJ+bK'+cK'', \quad a^2+b^2+c^2=1. 
\] 
Let 
\[
K_t:=\cos tK'+\sin tK'', \quad t\in \R/2\pi\Z. 
\] 
Note that  
$K_t$-complex planes are Lagrangian planes. We have the 
following decomposition of $\Lambda^+$: 
\begin{equation} \label{Lambda+} 
\Lambda^+ =\bigsqcup _{t\in\R/2\pi\Z}\dP(K_t), 
\end{equation} 
where $\dP(K_t)\cong \C P^1\cong S^2$ is the Grassmannian 
of $K_t$-complex 2-dimensional subspaces of $V$. 
We also denote by $\dP(J)$ the Grassmannian of $J$-complex 
planes. We 
call (\ref{Lambda+}) a {\em $J$-decomposition of $\Lambda^+$}. 

\begin{rem} 
{\rm 
A different choice of a unitary basis $( u,v)$ amounts to 
changing the parameter $t$ in (\ref{Lambda+}) by 
adding a constant. In addition, 
the space of $\omega$-compatible 
complex structures on $V$ is contractible, hence any two 
$J$-decompositions of $\Lambda^+$ are homotopic. 
} 
\end{rem}

For example, if $J$ is the standard complex structure on 
$\C^2\cong \R^4$ defined 
by 
\[ 
J\pa_{x_j}=\pa_{y_j}, \quad j=1,2, 
\] 
then $g$ is the Euclidean metric 
on $\R^4$. Pick the unitary basis 
\[ 
u:=\pa_{x_1}, \quad v=\pa_{x_2}, 
\] 
then $K', K''$ are  defined by 
\begin{eqnarray}
K'\pa_{x_1}  = \pa_{x_2},  & \quad  K'\pa_{y_1} = -\pa_{y_2} 
\label{K'}\\ 
K''\pa_{x_1} = \pa_{y_2},  & \quad  K''\pa_{y_1} = -\pa_{x_2}.  
\label{K''} \end{eqnarray}

Although the choice of $u,v$ play no big role in the decomposition of $\Lambda^+$ as in (\ref{Lambda+}), the 
oriented Lagrangian plane $u\wedge v$ associates a unique 
oriented $S^1$-subgroup of $SU(2)$ as well as a unique 
oriented $S^1$-family of $J$-complex planes, leading to  
a parameterization of $\Lambda^+$ which will be described below. First review some basic facts about the $U(2)$-action 
on $\Lambda^+$.

{\bf Action of $U(2)$ on $\Lambda^+$.} \ 
The unitary group $U(2)$ associated to $(\omega, J)$ acts on 
$\Lambda^+$. Let $\cC\subset U(2)$ denote the subgroup 
of centralizers of $U(2)$. With respect to any 
unitary basis (e.g., $\{ u,v\}$), the matrix representative of $\cC$ is 
\begin{equation} \label{C} 
\cC  =\Big\{ c_t=\begin{pmatrix} e^{it} & 0\\ 0 & e^{it}\end{pmatrix}\mid t\in\R/2\pi\Z\Big\}.  
\end{equation} 
$\cC$ acts on $\Lambda^+$ by rotations: 
\begin{equation} \label{ctau} 
c_\tau (\dP (K_t))=\dP (K_{t+2\tau}), 
\end{equation} 
with $c_\pi=-Id$ acts trivially on 
$\Lambda^+$. Note that $\cC$ acts trivially on $\dP (J)$, $c_t$ rotates the total space of each element $E\in\dP(J)$ by an angle of $t$-radians with respect to the orientation of $E$. 

On the other hand, the special unitary subgroup 
$SU(2)\subset U(2)$ acts on each of $\dP(K_t)$ as well as on 
$\dP(J)$ by rotations, with its centralizer subgroup $\{ \pm Id\}$ 
acts as the isotropy subgroup of the action. Indeed, the action 
of $SU(2)/\{ \pm Id\}$ on $\dP(J)$ and on each of $\dP(K_t)$ 
can be identified with the canonical action of $SO(3)$ on the 
unit 2-sphere $S^2\subset \R^3$. Also $SU(2)$ commutes 
with all $J_{a,b,c}$, and in particular $K_t$ for $t\in\R/2\pi\Z$.

As a homogeneous space 
$\dP(K_t)\cong SU(2)/S^1\cong S^2$ is endowed with an 
$SU(2)$-equivariant metric unique up to scaling. 
We take the one with which the area of $\dP(K')$ is $\pi$, then 
$\dP(K_t)$ is a standard sphere with diameter 1. We also  
endow $\dP(J_{a,b,c})$ with the same kind of metric for 
$(a,b,c)\in S^2$. 

An $S^1$-subgroup $\cH\subset SU(2)$ acts on each 
of $\dP(J_{a,b,c})$ by standard rotations. There is a unique 
$\cH$-orbit in $\dP(J_{a,b,c})$ which is a great circle with 
respect the $SU(2)$-equivariant metric. We call this special 
orbit the {\em geodesic $\cH$-orbit in $\dP(J_{a,b,c})$}.

%
\subsection{Intersection of complex and Lagrangian planes} \label{intCL}

\begin{defn}[{\bf Complex locus}] 
{\rm 
Let $Z\subset V\cong \R^4$ be an oriented two dimensional 
subspace. The {\em complex locus of $Z$}  is defined 
to be 
\[ 
\cE_Z:=\{ E\in \dP(J)\mid E\cap Z\neq \{ 0\}\}. 
\] 
} 
\end{defn} 

\begin{prop} 
Let $(u,v)$ be a positive orthonormal basis of $Z$. Then 
\[ 
\cE_Z=\{ E_\h:=u_\h\wedge Ju_\h\mid \h\in \R/\pi\Z\}, 
\] 
where $u_\h:=u\cos\h+v\sin \h$. 
So $\cE_Z$ is a (possibly degenerated) circle in $\dP(J)$. 
And $\cE_Z$ consists of a single point iff $Z\in \dP(J)$ or 
$Z\in\dP(-J)$. 
\end{prop}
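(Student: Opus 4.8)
The plan is to prove the two set inclusions that establish $\cE_Z=\{E_\h\}$, then read off the ``circle'' statement and the degeneracy criterion as corollaries.

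First I would record the elementary facts about the inner product $g=\omega\circ(\mathrm{Id}\times J)$ that drive the whole argument: for any unit vector $w$ one has $g(w,Jw)=\omega(w,J(Jw))=\omega(w,-w)=0$ and $g(Jw,Jw)=g(w,w)=1$, so $(w,Jw)$ is an orthonormal pair spanning the $J$-complex plane $w\wedge Jw\in\dP(J)$. Applying this to $w=u_\h$ shows each $E_\h=u_\h\wedge Ju_\h$ is a genuine element of $\dP(J)$; moreover $u_{\h+\pi}=-u_\h$ gives $E_{\h+\pi}=E_\h$, so $\h\mapsto E_\h$ is a well-defined continuous map $\R/\pi\Z\to\dP(J)$. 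The inclusion $\{E_\h\}\subseteq\cE_Z$ is then immediate: $u_\h\in Z$ and $u_\h\in E_\h$, so $u_\h\in E_\h\cap Z\neq\{0\}$.

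For the reverse inclusion $\cE_Z\subseteq\{E_\h\}$, I would take $E\in\dP(J)$ with a nonzero $x\in E\cap Z$. Because $x\in Z$ and $(u,v)$ is a positive orthonormal basis, $x$ is a positive multiple of $u_\h$ for a unique $\h\in\R/\pi\Z$ (replacing $\h$ by $\h+\pi$ if the scalar would be negative). Since $E$ is $J$-invariant and contains $x$, it contains $Jx$ and hence equals the oriented plane $x\wedge Jx$; as $x=r\,u_\h$ with $r>0$ gives $x\wedge Jx=r^2\,u_\h\wedge Ju_\h$, the $J$-complex orientations agree and $E=E_\h$. The only genuinely delicate point is this orientation bookkeeping: one must keep $r>0$ so that $E$ and $E_\h$ coincide as \emph{oriented} planes in $\dP(J)$, not merely as subspaces.

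With $\cE_Z=\{E_\h\mid\h\in\R/\pi\Z\}$ established, $\cE_Z$ is the continuous image of $S^1\cong\R/\pi\Z$, i.e.\ a (possibly degenerate) circle, and it remains to characterize degeneracy. If $\cE_Z$ is a single point then in particular $E_0=E_{\pi/2}$, so $v=u_{\pi/2}\in E_0=\Span\{u,Ju\}$; writing $v=au+bJu$ and using $g(u,v)=0$, $g(u,u)=1$, $g(u,Ju)=0$ forces $a=0$, whence $v=bJu$ with $b=\pm1$ by unit length. Thus $Z=u\wedge v=\pm\,u\wedge Ju$ is $J$-invariant, lying in $\dP(J)$ when $v=Ju$ and in $\dP(-J)$ when $v=-Ju$. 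Conversely, if $Z\in\dP(\pm J)$ then $Z$ is $J$-invariant, so $Ju_\h\in Z$ and $E_\h=u_\h\wedge Ju_\h$ is the single $J$-complex plane underlying $Z$ for every $\h$, making $\cE_Z$ a point. (If desired, the same orthogonal-decomposition argument shows $\h\mapsto E_\h$ is injective whenever $Z$ is not $J$-invariant, so that $\cE_Z$ is then an embedded circle rather than merely a non-constant image.) I expect the degeneracy equivalence — specifically matching the two orientations $v=\pm Ju$ with membership in $\dP(J)$ versus $\dP(-J)$ — to be the part requiring the most care.
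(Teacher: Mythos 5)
Your proof is correct. The paper states this proposition without any proof (it is followed only by the remark that $\cE_Z=\{Z\}$ when $Z$ is $J$-complex and $\cE_Z=\{-Z\}$ when $Z$ is $(-J)$-complex), so there is no argument in the text to compare against; your two inclusions, the identification $E_{\h+\pi}=E_\h$, and the degeneracy equivalence via $v=\pm Ju$ supply exactly what is missing. One small simplification: the orientation bookkeeping you flag in the reverse inclusion is automatic, since for any nonzero $x=ru_\h$ one has $x\wedge Jx=r^2\,u_\h\wedge Ju_\h$ regardless of the sign of $r$, so $E$ and $E_\h$ agree as oriented planes with no case distinction needed.
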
 

It is easy to see that $\cE_Z=\{ Z\}$ if $Z$ is $J$-complex, and 
$\cE_Z=\{ -Z\}$ if $Z$ is $(-J)$-complex. 

In general $Z$ is $J_{a,b,c}$-complex for some unique 
$(a,b,c)\in S^2$. $SU(2)$ acts on the $J_{a,b,c}$-complex 
Grassmannian $\dP(J_{a,b,c})\cong S^2$ by rotations with 
$\{ \pm Id\}$ as the isotropy subgroup. 
There is a unique $S^1$-subgroup $\cH\subset SU(2)$ fixing 
the pair $Z,Z^\perp\in \dP(J_{a,b,c})$. 
$\cH$ acts as rotations on the total 
spaces of $Z$ and $Z^\perp$ respectively. We can orient 
$\cH=\{ h_s\mid s\in \R/2\pi\Z\}$ with $h_0=Id$ 
so that $h_s$ rotates $Z$ by an angle of $s$-radians, whilst 
it rotates $Z^\perp$ by an angle of $(-s)$-radians, with respect 
to the orientation associated with the complex structure 
$J_{a,b,c}$. By definition, $\cH$ also acts on $\cE_Z$ by 
rotations. Combining with the proposition above we have the 
following lemma.  

\begin{lem} 
The complex locus $\cE_Z$ is a connected  
$\cH$-orbit and hence a latitude  in $\dP(J)$ 
with respect to the fixed points of $\cH$ in $\dP(J)$, 
where $\cH\subset  SU(2)$ is the stabilizer subgroup of $Z$. 
\end{lem}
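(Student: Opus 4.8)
The plan is to realize $\cE_Z$ as a single orbit of the circle group $\cH$ acting on $\dP(J)$ and then invoke the elementary classification of $S^1$-orbits on the round $2$-sphere. Recall from the preceding proposition that, for a positive orthonormal basis $(u,v)$ of $Z$,
\[
\cE_Z = \{ E_\h = u_\h\wedge Ju_\h \mid \h\in\R/\pi\Z\}, \qquad u_\h=u\cos\h+v\sin\h,
\]
while $\cH=\{h_s\mid s\in\R/2\pi\Z\}$ is the stabilizer of $Z$ in $SU(2)$, acting on the total space of $Z$ as rotation through angle $s$ (with respect to the $J_{a,b,c}$-orientation) and on $Z^\perp$ through $-s$. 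The first point to check is that $\cH$ genuinely acts on the set $\cE_Z$. Since $\cH\subset SU(2)\subset U(2)$, each $h_s$ is a $g$-isometry commuting with $J$, so it carries $J$-complex planes to $J$-complex planes and hence acts on $\dP(J)$; and because $h_s$ fixes $Z$ as a subspace, any $E$ with $E\cap Z\neq\{0\}$ satisfies $h_s(E)\cap Z=h_s(E\cap Z)\neq\{0\}$, whence $h_s(E)\in\cE_Z$.

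Next I would establish transitivity by a direct computation. As $h_s$ restricts to rotation by $s$ in the frame $(u,v)$, we have $h_s(u_\h)=u_{\h+\varepsilon s}$ for a fixed sign $\varepsilon\in\{\pm1\}$ recording whether $(u,v)$ agrees with the $J_{a,b,c}$-orientation, and since $h_s$ commutes with $J$,
\[
h_s(E_\h)=h_s(u_\h)\wedge Jh_s(u_\h)=u_{\h+\varepsilon s}\wedge Ju_{\h+\varepsilon s}=E_{\h+\varepsilon s}.
\]
Letting $s$ range over $\R/2\pi\Z$ therefore sweeps $E_0$ through all of $\cE_Z$, so $\cE_Z$ is a single $\cH$-orbit, and in particular connected. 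The factor-of-two periodicity is consistent: the antipodal element $h_\pi=-Id$ acts trivially on $\dP(J)$, so the action descends to $\cH/\{\pm Id\}\cong S^1$, which acts simply transitively in a manner matching the $\pi$-periodicity of $\h$.

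Finally, the $\cH$-action on $\dP(J)\cong S^2$ factors through $SU(2)/\{\pm Id\}\cong SO(3)$, under which $\cH$ maps to the one-parameter group of rotations about the axis joining its two fixed points in $\dP(J)$. The orbits of such a rotation subgroup are exactly the latitude circles relative to those poles, with the poles themselves as the two degenerate orbits. Since $\cE_Z$ is one such orbit, it is a latitude, degenerating to a single pole precisely when $Z$ is $(\pm J)$-complex, in agreement with the preceding proposition.

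I expect the only delicate point to be the bookkeeping in the transitivity step: matching the rotation parameter $s\in\R/2\pi\Z$ on $Z$ against the angle $\h\in\R/\pi\Z$ parametrizing $\cE_Z$, and verifying that $h_\pi=-Id$ so that the action descends correctly to the $\pi$-periodic circle. The preservation of $\cE_Z$ is immediate from $\cH\subset U(2)$ commuting with $J$, and the identification of a single $S^1$-orbit with a latitude is the standard picture of an $SO(3)$-rotation subgroup acting on the round sphere, so neither of these should present any real difficulty.
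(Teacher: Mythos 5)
Your proposal is correct and follows essentially the same route the paper takes: the paper derives the lemma directly from the preceding proposition together with the observation that $\cH$, being the stabilizer of $Z$ and commuting with $J$, acts on $\cE_Z$ by rotations, which is exactly the content of your transitivity computation $h_s(E_\h)=E_{\h+\varepsilon s}$ and the standard identification of an $S^1$-rotation orbit on $\dP(J)\cong S^2$ with a latitude. Your write-up merely makes explicit the bookkeeping ($h_\pi=-Id$, the $2{:}1$ matching of $s\in\R/2\pi\Z$ against $\h\in\R/\pi\Z$) that the paper leaves to the reader.
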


\begin{prop} \label{ZZ} 
The two complex loci $\cE_Z$ and $\cE_{Z^\perp}$ are 
either disjoint or equal. Moreover, 
$\cE_Z=\cE_{Z^\perp}$ iff $Z$ is Lagrangian. 
\end{prop}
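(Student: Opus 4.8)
The plan is to reduce the statement to elementary two–dimensional symplectic linear algebra inside $Z$, exploiting that $\omega$ restricted to a two–dimensional subspace is either identically zero or nondegenerate, so that every oriented plane $Z\subset V$ is either Lagrangian or symplectic. I would then prove the sharper dichotomy: if $Z$ is symplectic then $\cE_Z\cap\cE_{Z^\perp}=\emptyset$, while if $Z$ is Lagrangian then $\cE_Z=\cE_{Z^\perp}$. This simultaneously yields the ``disjoint or equal'' alternative and the equivalence $\cE_Z=\cE_{Z^\perp}\iff Z$ Lagrangian, since $\cE_Z$ is always nonempty: it contains the $J$–line $w\wedge Jw$ for any unit $w\in Z$ by the preceding proposition.

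The computational engine is the identity $g(Jw,z)=\omega(Jw,Jz)=\omega(w,z)$, valid for all $w,z\in V$ by $\omega$–compatibility of $J$ together with $g(a,b)=\omega(a,Jb)$. Taking $w\in Z$, this shows that $Jw$ is $g$–orthogonal to all of $Z$, i.e. $Jw\in Z^\perp$, exactly when $\omega(w,z)=0$ for every $z\in Z$; that is, exactly when $w$ lies in the radical of $\omega|_Z$. Thus whether a line $w\wedge Jw$ can simultaneously meet $Z$ and $Z^\perp$ is governed entirely by the degeneracy of $\omega|_Z$.

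Next I would analyze a prospective common element $E\in\cE_Z\cap\cE_{Z^\perp}$. By definition $E$ is a $J$–complex line meeting $Z$ in some nonzero $w$, so $E=\mathrm{span}(w,Jw)$, and meeting $Z^\perp$ in some nonzero $w'$. Writing $w'=aw+bJw$ and pairing with $w$, the relations $g(w,w)=|w|^2$, $g(w,Jw)=\omega(w,J^2w)=-\omega(w,w)=0$, together with $g(w,w')=0$ (as $w\in Z$ and $w'\in Z^\perp$), force $a|w|^2=0$, hence $a=0$; so $w'$ is a nonzero multiple of $Jw$ and $Jw\in Z^\perp$. By the identity above, $w$ then lies in the radical of $\omega|_Z$. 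Therefore $\cE_Z\cap\cE_{Z^\perp}\neq\emptyset$ implies $\omega|_Z$ is degenerate, which in two dimensions means $\omega|_Z\equiv 0$, i.e. $Z$ is Lagrangian; contrapositively, if $Z$ is symplectic the two loci are disjoint.

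To finish, I would treat the Lagrangian case directly: if $Z$ is Lagrangian then every $w\in Z$ satisfies $Jw\in Z^\perp$, so $JZ=Z^\perp$ by a dimension count, and the $J$–line $w\wedge Jw$ contains the nonzero vector $Jw\in Z^\perp$, giving $\cE_Z\subseteq\cE_{Z^\perp}$; since $Z^\perp$ is then Lagrangian with $(Z^\perp)^\perp=Z$, the same inclusion applied to $Z^\perp$ yields $\cE_Z=\cE_{Z^\perp}$. Combined with the symplectic case, this proves both assertions. The only delicate point — and the step I expect to need the most care — is the orientation bookkeeping for $Z^\perp$ and the check that working from the parameterization $\cE_Z=\{E_\h=u_\h\wedge Ju_\h\}$ is consistent with using the raw defining condition $E\cap Z\neq\{0\}$; but once one observes that membership in $\cE_Z$ is detected purely by containing a nonzero vector of $Z$, orientations play no role in the intersection pattern and the argument closes.
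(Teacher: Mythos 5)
Your proof is correct, and it takes a genuinely different route from the paper's. For the ``disjoint or equal'' dichotomy the paper observes that $\cE_Z$ and $\cE_{Z^\perp}$ are both connected orbits of the same stabilizer circle $\cH\subset SU(2)$ (the stabilizer of the antipodal pair $Z,Z^\perp$ in $\dP(J_{a,b,c})$), so they coincide or are disjoint automatically; you instead deduce the dichotomy from the sharper alternative that a symplectic $Z$ gives disjoint loci while a Lagrangian $Z$ gives equal ones. For the equivalence, the paper assumes $\cE_Z=\cE_{Z^\perp}$ and tracks the intersection subspaces $E_\h\cap Z$ and $E_\h\cap Z^{\perp}$ as $\h$ varies to force $Z^\perp=-JZ$, then handles the Lagrangian direction by direct computation in the $K'$-model. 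Your key identity $g(Jw,z)=\omega(Jw,Jz)=\omega(w,z)$, which reduces everything to the radical of $\omega|_Z$, is more elementary and in fact proves something slightly stronger: a \emph{single} common element of $\cE_Z$ and $\cE_{Z^\perp}$ already forces $Z$ to be Lagrangian, which delivers both assertions of the proposition in one stroke. What your argument gives up is the orbit-theoretic description of $\cE_Z$ as an $\cH$-orbit, which the paper reuses immediately afterwards (Proposition \ref{georbit}) to decide which complex loci are great circles; what it gains is a short, self-contained piece of symplectic linear algebra with no $SU(2)$ machinery, and your closing remark that membership in $\cE_Z$ is insensitive to orientations correctly disposes of the only potential bookkeeping issue.
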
 

\begin{proof} 
Observe that $Z$ and $Z^\perp$ have the same stabilizer 
subgroup $\cH\subset SU(2)$, hence both $\cE_Z$ and 
$\cE_{Z^\perp}$ are connected $\cH$-orbit in $\dP(J)$. 
So $\cE_Z\cap \cE_{Z^\perp}=\emptyset$ if $\cE_Z\neq \cE_{Z^\perp}$. 

To verify the second statement we 
may assume that $Z$ is 
 neither $J$-complex nor $(-J)$-complex 
without loss of generality. 
Write $Z=u\wedge J_{a,b,c}u$ with $u$ unitary and $a\neq 0$, 
Then 
\[ 
\cE_Z=\{ E_\h:=(u\cos \h+J_{a,b,c}u\sin \h)\wedge 
J(u\cos \h+J_{a,b,c}u\sin \h) \mid \h\in\R/\Z\}. 
\] 
Assume that $\cE_Z=\cE_{Z^\perp}$. Then 
for any $\h\in\R/\pi\Z$, we have 
 $\dim E_\h\cap Z=1=\dim E_\h
\cap Z^\perp$, and $(E_\h\cap Z)\perp Z^\perp$, $(E_\h\cap 
Z^\perp)\perp Z$ since $Z\perp Z^\perp$. This implies that 
$E_\h\cap Z^\perp$ is spanned by $J(u\cos\h+J_{a,b,c}u\sin\h)$. Same conclusion holds if we replace $\h$ by $\h^\perp:=\h+\frac{\pi}{2}$. Thus we must have  $Z^\perp =-JZ$, hence $Z$ is Lagrangian. 

Conversely, assume that $Z$ is Lagrangian, then $J_{a,b,c}=K_t$ for some $t\in \R/2\pi\Z$. Since the centralizer subgroup 
$\cC\subset U(2)$ commutes with $SU(2)$ and $c_{\frac{-t}{2}}(Z)\in \dP(K')$, we may assume that $Z\in \dP(K')$ 
without loss of generality. 
Then we can write $Z=u\wedge K'u$, and $Z^\perp=K''u\wedge Ju=JK'u\wedge Ju$. That $\cE_Z=\cE_{Z^\perp}$ 
can be easily verified by direct computation. This completes 
the proof. 
\end{proof} 

\begin{prop} \label{georbit} 
The complex locus $\cE_Z$ is a great circle in $\dP(J)$ iff 
$Z$ is Lagrangian. 
\end{prop}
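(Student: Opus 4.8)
The plan is to exploit the fixed-point-free isometric involution of $\dP(J)$ given by orthogonal complementation, $\alpha\colon E\mapsto E^\perp$, and to read off the ``latitude'' of $\cE_Z$ from how $\alpha$ acts on it. Recall from the discussion preceding Proposition \ref{ZZ} that, since $Z$ and $Z^\perp$ share the same stabilizer $\cH\subset SU(2)$, both $\cE_Z$ and $\cE_{Z^\perp}$ are connected $\cH$-orbits in $\dP(J)$, hence latitudes with respect to the same pair of $\cH$-fixed points (the poles) $N,S\in\dP(J)$. Thus $\cE_Z$ is a circle of some colatitude $\phi\in[0,\pi]$ measured from $N$, and the whole question is whether $\phi=\pi/2$, i.e. whether $\cE_Z$ is the equatorial great circle.

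First I would check that $\alpha$ is exactly the antipodal map of the round metric on $\dP(J)\cong S^2$: complex-linearity and unitarity of $J$ make $E\mapsto E^\perp$ a well-defined involution of $\dP(J)$, and in the Riemann-sphere model $[z:w]\mapsto z/w$ it sends a complex line to its Hermitian complement, which is precisely the antipodal map of the Fubini--Study (hence $SU(2)$-equivariant round) metric. Next I would show $\alpha(\cE_Z)=\cE_{Z^\perp}$. This reduces to the purely linear-algebraic identity, valid for any two $2$-planes $E,Z\subset\R^4$,
\[
\dim(E\cap Z)=\dim\bigl((E+Z)^\perp\bigr)=\dim(E^\perp\cap Z^\perp),
\]
so that $E\cap Z\neq\{0\}$ iff $E^\perp\cap Z^\perp\neq\{0\}$; applying this to $J$-complex planes $E$ gives $E\in\cE_Z\iff E^\perp\in\cE_{Z^\perp}$. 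Finally I would note that $\alpha$ permutes the poles $N,S$: since $\cH$ acts unitarily, the orthogonal complement of an $\cH$-invariant complex line is again $\cH$-invariant, so $\alpha$ maps $\{N,S\}$ to itself, and being fixed-point-free it must swap $N$ and $S$.

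Putting these together finishes the argument. Because $\alpha$ is an isometry swapping $N$ and $S$, it carries the latitude at colatitude $\phi$ to the latitude at colatitude $\pi-\phi$; applied to $\cE_Z$ this identifies $\cE_{Z^\perp}$ as the latitude at colatitude $\pi-\phi$. Two latitudes about the same poles coincide iff they have equal colatitude, so $\cE_Z=\cE_{Z^\perp}$ iff $\phi=\pi-\phi$, i.e. iff $\phi=\pi/2$, which is exactly the condition that the latitude $\cE_Z$ be a great circle. Invoking Proposition \ref{ZZ}, $\cE_Z=\cE_{Z^\perp}$ iff $Z$ is Lagrangian, and the chain of equivalences
\[
\cE_Z \text{ is a great circle}\iff \phi=\tfrac{\pi}{2}\iff \cE_Z=\cE_{Z^\perp}\iff Z\text{ is Lagrangian}
\]
yields the proposition. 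The degenerate cases $\phi\in\{0,\pi\}$, when $\cE_Z$ is a single pole and $Z$ is $\pm J$-complex and manifestly non-Lagrangian, fit the same picture. The one point requiring genuine care, and the crux of the proof, is the pair of facts that $\alpha$ is the metric antipodal map and that it intertwines $\cE_Z$ with $\cE_{Z^\perp}$; once these are in hand the conclusion is a bookkeeping of colatitudes.
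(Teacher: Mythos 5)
Your proof is correct, and although it shares the paper's overall skeleton --- produce an involution of $\dP(J)$ carrying $\cE_Z$ to $\cE_{Z^\perp}$, check that it sends the latitude at colatitude $\phi$ to the one at colatitude $\pi-\phi$, and then invoke Proposition \ref{ZZ} --- the involution you use is genuinely different. The paper takes $\rho\in\cN(\cH)\subset SU(2)$, a representative of the nontrivial class in $\cN(\cH)/\cH\cong\Z_2$, acting on $\dP(J)$ as a $180^\circ$ rotation about an equatorial axis; there, $\rho(\cE_Z)=\cE_{Z^\perp}$ is obtained by comparing stabilizers of $Z$ and $Z^\perp$, and the description of how $\rho$ acts on $\dP(J)$ is delegated to a ``direct computation.'' You instead use orthogonal complementation $\alpha\colon E\mapsto E^\perp$, i.e.\ the antipodal map of $\dP(J)$, for which the key identity $\alpha(\cE_Z)=\cE_{Z^\perp}$ reduces to the elementary dimension count $\dim(E\cap Z)=\dim\bigl((E+Z)^\perp\bigr)=\dim(E^\perp\cap Z^\perp)$, with no group theory needed, and the fact that $\alpha$ swaps the two poles is forced by $\cH$-equivariance plus fixed-point-freeness. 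The two maps really are distinct ($\rho$ is an orientation-preserving rotation with two fixed points on the equator, $\alpha$ is orientation-reversing and fixed-point-free), but both swap the poles, which is all either argument uses. Your route buys a more self-contained and elementary justification of the crucial step; the paper's keeps the whole argument inside the $SU(2)$-action, in keeping with the equivariant framework of Section \ref{LG}. The only point I would make explicit is that the two poles are distinct, so the degenerate latitudes at colatitudes $0$ and $\pi$ do not coincide --- you use this tacitly when you fold the $\pm J$-complex case into the same bookkeeping.
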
 

\begin{proof} 
Let $\cH\subset SU(2)$ be the stabilizer subgroup of $Z$, 
and $\cN(\cH)$  the group of normalizers of $\cH$ in 
$SU(2)$. It is well known that  $\cN(\cH)/\cH\cong \Z_2$. 
Let $\rho\in \cN(\cH)$ be an element representing the 
nontrivial element of $\cN(\cH)/\cH$. Then $\rho^2=Id$, 
$\rho h_s\rho^{-1}=h_{-s}=h^{-1}_s$ for any $h_s\in \cH$. So $\rho$ preserves the orbit space of $\cH$ but reverses 
the orientation of $\cH$. Since $\rho$ maps the stabilizer subgroup 
$\cH_Z=\cH$ of $Z$ to the stabilizer subgroup $\cH_{Z^\perp}=\cH^{-1}$ of $Z^\perp$ we have $\rho(\cE_Z)=\cE_{Z^\perp}$.

By a direct computation one can 
show that $\rho$ acts on $\dP(J)$ as a $180^\circ$ rotation 
with respect to a pair of antipodal points lying on the 
geodesic $\cH$-orbit which is a great circle in $\dP(J)$. 
In particular, the geodesic $\cH$-orbit is the unique $\cH$-orbit 
preserved by $\rho$. 
So $\cE_Z$ is a great circle in $\dP(J)$ iff $\rho(\cE_Z)=\cE_Z$. Since $\rho(\cE_Z)=\cE_{Z^\perp}$ we conclude that 
$\cE_Z$ is a great circle in $\dP(J)$ iff $Z$ is Lagrangian 
by Proposition \ref{ZZ}. 
\end{proof}

\begin{cor} 
Given $Z,Z'\in \Lambda^+$, then 
\[ 
\cE_Z\cap \cE_{Z'}=
\begin{cases} 
\cE_Z  & \text{ if $Z'\in Orb_\cC(\{ Z,Z^\perp\})\subset \Lambda^+$},  \\ 
\text{ 2 points} & \text{ else}. 
\end{cases} 
\] 
\end{cor}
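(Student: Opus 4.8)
The plan is to reduce everything to Proposition \ref{georbit}, which tells us that for a Lagrangian plane the complex locus is a \emph{great circle} in $\dP(J)\cong S^2$. Once both $\cE_Z$ and $\cE_{Z'}$ are known to be great circles, the dichotomy in the statement is just the elementary fact that two great circles in $S^2$ either coincide or meet in exactly two (antipodal) points. Thus the corollary reduces to the single equivalence
\[
\cE_Z=\cE_{Z'}\iff Z'\in Orb_\cC(\{Z,Z^\perp\}).
\]
The ``else'' clause then follows immediately: if $Z'\notin Orb_\cC(\{Z,Z^\perp\})$, the two loci are distinct great circles and so meet in two points.

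For the implication $(\Leftarrow)$ I would first record that the complex locus is insensitive to the centralizer action on its defining plane. Since $\cC$ acts trivially on $\dP(J)$, for any $E\in\dP(J)$ one has $E\cap c_\tau(Z)\neq\{0\}$ iff $c_\tau^{-1}(E)\cap Z\neq\{0\}$, and because $c_\tau^{-1}(E)=E$ this holds iff $E\cap Z\neq\{0\}$; hence $\cE_{c_\tau(Z)}=\cE_Z$. Combining this with $\cE_Z=\cE_{Z^\perp}$ from Proposition \ref{ZZ} (valid as $Z$ is Lagrangian) shows that every member of $Orb_\cC(\{Z,Z^\perp\})$ shares the locus $\cE_Z$.

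The converse $(\Rightarrow)$ carries the real content. Using that each $c_\tau$ maps the fiber $\dP(K_t)$ to $\dP(K_{t+2\tau})$ while leaving the complex locus unchanged, I would move both $Z$ and $Z'$ into the common fiber $\dP(K')$ by suitable elements of $\cC$; since the $c_\tau$ are orientation-preserving $g$-isometries they satisfy $c_\tau(Z^\perp)=(c_\tau Z)^\perp$, so it suffices to treat $Z,Z'\in\dP(K')$ with $\cE_Z=\cE_{Z'}$ and transport the conclusion back by $\cC$. Now I would recover the stabilizer from the locus: by the lemma preceding Proposition \ref{ZZ}, $\cE_Z$ is the geodesic $\cH$-orbit of the stabilizer $\cH\subset SU(2)$ of $Z$, and as a great circle in $\dP(J)$ it determines its pole-axis, hence the rotation subgroup $\cH$. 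Therefore $\cE_{Z'}=\cE_Z$ forces the stabilizer of $Z'$ to equal $\cH$, so $Z'$ is a fixed point of $\cH$ acting on $\dP(K')\cong S^2$; the only two (antipodal) fixed points are $Z$ and $Z^\perp$. Thus $Z'\in\{Z,Z^\perp\}$ in the fiber, and undoing the $\cC$-moves yields $Z'\in Orb_\cC(\{Z,Z^\perp\})$.

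I expect the main obstacle to be the clean identification, in the converse, of the fiber of $Z\mapsto\cE_Z$ over $\dP(K')$ with the antipodal pair $\{Z,Z^\perp\}$: one must argue both that the great circle $\cE_Z$ remembers exactly the rotation axis $\cH$ and that the only $K'$-complex Lagrangian planes fixed by $\cH$ are $Z$ and $Z^\perp$. An equivalent, perhaps more transparent route is to observe that $Z\mapsto\cE_Z$ is $SU(2)$-equivariant for the $SO(3)$-actions on $\dP(K')=S^2$ and on the space of great circles $\R P^2$; being nonconstant it must be the standard antipodal double cover $S^2\to\R P^2$, whose fiber is exactly the antipodal pair $\{Z,Z^\perp\}$.
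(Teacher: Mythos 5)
Your proof is correct and follows the route the paper intends: the corollary is stated without proof as an immediate consequence of Proposition \ref{ZZ}, the lemma identifying $\cE_Z$ as an orbit of the stabilizer $\cH\subset SU(2)$, and Proposition \ref{georbit}, which is exactly the machinery you invoke. You merely make explicit the two details the paper leaves tacit — that $\cC$ preserves complex loci because it acts trivially on $\dP(J)$, and that in the converse direction the great circle $\cE_Z$ determines the rotation subgroup $\cH$ whose only fixed points in $\dP(K')$ are the antipodal pair $Z,Z^\perp$ — both of which are handled correctly.
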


\begin{defn}[{\bf Lagrangian locus}]
{\rm 
For every $J$-complex plane $E\subset \dP(J)$ we define 
the {\em Lagrangian locus} $\Lambda^+_{E,t}$ of $E$ in $\dP(K_t)$ to be the 
set of all $K_t$-complex complex planes which intersects 
nontrivially with $E$, i.e., 
\[ 
\Lambda^+_{E,t}:=\{ F\in \dP(K_t)\mid \dim F\cap E=1\}. 
\] 
We also define the {\em total Lagrangian locus} $\Lambda^+_E$ 
of $E$ to be 
\[ 
\Lambda^+_E:=\cup_t\Lambda^+_{E,t}. 
\] 
} 
\end{defn}

\begin{prop}  \label{LambdaE} 
\begin{enumerate} 
\item 
Let $E\in\dP(J)$, then for each $t$, $\Lambda^+_{E,t}\cong S^1$ is a 
great circle in $\dP(K_t)$, and $\cC$ acts on $\Lambda^+_E$: 
\[ 
c_{\frac{t'}{2}}(\Lambda^+_{E,t})=\Lambda^+_{E,t+t'}. 
\] 
\item 
For $E,E'\in\dP(J)$ with $E\neq E'$, 
\[ 
\Lambda^+_{E,t}\cap \Lambda^+_{E',t}=
\begin{cases} 
\text{ 2 points} & \text{ if $E'\neq E^\perp$} \\ 
\Lambda^+_{E,t}=\Lambda^+_{E',t} & \text{ if $E'=E^\perp$}. 
\end{cases} 
\] 
\end{enumerate} 
\end{prop}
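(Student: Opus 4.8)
The plan is to exploit the group actions already in place: the centralizer $\cC$ to move between the slices $\dP(K_t)$, and $SU(2)$ together with a single explicit computation to pin down the shape of one Lagrangian locus.

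\textbf{Equivariance and reduction (part (i)).} First I would dispose of the $\cC$-equivariance. Since $\cC$ fixes each $E\in\dP(J)$ as a plane (it only rotates within $E$) and carries $\dP(K_t)$ to $\dP(K_{t+t'})$ by $c_{t'/2}$, for $F\in\dP(K_t)$ one has $c_{t'/2}(F)\cap E=c_{t'/2}(F\cap E)$, so the intersection dimension is preserved and $c_{t'/2}(\Lambda^+_{E,t})=\Lambda^+_{E,t+t'}$ at once. This proves the displayed $\cC$-relation and reduces the "great circle" assertion to a single value of $t$, say $t=0$. Because $SU(2)$ acts transitively on $\dP(J)$, commutes with every $K_t$, and acts on each $\dP(K_t)\cong S^2$ by isometries (hence carries great circles to great circles), it further reduces the claim to one convenient choice of $E$.

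\textbf{The model computation.} For $t=0$ and $E=\pa_{x_1}\wedge\pa_{y_1}\in\dP(J)$ I would compute directly. Writing a $K'$-complex plane as $F_{[z_1:z_2]}=\Span_\R\{w,K'w\}$ in the $\C$-basis $e_1=\pa_{x_1}$, $e_2=\pa_{y_1}$ (with the $K'e_i$ furnishing the imaginary parts) and asking when a combination $\alpha w+\beta K'w$ has vanishing $K'e_1,K'e_2$ components yields a $2\times 2$ linear system whose solvability condition is $\Im(z_1\bar z_2)=0$. Thus $\Lambda^+_{E,0}=\{[z_1:z_2]:z_1/z_2\in\R\cup\{\infty\}\}=\R P^1\subset\C P^1\cong\dP(K')$, a great circle in the round metric; off this locus the determinant is nonzero and $\dim(F\cap E)=0$, while $\dim(F\cap E)=2$ is impossible since $F$ is Lagrangian and $E$ is symplectic. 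Transporting by $SU(2)$ and $\cC$ completes part (i).

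\textbf{The two-point count (part (ii)).} For $E'=E^\perp$ I would prove the identity $\dim(F\cap E)=\dim(F\cap E^\perp)$ for every Lagrangian $F$: using $F^\perp=JF$, the relation $(F+E)^\perp=F^\perp\cap E^\perp$, and that $E^\perp$ is again $J$-complex, one gets $\dim(F^\perp\cap E^\perp)=\dim(F\cap E)$, and applying $J$ gives $\dim(F\cap E^\perp)=\dim(F^\perp\cap E^\perp)$; hence $\Lambda^+_{E,t}=\Lambda^+_{E^\perp,t}$. For $E'\notin\{E,E^\perp\}$ both loci are great circles on $S^2$, so it remains only to show they are distinct, after which two distinct great circles meet in exactly two antipodal points. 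Distinctness I would obtain via poles: $\Lambda^+_{E,t}$ is invariant under the stabilizer $\cH_E\subset SU(2)$ of $E$, and a great circle invariant under a nontrivial $S^1$-rotation of $S^2$ must be that rotation's equator, so its poles are exactly the two $\cH_E$-fixed points in $\dP(K_t)$. Since $\cH_E=\cH_{E^\perp}$ and the correspondence between $S^1$-subgroups of $SU(2)$ and their antipodal fixed pairs (in $\dP(J)$ and in $\dP(K_t)$ alike) is bijective, $E'\notin\{E,E^\perp\}$ forces $\cH_{E'}\neq\cH_E$, hence a different pole pair and a genuinely different great circle. The delicate points, and where I expect the real care to be needed, are bookkeeping the round-metric normalization so that "great circle" is a well-defined, $SU(2)$-invariant notion, and making the pole/stabilizer correspondence precise, since this is exactly what upgrades "two great circles" to "exactly two intersection points" and rules out accidental coincidences.
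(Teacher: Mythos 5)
Your proof is correct, but it takes a genuinely different route from the paper's. The paper proves this proposition in three lines by a duality trick: it introduces the symplectic form $\omega_t:=g\circ(K_t\oplus Id)$, observes that every $E\in\dP(J)$ is \emph{Lagrangian} with respect to $\omega_t$, and then simply cites Propositions \ref{ZZ} and \ref{georbit} (and the $\cC$-action from Section \ref{LG}) with the roles of $J$ and $K_t$ interchanged -- so the "great circle iff Lagrangian" and "equal iff $Z'=Z^\perp$" statements already proved for complex loci of Lagrangian planes deliver (i) and (ii) for Lagrangian loci of complex planes. You instead prove everything from scratch on the $\dP(K_t)$ side: the $\cC$-equivariance and $SU(2)$-transitivity reduction, the explicit identification $\Lambda^+_{E_0,0}=\R P^1\subset\C P^1$ (which matches the paper's formula $\lambda_\h(s)=[e^{i\h}\cos s:e^{-i\h}\sin s]$ at $\h=0$), the linear-algebra identity $\dim(F\cap E)=\dim(F\cap E^\perp)$ via $F^\perp=JF$, and the stabilizer/pole argument for distinctness of the two great circles. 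That last argument is essentially a re-derivation, in the dual setting, of the machinery the paper packaged into Propositions \ref{ZZ} and \ref{georbit} (stabilizer subgroups, geodesic orbits, the axis--subgroup bijection in $SO(3)$). What the paper's route buys is brevity and a conceptual symmetry between $\cE_Z$ and $\Lambda^+_E$; what yours buys is a self-contained, coordinate-explicit proof that does not depend on having set up the $\omega_t$ duality, plus the concrete picture of $\Lambda^+_{E,t}$ as a real projective line. Your concerns about the metric normalization are harmless: "great circle" is invariant under rescaling the $SU(2)$-equivariant metric, which is unique up to scale.
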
 

\begin{proof} 
For $t\in \R/2\pi\Z$ 
 let $\omega_t:=g\circ (K_t\oplus Id)$ denote  the nondegenerate  anti-symmetric bilinear form (i.e. a linear symplectic form) on 
 $V=\R^4$ with respect to the complex structure $K_t$. 
 Elements of $\dP(J)$ are $\omega_t$-Lagrangian for all $t$. 
 Then (i) and (ii) follow easily from Propositions \ref{ZZ} and 
 \ref{georbit}, and the action of $\cC$ on $\Lambda^+$ as discussed in Section \ref{LG}. 
 \end{proof}

\begin{lem}
Given $E_0\in \dP(J)$ and an $S^1$-subgroup $\cG$ of $SU(2)$ we denote by $\cE:=Orb_\cG(E_0)$ 
the $\cG$-orbit of $E_0$ in $\dP(J)$. Then for any $t\in\R/2\pi\Z$,  
\[ 
\dP(K_t)=\underset{E\in \cE}{\bigcup} \Lambda^+_{E,t} 
\quad \Longleftrightarrow \quad \text{ $\cE$ is a great circle}.  
\] 
\end{lem}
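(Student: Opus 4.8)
The plan is to translate the covering condition into a statement about great circles in $\dP(J)$ and then reduce it to an elementary fact about circles on $S^2$. The bridge is the symmetry of the incidence relation $\dim E\cap F=1$ relating $E\in\dP(J)$ to $F\in\dP(K_t)$.

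First I would record that an element of $\dP(J)$ is $J$-complex, hence $\omega$-symplectic, while an element of $\dP(K_t)$ is $K_t$-complex, hence Lagrangian; so no $F\in\dP(K_t)$ equals any $E\in\dP(J)$, and for such a pair $E\cap F$ has dimension $0$ or $1$. Consequently the four conditions $E\cap F\neq\{0\}$, $\dim E\cap F=1$, $F\in\Lambda^+_{E,t}$, and $E\in\cE_F$ all coincide, where $\cE_F$ is the complex locus of $F$. Hence a point $F$ lies in $\bigcup_{E\in\cE}\Lambda^+_{E,t}$ precisely when $\cE\cap\cE_F\neq\emptyset$, and therefore
\[
\dP(K_t)=\bigcup_{E\in\cE}\Lambda^+_{E,t}\quad\Longleftrightarrow\quad \cE\cap\cE_F\neq\emptyset \ \text{ for every } F\in\dP(K_t).
\]

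Next I would describe the family $\{\cE_F\mid F\in\dP(K_t)\}$. Each $F$ is Lagrangian, so by Proposition \ref{georbit} the locus $\cE_F$ is a great circle in $\dP(J)\cong S^2$. The assignment $F\mapsto\cE_F$ is equivariant for the $SU(2)$-action, which commutes with $J$ and with every $K_t$ (Section \ref{LG}) and acts transitively on $\dP(K_t)$. Since $SU(2)$ acts through $SO(3)$ transitively on the space of great circles of $\dP(J)$, which is $\R P^2$, the image of the equivariant map $F\mapsto\cE_F$ is a single orbit and hence is \emph{all} great circles of $\dP(J)$. Thus the covering condition is equivalent to: $\cE$ meets every great circle of $\dP(J)\cong S^2$. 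It then remains to prove the elementary claim that a $\cG$-orbit $\cE\subset S^2$ meets every great circle iff $\cE$ is itself a great circle. Here $\cG$ acts through $SO(3)$ as rotation about a fixed axis, so $\cE$ is a latitude circle (degenerating to a pole when $E_0$ is $\cG$-fixed), and it is a great circle exactly when it is the equator orthogonal to that axis. If $\cE$ is the equator, then since any two great circles of $S^2$ share a pair of antipodal points, $\cE$ meets every $\cE_F$ and the covering holds. Conversely, if $\cE$ is a latitude at colatitude $\theta_0\neq\pi/2$ (a pole included), it is disjoint from the equatorial great circle $C_0$ orthogonal to the axis of $\cG$; by the surjectivity just established, $C_0=\cE_{F_0}$ for some $F_0\in\dP(K_t)$, and this $F_0$ is not covered.

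The step I expect to be the main obstacle is the surjectivity of $F\mapsto\cE_F$ onto all great circles of $\dP(J)$. The reverse implication of the lemma uses only that two great circles always meet, and the disjointness $\cE\cap C_0=\emptyset$ is immediate; but deducing from the covering hypothesis that $\cE$ \emph{must} be a great circle genuinely requires knowing that the $\cE_F$ exhaust all of $\R P^2$ rather than some proper subfamily. I would secure this via the equivariance-plus-transitivity argument above, or, if a more hands-on verification is preferred, by computing the pole of $\cE_F$ explicitly and checking that it sweeps out all of $\dP(J)$ as $F$ ranges over $\dP(K_t)$.
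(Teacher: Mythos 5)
Your proof is correct and follows essentially the same route as the paper's: reduce the covering condition to ``$\cE$ meets every complex locus $\cE_F$'', use that the loci $\cE_F$ for $F\in\dP(K_t)$ are exactly the great circles of $\dP(J)$, and conclude from the elementary facts that two great circles always meet while a non-great-circle $\cG$-orbit lies in an open hemisphere and misses its boundary. The only difference is that you justify the surjectivity of $F\mapsto\cE_F$ onto the set of great circles via $SU(2)$-equivariance and transitivity, a step the paper asserts without proof; this is a useful supplement but not a different approach.
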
 

\begin{proof} 
For any $Z\in \dP(K_t)$ we have 
\[ 
\begin{split} 
 Z\in \underset{E\in \cE}{\bigcup} \Lambda^+_{E,t} \  
& \Longleftrightarrow  \text{ $\dim (Z\cap E)=1$ for some 
$E\in \cE$}  \\ 
& \Longleftrightarrow  \text{ $E\in \cE_Z$ for some $E\in \cE$}. 
\end{split} 
\] 
The map 
\[ 
\dP(K_t)\to \{ \text{ great circles in $\dP(J)$} \}
\] 
 by sending $Z\in\dP(K_t)$ to $\cE_Z$ is a 2:1 surjective map. 
 If $\cE$ is a great circle, then it will intersects with 
 every great circle in $\dP(J)$ at least twice, which implies 
 that $\dP(K_t)=\underset{E\in\cE}{\bigcup} \Lambda^+_{E,t}$. On the other hand, if 
 $\cE$ is not a great circle, then it is contained in some open hemisphere $D$ of  $\dP(J)$ and hence misses 
 at least one (in fact, infinitely may)  great circle: the boundary 
 $\pa D$ of $D$. Since $\pa D=\cE_Z$ for some $Z\in\dP(K_t)$
 we conclude that $\dP(K_t)\supsetneqq\underset{E\in\cE}{\bigcup} \Lambda^+_{E,t}$. This completes the proof. 
\end{proof}

Applying Proposition \ref{georbit} we have the following result.

 \begin{cor} 
For any $Z\in \dP(K_t)$, we have 
\[ 
\dP(K_t)=\underset{E\in \cE_Z}{\bigcup}\Lambda^+_{E,t}, \quad \text{and} 
\] 
\[ 
\Lambda^+=\underset{t\in\R/2\pi\Z}{\bigcup}(\underset{E\in \cE_Z}{\cup}\Lambda^+_{E,t}). 
\] 
\end{cor}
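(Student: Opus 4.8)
The plan is to read the corollary off directly from Proposition \ref{georbit} together with the two (unlabelled) lemmas that precede it, so that no fresh computation is needed. First I would fix $Z\in\dP(K_t)$ and recall that $K_t$-complex planes are Lagrangian; hence $Z$ is itself a Lagrangian plane. Proposition \ref{georbit} then says that the complex locus $\cE_Z$ is a great circle in $\dP(J)$. In addition, by the lemma identifying $\cE_Z$ as a connected $\cH$-orbit (where $\cH\subset SU(2)$ is the stabilizer of $Z$), the locus $\cE_Z$ has the form $Orb_\cH(E_0)$ for any chosen $E_0\in\cE_Z$. This is exactly the hypothesis required to invoke the lemma immediately preceding the corollary, taking $\cG=\cH$ and $\cE=\cE_Z$.

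Next I would apply that preceding lemma. It asserts that for an $S^1$-orbit $\cE$ in $\dP(J)$ one has $\dP(K_t)=\bigcup_{E\in\cE}\Lambda^+_{E,t}$ for every $t$ precisely when $\cE$ is a great circle. Since $\cE_Z$ has just been shown to be a great circle, the first displayed equality $\dP(K_t)=\bigcup_{E\in\cE_Z}\Lambda^+_{E,t}$ follows at once. The point worth flagging is that the lemma delivers this covering \emph{uniformly in} $t$, not merely for the single value of $t$ with $Z\in\dP(K_t)$; it is precisely this uniformity that allows one fixed Lagrangian $Z$ to serve in the second statement.

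Finally, for the global decomposition I would combine the first equality with the $J$-decomposition (\ref{Lambda+}), which presents $\Lambda^+$ as the disjoint union $\bigsqcup_{t\in\R/2\pi\Z}\dP(K_t)$. Substituting $\dP(K_t)=\bigcup_{E\in\cE_Z}\Lambda^+_{E,t}$ into each slice and taking the union over $t$ yields $\Lambda^+=\bigcup_{t}\big(\bigcup_{E\in\cE_Z}\Lambda^+_{E,t}\big)$, as claimed.

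I do not expect a genuine obstacle: the entire content of the corollary is already packaged in Proposition \ref{georbit} and the two lemmas. The only step requiring a moment's care is the logical parsing of the preceding lemma, whose biconditional is stated ``for any $t$''; once $\cE_Z$ is certified to be a great circle, the covering of $\dP(K_t)$ holds simultaneously for all $t$, which is exactly what the outer union over $t$ in the second equation needs.
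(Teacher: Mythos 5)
Your proposal is correct and follows exactly the paper's route: the paper derives this corollary in one line by applying Proposition \ref{georbit} (so that $\cE_Z$ is a great circle, being the complex locus of a Lagrangian plane) together with the immediately preceding lemma and the decomposition $\Lambda^+=\bigsqcup_t\dP(K_t)$. Your additional remark that the preceding lemma gives the covering uniformly in $t$ is a correct and worthwhile clarification of why a single $Z$ suffices for the second displayed equality.
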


\begin{rem} 
{\rm 
For $E\in \dP(J)$ and $a\neq 0$ one can also define the 
{\em intersection locus} of $E$ in $\dP(J_{a,b,c})$ to be 
$\dP(J_{a,b,c})_E:=\{ Z\in \dP(J_{a,b,c})\mid \dim Z\cap E >0\}$. Then $\dP(J_{a,b,c})_E$ is a connected orbit of the 
stabilizer subgroup $\cH_E\subset SU(2)$ of $E$, but it is not a great circle in  in $\dP(J_{a,b,c})$. This follows from the 
observation that $E$ is not Lagrangian with respect to the 
symplectic form $\omega_{a,b,c}:=g\circ(J_{a,b,c}\oplus Id )$ 
associated to $J_{a,b,c}$, provided that $a\neq 0$. Likewise, 
in contrast to the Lagrangian case, for any $S^1$-subgroup 
$\cH\subset SU(2)$ and any $E_0\in \dP(J)$, the union of the 
intersection loci $\cup_{E\in Orb_\cH(E_0)}\dP(J_{a,b,c})_E$ 
will never cover $\dP(J_{a,b,c})$ if $a\neq 0$, even if 
$Orb_\cH(E_0)$ is a great circle in $\dP(J)$. 
This covering property of  intersection loci set Lagrangian 
planes apart from the totally real ones, and will enable us 
to device a new invariant for Lagrangian surfaces with 
stronger rigidity than some of the classical ones.  
} 
\end{rem}

%
%
\subsection{Spherical coordinates adapted to $( u,v)$} \label{sph}

Fix a unitary basis $( u,v)$ and let $Z:=u\wedge v\in \dP(K')$. 
Let $\cG\subset SU(2)$ denote the stabilizer subgroup of $Z$.
The matrix representation of $\cG$ 
with respect to the unitary basis $u,v$ is 
\[ 
\cG  =\Big\{ g_\h=\begin{pmatrix} \cos\h & -\sin\h\\ \sin\h & \cos\h\end{pmatrix}\mid \h\in\R/2\pi\Z\Big\}.  
\]
Here the parameter $\h$ is chosen so that $\eg_\h$ rotates 
the $K'$-complex plane $Z$ by an angle of $\h$-radians, with 
respect to the $K'$-complex orientation of $Z$. Simultaneously 
$g_\h$ rotates $Z^\perp$ by an angle of $(-\h)$-radians, also 
with respect to the $K'$-complex orientation of $Z^\perp$. 

Recall the complex locus $\cE_Z$. We parameterize 
$\cE_Z$ by $\h\in \R/\pi\Z$: 
\[ 
\cE_Z=\{ E_\h\in \dP(J)\mid  E_\h=u_\h\wedge Ju_\h\}, 
\]  
where $u_\h:=u\cos\h+v\sin\h$.

For each $\h\in\R/\pi\Z$, we denote by $\lambda_\h$ 
the Lagrangian locus $\Lambda^+_{E_\h,0}$ of $E_\h$ in 
$\dP(K')=\dP(K_0)$: 
\[
\lambda_\h:=\{ \xi\in\dP(K')\mid \dim (\xi\cap E_\h)=1\}. 
\] 
Each of $\lambda_\h$ is a great circle in 
$\dP(K')$ passing through $Z$ and $Z^\perp$. Note that 
(recall $\h^\perp=\h+\frac{\pi}{2}$) 
\[ 
\lambda_{\h^\perp}=\lambda_\h, \quad \h\in\R/\pi\Z. 
\] 
For each $\h$, we choose $u_\h$ as the basis for $E_\h$. 
Denote $\xi\in\dP(K')$ as 
\[ 
\xi=\lambda_\h(s) \quad \h,s\in\R/\pi\Z 
\] 
\[ 
\text{if }\  \xi\in\lambda_\h \ \  \text{and} \ \  \xi\cap E_\h=\text{Span}\{ u_\h\cos s +Ju_\h\sin s\} . 
\] 
We have 
\begin{enumerate} 
\item $\lambda_{\h^\perp}(s)=\lambda_\h(\pi-s)=\lambda_\h(-s)=(\lambda_\h(\frac{\pi}{2}-s))^\perp$ 
for $\h,s\in\R/\pi\Z$, and 
\item $Z=\lambda_\h(0)$, $Z^\perp=\lambda_\h(\frac{\pi}{2})$ 
for $\h\in \R/\pi\Z$. 
\end{enumerate} 
The parameter $s$ not only corresponds to the angle 
of rotation of the intersection subspace 
$\text{Span}\{ u_{\h,s}\}$ in $E_\h$, but also parameterize the orbit 
space of $\cG$ in $\dP(K')$ if $s$ is restricted to either $[0,\frac{\pi}{2}]$ or $[\frac{\pi}{2},\pi]$. Then 
\[ 
\Phi_+:\R/\pi\Z\times [0,\pi/2]\to \dP(K'), \quad \Phi_+(\h,s):=\lambda_\h(s)
\] 
 is a  
modified spherical coordinate system for $\dP(K')\cong S^2$.  
To compare $\Phi_+$ with the homogeneous coordinates of 
$\dP(K')$ we may take $u,v$ to be 
\[ 
u=\pa_{x_1}, \quad v=\pa_{x_2} 
\] 
without loss of generality. 
Let $x=x_1+ix_2$ and $y=y_1-iy_2$ be the $K'$-complex coordinates. Then $\dP(K')$ is parameterized by the homogeneous coordinates $[x:y]$: 
\begin{gather*}
\dP(K')=\{ [x:y]\mid (x,y)\in \C^2\setminus \{ (0,0)\}\}, \\   
[x:y]=[x':y']   \Leftrightarrow  (x',y')=(\lambda x,\lambda y) \text{ for some $\lambda\in \C^*$}. 
\end{gather*} 
In particular the points 
\[ 
\xi_0:=[1:0] \ \text{ and } \ \xi_\infty:=[0:1]
\] 
 represent $Z=\pa_{x_1}\wedge\pa_{x_2}$  
and $Z^\perp=\pa_{y_1}\wedge (-\pa_{y_2})$ respectively. 
The tangent plane $T_\xi\dP(K')$ can 
be identified with the total space of $\xi^\perp$. 
A direct computation yields the identity 
\[ 
\lambda_\h(s)=[e^{i\theta}\cos s: e^{-i\theta}\sin s], \quad 
\h,s\in\R/\pi\Z. 
\] 
Moreover, 
the induced  orientation on $\dP(K')$ via $\Phi_+$ coincides with the 
orientation on $\dP(K')$ inherited from its $K'$-complex 
structure.

 Note that the other parameterization 
 \begin{gather*} 
\Phi_-:\R/\pi\Z\times [\frac{\pi}{2},\pi]\to \dP(K'), \\ 
\Phi_-(\h,s):=\lambda_\h(s), 
\end{gather*} 
induces the opposite orientation on $\dP(K')$.

The action of $c_t\in\cC$ on $\Lambda^+$ carries the spherical 
trivialization $\Phi_+$ on $\dP(K')=\dP(K_0)$ over $\dP(K_{2t})$. 
Observe  that $\cC$ commutes with $\cG$. In fact, $\cC$ and 
$\cG$ together generate a maximal torus $\cT_\cG$ of $U(2)$. 
Accordingly we obtain an $\cT_\cG$-equivariant trivialization 
\begin{gather} 
\tilde{\Phi}:\R/2\pi\Z\times \R/\pi\Z\times [0,\frac{\pi}{2}]\to 
\Lambda^+  \nonumber \\ 
\tilde{\Phi}(t,\h,s):=c_{\frac{t}{2}}(\lambda_\h(s)). \label{triv}
\end{gather} 
With $\tilde{\Phi}$ we can identify $\Lambda^+$ with 
$S^1\times S^2$, so that $\{ t\}\times S^2\cong \dP(K_t)$, 
and the projection $S^1\times S^2\to S^2$ corresponds to 
the {\em central projection}  
\begin{align} 
\pi' & :\Lambda^+\to \dP(K'), \nonumber \\ 
\pi'(\xi) & := c_{-\frac{t}{2}}(\xi) \quad 
\text{ if $\xi\in\dP(K_t)$}. \label{pi'} 
\end{align}

\begin{rem} \label{theta} 
{\rm 
The parameter 
$\h\in \R/\pi\Z$ , which parameterizes complex planes 
$E_\h$, 
increases clockwise around $\xi_0$ and counterclockwise 
around $\xi_\infty$. 
} 
\end{rem}


{\bf Orientation of $\lambda_\h$.}  \ 
Observe that $\lambda_\h=\lambda_{\h^\perp}$ comes 
with two different orientations. Denote by $\lambda_\h^+$ 
as $\lambda_\h$ with the orientation induced $E_\h$, 
and by $\lambda_\h^-$ as $\lambda_\h$ but with the orientation 
induced by $-E_\h$. We have 
\[ 
\lambda_\h^+=\lambda_{\h^\perp}^- , \quad 
\lambda_\h^-=\lambda_{\h^\perp}^+. 
\] 

{\bf Relative $E_\h$-phase.} \ 
As we trace out $\lambda^+_\h$ once, the intersection 
subspace $\lambda^+(s)\cap E_\h$ rotates in $E_\h$ by an 
angle of $\pi$-radians, whilst $\lambda^+_\h\cap E_{\h^\perp}$ 
rotates in $E_{\h^\perp}$ by an angle of $(-\pi)$-radians, 
we call the former minus the latter, denoted as 
$(\Delta\varphi)_{\lambda^+_\h}$, the {\em relative phase} 
of $E_\h$ along 
$\lambda^+_\h$, which is 
\begin{equation} \label{Delta_lambda} 
(\Delta\varphi )_{\lambda^+_\h}=2\pi. 
\end{equation}

An alternative description of the orientations of $\lambda_\h$ 
is in order: The complement $\dP(K')\setminus\lambda_\h$ 
consists of two disjoint open disks: 
\[ 
\dP(K') \setminus \lambda_\h=D_\h \sqcup D_{\h^\perp}, 
\] 
where 
\begin{align} 
D_\h & :=\{ \xi\in \dP(K')\mid \xi\pitchfork E_\h, \ \xi\wedge E_\h>0\}  \label{Dh} \\  \nonumber 
 & = \{ \lambda_{\h'}(s)\in \dP(K')\mid  \h-\frac{\pi}{2}<\h'<\h \mod \pi, \ 0<s<\frac{\pi}{2}\}, \\  
D_{\h^\perp} & :=\{ \xi\in \dP(K')\mid \xi\pitchfork E_{\h^\perp}, \ \xi\wedge E_{\h^\perp}>0\}     \label{Dhp} \\ 
 & =\{ \lambda_{\h'}(s)\in \dP(K')\mid  \h<\h'<\h^\perp \mod \pi, \ 0<s<\frac{\pi}{2}\}. \nonumber
\end{align}

Note that for $\xi\in\dP(K')$ 
\[ 
\xi\wedge E_\h>0 \ \Longleftrightarrow \  \xi\wedge E_{\h^\perp}<0. 
\] 
Then 
\[ 
\lambda_\h^+=\lambda_{\h^\perp}^-=\pa D_\h, \quad \lambda_{\h^\perp}^+=\lambda_\h^-=\pa D_{\h^\perp}
\] 
as oriented boundaries. 
Similar conclusions hold straightforwardly for complex loci $\lambda^t_\h:=\Lambda^+_{E_\h,t}\in \dP(K_t)$ by applying 
the rotation $c_{\frac{t}{2}}$.

\subsection{Maps into $\dP(K')$} \label{PK'}

Recall that $\xi_0:=u\wedge v\in \dP(K')$ denotes the oriented 
Lagrangian plane which corresponds to the south pole 
$[1:0]$ of 
$\dP(K')=\{ [e^{i\h}\cos s:e^{-i\h}\sin s]\mid \h\in\R/\pi\Z, \ 
s\in [0,\frac{\pi}{2}]\}$.

A suitable neighborhood $U_{\xi_0}$ of $\xi_0$ in $\Lambda^+$ can be identified with the space of symmetric $2\times 2$ 
matrices 
\[ 
S=\begin{pmatrix} a+c & b \\ b & -a+c \end{pmatrix}, \quad  
a,b,c\in \R, 
\] 
so that with respect to the orthonormal basis $\{ u, v,Ju, Jv\}$ the column vectors of the $4\times 2$ matrix 
$\begin{pmatrix} I \\ S\end{pmatrix}$ form a basis of the 
corresponding Lagrangian plane $\xi\in U_{\xi_0}$. In particular, $\xi\in 
\dP(K')$ iff the trace of $S$ is $\text{tr}S:=2c=0$. In this case $[1: a-ib]$ is 
the homogeneous coordinate of $\xi\in \dP(K')$, with 
\[ 
\tan 2\h=\frac{b}{a}, \quad \tan s=\sqrt{a^2+b^2}. 
\]  
Indeed the map 
\[ 
[e^{i\h}\cos s:e^{-i\h}\sin s]\to (a=\cos2\h \tan s, -b=-\sin 2\h
\tan s) 
\] 
is the stereographic projection map of  $\dP(K')\setminus\{ [0:1]\}$ from 
its north pole $\xi_\infty=[0:1]$ onto $\R^2$. 

Let $L$ be an oriented surface and $g_L:L\to \Lambda^+$ a smooth map. Composing $g_L$ with the projection $\pi':
\Lambda^+\to \dP(K')$ we get 
\[ 
g'_L:=\pi'\circ g_L:L\to \dP(K').
\] 

Given $q_0\in L$ and 
 assume that $g'_L(q_0)=\xi_\infty$ for the moment. Let $U\subset \R^2$ be a coordinate neighborhood of $q_0$ with coordinates $(x_1,x_2)$ 
such that $g'_L(U)\subset \dP(K')\setminus\{ \xi_\infty\}$. 
Then near $q_0$ the map $g'_L$ can be expressed as 
\[ 
g'_L(x_1,x_2)=(a(x_1,x_2), -b(x_1,x_2)). 
\] 
 Then by a direct computation 
we find for $q\in U$ 
\[ 
b(q)\cos 2\h -a(q)\sin 2\h 
\begin{cases} 
>0 & \text{ iff $g'_L(q)\in D_\h$}, \\ 
=0 & \text{ iff $g'_L(q)\in \lambda_\h$}, \\ 
<0 & \text{ iff $g'_L(q)\in D_{\h^\perp}$}. 
\end{cases} 
\] 

We denote $a_i:=\frac{\pa a}{\pa x_i}$ and similarly 
$b_j:=\frac{\pa b}{\pa x_j}$. The differential $dg'_L$ can 
be expresses as a matrix 
\[ 
dg'_L(q)=\begin{pmatrix} a_1(q) & a_2(q) \\ 
-b_1(q) & -b_2(q)\end{pmatrix}.  
\] 
Then $q$ is a singular point of $q'_L$ iff $\det dg'_L(q)=0$, 
i.e., if the gradient vectors $\nabla a$ and $\nabla b$ are 
linearly dependent at $q$. 

%
We will use the following notations: 
\begin{align*} 
L^+ & :=\{ q\in L\mid \det dg'_L(q)>0\} \\  
L^o & :=\{ q\in L\mid \det dg'_L(q)=0\} \\ 
L^- & :=\{ q\in L\mid \det dg'_L(q)<0\} \\ 
V_\h &:=(g'_L)^{-1}(D_\h) \\ 
\Gamma_\h &:=(g'_L)^{-1}(\lambda_\h)=(g'_L)^{-1}(\pa D_\h)
\end{align*} 

Recall that $\Gamma_\h=\Gamma_{\h^\perp}$. Generically each of $\Gamma_\h\subset L$ is a $1$-dimensional skeleton together with a finite number of isolated points.  Since 
\[ 
\Gamma_\h=\{ q\in L\mid \dim E_\h\cap g'_L(q)=1\}=\{ 
q\in L\mid \dim E_\h\cap g_L(q)=1\}
\] 
we call $\Gamma_\h$ the {\em $E_\h$-locus} of $g'_L$. 
Note that 
\[ 
(g'_L)^{-1}(\{ \xi_0,\xi_\infty\})\subset \Gamma_\h, \quad \forall \h\in \R/\pi\Z,  
\] 
and for $\h,\h'\in [0,\frac{\pi}{2})$ with $\h\neq \h'$, 
\[ 
\Gamma_\h\cap \Gamma_{\h'}=(g'_L)^{-1}(\{ \xi_0,\xi_\infty\}). 
\]

%
%

{\bf Curves in $L^o$.} \ 
Generically the set $L^o$ of singular points of $g'_L$ is a 
$1$-dimensional skeleton, the union of a finite number of 
immersed closed curves and a finite number of isolated points. 

Let $\sigma\subset L^o$ be a connected immersed closed curve 
with a finite number of self-intersection points. 
Recall that if $g'_L(\sigma)$ misses the point $\xi_\infty$ 
and $dg'_L|_q\neq 0$ at $q\in \sigma$, then the kernel 
of $dg'_L|_q$ is tangent to level sets $\{ a=a(q)\}$ and 
$\{ b=b(q)\}$ at $q$.

\begin{defn}[{\bf Sign-changing}] \label{sign-changing} 
{\rm 
We say $\sigma$ is 
{\em sign-changing} if the determinant $\det dg'_L$ changes its 
$\pm$-signs at $\sigma$, i.e., if $\sigma$ is contained in the closure of $L^+$ as well as the closure of $L^-$. 
} 
\end{defn} 

The sign-changing property is persistent under a small perturbation of $g'_L$. If $\sigma$ is not sign-changing then it may disappear 
or split into a pair of sign-changing curves under a  small perturbation $g'_L$.

\begin{defn}[{\bf Ordinary folding curve}]  
{\rm 
We say that a sign-changing curve $\sigma$ is an {\em ordinary  folding 
curve} if its tangent  
$\dot{\sigma}(q)\not\in\ker(dg'_L)$ for all but a  finite 
number of points $q\in \sigma$. 
}
\end{defn} 

The image $g'_L(\sigma)$ is $1$-dimensional if $\sigma$ is an 
ordinary folding curve. On the other hand, there may exist a 
curve in $L^o$ whose image in $\dP(K')$ is a point.

\begin{defn}[{\bf $p$-curve}] \label{pcurve}
{\rm 
Let $\xi$ be a singular value of $g'_L$. Then a 
1-dimensional connected component 
$\gamma\subset L^o$ of 
$(g'_L)^{-1}(\xi)$ is called a $p$-curve. 
}
\end{defn} 

Let $\xi$ be a singular value of $g'_L$. Suppose that 
$(g'_L)^{-1}(\xi)$ contains some 1-dimensional connected components. Let $\gamma$ be one of the connected components.  Assume for the moment 
that $g'_L(\gamma)=\xi\neq \xi_\infty$. Then by composing with 
the stereographic projection from $\dP(K')\setminus\{ \xi_\infty\}$ to 
$\R^2$, we can easily see that $\gamma$ is a common level 
curve to both $a$ and $b$, i.e., $\gamma\subset a^{-1}(a_0)$ and 
$\gamma\subset b^{-1}(b_0)$ for some $a_0,b_0\in \R$. 
In general $a_0,b_0$ are not regular values of 
$a$ and $b$ respectively. Generically along 
$a^{-1}(a_0)$ the gradient $\nabla a$ 
of $a$ vanishes only at a finite set $S_a\subset a^{-1}(a_0)$. 
Similarly, $\nabla b\neq 0$ along $b^{-1}(b_0)$ except at 
a finite set $S_b\subset b^{-1}(b_0)$. Since $\gamma$ 
is of dimension one, {\em $S_a\cap S_b\cap \gamma$ is empty 
generically}. This means that along $\gamma$, the normal 
bundle $N_{\gamma/L}$ of $\gamma$ is fiberwise spanned 
by $\nabla a$ and $\nabla b$, which implies that 
{\em $\gamma$ is smoothly embedded}, and along $\gamma$ the differential $dg'_L$  is of rank one and $\ker(dg'_L)$ is spanned by 
$\dot{\gamma}$ the tangent of $\gamma$. This also applies to the case 
when $g'_L(\gamma)=\xi_\infty$. 
Note that it is possible that $\gamma$ is still embedded even though $\nabla a$ and $\nabla b$ together need not span 
$N_{\gamma/L}$ along $\gamma$.

{\bf Terminology alert:} \  Since $p$-curves of $g'_L$ are (smoothly) embedded for generic $g'_L$, {\em from now on all $p$-curves are assumed to be embedded unless otherwise mentioned}. A $p$-curve which is not embedded will be called 
a {\em singular $p$-curve}. 
The same abuse of language also applied to {\em folding $p$-curves} and {\em crossing $p$-curves} which will be defined 
below.

{\bf Folding v.s. crossing.} 
Let $\gamma$ be a  $p$-curve   and $U=U_\gamma\subset L$ 
be a small tubular neighborhood of $\gamma$ so that 
$\xi=g'_L(\gamma)\not\in g'_L(U\setminus\gamma)$. 
We parametrize $U$ as $\R/2\pi\Z\times (-\epsilon,\epsilon)$ 
with coordinates $(x_1,x_2)$ so that $\gamma=\{ x_2=0\}$. 
Let $U^+:=U\cap \{ x_2>0\}$ and $U^-:=U\cap \{ x_2<0\}$. 
Denote coordinate curves $\gamma_s:=\{ x_2=s\}$, 
$\ell_\h:=\{ x_1=\h\}$. Also let $\ell^+_\h:=\ell_\h\cap\{ x_2\geq 0\}$, $\ell^-_\h:=\ell_\h\cap\{ x_2\leq 0\}$. 

Let $D=D_\xi\subset\dP(K')$ be a neighborhood of $\xi$ 
diffeomorphic to an open disk. Identify $D$ with a disk 
of radius $\delta>0$ with center $\xi$, 
and let $(\rho,t)\in[0,\delta)\times\R/2\pi\Z$ be polar coordinates on $D$ with $\xi=\{ \rho=0\}$.  Identify $\gamma_s$ as 
$\R/2\pi\Z$, then  $t|_{\gamma_s}$, $0<s<\epsilon$, 
is a smooth family of  maps which extends continuously over 
$s=0$. Indeed $t(x_1,s)$ is the angle (oriented 
counterclockwise) from the polar axis of 
$\xi$ to the secant line  connecting $\xi$ and $g'_L(x_1,s)$ 
and {\em pointing away from $\xi$}, 
then $t(x_1,0)$ is defined to be the angle from the polar axis 
to the oriented tangent line of the image curve 
$g'_L(\ell^+_{x_1})$ 
at $\xi$, also pointing away from $\xi$. Put together 
we get a continuous  map $t^+:U^+\cup\gamma\to S^1$ which is smooth on $U^+$. 
The same holds true for the $s\leq 0$ case. We denote 
the corresponding map as $t^-:U^-\cup\gamma\to S^1$. 
Since $g'_L(\ell^+_{x_1})$ and $g'_L(\ell^-_{x_1})$ have the same unoriented tangent line at $\xi$ and $g'_L(U^\pm)$ are disjoint 
from $\xi$, by comparing the oriented tangent lines 
of $g'_L(\ell_{x_1}^\pm)$ at $\xi$ and by continuity of $t^\pm$ on $x_1$ exactly one 
of the followings will be satisfied: 

\begin{description}
\item[(F).] $t^-(x_1,0)=t^+(x_1,0) \mod 2\pi$, 
\item[(C).] $t^-(x_1,0)=t^+(x_1,0)+\pi \mod 2\pi$. 
\end{description}

%

In Case ({\bf F}) the image $g'_L(\ell_{x_1})$ has $\xi$ as its cusp 
point since the oriented tangent lines of $g'_L(\ell^\pm_{x_1})$ 
at $\xi$ point to the same direction. Thus the images of 
all $\ell_{x_1}$ "fold back" at $\xi$. Also in this case, $t^-$ and 
$t^+$ together form a continuous function on $U$, or equivalently, the composition of $g'_L$ with the coordinate function $t$ of $D_\xi$ is a continuous function on $U$. 
Note that here $\rho\geq 0$ also lifts to a smooth function 
on $U$. 

In Case ({\bf C}) the oriented tangent line for the image of $\ell_{x_1}$ at $\xi$ 
is defined for each $x_1$, meaning that the image curves of $\ell_{x_1}$ all "cross" the point $\xi$ at $x_2=0$ as $x_2$ 
increases from negative to positive. The two functions $t^-$ and 
$t^+$ do not match at $x_2=0$. Hence $t$ does not lift to a 
continuous function on $U$. 
This however can be remedied at the expense of allowing 
$\rho$ to be negative, namely instead of $(\rho,t)$ we 
 consider the extended polar coordinates  
$(\tilde{\rho},\tilde{t})\in (-\delta,\delta)\times \R/2\pi\Z$ 
with the equivalence relation $(\tilde{\rho},\tilde{t}+\pi)\sim 
(-\tilde{\rho},\tilde{t})$. Then both 
$\tilde{\rho}$ and $\tilde{t}$ lift 
to functions continuous on $U$ and smooth when $x_2\neq 0$: 
\[ 
(\tilde{\rho},\tilde{t})=\begin{cases} 
(\rho,t^+) & \text{ when $x_2\geq 0$}, \\ 
(-\rho, t^--\pi) & \text{ when $x_2<0$}. \end{cases}
\]

\begin{defn}[{\bf Crossing v.s. folding}]  \label{cvsf} 
{\rm 
Let 
We say that a $p$-curve $\gamma$ is a 
{\em folding} $p$-curve if ({\bf F}) holds for $\gamma$; 
a {\em crossing} $p$-curve if ({\bf C}) holds instead. 
} 
\end{defn}

Below we give a different description about the folding/crossing 
dichotomy of $p$-curves. 
Let $\gamma$ be a $p$-curve, 
$\xi:=g'_L(\gamma)$, $D_\xi\subset \dP(K')$ an open 
disk with polar coordinates $(\rho, t)\in [0,\delta)\times \R/2\pi\Z$ centered at $\xi$. Then $g'_L(L^o)$ intersects transversally 
with level sets of $\rho$ on $D_\xi\setminus\{ \xi\}$ provided 
$\delta>0$ is small enough. This implies that there exist a 
tubular neighborhood $U=U_\gamma\subset L$ missing 
all $p$-curves except for $\gamma$, and 
coordinates $(x_1,x_2)\in \R/2\pi\Z\times (-\epsilon,\epsilon)$ 
for $U$ so that

%
%
%

%
%
\begin{description} 
\item[{\rm (c1).}] $\rho$ depends only on $x_2$, $\frac{\pa \rho}{\pa x_2}\neq 0$ on $U\setminus \gamma$,  
\item[{\rm (c2).}] $\rho^{-1}(0)=\gamma=\{ x_2=0\}$,  
\item[{\rm (c3).}] $dg'_L$ is of rank $1$ on $(L^o\setminus \gamma)\cap U$, 
\item[{\rm (c4).}] $\pa_{x_1}\in \ker (dg'_L)$ on 
$L^o\cap U$. 
 \end{description} 

Without loss of generality we may assume that $\xi:=g'_L(\gamma)\neq \xi_\infty$. 
Let $a_0,b_0\in \R$ be such that 
$\gamma \subset\{ a=a_0\}\cap \{ b=b_0\}$. 
Let $\bar{a}=a-a_0$, $\bar{b}=b-b_0$. By adding to $t$ a constant if necessary we may assume that 
$\bar{a}=\rho \cos t$, $\bar{b}=-\rho\sin t$. 

Observe that the $\pm$-sign of 
$\rho_2:=\frac{\pa \rho}{\pa x_2}$ 
changes as we cross  $\gamma=\{ x_2=0\}$. 
Replacing the coordinate $x_2$ by $-x_2$ if necessary we may assume that 
\[ 
\rho_2=
(\cos t) \bar{a}_2-(\sin t) \bar{b}_2 
\begin{cases}  <0 & \text{ if $x_2<0$},  \\ 
>0 & \text{ if $x_2>0$},  
\end{cases} 
\] 
where $\bar{a}_2:=\frac{\pa \bar{a}}{\pa x_2}$ and 
$\bar{b}_2:=\frac{\pa \bar{b}}{\pa x_2}$. We arrive at the 
following observations.

\begin{description} 
 
\item[(F')] If $\gamma$ is a folding $p$-curve,  $t$ is 
continuous  on $U$, then along each line $\ell_\h:=\{ x_1=\h\}$  both $\bar{a}_2$ and $\bar{b}_2$ 
change signs at $x_2=0$. Thus  for both functions $a$ and $b$, the surface $L$ "folds" at $\gamma$.  
  
\item[(C')] If $\gamma$ is a crossing $p$-curve then $t$ jumps 
  by the value $\pi \mod 2\pi$ at $\gamma$, hence 
  along each line $\ell_\h$ the signs of 
 $\bar{a}_2$ and $\bar{b}_2$ do not  change at $x_2=0$. 
 This in particular is the case when 
 $\nabla a=\nabla\bar{a}$ and $\nabla b=
\nabla\bar{b}$ span the normal 
bundle $N_{\gamma/L}$ along $\gamma$. 
\end{description}

\begin{prop} \label{sign} 
Let $\gamma$ be a $p$-curve, then $\gamma$ is sign-changing. 
\end{prop}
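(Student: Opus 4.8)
The plan is to work entirely in the normal-form coordinates $(x_1,x_2)$ on the tubular neighborhood $U=U_\gamma$ supplied by (c1)--(c4), in which $\gamma=\{x_2=0\}$ and, after the constant shifts and the rotation of $t$, $\bar{a}=\rho\cos t$, $\bar{b}=-\rho\sin t$ with $\rho=\rho(x_2)$ depending only on $x_2$. Since constant shifts do not affect derivatives, $\det dg'_L=a_2b_1-a_1b_2=\bar{a}_2\bar{b}_1-\bar{a}_1\bar{b}_2$. First I would differentiate $\bar{a}=\rho\cos t$ and $\bar{b}=-\rho\sin t$ (using $\rho_1=0$ since $\rho$ depends on $x_2$ only) and substitute into this determinant; after collecting terms the $\cos^2t+\sin^2t$ factors cancel and the mixed $t_1t_2$ terms cancel, leaving the clean identity
\[
\det dg'_L=-\rho\,\rho_2\,t_1 .
\]
This is the computational heart of the argument: it reduces the whole question to the signs of the three factors near $\gamma$.

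Next I would analyze the factor $\rho\rho_2$. Because $\rho$ depends only on $x_2$ and $\rho^{-1}(0)=\gamma$, the smooth function $\rho^2=\bar{a}^2+\bar{b}^2$ is a function of $x_2$ alone, is nonnegative, and vanishes exactly on $\gamma$; hence it attains a strict minimum value $0$ along $\gamma$. By (c1) its derivative $(\rho^2)_2=2\rho\rho_2$ is nonzero off $\gamma$, so it is strictly negative for $x_2<0$ and strictly positive for $x_2>0$. Thus $\rho\rho_2$ changes sign across $\gamma$, with $\mathrm{sign}(\rho\rho_2)=\mathrm{sign}(x_2)$ near $\gamma$. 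Combined with the identity above,
\[
\mathrm{sign}\,\det dg'_L=-\,\mathrm{sign}(x_2)\,\mathrm{sign}(t_1)\qquad\text{on }U\setminus\gamma .
\]

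Now fix $q\in\gamma$ with $t_1(q)\neq0$. By continuity $t_1$ keeps a fixed sign on a neighborhood of $q$, so the displayed formula shows $\det dg'_L$ takes opposite signs on the two local sides $\{x_2>0\}$ and $\{x_2<0\}$ of $\gamma$; hence $q\in\overline{L^+}\cap\overline{L^-}$. It remains to drop the hypothesis $t_1(q)\neq0$. Here I would invoke the genericity of $g'_L$ used throughout this section together with the structure of a $p$-curve: if $t_1$ vanished on an open subarc of $\gamma$, all the image curves $g'_L(\ell_{x_1})$ would leave $\xi$ with a common, $x_1$-independent tangent direction, forcing the loops $g'_L(\gamma_s)$ for $0<s\ll1$ to degenerate --- a nongeneric higher-order tangency that we may exclude. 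Hence $\{q\in\gamma:t_1(q)\neq0\}$ is open and dense in $\gamma$. Since $\overline{L^+}\cap\overline{L^-}$ is closed and $\gamma$ is connected, it contains the closure of this dense subset, namely all of $\gamma$; therefore $\gamma$ is sign-changing.

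I expect the main obstacle to be exactly this last point --- controlling the zeros of $t_1$ along $\gamma$. The sign change of $\det dg'_L$ is governed entirely by $\rho\rho_2$ once $t_1\neq0$, so the only way the conclusion could fail is if $t_1$ were identically zero on a subarc; ruling this out cleanly (either by the genericity already in force in this section, or by a direct winding argument for the loops $g'_L(\gamma_s)$) is where care is needed, whereas the algebraic identity $\det dg'_L=-\rho\rho_2 t_1$ and the sign behavior of $\rho\rho_2$ are routine.
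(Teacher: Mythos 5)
Your proof is correct and follows essentially the same route as the paper's: both exploit the normal form (c1)--(c4) to factor $\det dg'_L$ into $\frac{\partial\rho}{\partial x_2}\cdot\frac{\partial t}{\partial x_1}$ (up to the positive factor $\rho$ coming from your use of Cartesian rather than polar target coordinates), observe that the first factor flips sign across $\gamma$ while the second does not (since $t^\pm$ differ by a constant), and then arrange to work where $t_1\neq 0$. The only cosmetic difference is that the paper secures $t_1\neq 0$ by restricting to a connected component of $U\setminus\overline{L^o\setminus\gamma}$ meeting $\gamma$ (where $g'_L$ is automatically nondegenerate off $\gamma$), whereas you argue by density of $\{t_1\neq 0\}$ along $\gamma$ plus closedness of $\overline{L^+}\cap\overline{L^-}$ --- these are equivalent under the genericity assumptions in force.
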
 

\begin{proof} 
Let $\xi:=g'_L(\gamma)$. Let $U=U_\gamma$, $U^\pm$, $D_\xi$ be as defined above. 
Let $V\subset U$ be a connected component of $U\setminus 
\overline{L^o\setminus\gamma}$ with $V\cap \gamma\neq \emptyset$. Write $V\setminus \gamma=V^+\cup V^-$,  $V^+:=V\cap U^+$ and $V^-:=V\cap U^-$. $g'_L$ 
is nondegenerate on both $V^\pm$. Note 
 that the $\pm$ sign of $\frac{\pa \rho}{\pa x_2}$ on $V^+$ is opposite to  that on  $V^-$. Also   $\frac{\pa t}{\pa x_1}\neq 0$ 
 on $V^+\cup  V^-$ since $\rho$ depends only 
 on $x_2$. Moreover the signs of $\frac{\pa t}{\pa x_1}\neq 0$   on 
 $V^+$ and $V^-$ are the same since $t$ changes only by a 
 constant when crossing $\gamma$. So $\gamma$ is sign-changing since on $V$ the 
 sign of $\det(dg'_L)=\frac{\pa \rho}{\pa x_2}\frac{\pa t}{\pa x_1}$ changes at $\gamma$. 
 This completes the proof. 
\end{proof}

{\bf Domain-switching property.} \ 
Let $\gamma$ be a $p$-curve, and $U^\pm,V^\pm, D_\xi$ be as above. Recall the polar coordinates $(\rho,t)$ and  the extended 
polar coordinates $(\tilde{\rho},\tilde{\h})$ for $D_\xi$. Observe 
that $(\tilde{\rho},\tilde{t})|_{V^\pm}$ is a coordinate system 
for each of $V^\pm$.  Recall the coordinates $(x_1,x_2)$ for 
$U$ with $\gamma=\{ x_2=0\}$, $x_2\cdot \frac{\pa \rho}{\pa x_2}>0$ for $x_2\neq 0$. Then 
$\det g'_L=\frac{\pa \tilde{\rho}}{\pa x_2}\frac{\pa \tilde{t}}{\pa x_1}$ changes sign at $\gamma$. Moreover,

\begin{enumerate} 
\item if $\gamma$ is a folding $p$-curve, then 
$\frac{\pa \tilde{\rho}}{\pa x_2}$ changes sign at $\gamma$, but 
$\frac{\pa \tilde{t}}{\pa x_1}$ does not. 

\item if $\gamma$ is a crossing $p$-curve, then 
$\frac{\pa \tilde{\rho}}{\pa x_2}$ does change sign at 
$\gamma$, but 
$\frac{\pa \tilde{t}}{\pa x_1}$ does. 

\end{enumerate}   

Compare with the fact that $\frac{\pa \rho}{\pa x_2}$ changes 
sign at $\gamma$ for any $p$-curve $\gamma$, we observe 
that as a sign-changing curve, a crossing $p$-curve also 
has what we call the {\em domain-switching property}: 

\begin{defn} 
{\rm 
Let $\gamma\subset L$ be a closed curve with a finite number 
of self-intersections. We say that 
$\gamma$ has {\em domain-switching} property 
(with respect to the map $g'_L\to \dP(K')$) 
if for every $q\in\gamma$ which is not a self-intersection 
point of $\gamma$ there is a connected open neighborhood $U_q\subset L$ of $q$ such that 
$U_q\setminus\gamma=U^+_q\cup U^-_q$, with $U^+_q\neq \emptyset$ and $U^-_q\neq \emptyset$ on different sides of $\gamma$, satisfies the following condition 
\[ 
g'_L(U^+_q)\cap g'_L(U^-_q)=\emptyset. 
\] 
On the other hand, we say $\gamma$ is {\em domain-folding} 
if for every $q\in\gamma$ and any connected open neighborhood $U_q\subset L$, with $U^+_q\neq \emptyset$ and $U^-_q\neq \emptyset$ on different sides of $\gamma$, satisfies the following condition
\[ 
g'_L(U^+_q)\cap g'_L(U^-_q)\neq\emptyset. 
\] 
} 
\end{defn} 
Clearly folding curves are sign-changing, and curves which are 
not sign-changing are domain-switching. On the other hand, 
a crossing $p$-curve is both sign-changing and domain-switching.

\begin{prop} 
If $\gamma\subset L$ is an embedded sign-changing and 
domain switching closed curve with respect to $g'_L$, then 
$\gamma$ is a crossing $p$-curve. 
\end{prop}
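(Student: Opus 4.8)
The plan is to peel off the two defining properties of a crossing $p$-curve one at a time, using the hypotheses to exclude every competing possibility. First, since $\gamma$ is sign-changing it lies in $\overline{L^+}\cap\overline{L^-}$ (Definition \ref{sign-changing}), so by continuity of $\det dg'_L$ we get $\det dg'_L\equiv 0$ on $\gamma$; hence $\gamma\subset L^o$ and $\gamma$ is a genuine singular curve of $g'_L$. Because $\gamma$ is embedded, every one of its points is a non--self--intersection point, so the local alternatives of Definition \ref{cvsf} and the domain-switching/domain-folding dichotomy are available at \emph{every} $q\in\gamma$.

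The core step is to show that $g'_L|_\gamma$ is constant, i.e. that $\gamma$ is a $p$-curve in the sense of Definition \ref{pcurve}. Away from the finitely many points where $dg'_L$ drops to rank $0$, a point $q\in\gamma$ has $dg'_L|_q$ of rank $1$ with a well-defined kernel line $K_q:=\ker(dg'_L|_q)$. I would argue that $\dot\gamma(q)\in K_q$ at every such $q$. Suppose instead that $\dot\gamma(q)\notin K_q$ for some generic $q$. Choosing coordinates $(x_1,x_2)$ near $q$ with $\gamma=\{x_2=0\}$ and $K_q=\mathrm{Span}(\pa_{x_2})$, the restriction $g'_L|_\gamma$ is an immersion at $q$ while $\pa_{x_2}g'_L$ vanishes on $\gamma$; since $\det dg'_L$ vanishes to first order and \emph{changes sign} across $\gamma$, the critical set is cut out transversally by the kernel direction and $g'_L$ has a nondegenerate (Whitney) fold along $\gamma$ near $q$. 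A fold is locally two-to-one onto one side of its image curve, so $g'_L(U^+_q)$ and $g'_L(U^-_q)$ overlap for every neighborhood $U_q$; that is, $\gamma$ would be domain-folding near $q$, contradicting the domain-switching hypothesis. Hence $\dot\gamma(q)\in K_q$ for all generic $q$, so $(g'_L\circ\gamma)'=dg'_L(\dot\gamma)=0$ there, and by connectedness $g'_L(\gamma)=\{\xi\}$ for a single $\xi$. Thus $\gamma$ is a $p$-curve.

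With $\gamma$ now a $p$-curve, Definition \ref{cvsf} forces it to be either folding or crossing, and to finish I would rule out the folding case. If $\gamma$ were a folding $p$-curve then, in the notation of case \textbf{(F)}, the angle functions match, $t^-(x_1,0)=t^+(x_1,0)$, and $(\rho,t)$ lifts continuously across $\gamma$; consequently the two sides $g'_L(U^+_q)$ and $g'_L(U^-_q)$ accumulate at $\xi$ through the \emph{same} angular sector and overlap, so $\gamma$ would again be domain-folding, contrary to hypothesis. Therefore $\gamma$ satisfies \textbf{(C)} and is a crossing $p$-curve. The one step that needs genuine care, rather than bookkeeping, is the fold argument of the middle paragraph: identifying the "transverse to the kernel plus sign-changing" configuration with a nondegenerate fold and extracting from it the overlap $g'_L(U^+_q)\cap g'_L(U^-_q)\neq\emptyset$. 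This is where I would invoke the standard local normal form for fold maps (permissible under the genericity already assumed for $g'_L$), since it is precisely the fact that converts the analytic sign-change into the geometric statement that the two sheets cover a common region.
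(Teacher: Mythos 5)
Your proof is correct and follows essentially the same route as the paper's: both arguments use sign-changing to force $\det dg'_L=0$ along $\gamma$ and domain-switching to exclude the configuration in which $\ker(dg'_L)$ is transverse to $\gamma$, concluding that $\dot\gamma\in\ker(dg'_L)$, hence that $\gamma$ is a $p$-curve, and finally that it must be crossing rather than folding. The only difference is one of detail: where the paper tersely asserts that domain-switching makes $g'_L|_{\ell_\h}$ an embedding and that sign-changing then forces $dg'_L(\pa_{x_1})=0$, you spell out the underlying local (Whitney fold) analysis showing that a transverse kernel would make the two sides' images overlap.
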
 

\begin{proof} 
Let $U$ be a tubular neighborhood of $\gamma$ with 
coordinates $(x_1,x_2)\in \R/2\pi\Z\times (-\epsilon,\epsilon)$ 
so that $\gamma=\{ x_2=0\}$. Let $\ell_\h=\{ x_1=\h\}$ be 
an $x_1$-coordinate line. Since $\gamma$ is domain-switching 
$g'_L|_{\ell_\h}:\ell_\h\to L$ is an embedding near $\ell_\h\cap \gamma$. Then  the sign-changing property of $\gamma$ 
forces $dg'_L(\pa_{x_1})=0$ along $\gamma$, i.e., 
$\gamma$ is tangent to $\ker(dg'_L)$. So $\gamma$ is a 
crossing $p$-curve.  
\end{proof}

The sign-switching property and domain-changing property of 
a curve are rigid under small deformation of $g'_L$, which 
implies the following rigidity of a 
a crossing $p$-curve $\gamma$.

\begin{prop}[{\bf Rigidity of a crossing $p$-curve}] \label{Vswitch} 
Let $\gamma$ be a  crossing $p$-curve. 
Then $\gamma$  is  topologically rigid under small perturbations 
of $g'_L$ compactly supported in a small tubular 
neighborhood of $\gamma\subset L$. 
\end{prop}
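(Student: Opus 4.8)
The plan is to reduce the statement to the intrinsic characterization of a crossing $p$-curve established just above---namely that an embedded closed curve is a crossing $p$-curve precisely when it is both \emph{sign-changing} (Definition \ref{sign-changing}) and \emph{domain-switching}---and then to observe that both of these are $C^1$-open conditions, which therefore survive a small compactly supported perturbation. The advantage of this reformulation is that, unlike the original Definition \ref{pcurve}, it makes no reference to a pinned target value $\xi$, so it behaves well when $\xi$ is allowed to move.

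First I would fix the local normal form. Since $\gamma$ is a crossing $p$-curve, there is a tubular neighborhood $U=U_\gamma$, coordinates $(x_1,x_2)\in\R/2\pi\Z\times(-\epsilon,\epsilon)$ with $\gamma=\{x_2=0\}$, and extended polar coordinates $(\tilde\rho,\tilde t)$ on $D_\xi$ centered at $\xi=g'_L(\gamma)$ satisfying (c1)--(c4). In these coordinates the crossing condition (C) says exactly that the signed radial function $r:=\tilde\rho\circ g'_L$ is $C^1$ on $U$ with $dr\neq 0$ along $\gamma$ and $\gamma=r^{-1}(0)$; that is, $0$ is a regular value of $r$ and $\gamma$ is cut out transversally. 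The sign-changing property then reads that $\det dg'_L=\frac{\partial\tilde\rho}{\partial x_2}\frac{\partial\tilde t}{\partial x_1}$ changes sign across $\gamma$, while the domain-switching property reads that $g'_L(U^+)$ and $g'_L(U^-)$ lie in the disjoint half-neighborhoods $\{\tilde\rho>0\}$ and $\{\tilde\rho<0\}$.

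Next I would run the stability argument. Let $\hat g$ be a $C^1$-small perturbation of $g'_L$ with support in $U$. The singular value drifts to some nearby $\hat\xi$; re-centering the extended polar coordinates at $\hat\xi$ produces a function $\hat r:=\tilde\rho_{\hat\xi}\circ\hat g$ that is $C^1$-close to $r$. Because $0$ is a regular value of $r$ and $\gamma=r^{-1}(0)$ is a compact embedded circle, the implicit function theorem yields a unique embedded closed curve $\hat\gamma=\hat r^{-1}(0)$ that is $C^1$-close to $\gamma$ and ambient-isotopic to it inside $U$. Transversality of $\hat r$ along $\hat\gamma$ forces $\hat g(U^+)$ and $\hat g(U^-)$ into the disjoint ranges $\{\tilde\rho>0\}$ and $\{\tilde\rho<0\}$, so $\hat\gamma$ is domain-switching, and the transversal sign change of $\det d\hat g$ across $\hat\gamma$ (by openness of transversal zeros) makes $\hat\gamma$ sign-changing. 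By the preceding characterization $\hat\gamma$ is a crossing $p$-curve of $\hat g$; its uniqueness in $U$ together with the isotopy to $\gamma$ is exactly the claimed topological rigidity.

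The main obstacle is the moving target value: a crossing $p$-curve is by definition a component of a single-point preimage $(g'_L)^{-1}(\xi)$ lying in $L^o$, and under perturbation this value moves from $\xi$ to $\hat\xi$. I must therefore show that the transversal ``crossing'' structure is intrinsic, so that the extended-polar chart can be re-centered at $\hat\xi$ and still yield a $C^1$ function $\hat r$ close to $r$---equivalently, that no spurious $p$-curve is created in $U$ and that $\gamma$ is not destroyed. Concretely this amounts to verifying the persistence of the normal form (c1)--(c4), in particular that $\ker d\hat g$ stays tangent to $\hat\gamma$ and that $\hat g$ keeps rank one along $\hat\gamma$; this follows from the transversality above, but requires care in the region where folding components of $L^o$, which may lie in $U$ without being $p$-curves, accumulate toward $\gamma$.
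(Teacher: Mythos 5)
Your proposal is correct and follows the same route the paper takes: the paper's entire justification for Proposition \ref{Vswitch} is the single sentence preceding it, namely that the sign-changing and domain-switching properties are rigid under small deformations of $g'_L$ and that (by the preceding proposition) an embedded closed curve with both properties is a crossing $p$-curve. You merely supply the details the paper omits --- the normal form (c1)--(c4), the implicit-function-theorem location of the perturbed curve $\hat\gamma$ as a regular level set of the signed radial function, and the honest flagging of the moving target value $\hat\xi$ --- all consistent with the paper's standing genericity assumptions.
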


\begin{rem} \label{ordinary}
{\rm 
Unlike its crossing counterpart, a folding $p$-curve may turn 
to an {\em ordinary folding curve} (i.e., the image under $g'_L$ is 1-dimensional) under a small perturbation. 
}
\end{rem}

The following two lemmas demonstrates the difference 
between a crossing $p$-curve and a folding curve via 
$\Gamma_\h$ and $V_\h$.

\begin{lem}[\bf {$\Gamma_\h$ and $p$-curves}] \label{Dswitch1}
Let $\gamma\subset \Gamma_\h$ be a  $p$-curve. Suppose  
there exists a curve $\sigma\subset \Gamma_\h$  
intersecting transversally with $\gamma$ at a point $q$. Assume that $q$ is the only intersection point of 
$\sigma$ with $L^o$ in a small neighborhood $U_q\subset L$ of 
$q$. Also assume that $L^o\cap U_q=\gamma\cap U_q$. Identity $U_q$  with an 
open domain in $\R^2$ with $q$ corresponding to the origin, and 
$\gamma$ the $x$-axis, $\sigma$ the $y$-axis. 

\begin{enumerate} 
\item If $\gamma$ is folding then either $\{x<0, y\neq 0\} 
\subset V_\h$ and $\{ x>0, y\neq 0\}\subset V_{\h^\perp}$, or 
the other way around.

\item If $\gamma$ is  crossing  then 
either $\{ xy>0\}\subset V_\h$ and 
$\{ xy<0\}\subset V_{\h^\perp}$, or the other way around. 
\end{enumerate} 

In other words,upon passing the intersection point $q$ when move along $\sigma$ in either direction, 
the two domains $V_\h$ and $V_{\h^\perp}$ 
\begin{enumerate} 
\item stay on their own sides of $\sigma$ if $\gamma$ is folding, 
\item switch to their 
opposite sides across $\gamma$ simultaneously if $\gamma$ is crossing. 
\end{enumerate} 
\end{lem}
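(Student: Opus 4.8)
The plan is to carry out everything inside the local normal form for a $p$-curve established just above, and to convert the folding/crossing dichotomy into a statement about where the two half-disks $D_\h$ and $D_{\h^\perp}$ sit relative to the image of a neighbourhood of $q$.

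First I would fix coordinates $(x_1,x_2)$ on $U_q$ with $\gamma=\{x_2=0\}$ and $\sigma=\{x_1=0\}$, normalised as in (c1)--(c4), and put polar coordinates $(\rho,t)\in[0,\delta)\times\R/2\pi\Z$ on a small disk $D_\xi\subset\dP(K')$ centred at $\xi:=g'_L(\gamma)$, with $\rho$ depending only on $x_2$, $\rho^{-1}(0)=\gamma$ and $\pa_{x_1}\in\ker(dg'_L)$ along $\gamma$. Since $\xi\in\lambda_\h$ and $\lambda_\h$ is a great circle, its trace in $D_\xi$ is a diameter $t\in\{t_0,t_0+\pi\}$, and the two complementary half-disks are $D_\h\cap D_\xi=\{t_0<t<t_0+\pi\}$ and $D_{\h^\perp}\cap D_\xi=\{t_0+\pi<t<t_0+2\pi\}$ after relabelling. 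Hence for $p\in U_q\setminus\gamma$ the membership criterion is simply that $p\in V_\h$ iff $t(g'_L(p))\in(t_0,t_0+\pi)$ and $p\in V_{\h^\perp}$ iff $t(g'_L(p))\in(t_0+\pi,t_0+2\pi)$.

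The key input I would draw from the hypotheses is the value of the boundary angle $t^+(0,0)$. Because $\sigma\subset\Gamma_\h$, the image $g'_L(\sigma)$ lies on $\lambda_\h$, so the image tangent of $g'_L(\ell^+_0)$ at $\xi$ is tangent to $\lambda_\h$; thus $t^+(0,0)\in\{t_0,t_0+\pi\}$, and after relabelling I may assume $t^+(0,0)=t_0$ with $\frac{\pa t^+}{\pa x_1}>0$ along $\gamma$ (the opposite signs produce the ``other way around'' alternatives). I would then shrink $U_q$ so that $\Gamma_\h\cap U_q=\gamma\cup\sigma$: on each of the open halves $\{x_2>0\}$ and $\{x_2<0\}$ the map $g'_L$ is a local diffeomorphism, since $L^o\cap U_q=\gamma$, and because $t^+(x_1,0)$ (resp. $t^-(x_1,0)$) sweeps only a thin interval about $t_0$ (resp. about $t_0$ or $t_0+\pi$, according to (F) or (C)), the image of each open half is a thin wedge of $t$-values meeting $\lambda_\h$ only along the single arc $\sigma^\pm$. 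This guarantees that each of the four open quadrants maps into a single one of $D_\h$, $D_{\h^\perp}$, so that the domain type is constant on each quadrant.

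Finally I would read off the quadrants. On $\{x_2>0\}$ one has $t\approx t^+(x_1,0)$, which lies just above $t_0$ for $x_1>0$ and just below $t_0$ for $x_1<0$; hence the upper-right quadrant lies in $V_\h$ and the upper-left in $V_{\h^\perp}$. On $\{x_2<0\}$ I invoke the dichotomy: for folding, (F) gives $t^-(x_1,0)=t^+(x_1,0)$, so the lower quadrants repeat the upper ones and $\{x_1>0\}\subset V_\h$, $\{x_1<0\}\subset V_{\h^\perp}$, which is conclusion (i); for crossing, (C) gives $t^-(x_1,0)=t^+(x_1,0)+\pi$, shifting each lower quadrant into the opposite half-disk, so $\{x_1x_2>0\}\subset V_\h$ and $\{x_1x_2<0\}\subset V_{\h^\perp}$, which is conclusion (ii). The reformulation then follows since in case (i) the two domains are separated by $\sigma$ and do not interchange across $q$, while in case (ii) they interchange across $\gamma$. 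I expect the main obstacle to be the bookkeeping of the angular coordinate $t$ modulo $2\pi$ together with its $\pi$-jump across a crossing curve, and in particular making rigorous that each open quadrant maps into a single half-disk; this is exactly where the hypotheses $L^o\cap U_q=\gamma\cap U_q$ and the transversality of $\sigma$ with $\gamma$ are needed, since they force $\Gamma_\h\cap U_q=\gamma\cup\sigma$.
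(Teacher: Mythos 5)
Your proposal is correct and follows essentially the same route as the paper's proof: both work in the local polar coordinates $(\rho,t)$ around $\xi=g'_L(\gamma)$, identify $D_\h$ and $D_{\h^\perp}$ with the two half-disks cut out by the diameter $\lambda_\h\cap D_\xi$, and then read off the conclusion from the dichotomy that $t$ is continuous across a folding $p$-curve but jumps by $\pi$ across a crossing one. Your quadrant-by-quadrant bookkeeping is a slightly more explicit version of the paper's statement that ``the angle $t$ along $\sigma$ does not change'' (folding) versus ``changes by $\pm\pi$'' (crossing), so there is no substantive difference in method.
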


\begin{proof}
 Assume that $\xi:=g'_L(\gamma)
\neq \xi_\infty$ for the moment. Then by the stereographic 
projection $\dP(K')\setminus\{ \xi_\infty\} \to \R^2$ one can 
see that $\sigma\subset \Gamma_\h$ intersects with $\gamma$ 
iff $\lambda_\h$ is the line passing through $\xi_0=(0,0)$ and 
$\xi=(a_\xi,b_\xi)$. Recall the local coordinate system 
$(\bar{a},\bar{b})$ and the corresponding polar coordinate 
system $(\rho,t)$ around $\xi$. By 
rotating the local coordinate system 
$(\bar{a},\bar{b})$ (and hence $(\rho, t)$) if necessary we may 
assume that  $t=2\h_0$ along one 
branch of $\lambda_{\h_0}\setminus\{ \xi\}$, and $t=2\h_0^\perp$ along 
the other branch of $\lambda_{\h_0}-\xi$, where $\h_0\in \R/\frac{\pi}{2}\Z$ is a value such that $\xi\subset \lambda_{\h_0}$  
($\h_0$ is unique $\mod \pi$ if $\xi\neq \xi_0$ or $\xi_\infty$). 
Recall the definitions of  $D_\h$ as in (\ref{Dh}) and (\ref{Dhp}).
If $\gamma$ is folding then the angle $t$ along $\sigma$ 
does not change after $\sigma$ meeting with $\gamma$, so 
$D_{\h_0}$ remain on the same side of $\sigma$. If $\gamma$ 
is crossing, then the angle $t$ along $\sigma$ changes by 
an amount of $\pm \pi$ after $\sigma$ meeting with 
$\gamma$ instead. Then $\h_0$ 
changes to $\h_0^\perp$, hence both $D_{\h_0}$ and 
$D_{\h_0^\perp}$ 
switch to their opposite sides simultaneously after $\sigma$ 
meeting with $\gamma$. 

The case $\xi=\xi_\infty$ is the same as the case $\xi_0$. 
This completes the proof. 
\end{proof}

A similar result holds for the case when an embedded $p$-curve 
$\gamma\subset \Gamma_\h$ is a connected component of 
$\Gamma_\h$. 

\begin{lem} \label{Dswitch2} 
Suppose that a $p$-curve $\gamma\subset \Gamma_\h$ is 
a connected component of $\Gamma_\h$. Let $U$ be a tubular 
neighborhood of $\gamma$ and $U^+$, $U^-$ denote the 
two connected components of $U\setminus\gamma$. Assume 
that $U$ is small enough, then 
\begin{enumerate} 
\item either $U^\pm\subset V_\h$ or $U^\pm\subset V_{\h^\perp}$  if $\gamma$ is folding, 
\item one of $U^+,U^-$ is in $V_\h$ and the other is in 
$V_{\h^\perp}$  if $\gamma$ is crossing. 
\end{enumerate} 
\end{lem}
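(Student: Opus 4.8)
The plan is to run the same angular-coordinate analysis used for Lemma \ref{Dswitch1}, now exploiting that $\gamma$ is \emph{isolated} in $\Gamma_\h$. First I would record the basic data: since $\gamma$ is a $p$-curve, $\xi:=g'_L(\gamma)$ is a single point, and since $\gamma\subset\Gamma_\h=(g'_L)^{-1}(\lambda_\h)$ we have $\xi\in\lambda_\h$. As in the previous proofs I assume $\xi\neq\xi_\infty$; the case $\xi=\xi_\infty$ is handled identically (exactly as in the proof of Lemma \ref{Dswitch1}, via the stereographic projection from $\xi_0$ instead of from $\xi_\infty$). Because $\gamma$ is a connected component of $\Gamma_\h$, I may shrink the tubular neighborhood $U$ so that $U\cap\Gamma_\h=\gamma$ and $U\cap L^o=\gamma$; then $g'_L(U\setminus\gamma)$ is disjoint from $\lambda_\h=\pa D_\h=\pa D_{\h^\perp}$.

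Next I would use connectedness. For $U$ small the two pieces $U^+,U^-$ of $U\setminus\gamma$ are each connected, so each image $g'_L(U^\pm)$ is a connected subset of $\dP(K')\setminus\lambda_\h=D_\h\sqcup D_{\h^\perp}$ and therefore lies entirely in $D_\h$ or entirely in $D_{\h^\perp}$. Hence each of $U^+,U^-$ is contained in $V_\h$ or in $V_{\h^\perp}$, and only their \emph{relative} placement remains to be decided.

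To decide it I would pass to the polar coordinates $(\rho,t)$ on a small disk $D_\xi\subset\dP(K')$ centered at $\xi$ from the definition of folding versus crossing (Definition \ref{cvsf}). Since $\lambda_\h$ is a smooth embedded great circle through $\xi$, near $\xi$ it is an arc whose two rays from $\xi$ carry angular values $\tau$ and $\tau+\pi$, and for $D_\xi$ small the two sides $D_\h,D_{\h^\perp}$ are the angular sectors $(\tau,\tau+\pi)$ and $(\tau+\pi,\tau+2\pi)$. As $x_2\to 0^\pm$ the point $g'_L(x_1,x_2)$ approaches $\xi$ along the limiting direction $t^\pm(x_1,0)$, which is precisely the tangent direction of $g'_L(\ell^\pm_{x_1})$ at $\xi$; because $g'_L(U^\pm)$ misses $\lambda_\h$, these directions avoid $\{\tau,\tau+\pi\}$, so the sector, and hence the side $D_\h$ or $D_{\h^\perp}$ containing $g'_L(U^\pm)$, is read off directly from $t^\pm$. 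Invoking the dichotomy (\textbf{F})/(\textbf{C}): in the folding case $t^-(x_1,0)=t^+(x_1,0)$ modulo $2\pi$, so both limiting directions lie in the same sector and $U^+,U^-$ sit on the same side, giving (i); in the crossing case $t^-(x_1,0)=t^+(x_1,0)+\pi$ modulo $2\pi$, so if $t^+\in(\tau,\tau+\pi)$ then $t^-\in(\tau+\pi,\tau+2\pi)$, the two directions fall in opposite sectors, and $U^+,U^-$ lie on opposite sides, giving (ii).

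The one point needing care — and the only genuine obstacle — is justifying that membership in $D_\h$ versus $D_{\h^\perp}$ is truly detected by the first-order angular data $t^\pm$: namely that the two local rays of the smooth curve $\lambda_\h$ at $\xi$ are antipodal in angle, so that a jump of $\pi$ necessarily flips the side, and that for $\rho$ small a point in an interior sector direction really lies in the corresponding half-disk. This is where I would use the smoothness and embeddedness of $\lambda_\h$ (so that $\lambda_\h\cap D_\xi$ is $C^1$-close to its tangent line) together with the freedom to shrink both $U$ and $D_\xi$.
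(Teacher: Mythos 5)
Your proof is correct and uses essentially the machinery the paper intends: the paper states this lemma without proof, remarking only that it is "similar" to Lemma \ref{Dswitch1}, and your argument is exactly the natural adaptation of that proof — the polar coordinates $(\rho,t)$ at $\xi$ and the (\textbf{F})/(\textbf{C}) angular dichotomy — supplemented by the connectedness observation that $g'_L(U^\pm)$ must each land wholly in $D_\h$ or $D_{\h^\perp}$, which is the right way to exploit the hypothesis that $\gamma$ is a whole component of $\Gamma_\h$. The only soft spot (a tangential approach of $g'_L(\ell^\pm_{x_1})$ to $\lambda_\h$ at $\xi$, which disjointness from $\lambda_\h$ alone does not exclude) you flag yourself, and since connectedness reduces the side-determination to a single well-chosen $x_1$, this is no worse than the level of detail in the paper's own proof of Lemma \ref{Dswitch1}.
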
 

\begin{rem} 
{\rm 
Lemmas \ref{Dswitch1} and \ref{Dswitch2} demonstrate the 
{\em domain switching} ($V_\h\leftrightarrow V_{\h^\perp}$) 
property as one crosses a 1-dimensional 
 is held exactly only for crossing $p$-curves. This property sets 
 crossing $p$-curves apart from other types of curves in $L^o$. 
 The {\em domain-switching} and {\em sign-changing} properties 
 of crossing $p$-curves are rigid and in a suitable sense make the deformation of crossing $p$-curves independent from that  of 
 the rest of $L^o$. 
} 
\end{rem}

\begin{defn}[{\bf $p$-domain and crossing/folding domain}] \label{pdomain}
{\rm 
Given a smooth map $g'_L:L\to \dP(K')$, we say that 
$U\subset L$ is a {\em $p$-domain} if each of connected 
component of $\pa U$ is a $p$-curve. 
We say a $p$-domain $U$ is a {\em folding domain} if 
$\pa U$ consists of folding $p$-curves, and the interior 
of $U$ does not contain any crossing $p$-curve; and $U$ is a {\em 
crossing domain} if $\pa U$ consists of crossing $p$-curves, and the interior 
of $U$ does not contain any crossing $p$-curve. 
} 
\end{defn} 

\begin{defn}[{\bf PLG-degree of a $p$-domain}] 
{\rm 
Let $U\subset L$ be a $p$-domain. Then $U$ associates a 
closed surface $\hat{U}$ which is obtained by collapsing each 
connected component of $\pa U$ to a point. 
Let $\overline{U}$ denote the closure of $U$, and 
$r:\overline{U}\to \hat{U}$ the corresponding collapsing map. Then $g'_L$ 
induces a map, denoted by $g'_U$, such that 
\[ 
g'_U(q)=\begin{cases} 
g'_L(q) & \text{ if $q\in U$}, \\ 
g'_L(\gamma) & \text{if $r^{-1}(q)=\gamma\subset \pa U$} 
\end{cases} 
\] 
The $g'_L$-degree of $U$ is defined to be the degree of the map 
$g'_U:\hat{U}\to \dP(K')$. 
} 
\end{defn}

%
%
\section{Invariants of Lagrangian surfaces}

In this section we define a new invariant, called $y$-index, for orientable Lagrangian surfaces immersed in a symplectic manifold $(W,\omega)$. For technical simplicity 
we assume that $(W,\omega)$ is parallelizable.

%
%
\subsection{Parallelizable symplectic 4-manifold} \label{paraM} 

Let $(W,\omega)$ be a symplectic 4-manifold 
and $J$ a $\omega$-compatible complex structure on $W$. It is well known that the set of all $\omega$-compatible almost complex structures is contractible, hence the Chern classes 
$c_i(W,J)$ are independent of the choice of $J$. Omitting $J$ we simply write the Chern classes of $(W,\omega)$ as 
$c_i(W)$. From now on we assume that $(W,\omega)$ satisfies 
the following condition: 
 
\begin{cond}[{\bf Parallelizability}]
\[ 
c_1(W)=0 \  \text{ and } \  c_2(W)=0.  \label{cond-c}
\] 
\end{cond}

Often it is convenient for computation if $W$ also satisfies 
\begin{cond} 
\[ 
H_1(W,\Z)=0 \ \text{ and } \ H_3(W,\Z)=0.   \label{cond-t} 
\] 
\end{cond} 

Condition (\ref{cond-c}) implies that $TW$ together with an $\omega$-compatible almost complex structure is a trivial complex vector 
bundle, hence $W$ is parallelizable. Condition (\ref{cond-t}) 
on cohomologies  ensures  that the complex trivialization of 
$TW$ is unique up to homotopy. 
For example, a $1$-connected Stein surface with an associated symplectic structure and vanishing first Chern class 
satisfies these conditions.

Fix a $\omega$-compatible almost complex structure $J$ on 
$(W,\omega)$ and let $g:=\omega\circ(Id\oplus J)$ denote the 
corresponding Riemannian metric on $W$. 
Since $c_1(W)=c_2(W)=0$, $TW$ is a trivial $J$-complex vector 
bundle. We fix a pair of unitary sections $u,v$ of $TW\to W$ 
so that pointwise $u,v$ form a unitary basis of $T_pW$, $p\in W$. We call $( u,v)$ a {\em unitary framing} or {\em unitary basis}  (with 
respect to $J$).

With $( u,v)$ we associate a unique  pair of $g$-orthogonal almost complex 
structures $K',K''$ defined by 
\begin{gather*}
K'u = v,   \quad K'Ju  =  -Jv; \\ 
K''u =  Jv,   \quad  K''Ju  = v.  
\end{gather*} 
The triple $(J,K',K'')$ satisfy 
\[ 
JK'=K'', \quad  K'K''=J, \quad K''J=K'; 
\] 
and  generate 
an $S^2$-family of $g$-orthogonal almost complex structures 
\[ 
J_{a,b,c}:=aJ+bK'+cK'', \quad a^2+b^2+c^2=1. 
\] 

Let $K_t:=(\cos t) K'+(\sin t)K''$, $t\in \R/2\pi\Z$. 
Then 
$K_t$-complex planes are Lagrangian planes, and we have the 
decomposition of oriented Lagrangian planes over $p\in W$: 
\[ 
\Lambda^+ _p=\bigsqcup _{t\in\R/2\pi\Z}\dP_p (K_t), 
\] 
where $\dP_p(K_t)\cong \C P^1\cong S^2$ is the Grassmannian 
of $K_t$-complex 2-dimensional subspaces of $T_pW$ for $p\in W$. 

\begin{rem} \label{uv}
{\rm 
 The choice of 
$u$ is unique up to homotopy since $H_3(W,\Z)=0$, and the 
choice of $v$, which amounts to a framing of the $J$-complex 
plane bundle $(u\wedge Ju)^\perp$ orthogonal to 
$u\wedge Ju$, 
is also unique up to homotopy due to $H_1(W,\Z)=0$. 
}
\end{rem} 

With respect to the $J$-complex unitary basis $(u,v)$, $TW$ is bundle 
isomorphic to $W\times \C^2$, and we identify $T_pW$ with 
the vector space $\C^2$ via the projection map 
\begin{gather*} 
\Psi:TW\to \C^2, \\ 
\Psi (a_1u+a_2Ju+b_1K'u+b_2K''u):=a_1\pa_{x_1}+
a_2\pa_{y_1}+b_1\pa_{x_2}+b_2\pa_{y_2}. 
\end{gather*}  
Then the complex structures $J$, $K_t$ are identified 
pointwise with 
their counterpart in $\R^4$ as studied in Sections \ref{LG} and 
\ref{intCL}, and relevant constructions like $\cC$, $\cG$ and 
$E_\h$ there can be extended  over $TW$ straightforwardly. 
For notational simplicity we will use the same notations 
$\cC$, $\cG$, $E_\h$ for their pullback over $TW$ by $\Psi$.  

%
%


The orbit space of $\cG$ contains a unique great circle 
subbundle $\cE_\cG$ of $\dP(J)$ 
formed by elements $E$ of $\dP(J)$ which have 
nontrivial intersection with $u\wedge v$, i.e., 
$\dim E\cap (u\wedge v)=1$. More precisely, 
let $u_\theta:=\cos\theta u+\sin\theta v$ and define 
\[ 
E_\theta:=u_\theta\wedge Ju_\theta, \quad \theta\in\R/2\pi\Z, 
\] 
Then $\cE_\cG=\{ E_\theta\mid 0<\theta<\pi\}$. 
Note that $E_\theta=E_{\theta+\pi}$, and $E_{\theta+\frac{\pi}{2}}=E_\theta^\perp$.

%

\begin{notn} 
{\rm 
Let $\cF^\omega$ denote the set of triples $(J,u,v)$ over $W$ 
where $J$ is an $\omega$-compatible almost complex structure, 
and $\{ u,v\}$ is a $J$-complex unitary framing of $TW$. Here the 
norms of $u,v$ are determined with respect to the Riemannian 
metric $g:=\omega \circ (Id\times J)$. 
Let $\cF^\omega_J:=\{ (J',u',v')\in \cF^\omega\mid J'=J\}$. 
}
\end{notn} 

\begin{rem} 
{\rm 
The set $\cF^\omega$ is path-connected if $H_1(W,\Z)=0=W_3(W,\Z)$. 
}
\end{rem}

%
%
\subsection{The $\mu_2$-index} \label{mu2index}

Fix $\ff=(J,u,v)\in \cF^\omega$, the corresponding 
unitary trivialization 
\[ 
\Psi:TW\cong W\times \C^2\to \C^2\cong \R^4
\] 
 induces a trivialization of 
the associated bundle $\cL^+$ of oriented Lagrangian Grassmannian over $W$: 
\[ 
\cL^+\cong W\times \Lambda^+. 
\] 
Consider the associated projection onto $\Lambda^+$: 
\[ 
\hat{\pi}:\cL^+\cong W\times \Lambda^+\to \Lambda^+
\]  
Also  recall the projection $\pi': \Lambda^+\to \dP(K')\cong S^2$. 

Given an immersed oriented Lagrangian surface $L\subset W$ 
we can associate to it the {\em projected Lagrangian Gauss map}   
(PLG-map) 
\begin{align*} 
g'_L: L  & \to \dP(K') \\ 
p & \to g'_L(p):=\pi'\circ \hat{\pi}  (T_pL). 
\end{align*}  
\begin{defn} \label{mu2} 
{\rm 
Let $L$, $W$, $g'_L$ and $\dP(K')$ be as above. We define 
the $\mu_2$-index of $L$ (oriented) with respect to $\ff\in 
\cF^\omega$ to be 
\[ 
\mu_2(L;\ff):=\deg (g'_L) \in \Z,  
\] 
the PLG-degree of  $g'_L$ from $L$ to 
$\dP(K')$. The Grassmannian $\dP(K')$ is oriented by its 
 $K'$-complex structure.
 }
\end{defn}

\begin{prop}   \label{mu2ori}
The number $\mu_2(L;\ff)$ is independent of the choice of the 
orientation of $L$, i.e., $\mu_2(-L;\ff)=\mu_2(L;\ff)$ where 
$-L$ denote $L$ with the opposite orientation. 
\end{prop}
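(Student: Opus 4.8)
The plan is to track how reversing the orientation of $L$ affects the PLG-map $g'_L$ and then to reduce the statement to a single degree computation on $\dP(K')\cong S^2$. Write $\nu\colon\Lambda^+\to\Lambda^+$ for the orientation-reversal map $\eta\mapsto-\eta$ on oriented Lagrangian planes, and let $G\colon L\to\Lambda^+$, $G(p):=\hat\pi(T_pL)$, so that $g'_L=\pi'\circ G$. Since $\hat\pi$ is induced by a fibrewise linear trivialization it preserves the complex orientation of a plane, so $\hat\pi(T_p(-L))=\hat\pi(-T_pL)=-\hat\pi(T_pL)=\nu(G(p))$. Thus $g'_{-L}=\pi'\circ\nu\circ G$, and the first task is to rewrite $\pi'\circ\nu$ in terms of $\pi'$.

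First I would compute $\nu$ on the factor $\dP(K')=\dP(K_0)$. Because $K_\pi=-K'$, an orientation-reversed $K'$-complex plane becomes $K_\pi$-complex, so $\nu(\dP(K_0))=\dP(K_\pi)$; the central projection then gives the self-map $\bar\nu\colon\dP(K')\to\dP(K')$, $\bar\nu(\xi):=\pi'(-\xi)=c_{-\pi/2}(-\xi)$. Using that each $c_t$ commutes with $\nu$ (a linear map commutes with orientation reversal) and that $\pi'\circ c_\tau=\pi'$, one checks in the trivialization $\tilde\Phi(t,\xi)=c_{t/2}(\xi)$ that $\nu\circ\tilde\Phi(t,\xi)=\tilde\Phi(t+\pi,\bar\nu(\xi))$, hence $\pi'\circ\nu=\bar\nu\circ\pi'$ on all of $\Lambda^+$. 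Combining, $g'_{-L}=\bar\nu\circ g'_L$ as set maps. Reversing the orientation of the domain flips the sign of the degree, so
\[
\mu_2(-L;\ff)=\deg_{-L}(\bar\nu\circ g'_L)=-\deg(\bar\nu)\cdot\deg_L(g'_L)=-\deg(\bar\nu)\,\mu_2(L;\ff),
\]
and the Proposition reduces to the claim $\deg(\bar\nu)=-1$.

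The main (though elementary) step is this last degree computation, which I would carry out in the homogeneous coordinates $[x:y]$ on $\dP(K')$ from Section \ref{sph}, with $x=x_1+ix_2$, $y=y_1-iy_2$. Representing $\xi=[x:y]$ by the real plane $\mathrm{span}_\R\{P,K'P\}$ with $P=(x_1,x_2,y_1,y_2)$, applying the linear map $c_{-\pi/2}\colon(x_1,x_2,y_1,y_2)\mapsto(y_1,y_2,-x_1,-x_2)$, and reading off the $K'$-complex coordinate of the image plane, I expect to obtain $\bar\nu([x:y])=[\bar y:-\bar x]$. In the affine chart $w=y/x$ this is $w\mapsto-1/\bar w$, the composition of complex conjugation with a holomorphic Möbius transformation; it is therefore an orientation-reversing (indeed fixed-point-free) involution of $\C P^1$, so $\deg(\bar\nu)=-1$. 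Feeding this back gives $\mu_2(-L;\ff)=\mu_2(L;\ff)$. The only place demanding care is the bookkeeping of orientations — establishing cleanly that $\hat\pi$ and $\pi'$ interact with $\nu$ exactly as above, and that the coordinate computation of $\bar\nu$ uses the same orientation of $\dP(K')$ (the $K'$-complex one) that enters Definition \ref{mu2}; once these are pinned down the result is immediate.
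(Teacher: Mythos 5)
Your argument is correct and is essentially the paper's proof: both identify $g'_{-L}$ as the composition of $g'_L$ with the orientation-reversing identity of the domain and the antipodal map of $\dP(K')$ (the paper writes $-\xi=(c_{\pi/2}\xi)^\perp$ and uses that $\xi\mapsto\xi^\perp$ commutes with $\cC$ and acts antipodally on each $\dP(K_t)$, which is exactly your $\bar\nu$), so the two degree-$(-1)$ factors cancel. Your coordinate computation of $\bar\nu$ as $[x:y]\mapsto[\bar y:-\bar x]$ is just a more explicit route to the same $\deg(\bar\nu)=-1$.
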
 

\begin{proof} 
Let $r:-L\to L$ denote the orientation reversing map 
defined by $r(p)=p$ for $p\in -L$. 
For $p\in -L$, 
\[ 
g'_{-L}(p)=\pi'\circ\hat{\pi}(T_p(-L))=\pi'\circ\hat{\pi}(-T_{r(p)}L).  
\] 
 Let 
 $v_1,v_2$ be a positive orthonormal basis of $T_pL$, so 
 $T_pL=v_1\wedge v_2$. Then 
 \[ 
 T_p(-L)=v_2\wedge v_1=(Jv_1\wedge Jv_2)^\perp =
 (c_{\frac{\pi}{2}}(v_1\wedge v_2))^\perp .  
 \] 
 Note that the map $A:\Lambda^+\to \Lambda^+$ defined by 
\[ 
A(\xi):=\xi^\perp, \quad \xi\in\Lambda^+
\] 
commutes with the action of $\cC$, preserving  each of $\dP(K_t)$ and acting on which as an antipodal map. So we 
have 
\begin{equation} \label{g-L}  
g_{-L}=A\circ c_\frac{\pi}{2}\circ g_L \circ r =c_\frac{\pi}{2}\circ A\circ g_L \circ r
\end{equation} 
and hence 
\[ 
g'_{-L}=A\circ g'_L\circ r, 
\] 
with  $A$ viewed  as a map from $\dP(K')$ to $\dP(K')$. 
Then 
\[ 
\deg (g'_{-L})=\deg (A)\cdot \deg (g'_L)\cdot \deg (r)= 
\deg (g'_L)
\] 
since $\deg(A)=-1=\deg(r)$. So 
\[ 
\mu_2(-L;\ff):=\deg(g'_{-L})=\deg(g'_L)=\mu_2(L;\ff). 
\] 

\end{proof}

\begin{prop} \label{mu2-indep}   
Let $L$ be an oriented compact Lagrangian surface immersed in $W$. Given $\ff=(J,u,v),\ff'=(J',u',v')\in \cF^\omega$, then 
\[ 
\mu_2(L;\ff)=\mu_2(L;\ff')
\]  provided that 
$\ff$ and $\ff'$ are in the same connected component 
of $\cF^\omega$. 
\end{prop}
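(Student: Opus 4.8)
The plan is to reduce the statement to the homotopy invariance of the topological degree. Since $\ff$ and $\ff'$ lie in the same connected component of $\cF^\omega$, and this space (an open subset of a space of framings) is locally path-connected, so that its connected and path components coincide, I would first fix a continuous path $\ff_s=(J_s,u_s,v_s)$, $s\in[0,1]$, in $\cF^\omega$ with $\ff_0=\ff$ and $\ff_1=\ff'$. Each $\ff_s$ produces, exactly as in the definition of $\mu_2$, a unitary trivialization $\Psi_s:TW\to W\times\C^2$, the induced trivialization $\cL^+\cong W\times\Lambda^+$ of the oriented Lagrangian Grassmannian bundle, the fibrewise projection $\hat\pi_s:\cL^+\to\Lambda^+$, and finally the PLG-map $g'_{L,s}=\pi'\circ\hat\pi_s\circ(T_\bullet L):L\to\dP(K')$. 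The essential observation is that the oriented tangent plane $T_pL\in\Lambda^+_p$ does not depend on $s$: all of the $s$-dependence is carried by the trivialization $\Psi_s$, while the central projection $\pi':\Lambda^+\to\dP(K')$ from Section \ref{sph} and the orientation of $\dP(K')$ by its $K'$-complex structure are the fixed standard ones once we work in the model $\R^4\cong\C^2$.

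Next I would verify that $s\mapsto g'_{L,s}$ is a genuine homotopy into a single fixed target. Since $\ff_s$ varies continuously, the isomorphisms $\Psi_s$ depend continuously on $s$, hence so does $\hat\pi_s(T_pL)\in\Lambda^+$ for each fixed $p$; composing with the fixed map $\pi'$ yields a continuous map $G:L\times[0,1]\to\dP(K')$, $G(p,s):=g'_{L,s}(p)$, with $G(\cdot,0)=g'_{L,\ff}$ and $G(\cdot,1)=g'_{L,\ff'}$. Because $L$ is compact and oriented and $\dP(K')\cong S^2$ is a fixed closed oriented surface, $\deg(G(\cdot,s))$ is defined for every $s$; homotopy invariance of the degree (equivalently, the fact that an integer-valued continuous function of $s$ is constant) then gives $\deg(g'_{L,\ff})=\deg(g'_{L,\ff'})$, which is precisely $\mu_2(L;\ff)=\mu_2(L;\ff')$.

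The main point requiring care is the claim that the target $\dP(K')$, together with its chosen orientation, may be taken literally constant along the path rather than merely varying through a family of $2$-spheres $\dP(K'_s)$. I would address this by emphasizing that the whole construction is carried out after transporting $TW$ to the standard model via $\Psi_s$: in that model the complex structure $K'$ and the projection $\pi'$ are the fixed standard ones, so the potential ambiguity is absorbed entirely into the continuously varying $\Psi_s$ and never appears in the target. A secondary subtlety is that $J_s$ itself varies, so the metric $g_s=\omega\circ(\mathrm{Id}\times J_s)$ and the meaning of ``unitary'' change with $s$; however, since each $\ff_s$ is by definition $\omega$-compatible and unitary for its own $J_s$, the trivialization $\Psi_s$ is well-defined and depends continuously on $s$ throughout, so no further adjustment to the argument is needed.
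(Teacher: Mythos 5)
Your proposal is correct and follows essentially the same route as the paper: both take a path $\ff_s$ in $\cF^\omega$, observe that the induced PLG-maps $g'_{L,s}$ form a continuous homotopy of maps from the compact oriented surface $L$ into the fixed target $\dP(K')\cong S^2$, and conclude by homotopy invariance of the degree. The paper packages the $s$-dependence as composition with a continuous family of diffeomorphisms $\phi_{s,p}=\Psi_0\circ\Psi_s^{-1}$ of the fiber, while you absorb it directly into the trivialization $\Psi_s$, but this is only a difference of presentation.
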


\begin{proof} 
Let $(J_t,u_t,v_t)$, $t\in[0,1]$, be a path in $\cF^\omega$ 
connecting $(J,u,v)=(J_0,u_0,v_0)$ and $(J',u',v')=(J_1, u_1,v_1)$. Each of $(J_t,u_t,v_t)$ induces a trivialization 
of $\cL^+|_L\overset{\Psi_t}{\underset{\cong}{\longrightarrow}} L\times S^1\times S^2$. 
\[ 
\phi_{t,p}:=\Psi_0\circ\Psi_t^{-1}|_{\{ p\}\times S^1\times S^2}:S^1\times S^2\to S^1\times S^2
\] 
is a smooth family of diffeomorphisms parameterized by 
$(t,p)\in[0,1]\times W$ with $\phi_{0,p}=id$ for all $p\in W$. 

Let $g_{t,L}:L\to S^1\times S^2$ be the corresponding Lagrangian Gauss map. Then $g_{t,L}(p)=\phi_{t,p}\circ g_{0,L}(p)$ for $p\in L$. So $\deg g'_{t,L}=\deg g'_{0,L}$ for 
$t\in [0,1]$. Hence $\mu_2(L;\ff)=\mu_2(L;\ff')$. 
\end{proof}

For $\ff=(J,u,v)\in \cF^\omega$ we denote by $[\ff]$ the 
equivalence of $\ff$ so that $[\ff]=[\ff']$ iff $\ff$ and 
$\ff'$ are in the same connected component of $\cF^\omega$. 

\begin{rem} 
{\rm 
$\mu_2(L,[\ff])$ is invariant under smooth regular homotopy of 
$L$ in the space of compact oriented Lagrangian surfaces immersed in $W$. 
} 
\end{rem}

\begin{exam} 
{\rm 
In the standard symplectic $\R^4$, $\mu_2(L,\ff)$ is 
independent of $\ff\in \cF^\omega$ since 
$H_1(\R^4)=0=H_3(\R^4)$. By direct computation one gets that 
$\mu_2(L)=2$ for Lagrangian Whitney spheres  
and $\mu_2=0$ for  both Chekanov tori and Clifford tori. See 
Section \ref{R4} for the detail. 
}
\end{exam}

{\bf Crossing domain decomposition.}  \ 
Recall the definitions of (crossing) $p$-curves and crossing domains from Section \ref{PK'}. Let 
\begin{align*} 
\Sigma & : \text{ the union of all $p$-curves 
of $g'_L$}, \\ 
\Sigma^c & : \text{ the union of all 
crossing $p$-curves of $g'_L$}.
\end{align*}
We assume that every crossing $p$-curve is embedded in $L$, 
which is true for generic $g'_L$. 

The complement $L\setminus\Sigma^c$ consists of a finite number 
of connected open subdomains denoted by $L_i$. 
Note that this decomposition is independent of the orientation 
of $L$.

Let $\bar{L}_i$ denote the closure of $L_i$. $\bar{L}_i$ is a 
compact surface with boundary. Let 
$\hat{L}_i$ denote the closed surface obtained by collapsing each 
of the boundary components of $\bar{L}$ to a point. 
Recall that   $g'_L$ 
induces for each $\hat{L}_i$ a map 
\[
g'_i:\hat{L}_i\to \dP(K'). 
\] 
Let 
\[  d_i:=\deg(g'_i). 
\] 
The degree of $g'_L:L\to \dP(K')$ is then 
\[ 
\mu_2(L;\ff)=d=\sum_i d_i. 
\] 
We all $d_i$ the {\em PLG-degree} of $L_i$. 

\subsection{The $y$-index} \label{y} 


Let $L\subset (W,\omega) $ be an immersed oriented 
Lagrangian suface 
and $g'_L:L\to \dP(K')$ the PLG-map 
with respect to the framing $\ff\in\cF^\omega$. 
Recall the critical set $L^o\subset L$ of $g'_L$. 
Generically $L^o$ is a codimension 1 subset of $L$, and 
the set of critical values $g'_L(L^o)$ is a codimension 1 subset 
of $\dP(K')$. Both $L^o$ and $g'_L(L^o)$ may contain 
a finite number of 0-dimensional connected components 
in addition to 1-dimensional ones. 

We assume that $L^o$ and $g'_L(L^o)$  satisfy the following conditions: 

\begin{enumerate} 
\item Both $L^o$ and $g'_L(L^o)$ are  disjoint unions of a finite 
number of 1-dimensional connected components and a finite 
number of isolated points. 


\item $g'_L$ has a finite number of crossing $p$-curves, all 
embedded.

%
%
%

\item The 1-dimensional components of $\overline{L^o\setminus\Sigma^c}$ are 
smooth curves with transversal self-intersections. 


Recall  $\Sigma\subset L^o$ the union of all $p$-curves of $L$.

\item For each $\h\in \R/\pi\Z$, $\lambda_\h\cap g'_L(L^o)$ 
is a finite set. Also, the intersection of $(g'_L)^{-1}(\lambda_\h)$ 
with $L^o\setminus\Sigma$ is finite, $\forall \h\in \R/\pi\Z$. 

\end{enumerate} 
A generic $g'_L$ will satisfy the conditions listed above.

{\bf Proper $E_\h$-locus.} \ 
Recall the $E_\h$-locus $\Gamma_\h:=(g'_L)^{-1}(\lambda_\h)$ 
for $\h\in \R/\pi\Z$, 
and the {\em proper $E_\h$-locus}  
\[ 
\check{\Gamma}_\h:=\overline{\Gamma_\h\setminus\Sigma}.  
\] 

Ignoring its 0-dimensional components (a finite number of points 
if not empty), each of $\check{\Gamma}_\h$ is a finite 
disjoint union of embedded curves except for a finite number 
of $\h$'s. For such exceptional $\h$, $\check{\Gamma}^+_\h$ 
has a finite number of self-intersection points which can be 
self-intersection points of $L^o\setminus\Sigma$ or isolated 
singular points of $g'_L$.

%
%
%
%


Recall the oriented geodesics $\lambda^+_\h$ as the 
boundary of $D_\h$, with $\lambda^+_{\h^\perp}=\lambda^-_\h$. 
We will show that there is a uniform way of orienting 
$\check{\Gamma}_\h$ so that the oriented 1-dimensional cycles 
$\check{\Gamma}^+_\h$ satisfy $\check{\Gamma}^+_{\h^\perp} =\check{\Gamma}^-_\h$, where $\check{\Gamma}^-_\h=-\check{\Gamma}^+_\h$, and the degree 
of $g'_{L,\h}:=\Gamma^+_\h\to \lambda^+_\h$ is independent 
of $\h$. 

Along $\check{\Gamma}_\h$ with $\h$ not exceptional, the Jacobian of $dg'_L$ changes 
its sign at a finite number of points on $\check{\Gamma}_\h$. 
These points are either {\em folding points} (intersection points with folding curves) or {\em crossing points} (intersection points with crossing $p$-curves) of 
$\check{\Gamma}_\h$.

{\bf Orientation of $\check{\Gamma}_\h\cap L_i$.} \ 
In each crossing domain $L_i$, there are no interior crossing points, and $V_\h$ stay on the the same side of $\check{\Gamma}_\h$, so there is a uniform way of orienting $\check{\Gamma}_\h\cap L_i$ for all $\h$: 
Observe that 
$\check{\Gamma}_\h\cap L_i$ 
are disjoint from $\pa L_i$ and are smooth for all but a 
finite number of $\h$. In $L_i$ we orient $\check{\Gamma}_\h$ 
so that $V_\h$ is on the {\em left hand side} of $\check{\Gamma}_\h$. 
The orientation of $\check{\Gamma}_{\h'}$ for exceptional $\h'$ 
can be determined by observing that $\check{\Gamma}_{\h'}$ is 
contained in the limit set of $\check{\Gamma}_\h$ as $\h\to \h'$. 
It is easy to see that with the orientation assigned to $\check{\Gamma}_\h\cap L_i$, $V_\h$ stay on the left 
hand side of $\check{\Gamma}_\h\cap L_i$ for all $\h$. 
\begin{notn} 
{\rm 
We denote by $\check{\Gamma}_{\h,i}$ the oriented set $\check{\Gamma}_\h\cap L_i$ so that $V_\h\cap L_i$ is on the left hand side of $\check{\Gamma}_{\h,i}$. 
} 
\end{notn}

\begin{cor} 
The degree of the restricted map $g'_L: \check{\Gamma}_{\h,i} 
\to \lambda^+_\h$ is $d_i$. 
\end{cor}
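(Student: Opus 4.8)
The plan is to deduce the corollary from the general principle that the degree of a map from a closed oriented surface to $S^2$ can be read off from its restriction to the preimage of any oriented separating great circle. Recall that $d_i=\deg(g'_i)$, where $g'_i:\hat L_i\to\dP(K')$ is the map induced by $g'_L$ on the closed surface $\hat L_i$ obtained from the crossing domain $L_i$ by collapsing each boundary crossing $p$-curve to a point. The great circle $\lambda_\h$ separates $\dP(K')\cong S^2$ into the two open hemispheres $D_\h$ and $D_{\h^\perp}$, and by construction $\lambda^+_\h=\pa D_\h$ as an oriented boundary, so $D_\h$ lies to the left of $\lambda^+_\h$.

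The key topological fact I will use is: if $f:\Sigma\to S^2$ is a smooth map from a closed oriented surface, transverse to an oriented embedded circle $\al=\pa D_+$ with $D_+$ on its left, and if the $1$-manifold $f^{-1}(\al)$ is oriented so that $f^{-1}(D_+)$ lies to its left, then
\[
\deg\bigl(f|_{f^{-1}(\al)}:f^{-1}(\al)\to\al\bigr)=\deg(f).
\]
This follows from a pointwise sign comparison at a regular value $\zeta\in\al$: at each $p\in f^{-1}(\zeta)$ the local degree $\mathrm{sign}\det(df_p)$ used in computing $\deg(f)$ agrees with the local degree of $f|_{f^{-1}(\al)}$, because $df_p$ carries the $f^{-1}(D_+)$-side of $f^{-1}(\al)$ to the $D_+$-side of $\al$ and hence preserves (resp.\ reverses) the chosen tangent orientation exactly when it preserves (resp.\ reverses) the ambient orientation. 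This sign bookkeeping is routine.

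I then apply this with $f=g'_i$, $\al=\lambda^+_\h$, and $D_+=D_\h$. Choosing $\h$ generic so that neither the value $\xi=g'_L(\pa L_i)$ at the collapsed points nor the (finitely many) images of interior folding $p$-curves lie on $\lambda_\h$, the preimage $(g'_i)^{-1}(\lambda_\h)$ avoids the collapsed points and therefore coincides with $\Gamma_\h\cap L_i$; deleting the folding $p$-curves (each of which maps to a single point and so contributes no preimages of a generic $\zeta\in\lambda_\h$) leaves exactly the proper locus $\check{\Gamma}_\h\cap L_i=\check{\Gamma}_{\h,i}$. Since $V_\h\cap L_i=(g'_i)^{-1}(D_\h)$ and $\check{\Gamma}_{\h,i}$ was oriented precisely so that $V_\h\cap L_i$ lies on its left, the orientation convention matches the hypothesis of the fact, and the degree of $g'_L|_{\check{\Gamma}_{\h,i}}:\check{\Gamma}_{\h,i}\to\lambda^+_\h$ equals $d_i$. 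Finally, since $d_i$ is visibly independent of $\h$ and the family $\check{\Gamma}_{\h,i}$ varies continuously (the orientation at exceptional $\h$ being fixed by the limiting procedure already described), the equality persists for all $\h$.

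The step I expect to be the main obstacle is the identification of $\check{\Gamma}_{\h,i}$ with $(g'_i)^{-1}(\lambda_\h)$ as an oriented $1$-cycle in $\hat L_i$: one must check that the closure operation defining $\check{\Gamma}_\h=\overline{\Gamma_\h\setminus\Sigma}$ produces a genuine closed cycle inside the collapsed surface, that the endpoints of arcs of $\Gamma_\h\setminus\Sigma$ at interior folding $p$-curves close up correctly (controlled by Lemma \ref{Dswitch2}(i), where $V_\h$ does not switch sides), and that the collapsed boundary points cause no trouble---which is exactly why the genericity $\xi\notin\lambda_\h$ is invoked. Once this identification is in place, the degree statement is immediate from the displayed fact and the left-hand orientation convention.
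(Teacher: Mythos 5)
Your argument is correct and is essentially the justification the paper leaves implicit: the corollary follows from the standard fact that the degree of a map to $S^2$ equals the degree of its restriction to the preimage of an oriented circle, once the orientation conventions ($V_\h$ on the left of $\check{\Gamma}_{\h,i}$, $D_\h$ on the left of $\lambda^+_\h$) are matched, which is exactly how the paper sets things up in the preceding paragraphs. The sign bookkeeping at a regular value and the identification of $\check{\Gamma}_{\h,i}$ with the relevant preimage cycle in $\hat{L}_i$ are handled adequately.
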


{\bf Graph $\Lambda_L$.} \ 
Note that {\em the above regional orientations for 
$\check{\Gamma}_\h$ do not match at crossing points}. We 
need to adjust the orientations of $\check{\Gamma}_{\h,i}$.  
First of all  to the decomposition  $(\cup_{i\in I} L_i)\cup 
\Sigma^c$ of $L$ into the disjoint union of crossing domains and crossing $p$-curves we associate 
a graph $\Lambda_L$ of which the vertices are indexed by $I$ the index 
set of $\cup L_i$ so that the vertex $v_i$ corresponds to $L_i$, 
and two vertices $v_i,v_j$ are connected by an edge if 
$L_i$ and $L_j$ has a common boundary (crossing $p$-curve) 
$\gamma\subset \Sigma^c$. 
Note that $\Lambda_L$ is connected since $L$ is.

\begin{prop} \label{evenloop}
Any loop embedded in $\Lambda_L$ is even, i.e.,  consisting of an even number of distinct edges. 
\end{prop}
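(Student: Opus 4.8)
The plan is to prove the equivalent statement that $\Lambda_L$ is bipartite, the two color classes recording the binary choice of uniform orientation of the proper loci $\check{\Gamma}_\h$ that a crossing domain induces. The first step is to verify that every edge of $\Lambda_L$ reverses this choice while nothing inside a domain does. In a crossing domain $L_i$ I would orient $\check{\Gamma}_\h\cap L_i$ by the rule ``$V_\h$ on the left''; this is well defined inside $L_i$ because $L_i$ contains no interior crossing $p$-curve. Using the domain-switching dichotomy of Lemmas \ref{Dswitch1} and \ref{Dswitch2}, crossing a folding curve keeps $V_\h$ on the same side (case (1)), so the rule extends continuously, whereas crossing a crossing $p$-curve interchanges $V_\h$ and $V_{\h^\perp}$ (case (2)), so the rule must flip in order to continue $\check{\Gamma}_\h$ across the shared boundary. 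Equivalently, each edge applies the order-two involution $\iota\colon\h\mapsto\h^\perp=\h+\tfrac{\pi}{2}$ to the globally defined pair of complementary sets $V_\h=(g'_L)^{-1}(D_\h)$ and $V_{\h^\perp}=(g'_L)^{-1}(D_{\h^\perp})$.

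The rigorous core of the argument is a two-sidedness count along the $E_\h$-loci. Fix a generic $\h$, so that each component $\beta$ of $\check{\Gamma}_\h$ is an embedded circle meeting $\Sigma^c$ transversally. Since $L$ is oriented and $\beta$ is a circle, ``left of $\beta$'' is globally consistent as we traverse $\beta$ once; moreover the labeling of the two local sides of $\beta$ as the $V_\h$-side and the $V_{\h^\perp}$-side is determined by the fixed global sets, not by any choice. By the first step this labeling is unchanged along arcs of $\beta$ interior to a domain and switches exactly at each point where $\beta$ crosses $\Sigma^c$. Returning to the starting point, the labeling must agree with itself, so $\beta$ meets $\Sigma^c$ in an even number of points; that is, each component of $\check{\Gamma}_\h$ traces a closed walk of even length in $\Lambda_L$.

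It remains to upgrade this from the distinguished loops $\beta$ to an arbitrary embedded loop in $\Lambda_L$. I would package the flip of the first step as a flat $\Z/2$-local system (double cover) $\tilde L\to L$ whose monodromy across a one-dimensional component of $L^o$ is nontrivial precisely for crossing $p$-curves and trivial for folding curves and in domain interiors; its class is $w_1(\tilde L)=\mathrm{PD}[\Sigma^c]$, and by construction the length of an embedded cycle $z$, realized by a circle $c\subset L$ meeting $\Sigma^c$ transversally once on each edge of $z$, is congruent mod $2$ to $\langle w_1(\tilde L),[c]\rangle$. The content of the second step is exactly that this pairing vanishes on the classes carried by the components $\beta$. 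The main obstacle, and the step I expect to require the most care, is to conclude the pairing vanishes on every such $[c]$: I would let $\h$ sweep $\R/\pi\Z$ and argue that, as the $E_\h$-loci $\check{\Gamma}_\h$ sweep across $L$, their components generate the image of $H_1(\Lambda_L;\Z/2)\to H_1(L;\Z/2)$ under the dual-graph inclusion, using the rigidity of crossing $p$-curves (Proposition \ref{Vswitch}) to control what happens at the exceptional angles and at the isolated singular and self-intersection points of $L^o$, where the ``$V_\h$ on the left'' rule degenerates. Equivalently, one may phrase the conclusion through the involution directly: traversing an embedded loop of length $n$ applies $\iota$ once per edge and is trivial along the interior arcs, so the accumulated transport of the pair $\{V_\h,V_{\h^\perp}\}$ is $\iota^{\,n}$; since the loop returns to its starting domain, where this pair is globally fixed, $\iota^{\,n}=\mathrm{id}$, and as $\iota$ is a nontrivial involution this forces $n$ to be even. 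Making the phrase ``accumulated transport is trivial'' precise is precisely the monodromy-vanishing secured by the generation step.
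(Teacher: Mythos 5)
Your first two steps are sound: the ``$V_\h$ on the left'' rule is well defined inside each crossing domain, it flips exactly at crossing $p$-curves by Lemmas \ref{Dswitch1} and \ref{Dswitch2}, and since a component $\beta$ of $\check{\Gamma}_\h$ is a two-sided embedded circle in the oriented surface $L$, the flip count along $\beta$ is even, so $\beta$ meets $\Sigma^c$ in an even number of points. The gap is the third step, and it is not a technicality. What you have shown is that $[\Sigma^c]\in H_1(L;\Z/2)$ pairs trivially with the classes $[\beta]$ carried by the $E_\h$-loci; what you need is that it pairs trivially with every class in the image $A$ of $H_1(\Lambda_L;\Z/2)\to H_1(L;\Z/2)$. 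By nondegeneracy of the mod $2$ intersection form this forces you to prove that $A$ is contained in the span of the $[\beta]$'s, and that generation statement is asserted, not proved. It is also doubtful: $\Gamma_\h=(g'_L)^{-1}(\lambda_\h)$ meets a crossing $p$-curve $\gamma$ only when $g'_L(\gamma)\in\lambda_\h$, so if $g'_L(\gamma)\notin\{\xi_0,\xi_\infty\}$ then for all but one value of $\h$ modulo $\frac{\pi}{2}$ the locus $\check{\Gamma}_\h$ is disjoint from $\gamma$ and its components say nothing about the corresponding edge of $\Lambda_L$; even sweeping $\h$, the classes $[\beta]$ may span a proper subspace (for the Chekanov torus they are all multiples of a single class). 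Your closing ``involution transport'' paraphrase is, as you yourself note, circular without exactly this missing input.

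The repair is to run the $\Z/2$-monodromy count along the realizing circle of the given loop rather than along the $E_\h$-loci, which is what the paper does. Given an odd embedded loop of $\Lambda_L$, realize it by an embedded circle $c\subset L$ meeting each relevant crossing $p$-curve orthogonally in one point and transverse to the rest of $L^o$, and transport along $c$ the image $dg'_L(\nu)$ of a normal field $\nu$ to $c$. The folding/crossing dichotomy of Definition \ref{cvsf} says this vector switches sides of the image curve $g'_L(c)$ exactly at crossing $p$-curves (at a folding curve the image cusps and the side is preserved), so an odd number of edge-crossings would force the side-label, hence $\nu$ itself, to return to its negative after one circuit; this is impossible because the tubular neighborhood of $c$ in the orientable surface $L$ is an annulus, not a M\"{o}bius band. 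This localizes the entire argument to $c$ and requires no global homological generation statement. If you want to keep your local-system language, the point is that the correct $\Z/2$-torsor to transport is the side-of-$g'_L(c)$ label of the normal image vector, which is defined along an arbitrary transverse circle, rather than the $V_\h$ versus $V_{\h^\perp}$ label, which is only available along $\check{\Gamma}_\h$.
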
 

\begin{proof} 
Suppose that $\sigma\subset \Lambda_L$ is an embedded 
loop of odd type. We fix a cyclic ordering of its vertices 
$v_{i_1},v_{i_2},...,v_{i_n}, v_{i_{n+1}}=v_{i_1}$ with $n\in \N$ 
an odd number. 
$\sigma$ corresponds to a loop of crossing 
domains $L_{i_1},L_{i_2},...,L_{i_n},L_{i_{n+1}}=L_{i_1}$, 
and a sequence $\h_i\in \R/\pi\Z$ so that $L_{i_k}$ and 
$L_{i_{k+1}}$ share a common boundary $C_k\subset \Gamma_{\h_k}$. We can form a closed curve $\gamma$  embedded in $L$ such that 
\begin{itemize} 
\item $\gamma$ misses all points where $dg'_L=0$, 
\item $\gamma$ intersects orthogonally with  each of 
$C_k$ and in one point, 
\item $\gamma\pitchfork \sigma$ for all any 
folding curve $\sigma$ of $L$ if the intersection is not empty, and if $q\in \gamma \cap \sigma$, then $q$ is an embedded point of $\sigma$ such that $\sigma$ is not tangent to 
$\ker(dg'_L)$ at $q$, but $\gamma$ is. 
 \end{itemize} 


Let $U_\gamma\subset L$ be a small  tubular 
neighborhood of $\gamma$. We may identify $U_\gamma$ 
with the total space of the normal bundle $N_{\gamma/L}$ 
of $\gamma$ in $L$. Since $L$ is orientable, 
$U_\gamma$ is diffeomorphic to an annulus. Parameterize 
$U_\gamma$ as a strip $(t,s)\in [0,1]\times (-\epsilon,\epsilon)$ 
with the identification $(0,s)\sim (1,s)$ for $-\epsilon<s<\epsilon$, so that $\gamma$ is the curve $\{ s=0\}$, and 
the point with coordinates $(0,0)$ is a crossing point. 
Let $\ell$ denote the image of $\gamma$ under $g'_L$, 
$\ell$ is immersed except at folding points. 
 The normal vector field 
$v:=dg'_L(\pa _s)$ vanishes exactly at crossing points. 
Moreover, as one traces out $\gamma$ once the normal vector 
$v$ flips to the opposite side of $\ell$ exactly at crossing 
points. 
Since $n$, the number of crossing points, is odd we have that $dg'_L(\pa_s)_{(0,0)}
=-dg'_L(\pa_s)_{(1,0)}\neq 0$ which is impossible 
unless $U_\gamma$ is a M\"{o}bius band  but $U_\gamma$ 
is not. So all embedded loops of $\Lambda_L$ are of even type. 
\end{proof}

In fact, the evenness can be held for a wider class of closed 
curves in $\Lambda_L$. Let $\cV$ denote the set of all vertices 
of $\Lambda_L$. 
We say that a smooth map 
$\gamma:S^1\to \Lambda_L$ is an {\em immersed} loop in $\Lambda_L$ if (A) $\gamma^{-1}(\cV)$ is a compact $0$-dimensional set, and (B)  the restricted map $\gamma:\gamma^{-1}(\Lambda\setminus \cV)\to \Lambda_L$ is an immersion. 
An immersed loop of $\Lambda_L$ is endowed with the structure of a graph via the map $\gamma$. 

Similarly a map $\gamma':[0,1]\to \Lambda_L$ is called an {\em immersed 
path} in $\Lambda_L$ if $\{ 0,1\}\subset \gamma^{-1}(\cV)$ and 
$\gamma'$ satisfies the above two conditions (A), (B). 
An immersed path is endowed with the structure of a graph via 
$\gamma'$. We say that an immersed path is {\em even} if it 
has an even number of edges, {\em odd} if instead the number 
of its edges is odd.

\begin{cor} 
Any immersed loop $\gamma:S^1\to \Lambda_L$ is even 
(with respect to the induced graph structure. 
\end{cor}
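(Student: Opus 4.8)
The plan is to upgrade Proposition \ref{evenloop} from embedded loops to arbitrary closed walks by showing that the graph $\Lambda_L$ is \emph{bipartite}, and then to read off evenness of an immersed loop directly from a $2$-colouring of its vertices. Recall that $\Lambda_L$ is connected (as noted in the proof of Proposition \ref{evenloop}) and that, by that proposition, every embedded loop in $\Lambda_L$ has an even number of edges; in particular $\Lambda_L$ has no self-loop, since a self-loop would be an embedded loop with a single edge. The hard part is to pass from ``every embedded loop is even'' to a global $2$-colouring without circular reasoning, since defining the colour of a vertex by the parity of an \emph{arbitrary} path to it presupposes exactly the well-definedness we are trying to establish.

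To circumvent this I would produce the colouring $c:\cV\to\Z/2\Z$ from a spanning tree. Fix a spanning tree $T\subset\Lambda_L$ rooted at a vertex $v_{i_0}$, and set $c(v_i)$ to be the parity of the length of the unique $T$-path from $v_{i_0}$ to $v_i$; this is manifestly well-defined. Every edge of $T$ joins vertices of opposite colour by construction. For a non-tree edge $e=\{v_i,v_j\}$, adjoining $e$ to $T$ creates a unique embedded loop, whose length is even by Proposition \ref{evenloop}; since this loop consists of $e$ together with the $T$-path from $v_i$ to $v_j$, that $T$-path has odd length, and because tree distances satisfy $d_T(v_{i_0},v_i)+d_T(v_{i_0},v_j)\equiv d_T(v_i,v_j)\pmod 2$, it follows that $c(v_i)\neq c(v_j)$. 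Hence every edge of $\Lambda_L$ joins vertices of opposite colour, i.e. $c$ is a proper $2$-colouring and $\Lambda_L$ is bipartite.

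Finally I would decode the induced graph structure of an immersed loop $\gamma:S^1\to\Lambda_L$. By conditions (A) and (B) the finite set $\gamma^{-1}(\cV)$ cuts $S^1$ into finitely many arcs $a_1,\dots,a_n$, cyclically ordered, each mapped by $\gamma$ (via an immersion of an interval) into a single open edge of $\Lambda_L$; monotonicity of the immersion together with the fact that the two endpoint-limits lie in $\cV$ forces each arc to run between the two endpoints of its edge, which are distinct vertices because $\Lambda_L$ has no self-loop. Thus along the cyclic sequence $v_{i_0},v_{i_1},\dots,v_{i_n}=v_{i_0}$ of vertices visited by $\gamma$ the colour $c$ flips across every arc, so reading colours around the loop gives $c(v_{i_0})=c(v_{i_0})+n$ in $\Z/2\Z$, forcing $n$ to be even. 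Since $n$ is exactly the number of edges of the induced graph structure on $\gamma$, the immersed loop is even, which is the assertion.
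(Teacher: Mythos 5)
Your proof is correct and follows the same route as the paper: reduce the statement to Proposition \ref{evenloop} via the combinatorics of the graph $\Lambda_L$. The paper's own proof is a one-line appeal to condition (B) and the evenness of embedded loops; your spanning-tree/bipartiteness argument supplies exactly the graph-theoretic detail that appeal leaves implicit, and your observation that each arc of $S^1\setminus\gamma^{-1}(\cV)$ traverses one full edge between distinct vertices (no self-loops) is precisely the content of condition (B) being invoked.
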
 

\begin{proof}
It is enough to show that $\gamma$ has an even number of edges, 
which follows from the condition (B) above and the evenness of 
every embedded loop in $\Lambda_L$. 
\end{proof}

\begin{cor} \label{path}
Let $v_i$ and $v_j$ be two vertices of $\Lambda_L$. 
Let $\sigma,\sigma':[0,1]\to \Lambda_L$ be two immersed paths in $\Lambda_L$ with $\sigma(0)=v_i=\sigma'(0)$ and 
$\sigma(1)=v_j=\sigma'(1)$. Let $n,n'$ be the number of edges of $\sigma$ 
and $\sigma'$ respectively. Then $n-n'\in 2\Z$. 
\end{cor}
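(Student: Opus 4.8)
The plan is to reduce Corollary~\ref{path} to the evenness of immersed loops proved in the preceding corollary. The key idea is that running $\sigma$ forward from $v_i$ to $v_j$ and then running $\sigma'$ backward from $v_j$ to $v_i$ produces a closed curve in $\Lambda_L$ whose edge count is $n+n'$; evenness of this loop then forces $n$ and $n'$ to have the same parity, which is exactly the assertion $n-n'\in 2\Z$.

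In detail, first I would define the reversed path $\bar{\sigma}':[0,1]\to\Lambda_L$ by $\bar{\sigma}'(t):=\sigma'(1-t)$; this is again an immersed path, now from $v_j$ to $v_i$, carrying the same $n'$ edges. Since $\sigma(1)=v_j=\bar{\sigma}'(0)$ and $\sigma(0)=v_i=\bar{\sigma}'(1)$, the two paths can be concatenated and the result closed up into a map $\gamma:S^1\to\Lambda_L$. I would then check that $\gamma$ is an immersed loop in the sense of conditions (A) and (B) from the text: the endpoints $0,1$ of each constituent path map into $\cV$ (they are the vertices $v_i,v_j$), so $\gamma^{-1}(\cV)$ is still a finite, hence compact $0$-dimensional, set after gluing, and $\gamma$ restricts to an immersion on the complement of $\gamma^{-1}(\cV)$ because each of $\sigma,\bar{\sigma}'$ does. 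The induced graph structure on $\gamma$ therefore has exactly $n+n'$ edges.

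Applying the preceding corollary, every immersed loop in $\Lambda_L$ is even, so $n+n'\in 2\Z$. Finally, since $(n+n')-(n-n')=2n'$ is even, $n-n'$ has the same parity as $n+n'$, whence $n-n'\in 2\Z$, as claimed.

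There is essentially no obstacle here: the only point requiring a moment's care is verifying that the concatenation satisfies (A) and (B) at the gluing vertex $v_j$ and at the basepoint $v_i$, but this is immediate because both are vertices of $\Lambda_L$ and the two constituent paths are immersed away from $\cV$. The entire substance of the argument is already carried by the established evenness of embedded loops, promoted to immersed loops via condition (B); the present corollary is merely the path-version obtained by closing up $\sigma$ against $\sigma'$.
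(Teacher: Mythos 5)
Your argument is correct and is exactly the intended route: the paper states Corollary \ref{path} as an immediate consequence of the preceding corollary on evenness of immersed loops, and the concatenation of $\sigma$ with the reversal of $\sigma'$ into an immersed loop with $n+n'$ edges is the argument being relied upon. Your verification of conditions (A) and (B) at the gluing vertices fills in the (omitted) details faithfully.
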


{\bf Uniform orientation.} \ 
Define a function 
\begin{align*} 
\varepsilon=\varepsilon_L & :I\times I\to \{ \pm 1\}, \\ 
\varepsilon (i,j) & =\begin{cases} 
1 & \text{ if  $v_i,v_j$ are connected by an even path}, \\ 
-1 & \text{ if $v_i,v_j$ are connected by an odd path}. 
\end{cases} 
\end{align*} 
Corollary \ref{path} ensures that $\varepsilon(i,j)$ is independent 
of the choice of an immersed path connecting $v_i,v_j$ and 
hence is well-defined. 
The function $\varepsilon$ is symmetric, $\varepsilon(i,i)=1$ 
for $i\in I$, and more generally 
\[ 
\varepsilon(i,j)\varepsilon(j,k)=\varepsilon(i,k), \quad i,j,k\in I. 
\] 

Below we orient $\check{\Gamma}_\h$ 
for each $\h\in\R/\pi\Z$, so that the orientation of 
$\check{\Gamma}_{\h^\perp}$ is the opposite of the orientation 
of $\Gamma_\h$, and the orientation of $\check{\Gamma}_\h$ 
varies continuously as $\h$ varies. 

Firstly we fix a {\em reference crossing domain} say $L_{i_0}$ and 
We orient $\check{\Gamma}_\h$ so that the oriented $\check{\Gamma}_\h$, 
denoted as $\check{\Gamma}^+_\h$ is defined to be 
\[ 
\check{\Gamma}^+_\h \cap L_j=\varepsilon(i_0,j)\check{\Gamma}_{\h,j},  
\]
i.e., the orientation of $\check{\Gamma}^+_\h \cap L_j$ is equal 
to that of $\check{\Gamma}_{\h,j}$ iff $\varepsilon(i_0,j)$ is $+1$.

Note the orientation of $\check{\Gamma}^+_\h$ depends on the 
choice of a reference crossing domain $L_{i_0}$. If we choose 
a different reference crossing domain $L_{i'_0}$, then the new orientation 
for $\check{\Gamma}_\h$ will be different from the old one 
iff $\varepsilon(i_0,i'_0)=-1$.

\begin{defn}
{\rm 
Let $q\in L_{i_0}$ be a regular point of $g'_L$ with respect to 
$\ff\in \cF^\omega$, where $L_{i_0}$ is a crossing domain 
containing $q$. 
We define the {\em $y$-index of $L$}  relative to the framing $\ff$ 
and $q$  to 
be 
\[ 
y(L,q;\ff):=\sum_{i\in I} \epsilon(i_0,i)d_i.  
\] 
Then $y(L,q;\ff)=y(L,q';\ff)$ if $q\in L_{i_0}$ and $q'\in 
L_{i'_0}$ with $\varepsilon(i_0,i'_0)=1$. 

The {\em absolute $y$-index} relative to $\ff$ is defined to be 
\[ 
\bar{y}(L;\ff):=|y(L,q;\ff|. 
\] 
$\bar{y}(L;\ff)$ is independent of the choice of a reference 
crossing domain. 
} 
\end{defn} 

{\bf Relative $E_\h$-phase.} \  
Let $\alpha^+_\h$  denote the {\em $E_\h$-phase along $\check{\Gamma}^+_\h$}, which is defined to be the total angle of 
rotation of $T_qL\cap E_\h$ in $E_\h$ as $q$ traces out 
$\check{\Gamma}^+_\h$ once. Similarly let $\alpha^-_\h$  denote the {\em $E_{\h^\perp}$-phase along $\check{\Gamma}^+_\h$}, which is defined to be the total angle of 
rotation of $T_qL\cap E_{\h^\perp}$ in $E_{\h^\perp}$ as $q$ traces out 
$\check{\Gamma}^+_\h$ once. Also define the {\em relative 
$E_\h$-phase along $\check{\Gamma}^+_\h$} to be 
$\alpha_\h:=\alpha^+_\h-\alpha^-_\h$.

\begin{prop} 
 Let $\alpha^\pm_\h$ be as defined above. Then 
\[ 
y(L,q;\ff)=\frac{1}{2\pi}\alpha_\h=\frac{1}{2\pi}(\alpha^+_\h-\alpha^-_\h)
\] 
\end{prop}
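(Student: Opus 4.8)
The plan is to route the identity through the degree of the projected Lagrangian Gauss map restricted to the positively oriented proper locus, establishing
\[
\deg\bigl(g'_L:\check{\Gamma}^+_\h\to\lambda^+_\h\bigr)=y(L,q;\ff)
\]
as an intermediate step, and then identifying this integer with $\tfrac{1}{2\pi}\alpha_\h$ by a winding-number computation. First I would handle the degree identity. By the corollary preceding this statement, on each crossing domain $L_i$ the restricted map $g'_L:\check{\Gamma}_{\h,i}\to\lambda^+_\h$ has degree $d_i$, where $\check{\Gamma}_{\h,i}$ is oriented so that $V_\h$ lies to its left. Using the defining relation $\check{\Gamma}^+_\h\cap L_j=\varepsilon(i_0,j)\check{\Gamma}_{\h,j}$ and the fact that reversing the orientation of a domain cycle negates its contribution to the degree, summing over $j\in I$ yields $\sum_{j}\varepsilon(i_0,j)d_j$, which is precisely $y(L,q;\ff)$ by definition.

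Next I would translate the angular data along $\lambda^+_\h$ into the two phases. Recall from Section \ref{sph} that, in the parametrization $\lambda_\h(s)$, the intersection line $\lambda_\h(s)\cap E_\h$ has angle $s$ in $E_\h$, and that one full traversal of $\lambda^+_\h$ advances $s$ by $\pi$, producing a rotation of $+\pi$ in $E_\h$ and of $-\pi$ in $E_{\h^\perp}$ (cf.\ (\ref{Delta_lambda})). Since $g'_L$ maps $\check{\Gamma}^+_\h$ onto $\lambda^+_\h$ with degree $y(L,q;\ff)$, the net rotation of $g'_L(q)\cap E_\h$ as $q$ traverses $\check{\Gamma}^+_\h$ once equals $y(L,q;\ff)\,\pi$, and that of $g'_L(q)\cap E_{\h^\perp}$ equals $-y(L,q;\ff)\,\pi$; this is a standard pullback-of-the-angle-form argument once the exceptional $\h$ are excluded.

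The delicate point is that $\alpha^\pm_\h$ are defined through the intersections $T_qL\cap E_\h$ and $T_qL\cap E_{\h^\perp}$ of the \emph{unprojected} tangent plane, whereas the winding counts above are phrased in terms of $g'_L(q)=\pi'(T_qL)$. Writing $T_qL\in\dP(K_{t(q)})$ so that $g'_L(q)=c_{-t(q)/2}(T_qL)$, and recalling that $c_\tau$ rotates the total space of every $E\in\dP(J)$ by $\tau$ in its own orientation, the lines $T_qL\cap E_\h$ and $g'_L(q)\cap E_\h$ differ by a rotation of $-t(q)/2$ inside $E_\h$, and the \emph{same} shift $-t(q)/2$ relates the two lines in $E_{\h^\perp}$. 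Hence, letting $\Delta t$ be the total variation of a continuous lift of $t(q)$ around $\check{\Gamma}^+_\h$,
\[
\alpha^+_\h=y(L,q;\ff)\,\pi+\tfrac12\Delta t,\qquad
\alpha^-_\h=-y(L,q;\ff)\,\pi+\tfrac12\Delta t.
\]
The crucial observation is that the $t$-correction enters both phases identically and therefore cancels in the difference, giving $\alpha_\h=\alpha^+_\h-\alpha^-_\h=2\pi\,y(L,q;\ff)$, which is the assertion.

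I expect the main obstacle to be making this cancellation fully rigorous: one must confirm that the $c_{-t/2}$-rotation acts with the same magnitude and orientation convention on $E_\h$ and $E_{\h^\perp}$ (so the $\tfrac12\Delta t$ terms genuinely coincide), and one must control the exceptional values of $\h$ at which $\check{\Gamma}^+_\h$ acquires self-intersections or isolated singular points, via a limiting argument so that both the winding counts of the second step and the degree identity of the first step remain valid there. Everything else reduces to the bookkeeping already prepared in Sections \ref{sph} and \ref{y}.
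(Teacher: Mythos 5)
Your proposal is correct and follows essentially the same route as the paper's proof: decompose $\check{\Gamma}^+_\h$ over the crossing domains, use that the restricted map $\check{\Gamma}_{\h,i}\to\lambda^+_\h$ has degree $d_i$ together with the relative $E_\h$-phase $2\pi$ along $\lambda^+_\h$ from (\ref{Delta_lambda}), and sum with the signs $\varepsilon(i_0,i)$. Your explicit treatment of the discrepancy between $T_qL\cap E_\h$ and $g'_L(q)\cap E_\h$ — showing the $c_{-t/2}$-shift enters $\alpha^+_\h$ and $\alpha^-_\h$ identically and cancels in the difference — is a point the paper's proof passes over silently, and it is handled correctly here.
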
 

\begin{proof} 
Given a crossing domain $L_i$ of $g'_L$, 
et $\alpha_{\h,i}$ denote the relative $E_\h$-phase of 
$\check{\Gamma}_{\h,i}=\pa V_\h\cap L_i$. 
Recall the the relative $E_\h$-phase along $\lambda^+_\h=\pa D_\h\subset \dP(K')$ is $2\pi$. Then 
$\alpha_{\h,i}=2\pi d_i$, where $d_i$ is the PLG-degree of $L_i$. Note that the value of $\alpha_{\h,i}$ is independent of $\h$. 
Orientations of $\check{\Gamma}_{\h,i}$ and $\check{\Gamma}_{\h,j}$ match iff $\varepsilon(i,j)=1$. With 
$L_{i_0}$ as the reference domain we have $\check{\Gamma}^+_{\h}|_{L_{i_0}}=\check{\Gamma}_{\h,i_0}$, so 
\[ 
\alpha_\h=\sum_{i\in I} \varepsilon(i_0,i)\alpha_{\h,i} =2\pi 
\sum_{i\in I} \varepsilon(i_0,i)d_i, 
\] 
and 
\[ 
y(L,q;\ff)=\frac{1}{2\pi}\alpha_\h=\frac{1}{2\pi}(\alpha^+_\h-\alpha^-_\h). 
\] 
\end{proof}

\begin{prop} 
$y(L,q;\ff)$ is independent of the orientation of $L$, i.e., 
$y(-L,q;\ff)=y(L,q;\ff)$. 
\end{prop}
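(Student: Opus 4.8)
The plan is to mirror the argument already used for Proposition \ref{mu2ori}, exploiting the fact that the $y$-index is assembled out of two pieces of data: the combinatorial sign function $\varepsilon$ attached to the crossing-domain graph, and the local PLG-degrees $d_i$. I would first recall from the proof of Proposition \ref{mu2ori} the identity
\[
g'_{-L}=A\circ g'_L\circ r,
\]
where $r:-L\to L$ is the identity on underlying points (but orientation-reversing) and $A:\dP(K')\to\dP(K')$, $A(\xi):=\xi^\perp$, is the antipodal map, so that $\deg(A)=-1=\deg(r)$. Thus the PLG-maps of $L$ and $-L$ differ only by precomposition and postcomposition with diffeomorphisms.

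Next I would check that all the discrete data entering the definition of $y(L,q;\ff)$ are unaffected by reversing the orientation. Since $A$ and $r$ are diffeomorphisms and $r$ is the identity on points, the critical set $L^o$, its $p$-curves, and the folding/crossing dichotomy (conditions (\textbf{F}) and (\textbf{C}), which compare directions of tangent lines and are preserved by the diffeomorphism $A$ of $\dP(K')$) all transport to the corresponding objects for $g'_{-L}$ via $r$. Consequently $\Sigma^c$, the crossing-domain decomposition $\{L_i\}_{i\in I}$, the graph $\Lambda_L$, and hence the sign function $\varepsilon=\varepsilon_L$ are literally the same for $-L$ as for $L$ (this independence of the decomposition from orientation is already noted where it is introduced). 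I would also note that $q$ is a regular point of $g'_{-L}$ iff it is one of $g'_L$, and $q$ lies in the same reference domain $L_{i_0}$ in both cases, so the same $i_0$ and $q$ may be used throughout.

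Finally I would compute the degrees domain by domain. On the collapsed surface $\hat L_i$ the induced map for $-L$ is $A\circ g'_{L,i}\circ r_i$, where $r_i$ reverses the orientation of $\hat L_i$; hence
\[
d_i(-L)=\deg(A)\cdot\deg(g'_{L,i})\cdot\deg(r_i)=(-1)\cdot d_i\cdot(-1)=d_i.
\]
Since both $\varepsilon$ and every $d_i$ are unchanged, $y(-L,q;\ff)=\sum_{i\in I}\varepsilon(i_0,i)\,d_i(-L)=\sum_{i\in I}\varepsilon(i_0,i)\,d_i=y(L,q;\ff)$, as desired. The main obstacle, and the only point requiring genuine care, is the second step: one must verify that the crossing/folding classification of $p$-curves is invariant under the diffeomorphism $A$ of the target, so that $\Sigma^c$ and therefore $\varepsilon$ really are orientation-independent; the degree bookkeeping in the last step is then a routine sign count identical in spirit to Proposition \ref{mu2ori}.
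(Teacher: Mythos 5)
Your proposal is correct and follows essentially the same route as the paper's own proof: both use the identity $g'_{-L}=A\circ g'_L\circ r$, observe that the crossing-domain decomposition (and hence $\varepsilon$) is unchanged, and cancel the two degree $-1$ factors from $A$ and $r$ to get $d_i(-L)=d_i$. Your extra remark that one must check invariance of the crossing/folding dichotomy under $A$ is a point the paper treats implicitly, but it does not change the argument.
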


\begin{proof} 
Recall (\ref{g-L}): 
\[ 
g_{-L}=A\circ c_\frac{\pi}{2}\circ g_L \circ r =c_\frac{\pi}{2}\circ A\circ g_L \circ r, 
\] 
where $A:\Lambda^+\to \Lambda^+$, 
$A(\xi)=\xi^\perp\in \dP(K_t)$ for $\xi\in\dP(K_t)$. 
Recall also 
\[ 
g'_{-L}=A\circ g'_L\circ r. 
\] 
The map $r$ preserves each of the crossing domains $L_i$ pointwise except the orientation. It also preserves 
unoriented 
$\Gamma_\h=\Gamma_{\h^\perp}$ and hence the crossing $p$-curves of $L$. So $g'_{-L}|_{-L_i}=A\circ g'_L\circ r|_{-L_i}$
 and $g'_{-L}(-L_i)=A\circ g'_L(L_i)$. 
Since $A:\dP(K')\to \dP(K')$ is the antipodal map, 
the crossing domain decomposition of $L$ and that of $-L$ 
are identical (except the orientation). And the maps 
$g'_{-L_i}:-\hat{L}_i\to \dP(K')$ and $g'_{L_i}:\hat{L}_i\to 
\dP(K')$ have the same degree: $\deg(g'_{-L_i})=\deg(g'_{L_i}) 
=d_i$. By definition of $y(L;\ff)$ we have 
$y(-L,q;\ff)=y(L,q;\ff)$. This completes the proof. 
\end{proof}


{\bf Degeneration of crossing $p$-curves.} \ 
For a generic framing  $\ff$ the map 
$g'_L:L\to \dP(K')$ contains only a finite number of crossing 
$p$-curves, and all are embedded. Though their sign-changing 
and domain-switching properties are 
rigid under variations of $\ff$ in $\cF^\omega$, 
along a genetic math $\ff_t$ of 
$\omega$-compatible framings embedded crossing 
$p$-curves may become singular (called {\em singular crossing 
$p$-curves}) and 
undergo certain topological changes 
as typical level curves of a smooth 
function may have when the function varies with respect to $t$. 

The actual deformation pattern  can be 
very complicated but, for a generic path $\ff_t$, the deformations of crossing $p$-curves can be decomposed as a finite combination of two basic types of degenerations: 
(I) merge-split and (II) birth-death.

{\bf Merge-split.} \ 
Recall that for a generic $\ff$, $\nabla a$ and $\nabla b$ do 
not vanish simultaneously at a point on a common level curve 
$\gamma$ if $\gamma$ is a crossing $p$-curve. However 
the opposite 
may happen during a deformation  $\ff_t$ with $t\in[0,1]$. 
The simplest case is when this happens at a single point 
$q\in \gamma$ at a critical moment $t_i\in(0,1)$, turning $\gamma$ into an immersed curve with 
one self-intersection point $q$, and then {\em split} at $q$ into two embedded crossing $p$-curves after $t=t_i$. Reversing 
the process we get two disjoint  embedded $p$-curves 
become connected at a point $q$ and then {\em merge} into a 
single embedded crossing $p$-curve. The {\em merge-split} process is indeed in line with the deformation of  a level set 
$\{ f=c\}$ of a function $c$ as $c$ passes through a critical 
value of $f$.

{\bf Birth-death.} \ 
There are two kinds of death/birth of  
crossing $p$-curves: 
the death/birth of (A) a single  crossing $p$-curve and (B) 
a pair of  crossing $p$-curves.

The death of a single embedded 
crossing $p$-curve $\gamma$ occurs 
when $\gamma$ degenerates to an  point $q\in L$ of $g'_L$. 
This can happen if $\gamma$ is the boundary of 
a crossing domain $L_0$ (diffeomorphic to a disk) whose PLG-degree is $0$. 
Reversing the above processes gives rise to the birth 
of $\gamma$.

Case (B) corresponds to the creation or cancellation of a pair of 
embedded crossing curves which bound an annulus $U$,  and 
have opposite normal flow orientations (with respect to both 
 $\nabla a$ and $\nabla b$). Without loss of generality we may assume $U$ is a crossing  domain with vanishing PLG-degree.

 The simplest model for $U$ is 
one such that the 
interior $int(U)$ of $U$ contains precisely one embedded folding curve $\sigma$ parallel to each connected components of $\pa U=\gamma_0\sqcup\gamma_1$. And 
up to a small perturbation this can assumed to be the case when 
$U$ deforms to a very narrow strip. 
As the closure of $U$ degenerates to a curve 
$\sigma_0$, the two crossing $p$-curves $\gamma_0$ and 
$\gamma_1$ become $\sigma_0$. Observe that  $\sigma_0$ is a folding $p$-curve, since it is still a common level set of $a$ and $b$, sign-changing, but not domain-switching. Reversing 
the deformation process we get the birth of a pair of crossing 
$p$-curves which form the boundary of an annular domain of 
vanishing PLG-degree.

{\bf Interaction with other types of singular locus.} \ 
%
Can birth/death phenomenon happen to a crossing-folding pair? 
Let $\gamma_0$ be a crossing $p$-curve, $\gamma_1$ a folding curve disjoint from $\gamma_0$, such that $\gamma_0\sqcup \gamma_1$ is the boundary of some annular domain $U$. Assume that the interior of $U$ does not contain 
any crossing $p$-curve. Suppose that 
the closure of $U$ is deformed to a curve $\sigma_0$, 
then $\sigma_0$ is a $p$-curve because $\gamma_0$ is. 
If there are no other folding curves in $U$ parallel to $\gamma_0$ and $\gamma_1$ then $\sigma_0$ is not sign-changing, which contradicts to the fact that any $p$-curve is 
sign-changing, with the only exception that  $\gamma_0$ 
and $\gamma_1$ together degenerate to a point upon the occurrence of cancellation. This brings us back to the Case (A) 
of the birth/death of $\gamma_0$. Now if there is exactly one folding curve 
$\sigma$ in the interior that is parallel to $\gamma_0$ and 
$\gamma_1$,   then as the closure of $U$ 
deforms to a curve $\sigma_0$, $\sigma_0$ 
will be a crossing $p$-curve. In other words, in this deformation 
$\gamma_1$ will cancel with a middle folding curve, and the crossing $p$-curve $\gamma_0$ survives this deformation and is the curve $\sigma_0$. So strictly speaking this is about the 
birth/death of a folding pair,  has nothing to do with a 
crossing $p$-curve. In general, if there are $k$ folding 
curves in $U$ parallel to $\gamma_0$ and $\gamma_1$, then 
generically the these middle curves will undergo their own 
cancelations at first, hence reduce to the $k=0,1$ cases. 
We conclude  that there are no nondegenerated birth/death of a 
crossing-folding pair.

\begin{prop} 
$y(L,q;\ff)=y(L,q;\ff')$ provided that $\ff,\ff'\in \cF^\omega$ 
are in the same connected component of $\cF^\omega$. 
\end{prop}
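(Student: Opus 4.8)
The plan is to join $\ff$ and $\ff'$ by a generic smooth path $\ff_t=(J_t,u_t,v_t)$, $t\in[0,1]$, inside the given connected component of $\cF^\omega$, and to show that the resulting family of PLG-maps $g'_{L,t}:L\to\dP(K')$ yields a locally constant, hence constant, value of the $y$-index. By transversality I would arrange that $\ff_0=\ff$, $\ff_1=\ff'$, that the fixed point $q$ stays a regular point of $g'_{L,t}$ for every $t$, and that for all but finitely many critical values $t_1<\dots<t_m$ in $(0,1)$ the map $g'_{L,t}$ meets the genericity conditions of Section \ref{y} (finitely many embedded crossing $p$-curves, and so on). By the degeneration analysis carried out above, at each $t_k$ the crossing locus $\Sigma^c$ undergoes exactly one elementary degeneration, of type merge-split (I), birth-death (A), or birth-death (B).

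On each open subinterval between consecutive critical values the crossing-domain decomposition $\bigl(\bigcup_{i\in I}L_i\bigr)\cup\Sigma^c$, the graph $\Lambda_L$, the function $\varepsilon$, and the PLG-degrees $d_i$ all vary by an ambient isotopy of $L$, so $y(L,q;\ff_t)$ is unchanged there. More precisely, the proper locus $\check{\Gamma}^+_\h$ and the intersection subspaces $T_pL\cap E_\h$ depend continuously on $t$, so the relative phase $\alpha_\h(t)$ varies continuously; since $y(L,q;\ff_t)=\tfrac{1}{2\pi}\alpha_\h(t)\in\Z$ by the phase identity established above, a continuous integer-valued function is locally constant. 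Thus it suffices to check that $y$ does not jump across each $t_k$.

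I would dispatch the three degenerations using degree additivity of $g'_L$ under merging of domains together with the parity structure of $\varepsilon$. In case (A) the vanishing crossing $p$-curve bounds a disk crossing domain $L_0$ with $d_0=0$; this $L_0$ is a leaf of $\Lambda_L$, so deleting it alters neither $\varepsilon(i_0,i)$ for the surviving vertices nor the sum $\sum_i\varepsilon(i_0,i)d_i$, the dropped term being $\varepsilon(i_0,0)\,d_0=0$. In case (B) the colliding pair bounds an annular crossing domain $U$ with $d_U=0$; as the pair collapses to a folding curve, the two flanking domains $L_a,L_b$ and $U$ fuse into one domain $L_{ab}$ with $d_{ab}=d_a+d_b$. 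In $\Lambda_L$ the vertices $v_a,v_b$ are joined by the length-$2$ path through $v_U$, so $\varepsilon(a,b)=1$, whence $\varepsilon(i_0,a)=\varepsilon(i_0,b)$ and
\[
\varepsilon(i_0,a)d_a+\varepsilon(i_0,U)d_U+\varepsilon(i_0,b)d_b=\varepsilon(i_0,ab)d_{ab}.
\]
The merge-split case (I) is handled the same way: the passage through a figure-eight reorganizes the neighbouring domains but preserves both the total PLG-degree (additivity) and, by Corollary \ref{path} applied to the graph at nearby regular times, the relevant path-parities in $\varepsilon$, so $\sum_i\varepsilon(i_0,i)d_i$ is unaffected; one may also simply observe that the rotation integral defining $\alpha_\h(t)$ does not jump as $\check{\Gamma}^+_\h$ passes through a transverse self-crossing.

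The main obstacle will be the orientation bookkeeping, not the degree count. What must be verified with care is that the uniform orientation of $\check{\Gamma}^+_\h$ fixed by the reference domain $L_{i_0}$ — equivalently, the relative signs $\varepsilon(i_0(t),i)$ — is transported consistently through each degeneration and across the times when $q$ passes from one crossing domain to another, so that no spurious sign enters the sum. This rests on the evenness of loops in $\Lambda_L$ (Proposition \ref{evenloop} and Corollary \ref{path}), which keeps $\varepsilon$ well-defined at every generic time and pins down the parities used in cases (B) and (I); checking that these parities are genuinely preserved through the topological change of $\Sigma^c$, rather than merely before and after it, is the delicate point.
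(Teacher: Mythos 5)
Your proposal is correct and follows essentially the same route as the paper: a generic path $\ff_t$, reduction to the elementary degenerations (merge-split and the two birth-death types), and a case-by-case check that the graph $\Lambda_L$, the parity function $\varepsilon$, and the PLG-degrees $d_i$ combine so that $\sum_i\varepsilon(i_0,i)d_i$ is unchanged, using exactly the identities $d_U=0$, $\varepsilon(a,b)=1$ via the even path through $v_U$, and degree additivity under fusion of domains. The delicate point you flag — that the parities survive the topological change itself, not just before and after — is precisely what the paper verifies by splitting each degeneration according to whether the two edges share one or two common vertices.
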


 \begin{proof} 
  The $y$-index is defined by a suitable counting of the 
 degrees of crossing domains. as the framing $\ff_t$ varies in 
 $\cF^\omega$ these crossing domains may undergo topological 
 changes due the deformation of its boundary crossing $p$-curves. As we investigated earlier, generic deformations of 
 crossing $p$-curves consists of two types of 
 elementary deformations: (I) merge-split, and (II) birth-death.

In Case (I), the topology of crossing domains 
are changed, so it the graph associated to the 
crossing domain decomposition.  Let $\gamma'$ and 
$\gamma''$ be two crossing $p$-curves.  Let $e'$ and $e''$ denote the corresponding  edges in the graph. Recall that any 
simple closed loop in the graph must be even, i.e., has an 
even number of edges.  

Note that for the elementary merge between $\gamma'$ and 
$\gamma''$ to happen, $\gamma'$ and $\gamma''$ have to 
be in the boundary of a common connected crossing domain 
say $L_i$, otherwise either $e'$ or $e''$ would have to 
collide with 
another boundary crossing $p$-curve before they actually meet each other. The vertex $v_i$ representing $L_i$ is then 
a common vertex of $e'$ and $e''$. $e'$ and $e''$ may or may 
not have a second common vertex. 

Suppose that $e'$ and $e''$ have another common vertex $v_j$, i.e., $\gamma'\cup \gamma''
\subset \pa L_j$ for some other crossing domain $L_j$. 
Then the only simple closed loop containing both $e'$ and $e''$ 
is the loop $\ell:=v_i\cup e'\cup v_j\cup e''\cup v_i$.  When $\gamma'$ and $\gamma''$ merge into a crossing $p$-curve 
$\gamma\in \pa L_i\cup \pa L_j$, $e'$ and $e''$ collide and 
the loop $\ell$ deforms 
to the union of the edge $e$ representing $\gamma$ and 
the two vertices $v_i$, $v_j$. Since there are no other 
simple closed loops containing both $e'$ and $e''$, the 
evenness of simple closed loops are preserved under the 
identification of $e'$ and $e''$ as $e$. Note the the 
PLG-degrees  $d_{L_i}$ and $d_{L_j}$ of $L_i$ and $L_j$ are 
preserved and 
so is the number $\varepsilon(i,j)$. This also holds for any 
pair of crossing domains. So the $y$-index is not changed 
upon the merging of two edges with the same end-vertices.

 Now suppose that $v_i$ is the unique common vertex of 
 $e'$ and $e''$. Let $v_{j'}$ and $v_{j''}$ be the other end-vertex 
 of $e'$ and $e'$ respectively. Let $L_{j'}$ and $L_{j''}$ denote the 
 crossing domains corresponding to $v_{j'}$ and $v_{j''}$ respectively. 
 Note that $\varepsilon(j',j'')=1$. Under the 
 merging of $e'$ and $e''$ to a single edge $e$ representing the 
 merged crossing $p$-curve $\gamma$,  $v_{j'}$ and $v_{j''}$ 
 also are merged into a single vertex $v_j$. Since $\varepsilon(j,j)=1$, $\varepsilon(j',j'')$ is preserved upon 
 the merging of $e'$ and $e''$. Also, $\varepsilon(k,l)$ is preserved under the merging for $k, l\not\in \{ j',j''\}$, and $\varepsilon(k,j')=\varepsilon(k,j'')$ 
 is equal to $\varepsilon(k,j)$ after the merging. 
  Let $L_j$ denote the 
 crossing domain obtained by the corresponding merging of $L_{j'}$ and $L_{j''}$. Then $d_{L_j}=d_{L_{j'}}+d_{L_{j''}}$.  
The PLG-degree $d_{L_i}$ is preserved, and so are those of other 
crossing domains not having either $\gamma'$ or $\gamma''$ 
as their boundary components. Put all together we find that 
the $y$-index is invariant under the merging of two crossing 
$p$-curves. By reversing the process of merging, we see that 
$y$-index is also invariant under the splitting of a crossing 
$p$-curve.

Move on to Case (II). Let $L_i$ be a crossing domain 
of PLG-degree $d_{L_i}=0$, and $\pa L_i=\gamma$ is connected. 
Assume that $\gamma$ is about to be deformed to a point 
in $L_i$. If the genus of $L_i$ is greater than $0$, then 
$\gamma$ has to go through a finite number of splitting and 
merging stages before actually shrinks to a point. So 
without loss of generality we may assume that $L_i$ is of genus 
$0$. Then $L_i$ corresponds to an end-vertex $v_i$ (i.e, a vertex with 
only one edge attached to it), and $\gamma$ an end-edge $e$ (an 
edge with one of its vertices being an end-vertex). The death 
of $\gamma$ and hence of $L_i$ corresponds to removing 
the "dangling" $(v_i,e)$ pair with $d_{L_i}=0$, which does not affect the $y$-index at all. Similarly, the birth of a single 
crossing $p$-curve $\gamma$ (and a crossing domain of degree $0$ bounded by $\gamma$) does not alter the $y$-index. 

Proceed to the death of a pair of crossing $p$-curves 
$\gamma',\gamma''$ whose union is the boundary of an 
annular crossing domain $L_i$. Let $e'$ and $e''$ denote the edge corresponding to $\gamma'$ and $\gamma''$ respectively, 
$v_i$ the vertex corresponding to $L_i$. Note that $v_i$ is a 
common vertex of $e'$ and $e''$.

Assume for the moment that $e'$ and $e''$ have another common vertex which we denote as $v_j$. Then $\ell:=v_j\cup 
e'\cup v_i\cup e''\cup v_j$ is a loop with $v_j$ as the only 
intersection point with the rest of the graph. There is no other 
simple closed loop containing both $e'$ and $e''$ except for 
$\ell$. Also, any loop not containing both $e'$ and $e''$ 
contains neither $e'$ nor $e''$. The death of the pair $\gamma', 
\gamma''$ corresponds to the removing of $e'$, $v_i$ and $e''$ from the graph. This will not effect the evenness of any simple 
closed curve in the remaining graph. It will not change 
$\varepsilon (k,l)$ provided $i\not\in\{ k,l\}$. The 
genus of $L_j$ will increases by $1$ (since it "absorbs" $L_i$) 
but its PLG-degree $d_{L_j}$ remains the same since the degree of 
$L_i$ is $d_{L_i}=0$. The degrees of other crossing domains 
are unaffected as well. So the $y$-index is preserved as well. 

Now consider the case where $v_i$ is the unique common end-vertex of $e'$ and $e''$. Let $v_{j'}$ and $v_{j''}$ denote the 
other end-vertex of $e'$ and $e''$ respectively. Then $\varepsilon(j',j'')=1$. Any embedded path connecting $v',v''$ is 
even, either containing both $e',e''$ or none of them. 
In this case 
a simple closed loop of the graph either (i) contains the subgraph 
$v_{j'}\cup e'\cup v_i\cup e''\cup v_{j''}$ or (ii) contains none 
of $e'$, $e''$ and $v_i$. The death of the $(\gamma',\gamma'')$ pair 
corresponds to removing $e'$, $e''$ and $v_i$, and identifying 
$v_{j'}$ and $v_{j''}$ as a single vertex $v_j$ in the new graph. 
Since any simple closed curve satisfies either condition (i) or condition (ii) above, the evenness of simple closed loops is 
preserved for the new graph, so preserved is $\varepsilon (k,l)$ for $k,l\not\in\{ i, j',j''\}$, 
and we have $\varepsilon(k,j')=\varepsilon(k,j)=\varepsilon(k,j'')$, $\varepsilon(j',j'')=\varepsilon(j,j)$. Let $L_j$, $L_{j'}$ and $L_{j''}$ denote the crossing domains corresponding to $v_j$, $v_{j'}$ and $v_{j''}$ 
respectively. Then topologically $L_j$ is the union of $L_{j'}$, 
$L_i$ and $L_{j''}$, with its PLG-degree  $d_{L_j}=d_{L_{j'}}+d_{L_{j''}}$ since $d_{L_i}=0$. We conclude the $y$-index is invariant 
under the death as well as the birth of a pair of crossing $p$-curves. The proof is now complete.  
 \end{proof} 
 
\begin{cor} 
$y(L,q;\ff)$ is invariant under symplectic isotopies of $L$. 
\end{cor}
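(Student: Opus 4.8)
The plan is to reduce the assertion to the framing–invariance just established, namely that $y(L,q;\ff)$ depends only on the connected component of $\ff$ in $\cF^\omega$. The bridge between the two statements is the naturality of the entire construction under symplectomorphisms. Concretely, let $\{\phi_t\}_{t\in[0,1]}$ be a symplectic isotopy with $\phi_0=\mathrm{id}$, and write $L_t:=\phi_t(L)$ and $q_t:=\phi_t(q)$. For each fixed symplectomorphism $\phi:=\phi_t$ I would first prove the identity
\[
y(\phi(L),\phi(q);\ff)=y(L,q;\phi^*\ff),
\]
where $\phi^*\ff:=(\phi^*J,\phi^*u,\phi^*v)$, and then let $t$ vary.

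To establish this identity I would check that $\phi^*\ff\in\cF^\omega$: since $\phi$ is a symplectomorphism, $\phi^*J$ is again $\omega$-compatible, and $(\phi^*u,\phi^*v)$ is unitary with respect to $\phi^*J$ because $\phi$ pulls the metric $g=\omega\circ(\mathrm{Id}\times J)$ back to the metric determined by $(\omega,\phi^*J)$. The heart of the matter is the naturality of the PLG-map. Writing $\Psi_p$ for the unitary trivialization of $T_pW$ determined by $\ff$, a direct check shows that the trivialization determined by $\phi^*\ff$ at $p$ equals $\Psi_{\phi(p)}\circ d\phi_p$, since the framing at $p$ is by definition the $d\phi_p^{-1}$-image of the framing at $\phi(p)$. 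Consequently the lifts of the Lagrangian Gauss maps into the model $\Lambda^+$ agree, and after the central projection $\pi'$ one obtains
\[
g'_{\phi(L),\ff}=g'_{L,\phi^*\ff}\circ\phi^{-1}
\]
as maps into $\dP(K')$. Because $\phi|_L\colon L\to\phi(L)$ is a diffeomorphism (and the $y$-index is already known to be orientation-independent), it carries crossing $p$-curves to crossing $p$-curves, hence transports the crossing-domain decomposition of $L$ relative to $\phi^*\ff$ onto that of $\phi(L)$ relative to $\ff$, matching reference domains, the PLG-degrees $d_i$, the graph $\Lambda_L$, and the sign function $\varepsilon$. The displayed index identity follows.

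Finally, since $\phi_0=\mathrm{id}$, the assignment $t\mapsto\phi_t^*\ff$ is a continuous path in $\cF^\omega$ starting at $\phi_0^*\ff=\ff$, so $\ff$ and $\phi_t^*\ff$ lie in the same connected component of $\cF^\omega$ for every $t$. The previous Proposition then gives $y(L,q;\phi_t^*\ff)=y(L,q;\ff)$, and combining this with the naturality identity yields $y(L_t,q_t;\ff)=y(L,q;\ff)$ for all $t$, which is the claimed invariance. I expect the main obstacle to be the naturality step: one must verify that the framing-dependent central projection $\pi'$ and the coordinate conventions on $\dP(K')$ intertwine correctly under $d\phi$, and that the combinatorial data (crossing domains, $\Lambda_L$, and $\varepsilon$) transport faithfully; once this is in place, the connectedness argument is immediate.
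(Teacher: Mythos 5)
Your argument is correct and is essentially the paper's intended one: the paper states this as an immediate corollary of the preceding proposition, the implicit bridge being exactly the naturality identity $y(\phi(L),\phi(q);\ff)=y(L,q;\phi^*\ff)$ together with the observation that $t\mapsto\phi_t^*\ff$ is a path in $\cF^\omega$ based at $\ff$. Your writeup just makes explicit the transport of the trivialization, the PLG-map, and the crossing-domain combinatorics that the paper leaves unstated.
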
 

Because there is no canonical choice of a reference crossing 
domain, it is desirable to consider a relative version of the 
$y$-index, which will be defined for a pair of Lagrangian surfaces provided they are identical near a regular point. 
Note that this overlapping condition can always be achieved by 
applying to one of the two Lagrangian surfaces a suitable Hamiltonian 
isotopy. 

Given two oriented Lagrangian surfaces $L,L'$, and suppose that $L$ and $L'$ overlap on an open  neighborhood $U$ of $p$, having the same orientation on $U$, the image $g'_{L}(U)=g'_{L'}(U)\subset \dP(K')$ is an open domain in $\dP(K')$. and there are crossing domains $L_{i_0}$ and 
$L'_{i_0}$ of $L$ and $L'$ respectively such that 
$U\subset L_{i_0}$ and $U\subset L'_{i_0}$. 
We can define a relative $y$-index 
\[ 
y(L',L,q;\ff):=\Big(\sum_{i\in I'} \epsilon(i_0,i)d'_i\Big)-
\Big(\sum_{j\in I} \epsilon(i_0,j)d_j\Big)
\] 
It is easy to see that $y(L',L, q;\ff)$ is independent 
of the choice of the common orientation of $L'$ and $L$ at $q$. On the other hand, a different choice of $q$ may change 
the $\pm$ sign of $y(L',L,q;\ff)$.

\begin{cor} 
$y(L,L',q;\ff_0)=y(L,L',q;\ff_1)$ if $\ff_0$ and $\ff_1$ are included 
in a continuous family $\ff_t$, $t\in [0,1]$, so that $q$ is 
regular (hence disjoint from any crossing $p$-curve) 
for $t\in[0,1]$. In particular, if is invariant under symplectic 
isotopies of each of $L,L'$ provided the overlapping condition 
is preserved. 
\end{cor}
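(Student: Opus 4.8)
The plan is to unwind the relative index into a difference of absolute indices and then invoke the preceding Proposition (the $\ff$-invariance of $y(L,q;\ff)$) for each surface separately. Recall that by definition
\[
y(L',L,q;\ff)=y(L',q;\ff)-y(L,q;\ff),
\]
where each term is computed using, as its reference crossing domain, the domain that contains the common point $q$; the order of the arguments only flips an overall sign and so is immaterial to the asserted equality. Thus it suffices to show that, along the family $\ff_t$, each of $t\mapsto y(L,q;\ff_t)$ and $t\mapsto y(L',q;\ff_t)$ is constant \emph{when computed with the domain through $q$ as reference}. The regularity hypothesis on $q$ is exactly what makes this common reference well defined and coherently trackable for both surfaces at once.

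First I would note that the single hypothesis governs both $L$ and $L'$. Since $L$ and $L'$ coincide, with the same orientation, on the neighborhood $U\ni q$, we have $g'_{L}=g'_{L'}$ on $U$, so $\det dg'_{L}(q)=\det dg'_{L'}(q)$; hence $q$ is regular for $g'_{L}$ at a given $t$ if and only if it is regular for $g'_{L'}$. Therefore the assumption that $q$ stays regular for all $t\in[0,1]$ guarantees that $q$ avoids every crossing $p$-curve of \emph{both} $g'_{L}$ and $g'_{L'}$ throughout the deformation. Consequently the crossing domain $L_{i_0(t)}$ (resp.\ $L'_{i_0(t)}$) containing $q$ is well defined and varies continuously in $t$, and I take these as the reference domains at each parameter value.

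Next I would run the degeneration analysis of the preceding Proposition. As $\ff_t$ varies generically, the crossing $p$-curves of each PLG-map undergo the elementary degenerations catalogued there --- merge-split and birth-death --- and that analysis shows the weighted sum $\sum_{i}\varepsilon(i_0,i)d_i$ is preserved under every such move, provided the reference domain is carried consistently through the move. Applying this verbatim to $L$ with reference $L_{i_0(t)}$, and then to $L'$ with reference $L'_{i_0(t)}$, shows that $y(L,q;\ff_t)$ and $y(L',q;\ff_t)$ are each constant on $[0,1]$; subtracting yields $y(L,L',q;\ff_0)=y(L,L',q;\ff_1)$.

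The step I expect to be the genuine obstacle is the coherence of the reference sign $\varepsilon(i_0(t),\cdot)$ across a degeneration in which the domain $L_{i_0(t)}$ itself is one of the domains that merges, splits, or absorbs another. Here the fact that $q$ remains in the interior of an honest crossing domain, never touching $\Sigma^c$, is precisely what lets one identify $L_{i_0}$ before and after the degeneration and inherit the sign bookkeeping already established in the preceding Proposition; absent this, the two absolute indices could each jump by a global sign and the cancellation in the difference would fail. Finally, the concluding assertion follows by the device used in the preceding Corollary: a symplectic isotopy of $L$ (resp.\ $L'$) in the identity component can be traded, by pullback, for a variation of the framing $\ff$, so invariance under $\ff$-variation gives invariance under symplectic isotopy; the overlapping hypothesis is exactly what keeps $q$ regular and inside a common, equally oriented domain, so that the relative index stays defined and the argument above applies unchanged.
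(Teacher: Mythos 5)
Your proposal is correct and follows essentially the route the paper intends: the corollary is presented as an immediate consequence of the preceding Proposition (invariance of the absolute $y$-index along a path of framings), obtained by writing the relative index as a difference of two absolute indices each referenced to the crossing domain containing $q$, with the regularity of $q$ throughout $[0,1]$ guaranteeing that this reference domain is coherently tracked for both $L$ and $L'$. Your additional remark about the reference domain itself participating in a merge/split or birth/death is a worthwhile clarification of a point the paper leaves implicit, but it does not constitute a different method.
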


%
%
\subsection{Effect of a $la$-disk surgery} \label{la-effect}

\begin{prop} 
The $la$-disk surgery preserves $\mu_2(L;\ff)$, i.e., 
\[ 
\mu_2(\eta_D(L);\ff)=\mu_2(L;\ff). 
\] 
\end{prop}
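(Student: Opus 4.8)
The plan is to reduce the statement to the homotopy invariance of the mapping degree. Since $\mu_2(L;\ff)$ is by definition the degree of the PLG-map $g'_L:L\to\dP(K')$ (Definition \ref{mu2}), and since $\dP(K')\cong S^2$ has the same dimension as $L$, it suffices to connect $g'_L$ and $g'_{\eta_D(L)}$ by a continuous one-parameter family of maps from a fixed closed surface into $\dP(K')$. The natural candidate is the family induced by the Lagrangian isotopy already produced in Proposition \ref{smooth-iso}.

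First I would recall that explicit isotopy
\[
L_\h:=(L\setminus Q)\cup Q_\h,\qquad Q_\h:=Orb_\cG(M_\h(\gamma)),\quad \h\in[0,\pi],
\]
with $L_0=L$ and $L_\pi=\eta_D(L)$, and note that \emph{each} $L_\h$ is Lagrangian: $Q_\h$ is the $\cG$-orbit of a plane curve, hence Lagrangian by Fact \ref{revolutionL}, and it is glued tangentially to the Lagrangian remainder $L\setminus Q$ along $\pa Q$. This Lagrangian feature is what matters here, because the PLG-map is defined only for Lagrangian surfaces (its construction factors through $\pi':\Lambda^+\to\dP(K')$). I would then fix a model closed surface $\Sigma\cong L$ together with a smooth family of embeddings $f_\h:\Sigma\to W$ parametrizing $L_\h$, and set
\[
G_\h:\Sigma\to\dP(K'),\qquad G_\h(x):=\pi'\circ\hat\pi\big(T_{f_\h(x)}L_\h\big).
\]
Here $G_\h$ is well-defined precisely because $T_{f_\h(x)}L_\h$ is a Lagrangian plane, so it lies in the domain of $\pi'$; moreover $G_0$ and $G_\pi$ agree with $g'_L$ and $g'_{\eta_D(L)}$ up to the harmless reparametrizations $f_0$, $f_\pi$, which do not change the degree.

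The key step is to check that $\h\mapsto G_\h$ is a genuine homotopy, i.e. continuous in $(\h,x)$. This follows once the Lagrangian tangent-plane field $T_{f_\h(x)}L_\h$ varies continuously, which in turn rests on the smoothness of the family $f_\h$ and on the smoothness of the projections $\hat\pi$ on $\cL^+$ and $\pi'$ on $\Lambda^+$. Granting this, invariance of the degree under homotopy gives $\deg G_0=\deg G_\pi$, that is $\mu_2(\eta_D(L);\ff)=\mu_2(L;\ff)$. Equivalently, one may observe that $L_\h$ is a regular homotopy through immersed Lagrangian surfaces and simply invoke the regular-homotopy invariance of $\mu_2$ recorded after Proposition \ref{mu2-indep}. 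The only real obstacle is confirming that the cut-and-reglue construction yields a \emph{smooth} family of Lagrangian tangent planes across the gluing locus $\pa Q$; but since the isotopy $L_\h$ is already known to be smooth, this is routine rather than substantive, and once it is in hand the homotopy invariance of the degree closes the argument immediately.
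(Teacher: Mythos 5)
There is a genuine gap: the interpolating surfaces $L_\h$ are \emph{not} Lagrangian for $0<\h<\pi$, so the family $G_\h$ on which your whole argument rests is not defined. Your justification is that $Q_\h=Orb_\cG(M_\h(\gamma))$ is the $\cG$-orbit of a plane curve and hence Lagrangian by Fact \ref{revolutionL}; but that fact applies only to curves lying in $\R^2_{x_1y_1}$, and $M_\h(\gamma)$ leaves that plane as soon as $\sin\h\neq 0$ (from (\ref{A_theta}), the $\pa_{x_2}$-component of $M_\h(x_1,0,y_1,0)$ is $\tfrac{1}{2}(x_1-y_1)\sin\h$). More fundamentally, $M_\h$ is symplectic only at $\h=0$ and anti-symplectic only at $\h=\pi$: writing $\dot\gamma=a\pa_{x_1}+b\pa_{y_1}$ at the point $(x_1,y_1)\in\gamma$, one computes $\omega\bigl(M_\h\dot\gamma,\,M_\h X_G\bigr)=\sin\h\,(by_1-ax_1)$, and for the model curve $\gamma(s)=(\sqrt2 r+r\cos s,\,r\sin s)$ this equals $r^2\sin\h\,\sin s\,(\sqrt2+2\cos s)$, which is nonzero away from $s=\pi$. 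So the tangent planes $T_{f_\h(x)}L_\h$ do not lie in $\Lambda^+$, the projection $\pi'$ cannot be applied to them, and neither your homotopy $G_\h$ nor the fallback appeal to ``regular homotopy through immersed Lagrangian surfaces'' is available. This failure is not incidental: the surgery is engineered to change the symplectic isotopy class, so one should be suspicious of any claim that the explicit smooth isotopy of Proposition \ref{smooth-iso} stays Lagrangian.

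The paper's proof sidesteps the interpolation entirely and compares the two PLG-maps at the endpoints only. The single anti-symplectic map $M=M_\pi$ carries $L_\gamma=L\cap U$ to $L_{\gamma'}=L'\cap U$, so $g'_{L_{\gamma'}}=M\circ g'_{L_\gamma}$; one then checks that although $M$ reverses the orientation of the $S^1$-factor of $\Lambda^+\cong S^1\times S^2$, it acts with degree $+1$ on the $S^2$-factor $\dP(K')$, so the degree contributions of the excised and the reglued pieces agree, while $L$ and $L'$ coincide elsewhere. If you wanted to rescue a homotopy-invariance argument you would have to extend $\pi'\circ\hat\pi$ continuously over the non-Lagrangian planes swept out by $TL_\h$ (say, via the full oriented Grassmannian $\tilde{G}(2,4)\cong S^2\times S^2$ and projection to one factor) and verify that the restriction to $\Lambda^+$ is the map $\pi'$; that is a different and additional piece of work which your write-up does not supply.
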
 

\begin{proof} 
%

Without loss of generality we may adapt the standard model 
of $L$ near $D$ as well as the relevant notations as described in Section \ref{model}. 
Let 
\[ 
\gamma'(s):=M(\gamma(s))=(x_1=r\sin s, y_1=\sqrt{2}r+r\cos s), \quad 
\frac{3\pi}{4}\leq s\leq \frac{5\pi}{4}, 
\] 

Let $L':=\eta_D(L)$. Then 
\[ 
L'=(L\setminus Orb_\cG(\gamma))\cup Orb_\cG(\gamma').  
\] 
Denote 
\[ 
L_\gamma:=Orb_\cG(\gamma)\quad  \text{ and } \quad  L_{\gamma'}:=Orb_\cG(\gamma')
\] 
 following the notational convention in Section 
\ref{p-domain}. Both $L_\gamma$ and $L_\gamma'$ 
are $p$-domains (see Section \ref{p-domain}), hence $\mu_2(L_\gamma)$ and $\mu_2(L_{\gamma'})$ are defined. 

Up to a homotopy we may assume that $\ff=(J,\pa_{x_1},\pa_{x_2})$ near $D$, where $J$ is the standard complex 
structure on $\R^4\cong \C^2$. 

%
%

To show that $\mu_2$ is preserved under a $la$-disk surgery, one observes that $M$, which interchanges $L_\gamma$ and 
$L_{\gamma'}$, is a linear map (an element of $SO(4)$) 
satisfying the equality 
\[ 
M=-K''\circ g_\frac{\pi}{2} =K_{-\frac{\pi}{2}}\circ g_\frac{\pi}{2}
=g_\frac{\pi}{2}\circ K_{-\frac{\pi}{2}}. 
\] 
Indeed $M$ is anti-symplectic, $M^*J:=MJM^{-1}=MJM=-J$, 
$M^*K_t=K_{\pi-t}$. $M$ commutes with the action of $\cG$  and anti-commutes with the action of 
the  centralizers $\cC$, $M\circ c_\h=c_{-\h}\circ M$. 
Recall that $\cC$ and $\cG$ induces a trivialization of 
$\Lambda^+\cong S^1\times S^2$. Since $M$ preserves 
$\dP(K_{\pm\frac{\pi}{2}})$ and acts on which as 
$180^\circ$-rotations ($M=g_\frac{\pi}{2}$ on $\dP(K_{\pm\frac{\pi}{2}})$), we conclude that when acting on $\Lambda^+$, 
$M$ is orientation reversing on the $S^1$-factor, but orientation 
preserving on the $S^2$-factor. Since 
\[ 
g'_{L_{\gamma'}}=Mg'_{L_\gamma}, 
\] 
the maps $g'_{L_{\gamma'}}$ and $g'_{L_\gamma}$ have the 
same degree, i.e., 
\[ 
\mu_2(L_{\gamma'};\ff)=\mu_2(L_\gamma;\ff), \quad \text{hence} 
\] 
\[ 
\mu_2(L';\ff)=\mu_2(L;\ff). 
\]

\end{proof}

We turn to the effect of a $la$-disk surgery on $y(L,q;\ff)$. 
Some of the computational details are left in Section 
\ref{p-domain}. 

Recall from Section \ref{surg-la} that $L\cap U_D=L_\gamma$, $\gamma':=M(\gamma)$, and 
 $L'\cap U_D=L_{\gamma'}=M(L_\gamma)$. 
 
 Note that $\gamma$ and $\gamma'$ are polar curves in $\R^2_{x_1y_1}\setminus\{ (0,0)\}$ as considered in Section \ref{p-domain}.   Let $\check{\Gamma}_{L,\tau}$ and 
 $\check{\Gamma}_{L',\tau}$ denote the proper $E_\tau$-locus 
 of $L$ and $L'$ respectively.

 \begin{prop} \label{effect} 
Let $L$, $L'$, $\ff$ be as above. Fix a regular point $q\in L\setminus L_\gamma=L'\setminus L_{\gamma'}$.
Then 

\begin{equation} 
y(L',L,q;\ff) =\pm 4 
\end{equation} 
So $L'=\eta_D(L)$ and $L$ are not Hamiltonian isotopic. 
\end{prop}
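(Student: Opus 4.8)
The plan is to compute the relative $y$-index through the relative $E_\tau$-phase and to localize the entire difference to the surgery chart $U_D$. Since $L$ and $L'=\eta_D(L)$ coincide on $W\setminus U_D$, and $q\in L\setminus L_\gamma = L'\setminus L_{\gamma'}$ is a common regular point sitting in a crossing domain that I fix as the reference domain for both surfaces, I would first observe that the uniformly oriented proper loci $\check{\Gamma}^+_{L,\tau}$ and $\check{\Gamma}^+_{L',\tau}$ literally agree, with the same induced orientation, on all components lying in $W\setminus U_D$. Using $y(L,q;\ff)=\frac{1}{2\pi}\alpha_\tau=\frac{1}{2\pi}(\alpha^+_\tau-\alpha^-_\tau)$ together with the additivity of the relative $E_\tau$-phase over connected components, the outside components cancel in the difference, so that $y(L',L,q;\ff)=\frac{1}{2\pi}\bigl(\alpha^{L'}_{\tau,\mathrm{in}}-\alpha^{L}_{\tau,\mathrm{in}}\bigr)$, where the subscript $\mathrm{in}$ denotes the portions of the proper loci lying on the replaced $p$-domains $L_\gamma=Orb_\cG(\gamma)$ and $L_{\gamma'}=Orb_\cG(\gamma')$.

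Next I would exploit the $\cG$-invariance of the standard model to reduce the PLG-map on $L_\gamma$ and $L_{\gamma'}$ to the plane curves $\gamma$ and $\gamma'=M(\gamma)$ of $(\ref{gamma})$ and $(\ref{gamma'})$, and then invoke the structural identity from the $\mu_2$-preservation proof, namely that $M=g_{\pi/2}\circ K_{-\pi/2}$ is anti-symplectic and, acting on $\Lambda^+\cong S^1\times S^2$, reverses orientation on the $S^1$-factor while preserving the $S^2$-factor. The $S^2$-factor records the PLG-degree of the $p$-domain, which is exactly why $\mu_2(L_{\gamma'};\ff)=\mu_2(L_\gamma;\ff)$ and $\mu_2$ is unchanged; but the $S^1$-factor records precisely the phase/sign data detected by the $y$-index. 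Because $g'_{L_{\gamma'}}=M\,g'_{L_\gamma}$ and $M$ reverses the $S^1$-orientation, the contribution of the replaced $p$-domain to the uniformly oriented signed phase sum flips sign between $L$ and $L'$, whereas the reference domain containing $q$ (outside $U_D$) retains its orientation. This is the conceptual reason why the signed sum $\sum_i\varepsilon(i_0,i)d_i$ changes even though the unsigned sum $\sum_i d_i$ does not.

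For the quantitative step I would carry out the model computation (with details deferred to Section \ref{p-domain}): using that $\gamma,\gamma'$ are tangent along $\pa Q$ to the pair of planes $E_+,E_-$ and that the $\cG$-orbit of the semicircular arc sweeps the intersection subspace $T_pL\cap E_\tau$ through a full revolution, one finds that the relative $E_\tau$-phase carried by $L_\gamma$ equals $\pm 2\cdot 2\pi$, i.e. the surgery region carries signed weight $\pm 2$. Flipping this sign while keeping the reference orientation fixed changes the signed phase sum by $\pm 2\cdot 2\cdot 2\pi=\pm 8\pi$, whence $y(L',L,q;\ff)=\tfrac{1}{2\pi}(\pm 8\pi)=\pm 4$, the global sign being governed by the polarity of $D$. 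Since $y(L',L,q;\ff)=\pm 4\neq 0$, while the relative $y$-index of a symplectically (in particular Hamiltonian) isotopic pair vanishes by the invariance corollaries already established, $L$ and $\eta_D(L)$ cannot be Hamiltonian isotopic.

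The hardest part will be the bookkeeping in the third step: verifying that the surgery region connects to the reference crossing domain through the graph $\Lambda_L$ with the parity that produces a clean single sign flip, and confirming that the weight is exactly $2$ rather than $1$. This amounts to tracking the crossing $p$-curves bounding $L_\gamma$ and $L_{\gamma'}$, their images under $g'_L$, and the values of $\varepsilon$ along paths in $\Lambda_L$ joining the surgery region to $q$, while checking that the anti-symplectic $M$ introduces no spurious crossing points that would corrupt the parity count or split the weight-$2$ contribution in an unexpected way.
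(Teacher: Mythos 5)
Your proposal is correct and follows essentially the same route as the paper: localize the difference of relative $E_\tau$-phases to the replaced $\cG$-invariant annuli, use that $M$ maps $\gamma$ to $\gamma'$ preserving orientation while swapping $E_\tau$ and $E_{\tau^\perp}$ (equivalently, reversing the $S^1$-factor of $\Lambda^+\cong S^1\times S^2$) so that $(\Delta\varphi)_{\gamma'}=-(\Delta\varphi)_\gamma$ with $|(\Delta\varphi)_\gamma|=\pi$, and then multiply by the four $\cG$-translated arcs to get a net change of $\mp 8\pi$ in the signed phase, i.e.\ $y(L',L,q;\ff)=\mp 4$. The paper likewise defers the quantitative weight-$2$ computation to the $S^1$-invariant $p$-domain model of Section \ref{p-domain}, so your plan matches its proof in both structure and arithmetic.
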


 \begin{proof} 
 The choice of $q$ 
 determines a uniform orientation for each  of 
 $\check{\Gamma}^+_{L,\tau}$ and $\check{\Gamma}^+_{L',\tau}$, which also coincide on $L\setminus L_\gamma$ as oriented 
 $E_\tau$-loci.  View $\gamma\subset \check{\Gamma}^+_0$ 
 as a  curve oriented as a connected component of 
 $\Xi :=\check{\Gamma}^+_{L,0}\cap L_\gamma$, and similarly $\gamma'$ 
 is oriented as a connected component of $\Xi':=\check{\Gamma}_{L',0}\cap L_{\gamma'}$. Observe that the map $M|_\gamma:\gamma\to \gamma'$ is orientation preserving. Let $(\Delta\varphi)_\gamma$ and $(\Delta\varphi)_{\gamma'}$ denote the 
 relative $E_0$-phase of $\gamma$ and $\gamma'$ respectively. 
 Since $M(E_\tau)=E_{\tau^\perp}$ we have 
 \[ 
 (\Delta\varphi)_{\gamma'}=-(\Delta\varphi)_\gamma. 
 \] 
 From Section \ref{p-domain} we have 
 \[ 
 |(\Delta\varphi)_\gamma|=\pi, 
 \] 
 and the relative $E_0$-phases of $\Xi$ and $\Xi'$ are 
 \[ 
  (\Delta\varphi)_\Xi=4 (\Delta\varphi)_\gamma, \quad 
   (\Delta\varphi)_{\Xi'}=4 (\Delta\varphi)_{\gamma'}. 
 \] 
 The relative $y$-index $y(L',L,q;\ff)$ is then 
 \begin{equation} 
  \begin{split} 
 y(L',L,q;\ff) & =\frac{1}{2\pi}( (\Delta\varphi)_{\Xi'}-(\Delta\varphi)_\Xi)   = -\frac{4}{\pi} (\Delta\varphi)_\gamma \\ 
 & = \begin{cases} 
 -4 & \text{ if } (\Delta\varphi)_\gamma=\pi, \\ 
 4 & \text{ if } (\Delta\varphi)_\gamma=-\pi. 
 \end{cases} 
 \end{split}
 \end{equation} 
 This completes the proof. 
 \end{proof}

\section{Examples} 

\subsection{Lagrangian surfaces in $\R^4$} \label{R4}

Let $\R^4$ be endowed with the standard symplectic form 
$\omega=\sum_{i=1}^2dx_i\wedge dy_i$, and the standard 
complex structure $J$. We take $u:=\pa_{x_1}$ and $v:=\pa_{x_2}$ to be the unitary framing of $T\R^4$. Then the associated 
$E_\tau$, $\tau\in \R/\pi\Z$ are $E_\tau=(\cos\tau\pa_{x_1}+
\sin\tau\pa_{x_2})\wedge (\cos\tau\pa_{y_1}+\sin\tau\pa_{y_2})$. 
Note that all $\omega$-compatible unitary framings on $\R^4$ are 
homotopic.  The $\mu_2$ and $y$-indexes of $L$ are independent of 
the choice of a framing $\ff$, and will be denoted as 
$\mu_2(L)$ and $y(L,q)$.

\subsubsection{$S^1$-invariant $p$-domain} \label{p-domain}

We consider the following symmetric model of a Lagrangian 
annulus which will be viewed as a part of a closed Lagrangian surface. 

Let $\gamma:[a,b]\to \R^2_{x_1,y_1}\setminus\{ (0,0)\}$ be a parameterized immersed curve with parameter $s$. 
We assume that $\gamma$ 
also satisfies the following condition: there exists a finite sequence of numbers 
$a=s_0<s_1<s_2<\cdots <s_{k-1}<s_k=b$ such that 
\begin{enumerate} 
\item $\dot{\gamma}(s_i):=\frac{d\gamma}{ds}(s_i)$ is tangent to the position vector 
$\gamma(s_i)$ for $i=0,1,...,k$; 
\item $\dot{\gamma}(s)$ is transversal to $\gamma(s)$ if 
$s\neq s_i$, $i=0,1,...,k$. 
\end{enumerate} 
Let $\cG$ be the $S^1$-subgroup of $SU(2)$ as defined in Section \ref{surg-la}. Recall that  $g_\h\in\cG$ is the time 
$\h $-map  (mod $2\pi$) of the flow of the Hamiltonian 
vector field $X_G=x_1\pa_{x_2}-x_2\pa_{x_1}+y_1\pa_{y_2}-y_2\pa_{y_1}$. 
Then the orbit space  $L_\gamma:=Orb_\cG(\gamma)$ is an  immersed 
Lagrangian surface in $\R^4$. 

Parametrize $L_\gamma$ so that $(s,\h)$ are the 
coordinates of $g_\h(\gamma(s))$. 
 Also orient $L_\gamma$ so that $\dot{\gamma}(s)$ and $X_G(\gamma(s))$ form a positive basis.

Consider two smooth functions associated to $\gamma$: 
\begin{enumerate} 
\item 
$t=t(s)$ is the smooth function of $s$ such that 
$t(s)$ modulo $2\pi$ is the angle of the counterclockwise rotation from $\pa_{x_1}$ to $\gamma(s)$ or, equivalently, 
$t(s)$ (modulo $2\pi$) is the angular function of the position vector $\gamma$. 

\item 
$\varphi=\varphi(s)$ is the smooth function such that 
$\varphi(s)$ $\mod 2\pi$ is the angle of counterclockwise 
rotation from the position vector $\gamma(s)$ to the tangent vector $\dot{\gamma}(s)$. 

\end{enumerate} 
Note that 
\[ 
\varphi(s)\equiv 0 \mod \pi \quad \Leftrightarrow \quad 
\gamma(s)\parallel \dot{\gamma}(s). 
\] 

 We also extend $t$ and $\varphi$ 
$\cG$-invariantly over $L_\gamma$.

Along the coordinate curve $\h=0$, the tangent space $T_{\gamma(s)}L_\gamma$ is spanned by the ordered orthonormal vectors 
\[ 
v_1(s):=\dot{\gamma}(s)/|\dot{\gamma}(s)|, \quad 
v_2(s):=X_G(\gamma(s))/|X_G(\gamma(s))|
\] 
which, expressed 
as column vectors with respect to the basis $\{ \pa_{x_1},\pa_{x_2},\pa_{y_1},\pa_{y_2}\}$, are 
\[ 
\begin{split} 
\begin{pmatrix} v_1 & v_2\end{pmatrix} & = \begin{pmatrix}   \cos(t+\varphi)  & 0 \\ 0 & \cos t  \\ \sin(t+\varphi) & 0 \\ 0 
& \sin t
\end{pmatrix} \\ 
& = \begin{pmatrix} \cos\rho & 0 & 
-\sin \rho & 0 \\ 0 & \cos \rho & 0 & -\sin \rho \\ \sin \rho & 0 & \cos \rho & 0 \\ 
0 & \sin \rho & 0 & \cos \rho \end{pmatrix} 
\begin{pmatrix} 
 \cos(\frac{\varphi}{2}) & 0 \\ 0 & \cos(\frac{-\varphi}{2})  \\ 
\sin(\frac{\varphi}{2}) & 0 \\ 0 & \sin(\frac{-\varphi}{2}) 
\end{pmatrix}  \\ 
& = c_\rho \begin{pmatrix} v'_1 & v'_2\end{pmatrix},  
\end{split} 
\] 
where $\rho=\rho(s):=t(s)+\frac{\varphi(s)}{2}$.

Then for each $s$ 
\[ 
Orb_\cG(\dot{\gamma}(s)\wedge X_G(\gamma(s))) 
\subset \Lambda^+
\]  
is  a (possibly degenerated) 
circle  with multiplicity 2.  Moreover, it degenerates to a point 
precisely when $\gamma(s)\parallel \dot{\gamma}(s)$, i.e., 
when $s=s_i$ for some $0\leq i\leq k$.  A direct computation 
yields the following lemma.

\begin{lem} 
The closed curve $Orb_{\cG}(\gamma(s_i))$ is a 
crossing $p$-curve if $\gamma$ does not change its concavity 
at $\gamma(s_i)$, a folding $p$-curve if $\gamma$ changes 
its concavity at $\gamma(s_i)$ (i.e., $\gamma(s_i)$ is an 
infection point of $\gamma$). 
\end{lem}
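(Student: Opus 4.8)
The plan is to compute the PLG-map $g'_L$ explicitly on the whole annulus $L_\gamma$ and then read off its local behaviour near the orbit circle $Orb_\cG(\gamma(s_i))$. The matrix computation preceding the lemma already shows that along $\h=0$ the tangent plane factors as $T_{\gamma(s)}L_\gamma=v_1\wedge v_2=c_\rho(v_1'\wedge v_2')$ with $\rho=t+\tfrac{\varphi}{2}$, and a one-line check that $K'v_1'=v_2'$ shows $v_1'\wedge v_2'\in\dP(K')$; writing $v_1'=u_0\cos\tfrac{\varphi}{2}+Ju_0\sin\tfrac{\varphi}{2}$ identifies it as $v_1'\wedge v_2'=\lambda_0(\tfrac{\varphi}{2})$. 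First I would promote this to the full orbit: since $\cG$ commutes with the centralizers $\cC$ and $g_\h(\lambda_0(\psi))=\lambda_\h(\psi)$, the tangent plane at $g_\h(\gamma(s))$ is $c_\rho\,\lambda_\h(\tfrac{\varphi(s)}{2})\in\dP(K_{2\rho})$, and applying the central projection $\pi'=c_{-\rho}$ yields the clean formula
\[
g'_L(s,\h)=\lambda_\h\!\left(\tfrac{\varphi(s)}{2}\right),\qquad \h\in\R/2\pi\Z,
\]
so that $g'_L$ depends on $s$ only through $\varphi(s)$.

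At $s=s_i$ we have $\varphi(s_i)\equiv 0\bmod\pi$, hence $\lambda_\h(\tfrac{\varphi(s_i)}{2})$ equals $\xi_0$ (when $\varphi(s_i)\equiv0\bmod 2\pi$) or $\xi_\infty$ (when $\varphi(s_i)\equiv\pi\bmod 2\pi$), independently of $\h$. Thus $Orb_\cG(\gamma(s_i))$ is sent to a single pole, and since $\pa_\h g'_L$ vanishes there the orbit lies in $L^o$ and is a $p$-curve in the sense of Definition \ref{pcurve}. To decide crossing versus folding I would pass to the stereographic chart about the relevant pole. Using $\lambda_\h(\psi)\mapsto(a,b)=(\cos2\h\,\tan\psi,\ \sin2\h\,\tan\psi)$ near $\xi_0$, the map becomes $(s,\h)\mapsto(\cos2\h\,\tan\tfrac{\varphi(s)}{2},\ \sin2\h\,\tan\tfrac{\varphi(s)}{2})$, which in the polar coordinates $(\rho,t)$ of $D_{\xi_0}$ reads $\rho=\tan\tfrac{\varphi(s)}{2}$, $t=2\h$. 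Setting $x_1=\h$, $x_2=s-s_i$, the two tangent directions of the image at the pole coincide exactly when $\tan\tfrac{\varphi}{2}$ keeps its sign across $s_i$ and differ by $\pi$ when it switches sign; comparing with conditions (F) and (C) identifies the orbit as a folding $p$-curve precisely when $\varphi$ has a local extremum at $s_i$, and as a crossing $p$-curve precisely when $\varphi$ changes sign at $s_i$. The pole $\xi_\infty$ case is identical after the antipodal relabelling.

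The last and main step is to translate the sign behaviour of $\varphi$ into the concavity of $\gamma$. Writing $\gamma=(X,Y)$ and $P:=X\dot Y-Y\dot X$, one has $\sin\varphi=P/(r|\dot\gamma|)$, so near $s_i$ the sign of $\varphi$ is the sign of $P$, with $P(s_i)=0$ recording the radial tangency. I would compute $\dot P=X\ddot Y-Y\ddot X$ and, after rotating so that $\gamma(s_i)=(r_0,0)$ and $\dot\gamma(s_i)=(\dot r_0,0)$, obtain
\[
\dot P(s_i)=r_0\ddot Y(s_i)=r_0\dot r_0^{\,2}\,\kappa(s_i),
\]
where $\kappa$ is the signed curvature of $\gamma$. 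Hence if $\kappa(s_i)\neq0$ — i.e. $\gamma$ does not change concavity at $\gamma(s_i)$ — then $P$ has a simple zero and changes sign, so the orbit is crossing. At an inflection point, where $\kappa$ vanishes and changes sign, $\dot P(s_i)=0$, while differentiating once more and using $\ddot Y(s_i)=0$ gives $\ddot P(s_i)=r_0\dddot Y(s_i)=(r_0/\dot r_0)\,\dot N(s_i)\neq0$, with $N:=\dot X\ddot Y-\dot Y\ddot X$ the curvature numerator and $\dot N(s_i)\neq0$ since $\kappa$ changes sign; thus $P$ has an even-order zero, does not change sign, and the orbit is folding.

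The main obstacle is precisely this curvature bookkeeping: one must track the chain of derivatives of $P$ (and of $N$) at $s_i$ under the genericity hypothesis that the signed curvature has a simple sign change at an inflection, and confirm that the $\varphi\equiv\pi$ (north-pole) branch produces the same dichotomy. Everything upstream — the formula $g'_L(s,\h)=\lambda_\h(\tfrac{\varphi}{2})$ and the chart computation — is routine once the $\cC$–$\cG$ equivariance of the trivialization is invoked.
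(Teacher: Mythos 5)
Your argument is correct and is essentially the ``direct computation'' the paper alludes to but does not write out: the factorization $T_{\gamma(s)}L_\gamma=c_\rho\bigl(v_1'\wedge v_2'\bigr)$ with $v_1'\wedge v_2'=\lambda_0(\varphi/2)$ and the $\cC$--$\cG$ equivariance give exactly your formula $g'_L=\lambda_\h(\varphi/2)$, after which the dichotomy reduces to whether $\sin\varphi$ (equivalently $P=X\dot Y-Y\dot X$) changes sign at $s_i$, and your curvature computation $\dot P(s_i)=r_0\,\ddot Y(s_i)$, $\ddot P(s_i)=r_0\,\dddot Y(s_i)$ supplies the translation into the concavity statement (the determinant $\fD=\tfrac12\sin\varphi\sin(2(\h-\tau))$ together with Lemma \ref{Dswitch2} gives a marginally shorter route to ``crossing iff $\sin\varphi$ changes sign,'' but the content is the same). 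The only caveat is the genericity you already flag: at an inflection the signed curvature is assumed to change sign simply, which is the standing assumption of this section.
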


Let $V_\tau:=(g'_{L_r})^{-1}(D_\tau)$, where $D_\tau\subset \dP(K')$ is the open hemisphere with $\pa D=\lambda^+_\tau$. 
To determine $V_\tau$ we compute the determinant of the 
$4\times 4$ matrix with column vectors $\dot{\gamma}_\h(s)$, $X_G(\gamma_\h(s))$, $\cos\tau \pa_{x_1}+\sin\tau\pa_{x_2}$, 
and $\cos\tau \pa_{y_1}+\sin\tau\pa_{y_2}$, where $\gamma_\h (s):=
g_\h(\gamma(s))$ and $\dot{\gamma}_\h(s):=\frac{\pa \gamma_\h}{\pa s}(s)$. The determinant is 
\begin{equation} \label{Dx1x2} 
\begin{split} 
\fD & :=\begin{vmatrix}   
\cos(t+\varphi)\cos\h & -\cos t\sin\h & 
\cos\tau & 0 \\ \cos(t+\varphi)\sin\h & \cos t\cos\h & \sin\tau & 0 \\ 
\sin(t+\varphi)\cos\h & -\sin t\sin\h & 0 & \cos\tau \\ 
\sin(t+\varphi)\sin\h & \sin t\cos\h & 0 & \sin\tau  
\end{vmatrix} \\ 
& =\frac{1}{2}\sin\varphi\sin(2(\h-\tau)). 
\end{split} 
\end{equation}  
Note that reversing the orientation of $\gamma$ (and 
hence the orientation of $L_\gamma$) results in  
an addition of $\pi$ to $\varphi$, which changes the sign of 
$\fD$. 

$\fD>0$ at a point $q\in L_\gamma$ iff $q\in V_\tau$. 
Also $q\in \Gamma_\tau$, $\tau\in\R/\pi\Z$ iff $\varphi=0 \mod \pi$ or $\h=\tau+\frac{(k-1)\pi}{2}$, $k=1,2,3,4$. So on  $L_\gamma$  
the proper $E_\h$-locus $\check{\Gamma}_\tau$ consists of four embedded 
arcs, the union of $\check{\Gamma}_\tau$ form a smooth 1-dimensional 
foliation of $L_\gamma$. Note that  $L_\gamma$ is $\cG$-invariant, $g_\h(\check{\Gamma}_\tau)=\check{\Gamma}_{\tau+\h}$.

Take $\tau=0$ and then $\fD>0$ iff $\sin\varphi\sin 2\h>0$, 
i.e., iff one of the two conditions holds: 
\begin{enumerate} 
\item $\varphi\in (0,\pi)$ and $\h\in(0,\frac{\pi}{2})\cup (\pi,\frac{3\pi}{2}) \mod 2\pi$; 
\item   $\varphi\in(\pi,2\pi)$ and 
$\h\in(\frac{\pi}{2},\pi)\cup(\frac{3\pi}{2},2\pi) \mod 2\pi$. 
\end{enumerate}

{\bf Example of Case (i):} \ Let $(r,t)$ be polar coordinates of $\R^2$ so that $x_1=r\cos t$, $y_1=r\sin t$.
Take $\gamma$ to be a 
polar curve $r(t)>0$ with $a\leq t\leq b$ and  parametrize 
$\gamma$ by $s=t$, so that $\gamma(s)$ is 
 tangent to $\dot{\gamma}(s)=\frac{d\gamma}{ds}(s)$ iff $s=a$ or $b$. In particular, $L_\gamma$ is a p-domain. For 
 such $\gamma$ we have 
 $\varphi\in (0,\pi) \mod 2\pi$ except at endpoints of $\gamma$, and $\Delta\varphi=\pm\pi$ or $0$. 
 Let $V_{0,k}:=\{ (t,\h)\mid a<t<b, \ \frac{(k-1)\pi}{2}<\h<\frac{k\pi}{2}\}$, $k=1,2,3,4$. 
 Let $\gamma_\h:=g_\h(\gamma)$ 
with the induced orientation, and  $-\gamma_\h$ denote 
$\gamma_\h$ but with the reversed orientation.

Here $V_0=V_{0,1}\cup V_{0,3}$. Then $\check{\Gamma}_0\subset 
 \pa V_0$ with the boundary orientation is 
  \begin{equation} \label{Gamma1} 
 \check{\Gamma}^+_0=\gamma \cup -\gamma_\frac{\pi}{2} \cup \gamma_\pi \cup -\gamma_\frac{3\pi}{2}. 
 \end{equation}  
 That $g_\h(E_\tau)=E_{\tau+\h}$ ensures that the 
 four components of $\check{\Gamma}^+_0$ have the same 
 relative $E_0$-phase (the angle 
of rotation in $E_0$ minus the angle of rotation in $E_\frac{\pi}{2}$) which is $\Delta\varphi$, so 
$\alpha_{0,1}=2\Delta\varphi=\alpha_{0,3}$, and hence 
\begin{equation}  \label{mu2Lgamma1}
\mu_2(L_\gamma):=d_{L\gamma}=\frac{1}{2\pi}(\alpha_{0,1}+\alpha_{0,3})=
\frac{1}{2\pi}(4\Delta\varphi)=\frac{2}{\pi}\Delta\varphi. 
\end{equation} 
See Figure \ref{Delta1}  for examples with $\Delta\varphi=\pm\pi$ or $0$. 


\begin{figure}[th] 
 \begin{center} 
 \includegraphics[scale=0.65]{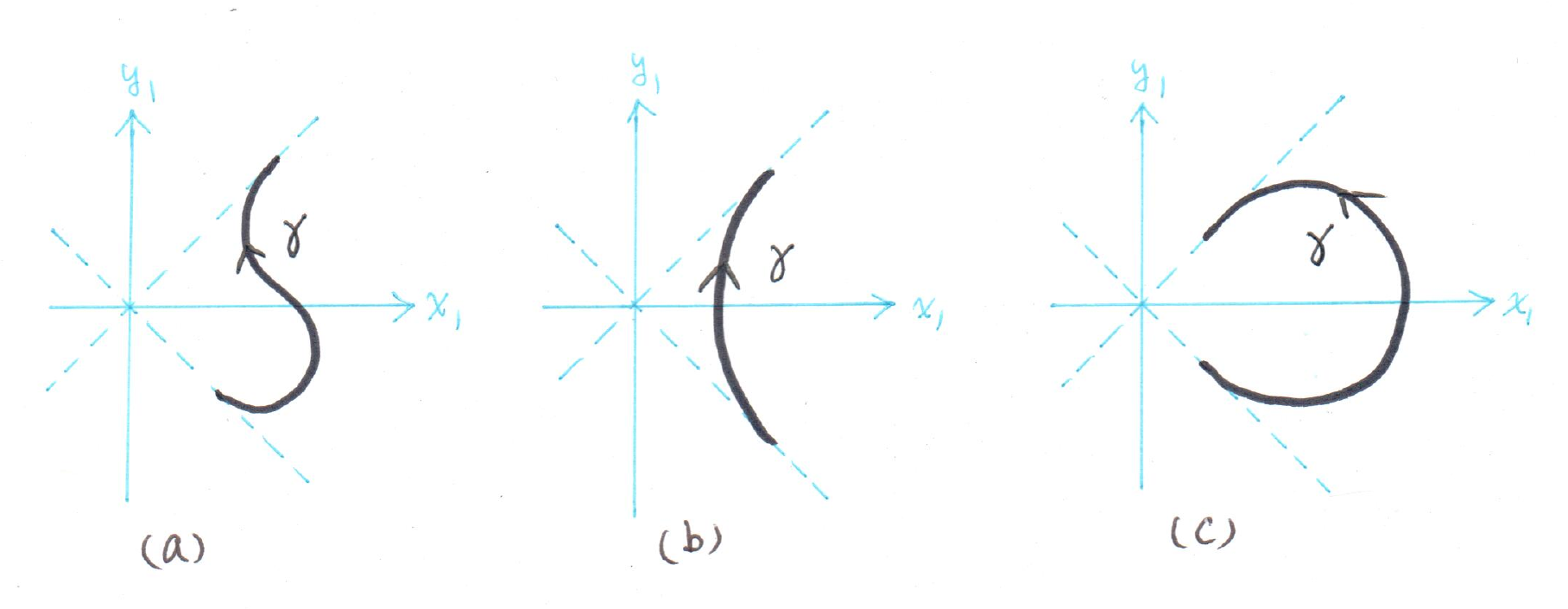} 
\caption{From (a) to (c): $\Delta\varphi=0,-\pi,\pi$; $d_{L_\gamma}=0, -2,2$.   \label{Delta1} }
\end{center} 
\end{figure} 

Note that $c_\tau(L_\gamma)$ and $L_\gamma$ have the same 
$\mu_2$-index (in fact they are Hamiltonian isotopic) and the 
precise value of $\mu_2$-index depends only on the variation 
$\Delta\varphi:=\varphi(b)-\varphi(a)$ but not on $\Delta t:=
t(b)-t(a)$.

\begin{figure}[th] 
 \begin{center} 
 \includegraphics[scale=0.65]{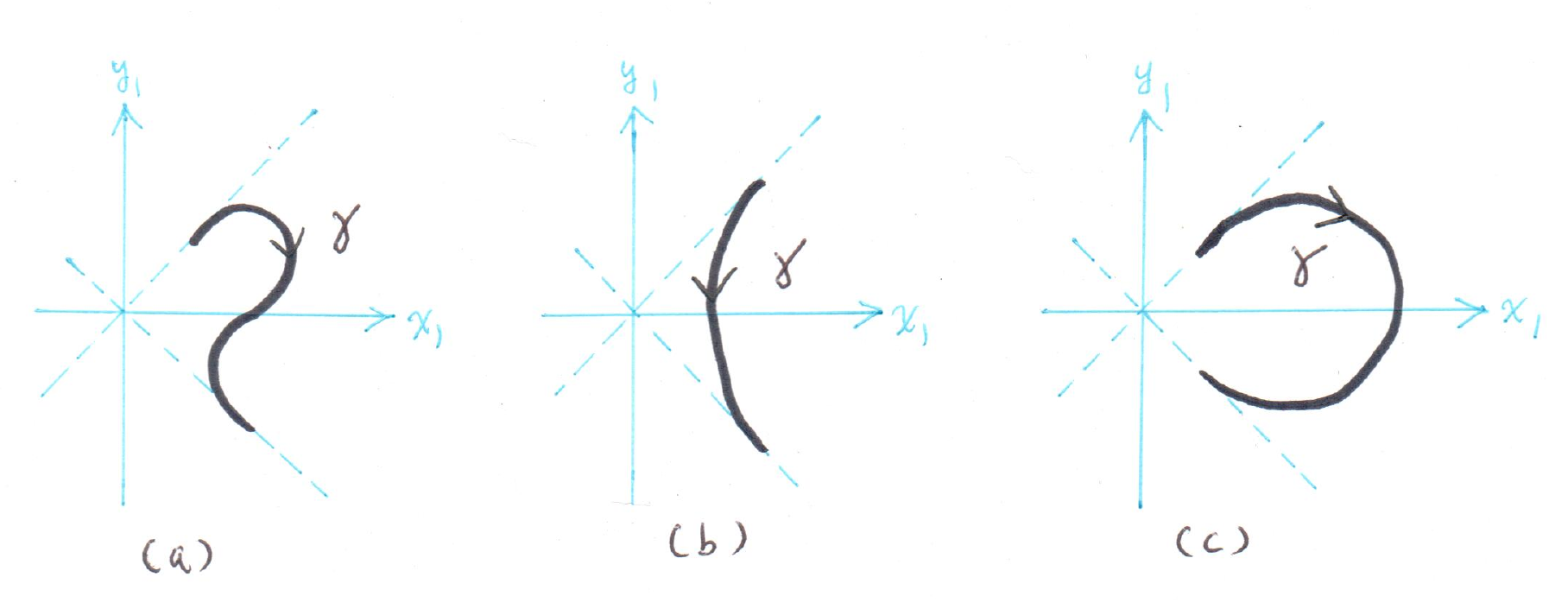} 
\caption{From (a) to (c): $\Delta\varphi=0,\pi,-\pi$; $d_{L_\gamma}=0, -2,2$.   \label{Delta2} }
\end{center} 
\end{figure}

{\bf Example for Case (ii):} \  Let $\gamma$ be as in the above 
example for Case (i) but {\em with the opposite orientation}, then 
 $\varphi\in (\pi,2\pi) \mod 2\pi$ except at endpoints of $\gamma$. Here 
 $V_0=V_{0,2}\cup V_{0,4}$ and 
 $\check{\Gamma}_0\subset 
 \pa V_0$ with the boundary orientation is 
  \begin{equation} \label{Gamma2} 
 \check{\Gamma}^+_0=-\gamma \cup \gamma_\frac{\pi}{2} \cup -\gamma_\pi \cup \gamma_\frac{3\pi}{2}. 
 \end{equation} 
 The relative $E_0$-phase along each of the 
 four components of $\check{\Gamma}^+_0$ is 
 $-\Delta\varphi$, 
$\alpha_{0,2}=-2\Delta\varphi=\alpha_{0,4}$, and hence 
\begin{equation} \label{mu2Lgamma2}
\mu_2(L_\gamma):=d_{L\gamma}=\frac{1}{2\pi}(\alpha_{0,2}+\alpha_{0,4})=
\frac{1}{2\pi}(-4\Delta\varphi)=-\frac{2}{\pi}\Delta\varphi. 
\end{equation} 

See Figure \ref{Delta2}  for examples with $\Delta\varphi=\pm\pi$ or $0$.

\begin{rem} 
{\rm
Note that in the above examples of $L_\gamma$, the orientation 
of $\check{\Gamma}^+_0$ (and hence of all $\check{\Gamma}^+_\tau$) does not depend on the orientation of $\gamma$. 
} 
\end{rem}

\subsubsection{Whitney sphere and tori} \label{WhTo}

{\bf Lagrangian Whitney sphere.} \ 
Consider the  polar curve $\gamma$  in $\R^2_{x_1,y_1}$ defined by 
\[ 
y_1^2-\frac{x_1^2}{2}+\frac{x_1^4}{4}=0, \quad x_1\geq 0. 
\] 
We orient $\gamma$ counterclockwise and parameterize it 
by its angular coordinate $t$ with $-\arctan \frac{1}{\sqrt{2}}\leq 
t\leq \arctan\frac{1}{\sqrt{2}}$. 
The immersed Lagrangian surface $L_\gamma:=Orb_\cG(\gamma)$ is a Whitney sphere, an immersed sphere with one 
transversal positive self-intersection. We orient $L_\gamma$ 
so that $\dot{\gamma}$ and $X_G(\gamma)$ form a positive 
basis of $T_\gamma L_\gamma$ for $t\neq \pm\arctan\frac{1}{\sqrt{2}}$. 

Note that $\gamma$ resembles the curve in Figure \ref{Delta1}(c) except that here the endpoints of $\gamma$ meet at 
$(0,0)$, Still we have $\Delta \varphi =\pi$ along $\gamma$. 
There are no crossing $p$-curves in $L_\gamma$. Take $q$ to be 
any regular point of $L_\gamma$, then by (\ref{mu2Lgamma1}) 
\[ 
\mu_2(L_\gamma)=2=y(L_\gamma,q). 
\] 

%
%

{\bf Chekanov torus.} \ 
Consider the closed curve $\upsilon:=\sigma\cup \gamma$ in $\R^2_{x_1,y_1}$: 
\begin{align*} 
\sigma(s) & :=(x_1=\sqrt{2}r+r\cos s, y_1=r\sin s), \quad 
\frac{-3\pi}{4}\leq s\leq \frac{3\pi}{4} , \\ 
\gamma(s) & :=(x_1=\sqrt{2}r+r\cos s, y_1=r\sin s), \quad 
\frac{3\pi}{4}\leq s\leq \frac{5\pi}{4},  
\end{align*} 
where $r>0$ is a constant. 
The orbit 
\[ 
L_{\upsilon}:=Orb_\cG(\upsilon) 
\] 
is a Chekanov torus. 

Observe that $\sigma$ is as in Figure \ref{Delta1}(c) 
with $(\Delta\varphi)_\sigma=\pi$, and 
$\gamma$ is as in Figure \ref{Delta2}(b) with $(\Delta\varphi)_\gamma=\pi$. Both $L_\sigma$ and 
$L_\gamma$ are crossing domains of $L_\upsilon$. Let 
$q$ be a regular point in $L_\sigma$, then 
\begin{align*} 
\mu_2(L_\upsilon) & = d_{L_\sigma}+d_{L_\gamma}= 
\frac{2}{\pi}( (\Delta\varphi)_\sigma-(\Delta\varphi)_\gamma))=
\frac{2}{\pi}(\pi-\pi)=0,  \\ 
y(L_\upsilon, q) & = d_{L_\sigma}-d_{L_\gamma}= 
\frac{2}{\pi}( (\Delta\varphi)_\sigma+(\Delta\varphi)_\gamma))=
\frac{2}{\pi}(\pi+\pi)=4.  
\end{align*}

%
%
%
%

{\bf Monotone Clifford torus.} \ 
Let $\gamma':=M(\gamma)$,  $\upsilon':=\sigma\cup\gamma'$. Then $L_{\upsilon'}=L_\sigma\cup L_{\gamma'}\subset \R^4$ is a monotone 
Clifford torus. Observe that, up to a rotation by some 
$c_\tau\in\cC$, $\gamma'$ resembles the model 
curve in Figure \ref{Delta1}(b) with $(\Delta\varphi)_{\gamma'}=-\pi$. Note that both $L_\sigma$ and $L_{\gamma'}$ are folding 
domains of $L_{\upsilon'}$. Let $q$ be a regular point in $L_\sigma$, then 
\[ 
y(L_{\upsilon'},q)=\mu_2(L_{\upsilon'}) = d_{L_\sigma}+d_{L_{\gamma'}}= 
\frac{2}{\pi}( (\Delta\varphi)_\sigma+(\Delta\varphi)_{\gamma'}))=
\frac{2}{\pi}(\pi-\pi)=0. 
\]

%
%
This in particular implies that the monotone Clifford torus $L_{\upsilon'}$ 
is not Hamiltonian isotopic to the Chekanov torus $L_\upsilon$.

{\bf General Clifford torus.} \ 
There is another way to describe a monotone Clifford torus. 
A (not necessarily monotone) Clifford torus is defined as 
\[ 
T_{a,b}:=\{ x_1^2+y_1^2=a, \ x_2^2+y_2^2=b\} \subset \R^4, 
\] 
where $a,b>0$ are constants. Then $T_{a,b}$ is monotone 
iff $a=b$. It is easy to see that with respect to the standard 
complex structure $J$ and the unitary framing $\pa_{x_1},\pa_{x_2}$, the image in $\dP(K')$ of $T_{a,b}$ under the 
map $g'_{T_{a,b}}$ degenerates to a circle. Hence 
$\mu_2(T_{a,b})=0$ for any $a,b>0$. 
Also $y(T_{a,b},q)=0$ because $g'_{T_{a,b}}$ is not surjective.

\subsubsection{Fibers of an integrable system} \label{integrable}
  
It is possible to construct an integrable system 
whose Lagrangian fibers include a Whitney sphere, Chekanov 
tori, as well as Clifford tori. 
 In this section we study the $\mu_2$ index and the $y$-index of Lagrangian fibers of such 
 an integrable system in $\R^4$ defined by a pair of commutative 
 Hamiltonian functions $G,H$ where 
 \begin{eqnarray*}  
 G(x_1,y_1,x_2,y_2) & := & -x_1y_2+x_2y_1, \\ 
H(x_1,y_1,x_2,y_2) & := & y_1^2+y_2^2-\frac{1}{2}(x_1^2+x_2^2)+\frac{1}{4}(x_1^2+x_2^2)^2. 
\end{eqnarray*} 
On can check that the Poisson bracket $\{ H,G\}=0$, in 
other words, the Lie bracket of the corresponding Hamiltonian 
vector fields is $[X_H,X_G]=0$. We remark here that a similar  integrable system was considered by 
Bates in \cite{Ba1}.  The example considered by Auroux in 
\S 5.1 of \cite{Aur1} is also topologically equivalent the one 
we consider here. Interested readers are advised to consult 
the references for more detail. Here we only state what are 
relevant to our purpose.

The pair $(G,H)$ defines a map by assigning to 
$p\in\R^4$ its $(G,H)$-values. 
\[ 
\cZ :\R^4\to \R^2, \quad \Xi (p):=(G(p), H(p)). 
\] 
Let $\cR$ denote the range of $\cZ$. It is an unbounded 
closed domain of $\R^2$. The boundary $\pa\cR$ is a smooth 
curve containing the point $(0,-\frac{1}{4})$ where $-\frac{1}{4}$ 
is the minimum value of $H$. The two vector fields $X_G$ and 
$X_H$ are linearly independent except at $\cZ^{-1}(\pa\cR)$ 
where $X_H$ and $X_G$ are nonvanishing but linearly dependent, 
and at the origin $0\in\R^4$ at which $X_G=0=X_H$. 
Both $X_G$ and $X_H$ are tangent to fibers of $\cZ$. 
Let $L_{a,b}:=\cZ^{-1}(a,b)$ denote the fiber of 
$\cZ$ over $(a,b)\in\cR$. For  $(a,b)\in\pa\cR$, $L_{a,b}$ is degenerate and is diffeomorphic to $S^1$. In addition, we 
have the following observation concerning the Lagrangian 
fibers over interior points of $\cR$:

\begin{fact} 
Let $int(\cR)$ denote the interior of $\cR$. Then there are 
four types of Lagrangian fibers over $(a,b)\in int(\cR)$: 
 \begin{enumerate} 
\item $L_{a,b}$ is a Chekanov torus if $a=0$ and $-\frac{1}{4}<b<0$.  
\item $L_{a,b}$ is a monotone Clifford torus if $a=0$ and 
$b>0$.   
\item $L_{a,b}$ is a Lagrangian Whitney sphere if $(a,b)=(0,0)$.   
\item $L_{a,b}$ is a non-monotone embedded Lagrangian torus 
for all $(a,b)\in int(\cR)$ with $a\neq 0$. 
\end{enumerate} 
\end{fact}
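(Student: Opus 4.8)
The plan is to exploit the Hamiltonian $S^1$-symmetry of the system. The circle group $\cG$ generated by $X_G$ preserves both $G$ (its own moment map) and $H$ (which depends only on $|x|^2=x_1^2+x_2^2$ and $|y|^2=y_1^2+y_2^2$, hence is $\cG$-invariant), so every fiber $L_{a,b}=\cZ^{-1}(a,b)$ is $\cG$-invariant, and the action is free on $\R^4\setminus\{0\}$, its only fixed point being the origin. First I would record the ring of $\cG$-invariants, generated by $P=|x|^2$, $Q=|y|^2$, $R=\langle x,y\rangle$ and $G$, subject to $PQ=R^2+G^2$, and note $H=Q-\tfrac12 P+\tfrac14 P^2$. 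Reducing at the level $G=a$ turns the fiber into the curve $\Sigma_{a,b}=\{\,R^2=W(P),\ P\ge 0\,\}$ in the reduced $(P,R)$-plane, where $W(P):=P\,Q(P)-a^2$ and $Q(P):=b+\tfrac12 P-\tfrac14 P^2$; the actual fiber $L_{a,b}$ is then the $\cG$-preimage of $\Sigma_{a,b}$.

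The topological classification reduces to analysing the cubic $W$ on $[0,\infty)$ together with the sign of $W(0)=-a^2$. For $a\neq 0$ one has $W(0)<0$ and $W(P)\to-\infty$, so $\{W\ge 0\}\cap[0,\infty)$, when nonempty (i.e. when $(a,b)\in\cR$), is a single interval bounded away from $P=0$; on it $R^2=W(P)$ traces one embedded oval with $Q(P)>0$ forced automatically (since $P\,Q(P)\ge a^2>0$), so $\Sigma_{a,b}$ is a single circle in $\{P>0\}$ and, the $\cG$-action being free there, $L_{a,b}$ is a principal $S^1$-bundle over $S^1$, i.e. an embedded $2$-torus (embeddedness being immediate as $(a,b)$ is then a regular value of $\cZ$). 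For $a=0$ the curve may meet $\{P=0\}$, and the sub-cases $-\tfrac14<b<0$, $b=0$, $b>0$ correspond to the profile in the right half $(x_1,y_1)$-plane being, respectively, a closed oval in $\{x_1>0\}$, a nodal tear-drop through the origin, or an arc meeting the axis $\{x_1=0\}$ in two points. These are exactly the profiles of the Chekanov torus, the Whitney sphere and the Clifford torus of Section~\ref{WhTo}, so matching the profile to those model curves (via Fact~\ref{revolutionL}) names each fiber.

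To upgrade the smooth classification to the claimed symplectic one I would compute the Liouville and Maslov data on the two standard generators of $H_1(L_{a,b})$, the $\cG$-orbit and the profile cycle. Using the rotation-invariant primitive $\lambda=\tfrac12\sum_i(x_i\,dy_i-y_i\,dx_i)$ one finds $\lambda(X_G)=G$, so the Liouville period over the orbit cycle is exactly $2\pi a$. On the other hand $\cG\subset SU(2)$, so along an orbit the Lagrangian tangent plane is carried by $g_\h\in SU(2)$ with $\det_\C g_\h\equiv 1$; hence the Maslov index of the orbit cycle vanishes for every one of these revolution tori. Consequently monotonicity would force $2\pi a=c\cdot 0=0$, proving that the fibers with $a\neq 0$ are non-monotone. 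For $a=0$ the orbit cycle contributes $(0,0)$ to the pair (Liouville, Maslov), so monotonicity is decided entirely by the profile cycle; an explicit computation of its Liouville period (a signed area in the $(x_1,y_1)$-plane) and its Maslov index confirms the positive ratio required for the Clifford fibers $b>0$, establishing item (ii).

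The main obstacle is precisely this last, symplectic, refinement: the reduction yields the smooth type cheaply, but smooth type alone separates neither Chekanov from Clifford tori nor monotone from non-monotone, so the Liouville/Maslov bookkeeping on the profile cycle must be carried out carefully, tracking orientations and the non-standard framing implicit in $\cG\subset SU(2)$ and its attendant complex structure $K'$. A secondary technical point is the reduced space at $a=0$, which is singular along the image of $\{x=0\}$; there I would argue directly on $L_{0,b}$ (a regular level set avoiding the origin for $b\neq 0$) rather than on the singular quotient, and treat the genuinely singular fiber $L_{0,0}$—the Whitney sphere with its nodal point at the origin—separately. Alternatively, since the excerpt notes that this system is equivalent to those of Bates and Auroux, one may cite \cite{Ba1,Aur1} for the fibration structure and use the computation above only to pin down the symplectic types.
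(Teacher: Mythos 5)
Your argument is correct, and it actually does more than the paper does: the paper records this statement as a bare Fact, and the only justification it supplies appears later, inside the proof of Proposition \ref{y-int}, where for $a=0$ the fiber is identified with $Orb_\cG\bigl(h^{-1}(b)\cap\{x_1\ge 0\}\bigr)$ for $h=y_1^2-\tfrac12 x_1^2+\tfrac14 x_1^4$ and the three types are read off from the level curves of $h$ (exactly your profile-curve step), while the case $a\neq 0$ is left to the citations \cite{Ba1,Aur1}. Your invariant-theoretic reduction ($P=|x|^2$, $Q=|y|^2$, $R=\langle x,y\rangle$ with $PQ=R^2+G^2$) treats all $(a,b)$ uniformly: the analysis of the cubic $W(P)=PQ(P)-a^2$ with $W(0)=-a^2<0$ correctly yields a single oval in $\{P>0\}$, hence an embedded $2$-torus, for every interior value with $a\neq 0$, and it degenerates to the paper's profile picture at $a=0$. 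The non-monotonicity argument is the genuinely new ingredient and is clean and correct: with $\lambda=\tfrac12\sum_i(x_i\,dy_i-y_i\,dx_i)$ one has $\lambda(X_G)=G$, so the orbit cycle has Liouville period $2\pi a\neq 0$, while its Maslov number vanishes because the tangent planes along an orbit are transported by $g_\h\in SU(2)$ and $\det^2_{\C}$ is therefore constant; no constant $c$ can satisfy $2\pi a=c\cdot 0$. The one place where your write-up is at the same level of informality as the paper is the upgrade from "the profile curve can be deformed to the model curve of Section \ref{WhTo}" to "the fiber \emph{is} a Chekanov (resp.\ monotone Clifford) torus": a Lagrangian isotopy of profiles must be promoted to a Hamiltonian one (equivalently, the Liouville/Maslov ratio on the profile cycle must be tracked), which you correctly flag as the remaining bookkeeping rather than claim for free.
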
 

Recall that $X_G$ generates the $\cG$-action on $\R^4$, with 
$g_\h$ as the time $\h$ map of the flow of $X_G$. Thus all 
$L_{a,b}$ are $\cG$-invariant.

We have the following result concerning the $y$-index of 
$L_{a,b}$ for $(a,b)\in\cR^\circ$. 

\begin{prop} \label{y-int}
Let $(a,b)\in int(\cR)$. Then the pair 
\[ 
(\mu_2(L_{a,b}), \ \bar{y}(L_{a,b}))=\begin{cases} 
(0,4) & \text{ if $a=0$ and $-\frac{1}{4}<b<0$}, \\ 
(2,2) & \text{ if $a=0=b$}, \\ 
(0,0) & \text{ if $a=0$ and $B>0$}, \\ 
(0,0) & \text{ if $a\neq 0$.}
\end{cases} 
\] 
\end{prop}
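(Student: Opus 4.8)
The plan is to treat the four cases in two groups. First I would dispose of the three cases with $a=0$ by identifying each fiber with a surface already computed in Section \ref{WhTo}. By the preceding Fact, the fiber $L_{a,b}$ with $a=0$ is a Chekanov torus for $-\frac14<b<0$, a Lagrangian Whitney sphere for $b=0$, and a monotone Clifford torus for $b>0$; moreover each of these is, up to the $\cG$-action and a Liouville rescaling, exactly one of the model surfaces $Orb_\cG(\upsilon)$, $Orb_\cG(\gamma)$, $Orb_\cG(\upsilon')$ studied there. Reading off the values of Section \ref{WhTo} then gives $(\mu_2,\bar y)=(0,4)$ for the Chekanov torus, $(2,2)$ for the Whitney sphere, and $(0,0)$ for the monotone Clifford torus, which are the first three lines. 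The only care needed is to match orientations and the choice of regular point $q$ so that the earlier computations apply verbatim.

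The heart of the matter is the case $a\neq 0$, where I claim the single observation that $g'_{L_{a,b}}$ misses the point $\xi_0\in\dP(K')$ forces both indices to vanish. Since $L_{a,b}=\cZ^{-1}(a,b)$ is $\cG$-invariant, the Hamiltonian vector field $X_G$ is everywhere tangent to it, so $X_G(p)\in T_pL_{a,b}$ for every $p$. By the definition of $\pi'$ in \eqref{pi'}, we have $g'_L(p)=\xi_0$ if and only if $T_pL=c_s(\xi_0)$ for some $s$, where $c_s(\xi_0)=\mathrm{Span}\{\cos s\,\pa_{x_1}+\sin s\,\pa_{y_1},\ \cos s\,\pa_{x_2}+\sin s\,\pa_{y_2}\}$. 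I would then verify by a direct linear computation that $X_G(p)=-x_2\pa_{x_1}+x_1\pa_{x_2}-y_2\pa_{y_1}+y_1\pa_{y_2}$ lies in $c_s(\xi_0)$ for some $s$ exactly when the vectors $(x_1,y_1)$ and $(x_2,y_2)$ are parallel, i.e.\ when $x_1y_2-x_2y_1=0$. Since $x_1y_2-x_2y_1=-G\equiv -a$ on $L_{a,b}$, this is impossible for $a\neq 0$; hence $X_G(p)\notin c_s(\xi_0)$ for all $s$, so $T_pL\neq c_s(\xi_0)$ and $g'_L(p)\neq \xi_0$ for every $p\in L_{a,b}$.

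With this non-surjectivity the conclusion is immediate. A continuous map $L_{a,b}\cong T^2\to\dP(K')\cong S^2$ that misses a point is null-homotopic, so $\mu_2(L_{a,b})=\deg(g'_L)=0$. For the $y$-index, each crossing domain $L_i$ induces a map $g'_i:\hat L_i\to\dP(K')$ whose values are those of $g'_L$ on the interior of $L_i$ together with the constant values $g'_L(\gamma)$ on the collapsed boundary crossing $p$-curves; all of these lie in the image of $g'_L$, which avoids $\xi_0$. Thus $g'_i$ also misses $\xi_0$ and $d_i=\deg(g'_i)=0$ for every $i$, so $y(L_{a,b},q)=\sum_{i\in I}\epsilon(i_0,i)d_i=0$ and $\bar y(L_{a,b})=0$. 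The monotone Clifford line $a=0,b>0$ is consistent with this, since there too $g'_L$ is non-surjective (its image is a circle).

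The step I expect to be most delicate is not the $a\neq 0$ computation, which is a short linear-algebra verification, but rather making the $a=0$ identifications precise: one must confirm that the fibers $L_{0,b}$ agree, up to the $\cG$-symmetry and a rescaling, with the specific parametrized models of Section \ref{WhTo}, and that the resulting crossing-domain decompositions and the sign bookkeeping (the reference domain $L_{i_0}$ containing $q$ and the values $\epsilon(i_0,i)$) coincide with the ones used there. Everything else then follows from the invariance properties already established.
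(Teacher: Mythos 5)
Your proposal is correct, and the overall architecture matches the paper's: the three $a=0$ cases are handled by identifying $L_{0,b}=Orb_\cG(h^{-1}(b)\cap\{x_1\geq 0\})$ with the Chekanov torus, Whitney sphere, and monotone Clifford torus models of Section \ref{WhTo} and quoting those computations, exactly as the paper does. Where you genuinely diverge is in the $a\neq 0$ case. The paper introduces the auxiliary $\cG$-invariant vector fields $Z_1=(0,x_1,0,y_1)^T$ and $Z_2=(x_2,0,y_2,0)^T$ with $Z_1-Z_2=X_G$, shows that along $\Gamma_0$ they span the intersection subspaces $T_pL\cap E_{\frac{\pi}{2}}$ and $T_pL\cap E_0$ respectively, and uses $a=x_2y_1-x_1y_2\neq 0$ to pin the relative angle strictly between $0$ and $\pi$ (or $-\pi$ and $0$), concluding both that the whole torus is a single crossing domain and that $g'_{T_{a,b}}$ is not surjective; it gets $\mu_2=0$ separately from regular-homotopy invariance. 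You instead observe directly that $g'_L(p)=\xi_0$ would force $T_pL=c_s(\xi_0)$ for some $s$, which is incompatible with the tangency $X_G(p)\in T_pL_{a,b}$ because $X_G(p)\in c_s(\xi_0)$ requires $x_1y_2-x_2y_1=-a=0$; the omitted value $\xi_0$ then kills $\deg(g'_L)$ and every crossing-domain degree $d_i$ at once. Your route is more economical — a single linear-algebra verification yields both vanishings simultaneously and sidesteps the phase bookkeeping — while the paper's route yields the extra structural information that $T_{a,b}$ is one crossing domain and locates the image of $g'_{T_{a,b}}$, which is in the spirit of the rest of Section 5. The one point worth making explicit in your write-up is that $X_G$ is nonvanishing on $L_{a,b}$ for $a\neq 0$ (it vanishes only at the origin, where $G=0$), so the tangency argument applies at every point of the fiber.
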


\begin{proof} 
Consider the case of $a=0$ at first. 
Let $h:\R^2_{x_1,y_1}\to \R$, 
\[ 
 h(x_1,y_1):=y_1^2-\frac{1}{2}x_1^2+\frac{1}{4}x_1^4, 
\] 
denote the restriction of $H$ to 
$\R^2_{x_1,y_1}$. Then $L_{0,b}=L_{\gamma_b}:=Orb_\cG(\gamma_b)$ where 
\[ 
\gamma_b=h^{-1}(b)\cap \{ x_1\geq 0\}. 
\] 
By analyzing level curves of $h$ it is easy to see that 
$L_{0,b}$ is Hamiltonian isotopic to a Chekanov torus if 
$b<0$, a Whitney sphere if $b=0$ and a monotone Clifford 
torus  if $b>0$. So it remains to prove the case of $a\neq 0$.

Let $(a,b)\neq (0,0)$ be an interior point of of $\cR$. Then 
$TL_{a,b}=\text{Span}\{ X_G,X_H\}$ where, with respect to 
the basis $\{ \pa_{x_1},\pa_{x_2},\pa_{y_1},\pa_{y_2}\}$, 
\begin{align*} 
X_G  & = (-x_2,x_1,-y_2,y_1)^T,  \\ 
X_H & =(-2y_1, -2y_2, x_1(x_1^2+x_2^2-1),x_2(x_1^2+x_2^2-1))^T.  
\end{align*} 
Recall the $\cG$-invariant $S^1$-family of complex planes $E_\theta$. Since $L_{a,b}$ is also $\cG$-invariant, the locus 
$\Gamma_\h\subset L_{a,b}$ of $E_\h$ is $g_\h(\Gamma_0)$, 
it is enough to analyze the intersection subspaces along $\Gamma_0$. Since $E_0=\pa_{x_1}\wedge \pa_{y_1}$, 
$p:=(x_1,y_1,x_2,y_2)\in \Gamma_0$ iff it satisfies 
the defining equations of $L_{a,b}$: $G=a$, $H=b$, and 
the determinant of the $4\times 4$ matrix 
$\begin{pmatrix} X_H & X_G & \pa_{x_1} & \pa_{y_1}
\end{pmatrix}$ vanishes at $p$, i.e., 
\begin{equation} 
 x_1x_2(x_1^2+x_2^2-1)+2y_1y_2=0.  
\end{equation}

Consider two vector fields $Z_1,Z_2$ on $\R^4$ defined by 
\[ 
Z_1=(0,x_1,0,y_1)^T, \quad (x_2,0,y_2,0)^T. 
\] 
Observe that both $Z_1$ and $Z_2$ are $\cG$-invariant, 
$Z_1-Z_2=X_G$, their inner product is $Z_1\cdot Z_2=0$, and 
$Z_1,Z_2$ are linearly independent precisely at points where 
$x_1^2+y_1^2>0$ and $x_2^2+y_2^2>0$. In particular this 
include all points with $a\neq 0$. We have 
\begin{align*} 
dG(Z_1)=dG(Z_2) & = 0 \\ 
dH(Z_1)=dH(Z_2) & =x_1x_2(x_1^2+x_2^2-1)+2y_1y_2. 
\end{align*} 
So for $p\in L_{a,b}$ and $i=1,2$, 
\[ 
Z_i(p)\in T_pL_{a,b} \quad  \Leftrightarrow \quad p\in \Gamma_0=\Gamma_{\frac{\pi}{2}}, 
\] 
and in this case, 
\[  
Z_2(p)\in T_pL_{a,b}\cap E_0, \quad Z_1(p)\in T_pL_{a,b}\cap E_{\frac{\pi}{2}}. 
\] 

Since $L_{a,b}$ are smoothly isotopic as Lagrangian surfaces for 
all $(a,b)\neq (0,0)$, and $\mu_2$-index is invariant under 
regular homotopy of Lagrangian surfaces, we have 
$\mu_2(L_{a,b})=0$ for all $(a,b)\neq (0,0)$. 
To determine $\bar{y}(L_{a,b})$ with $a\neq 0$, 
observe that $a=x_2y_1-x_1y_2\neq 0$, hence along $\Gamma_0$ the angle of $Z_2$ in $E_0$ (with respect to 
$\pa_{x_1}$) minus the angle of $Z_1$ in $E_{\frac{\pi}{2}}$ 
(with respect to $\pa_{x_2}$) is always 
\begin{enumerate} 
\item  greater than $0$ but smaller than $\pi$-radians 
if $a>0$, 
\item greater than $-\pi$ but smaller than $0$-radians if 
$a<0$. 
\end{enumerate} 
So the entire $T_{a,b}$ is a crossing domain 
and moreover the map $g'_{T_{a,b}}:T_{a,b}\to \dP(K')$ is not 
surjective. Therefore 
\[ 
\bar{y}(T_{a,b})=y(T_{a,b})=\mu_2(T_{a,b})=0. 
\] 
This completes the proof of Proposition \ref{y-int}. 
\end{proof}

%
%
\subsection{Sphere in $T^*S^2$} \label{T*S}

Let $S$ denote the zero section of the cotangent bundle 
$T^*S^2$ of a sphere, $T^*S^2$ is endowed with the standard 
symplectic structure $\omega$. Note that $T^*S^2$ is symplectically 
parallelizable, and $H_1(T^*S^2,\Z)=0=H_3(T^*S^2,\Z)$, so 
its $\cF^\omega$ is connected, and $\mu_2(S,\ff)$ is 
independent of $\ff\in \cF^\omega$.


We identify $S=D\cup \bar{D}$ as the union of two closed Lagrangian 
disks $D$ and $\bar{D}$. 
Parametrize $D$ and $\bar{D}$ as 
\begin{align*} 
D & =\{ (x_1,x_2)\in \R^2\mid x_1^2+x_2^2\leq 1\}, \\ 
\bar{D} & =\{ (\bar{x}_1,\bar{x}_2)\in\R^2\mid \bar{x}_1^2+\bar{x}_2^2\leq 1\} 
\end{align*} 
We also write $x_1=r\cos\h$ and $x_2=r\sin\h$ where 
$(r,\h)$ are polar coordinates of $\R^2_{x_1x_2}$. Similarly 
$\bar{x}_1=\br\cos\bth$ and $\bx_2=\br\sin\bth$ where 
$(\br,\bth)$ are polar coordinates of $\R^2_{\bx_1\bx_2}$.

Consider the diffeomorphism $\psi:\bar{D}\setminus\{ 0\} \to 
D\setminus\{ 0\}$ 
\begin{equation} \label{bDD} 
\psi(\bx_1,\bx_2)=\Big(x_1=\frac{-\bx_1}{\bx_1^2+\bx_2^2}, 
x_2=\frac{\bx_2}{\bx_1^2+\bx_2^2}\Big), \quad 0<\bx_1^2+\bx_2^2\leq 1. 
\end{equation} 
In polar coordinates $\psi(\br,\bth)=(r=\frac{1}{\br},\h=\pi-\bth)$. $\psi$ induces a symplectomorphism 
\begin{align*} 
\Psi & : T^*(\bar{D}\setminus\{ 0\})\to T^*(D\setminus\{ 0\}) \\ 
\Psi(\bx_1,\bx_2,(\by_1,\by_2)^T) & = (\psi(\bx_1,\bx_2), 
(\psi_*^{-1})^*_{(\bx_1,\bx_2)}(\by_1,\by_2)^T). 
\end{align*} 
The differential $\Psi_*$, in matrix form with respect to the 
bases $\{ \pa_{\bx_1},\pa_{\bx_2},\pa_{\by_1},\pa_{\by_2}\}$ 
and $\{ \pa_{x_1},\pa_{x_2},\pa_{y_1},\pa_{y_2}\}$ is 
\begin{align*}  
\Psi_*|_{(\bx,\by)} & =\begin{pmatrix} B & 0 \\ 0 & (B^{-1})^T 
\end{pmatrix} \in Sp(4,\R), \quad \text{where}  \\ 
B & =\begin{pmatrix} x_1^2-x_2^2 & -2x_1x_2 \\ 
2x_1x_2 & x_1^2-x_2^2 \end{pmatrix} =r^2\begin{pmatrix} 
\cos 2\h & -\sin 2\h \\ \sin 2\h & \cos 2\h\end{pmatrix}. 
\end{align*}  
Note that $B$ and hence $\Psi_*$ depend only on $x=(x_1,x_2)$, not 
on $y=(y_1,y_2)$. 
The symplectic manifold $T^*S^2$ can be identified with the 
union $T^*D\cup_{\Psi}T^*\bD$.

We  identify each of $T^*D$ and $T^*\bD$ as open 
domains in $T^*\R^2\cong \C^2$. Let 
$X_i:=\pa_{x_i}$ and $\bX_i:=\pa_{\bx_i}$ for $i=1,2$. 
On $T^*D$ the ordered pair $( X_1,X_2)$ is a unitary framing with 
respect to the standard complex structure $J$, and similarly 
$( \bX_1,\bX_2)$ is a unitary framing on $T^*\bD$ with 
respect to the standard complex structure which is now 
denoted as $\bJ$ to suggest its affinity with $T^*\bD$. 
Note that  $\Psi^*\bar{J}= J$  on $T^*_{\pa D} D
=\{ (x,y)\in \R^2\times \R^2\mid |x|=1\}$, $J$ and $\bJ$ 
match along $T^*_{\pa D} D$ to form a $\omega$-compatible 
almost complex  structure denoted as $J$ on $T^*S^2$. 
However  the two unitary 
bases do not: 
\[ 
\begin{pmatrix}\bX_1 & \bX_2\end{pmatrix}=
\begin{pmatrix} \cos 2\h & -\sin 2\h\\ \sin 2\h & \cos 2\h 
\end{pmatrix} \begin{pmatrix}X_1 & X_2\end{pmatrix} \quad 
\text{ on $T^*_{\pa D}D$}. 
\]

{\bf Unitary framing $(u,v)$.} \ 
We will modify $X_1,X_2$ on $T^*D$ to get a  unitary framing 
$u,v$ on $T^*S^2$ so that $u=\bX_1$ and $v=\bX_2$ on $T^*\bD$. Observe that $B|_{r=1}$ is twice of the contractible loop 
\[ 
G_0(\vh):=\begin{pmatrix} \cos \vh & -\sin \vh\\ \sin \vh & 
\cos \vh \end{pmatrix} \in SU(2) \cong S^3, \quad \vh\in\R/2\pi\Z. 
\] 
 Moreover, any two homotopies between 
$G_0$ and the constant loop $Id$ in $SU(2)$ are homotopic. 
Construct a homotopy between $G_0$ and $Id$ as follows: 
Let 
\[ 
H(t):=\begin{pmatrix} \cos t & i\sin t  \\ i\sin t & \cos t\end{pmatrix} \in SU(2)
\] 
and define 
\begin{equation} \label{Gth}  
G_t(\vh):=H(t)G_0(\vh)H(t)^{-1}. 
\end{equation}  
Note that $G_{t+\pi}(\vh)=G_t(\vh)$, $G_{t+\frac{\pi}{2}}(\vh)=
G_t(-\vh)=G_t(\vh)^{-1}$. 
In matrix form with respect to the basis $\{\pa_{x_1},\pa_{x_2},\pa_{y_1},\pa_{y_2}\}$ 
\[ 
G_t(\vh)=\begin{pmatrix} \cos\vh & -\sin\vh\cos 2t & 
-\sin\vh\sin 2t  & 0 \\ 
 \sin\vh\cos 2t & \cos\vh &  0 & \sin\vh\sin 2t  \\ \sin\vh\sin 2t  & 0 & \cos\vh & -\sin\vh\cos 2t 
 \\  0 & -\sin\vh\sin 2t  & \sin\vh\cos 2t & \cos\vh 
  \end{pmatrix}.   
\] 
The set 
\[ 
\cD:=\{ G_t(\vh)\mid \vh\in [0,\pi], \ t\in [0,\frac{\pi}{2}] \} \subset 
SU(2) 
\] 
is a hemisphere with boundary the group $G_0$. 

For the moment let us consider the diffeomorphism 
\begin{align*} 
f & : \cD \to D \\ 
f(G_t(\vh)) & :=(x_1=\cos\vh, x_2=\sin\vh\cos 2t) 
\end{align*}  

Consider a new unitary framing $u',v'$ over $D$ so that if 
$f^{-1}(x)=G_t(\vh)$ for $x\in D$, then 
\begin{equation}  \label{u'v'} 
u'_x  :=G_t(\vh)(\pa_{x_1}), \quad 
v'_x  :=G_t(\vh)(\pa_{x_2}).  
\end{equation}  
Note that $u'\wedge v'\in \dP(K')$. 
Observe that $\pa D=f(\{ t=0,\frac{\pi}{2}\})$. 
Recall $x=(x_1,x_2)=(r\cos\h, r\sin\h)$, then 
along $f(\{ t=0\})=\{ r=1, \h\in [0,\pi]\}$, 
$\vh=\h$, and 
\begin{align*} 
u'_x & =\cos\h \pa_{x_1}+\sin\h\pa_{x_2}  \\ 
v'_x & =-\sin\h\pa_{x_1}+\cos\h\pa_{x_2}. 
\end{align*} 
Similarly, along $f(\{ t=\frac{\pi}{2}\})=\{ r=1, \ \h\in[-\pi, 0]\}$, 
$\vh=-\h$ and
\begin{align*} 
u'_x & =\cos\h \pa_{x_1}+\sin\h\pa_{x_2} \\ 
v'_x & =-\sin\h\pa_{x_1}+\cos\h\pa_{x_2}. 
\end{align*} 
Also, along the diameter $x_2=0$ and at $x_1=\cos\vh$, 
\begin{equation} \label{x2=0} 
u'_x=\cos\vh\pa_{x_1}+\sin\vh\pa_{y_1}, \quad 
v'_x=\cos\vh\pa_{x_2}-\sin\vh\pa_{y_2} . 
\end{equation} 

Now consider the composite map 
\begin{equation} \label{F} 
F:=f^{-1}\circ  \phi:D\to \cD 
\end{equation} 
where 
\begin{equation} \label{z2} 
\phi:D\to D,  \quad \phi(x_1,x_2):=(x_1^2-x_2^2,2x_1x_2)
\end{equation} 
is the standard 
branched $2:1$ covering map of degree $2$ with $(0,0)$ as 
the only branching point.  $F(x_1,x_2)=G_t(\vh)$ if 
\begin{equation} \label{x1x2} 
x_1^2-x_2^2=\cos\vh, \quad 2x_1x_2=\sin\vh \cos 2t. 
\end{equation}

For $x\in D$ define 
\begin{equation} 
u_x:=G_t(\vh)(\pa_{x_1}), \quad 
v_x:=G_t(\vh)(\pa_{x_2}), \quad \text{ if  
$F(x)=G_t(\vh)$}. 
\end{equation}  
 A direction computation shows that along $\pa D$, 
 \begin{align*} 
 u_x & = \cos 2\h\pa_{x_1}+\sin 2\h\pa_{x_2} = \pa_{\bx_1}, \\
 v_x & = -\sin 2\h\pa_{x_1}+\cos 2\h\pa_{x_2} =\pa_{\bx_2}. 
 \end{align*} 
Extend $( u,v)$ over $\bar{D}$ by setting 
\[ 
u=\pa_{\bx_1}, \quad v=\pa_{\bx_2} \quad \text{ on } \bar{D}, 
\] 
then further extend $(u,v)$ over $T^*S$ by the pullback map of the 
canonical projection $\pi:T^*S\to S=\bar{D}\cup_\psi D$. 
Now the extended $( u,v)$ is a unitary framing on $T^*S$.

{\bf $E'_s$ associated to $( u,v)$.} \ 
Let $\cE':=\{ E'_s\mid s\in\R/\pi\Z\}$ denote the 
$S^1$-family of complex line bundles associated to the 
unitary framing $(u,v)$. 
Recall the $S^1$-family of complex line bundles $\cE:=\{ E_s\mid s\in \R/\pi\Z\}$, associated to the unitary framing $\pa_{x_1},\pa_{x_2}$. 
Then at $(x,y)\in T^*D$, 
\[ 
E'_s|_{(x,y)}=G_t(\vh)(E_s|_{(x,y)}) \quad \text{ if $F(x)=(G_t(\vh))$}. 
\] 
Since $TD$ is spanned by $\pa_{x_1},\pa_{x_2}$, for $x\in D$, 
there exists $s'\in \R/\pi\Z$, $s'$ depends on $x$, such that 
\begin{equation} 
\{ E'_s,E'_{s^\perp}\} =\{ E_{s'}, E_{{s'}^\perp}\} \quad \text{ at $x\in D$}. 
\end{equation} 

To solve for $s,s'$ with 
\begin{equation}  \label{s's} 
E_{s'}=G_t(\vh)(E_s),
\end{equation} 
observe that the two fixed points of the action of $H$ on $\dP(J)$ are 
\[ 
E_{\frac{\pi}{4}}=(\pa_{x_1}+\pa_{x_2})\wedge (\pa_{y_1}+\pa_{y_2}), \quad E_{\frac{3\pi}{4}} 
=E_{{\frac{\pi}{4}}^\perp}=(-\pa_{x_1}+\pa_{x_2})\wedge (-\pa_{y_1}+\pa_{y_2)}. 
\] 
$H(t)$ rotates the 
tangent plane $T_{E_\frac{\pi}{4}}\dP(J)$ clockwise by an angle of $2t$-radians. We take the fixed points of the $G_0$-action on $\dP(J)$ as the poles of $\dP(J)$. Then $\cE\subset \dP(J)$ is the 
equator. Assume that $t,s\in (0,\frac{\pi}{4})$ for the moment. 
Consider the right spherical triangle in $\dP(J)$ cut out 
by $\cE$, $H(-t)(\cE)$ and the longitude $\ell_{t,s}$ passing through the point $H(-t)(E_s)$. The point $H(-t)(E_s)$ is on 
a latitude $m_{t,s}$ (an $G_0$-orbit) of $\dP(J)$. 
Let $E_\tau$ denote the intersection point of $\ell_{t,s}$ with 
$\cE$. $E_\tau$ is  the right angle vertex of the triangle, 
$0<s<\tau$. 
Scale $\dP(J)$ to make it a sphere of radius 1. 
Then the angle at the vertex $E_{\frac{\pi}{4}}$ is $2t$,  the length of the side opposite to the vertex $E_\tau$ is 
$\frac{\pi}{2}-2s$, and the length of the side opposite to the vertex $H(-t)(E_s)$ 
is $\frac{\pi}{2}-2\tau$. We have 
\[ 
\cos 2t = \tan (\frac{\pi}{2}-2\tau)\cot (\frac{\pi}{2}-2s) 
=\cot 2\tau \tan 2s 
\] 
according to spherical trigonometry.

Since $H(-t)(E_{s'})=G_0(\vh)H(-t)(E_s)$, by 
considering the action of $G_0$ on its orbits in 
$\dP(J)$ we must 
have 
\[ 
\vh= -2\tau \mod \pi, 
\] 
and 
\begin{equation} \label{s'} 
 s'=\pi-s \mod \pi.   
\end{equation}  
 Then  $t$, $s$ and $\vh$ satisfy 
\begin{equation} \label{napier}  
\cos 2t =- \tan 2s \cot \vh. 
\end{equation}  
In fact Equations (\ref{s'}) and (\ref{napier}) hold for 
any $2t, \h\in [0,\pi]$ and $s,s'\in \R/\pi\Z$ satisfying 
Equation (\ref{s's}). Together with (\ref{x1x2}) we obtain 
that on $T^*D$ 
\begin{equation} \label{E'Es} 
E'_s=E_{\pi-s} \ \text{ if } \tan 2s =\frac{-2x_1x_2}{x_1^2-x_2^2}. 
\end{equation} 
In particular, on $\{ x_1x_2=0\}\subset T^*D$ 
\begin{equation} \label{E'E} 
E'_0=E_0=\pa_{x_1}\wedge \pa_{y_1},  \quad 
E'_\frac{\pi}{2}=E_{\frac{\pi}{2}}=\pa_{x_2}\wedge \pa_{y_2}. 
\end{equation}

{\bf  $\mu_2$- and $y$-indexes.}  \ 
 Now consider 
 the $4\times 4$ matrix formed by column vectors 
  $u'$,   $v'$, $\cos s'\pa_{x_1}+\sin s'\pa_{x_2}$, and 
  $\cos s'\pa_{y_1}+\sin s'\pa_{y_2}$ with $s'=\pi-s$, of which the determinant 
  is 
  \begin{equation*} 
  \begin{split} 
 \fD' & :=\begin{vmatrix} \cos\vh & -\sin\vh\cos 2t & \cos s' & 0 \\ 
 \sin\vh\cos 2t & \cos\vh & \sin s' & 0 \\ \sin\vh\sin 2t & 0 & 0 
 & \cos s' \\  0 & -\sin\vh\sin 2t  & 0 & \sin s' 
  \end{vmatrix}  \\ 
   & = \sin\vh\sin 2t \cdot (\cos 2t\cos 2s'\sin\vh -\sin 2s'\cos\vh) \\ 
   & = \sin\vh\sin 2t \cdot (\cos 2t\cos 2s\sin\vh +\sin 2s\cos\vh). 
   \end{split} 
  \end{equation*}  
$\fD'=0$ iff one of the followings folds: 
\begin{enumerate} 
\item $\vh=0$ or $\pi$, 
\item $t=0$ or $\frac{\pi}{2}$, 
\item $\vh=\frac{\pi}{2}$ and $t=\frac{\pi}{4}$, 
\item $\cos 2t\cos 2s\sin\vh =-\sin 2s\cos\vh$. 
\end{enumerate} 
In (i)-(iii) $\fD'=0$ for all values of $s$. 
Moreover, (i) holds at four points of $\pa D$, (ii) holds 
precisely on $\pa D$, and (iii) corresponds to the origin of $D$. 
Consider the PLG-map 
\[ 
g'_D:D\to \dP(K')
\] 
 with respect 
to the framing $(u,v)$ so that $\xi'_0:=u\wedge v$ is viewed as the 
south pole of $\dP(K')$ and $\xi'_\infty:=Ju\wedge -Jv$ the north pole. 
Then $g'_D$ is surjective with 
\[ 
(g'_D)^{-1}(\xi'_0)=\pa D, \quad (g'_D)^{-1}(\xi'_\infty)=(0,0)\in D. 
\] 
It is easy to see that $\deg g'=\pm 2$. To determine the $\pm$ 
sign of $\deg g'_D$ we consider 
Case (iv) which implies that (recall (\ref{x1x2}), (\ref{E'E})) 
\[ 
\tan 2s =\frac{-\sin\vh\cos 2t}{\cos \vh}=\frac{-2x_1x_2}{x^2_1-x_2^2}. 
\] 
Hence the intersection of $\Gamma_s:=(g'_D)^{-1}(\lambda_s)$ with the interior $int(D)$ of $D$ is the pair of 
orthogonal line segments defined by 
$-2x_1x_2=(x_1^2-x_2^2)\tan 2s$, $x_1^2+x_2^2<1$. 

Take $s=0$, then $V_0:=(g'_D)^{-1}(D_0)=\{ x_1x_2 > 0\} \cap D$. Write $V_0=V_{01}\cup V_{02}$, where 
$V_{01}=V_0\cap \{ x_2>0\}$, $V_{02}=V_0\cap\{ x_2<0\}$.  

The  boundary 
$\pa V_{01}\cap int(D)=\gamma \cup \sigma$, 
where 
\begin{align*} 
\gamma(t) & =(x_1=0, x_2=\cos 2t),\quad 0<t\leq \frac{\pi}{4}; \\ 
\sigma(t) & =( x_1=-\cos 2t,x_2=0),\quad \frac{\pi}{4}\leq t<\frac{\pi}{2}. 
\end{align*}  
Observe that the map $\phi$ from (\ref{z2}) maps $\gamma\cup\sigma$ onto the diameter $\{ x_2=0\}\cap int(D)$ injectively. By 
(\ref{x2=0}) 
along  $\gamma\cup \sigma$  
\[ 
\begin{pmatrix} u & v\end{pmatrix} =\begin{pmatrix} 
-\cos 2t & 0 \\ 0 & -\cos 2t \\ \sin 2t & 0 \\ 0 & -\sin 2t 
\end{pmatrix}, \ 0<t<\frac{\pi}{2},
\] 
$\pa_{x_1}$ generates the intersection subspace of 
$E'_0:=u\wedge Ju=E_0=\pa_{x_1}\wedge \pa_{y_1}$, $\pa_{x_1}=
-(\cos 2t) u-(\sin 2t)Ju$,  whilst $\pa_{x_2}$ generates the 
intersection subspace of $E'_{\frac{\pi}{2}}:=v\wedge Jv=E_{\frac{\pi}{2}}=\pa_{x_2}\wedge \pa_{y_2}$, 
$\pa_{x_2}=(\cos 2t)v-(\sin 2t)Jv$. 
So as $t$ increases from 
$0$ to $\frac{\pi}{2}$, $\pa_{x_1}$ rotates 
in $E'_0$ by an angle of $\pi$ radians, and $\pa_{x_2}=$ 
rotates 
in $E'_{\frac{\pi}{2}}$ by an angle of $-\pi$ radians. 
We have  
\[ 
 \alpha_{01}=(\Delta\varphi)_{\gamma\cup\sigma}=\pi-(-\pi)=2\pi, 
 \] 
 the restricted map 
 \[ 
 g'_S\mid _{\pa V_{01}}:\pa V_{01}\to \lambda^+ _0 
 \] 
 is of degree 1. 
 Similarly, the the restricted map 
 \[ 
 g'_S\mid _{\pa V_{02}}:\pa V_{02}\to \lambda^+ _0 
 \]
 is also of degree 1. 
 These put together imply that the $\mu_2$-index of the zero-section $S$ of $T^*S$ (with respect 
to the framing $\ff:=(J,u,v)$) is 
\[ 
\mu_2(S)=2. 
\] 
There are no crossing $p$-curves on $S$. Take a regular point $q\in S$ and we have 
\[ 
y(S,q)=\mu_2(S)=2. 
\]

{\bf Plumbing of $T^*S^2$'s.}  \ 
Our construction of a unitary framing on 
$T^*S^2$ can be generalized to symplectic manifolds formed 
by plumbing a finite number of $T^*S^2$'s. For example, 
for any integer $n\geq 1$ 
consider the Stein surface $W_n\subset \C^3$ defined by 
the equation 
\begin{equation} \label{Wneqn} 
z_1^2+z_2^2=z_3^{n+1}+\frac{1}{2}. 
\end{equation}  
$W_n$ is the plumbing of $n$ copies of $T^*S^2$ so that 
the Lagrangian zero-section spheres $S_1$,...,$S_n$ 
form an $A_n$-configuration: 
$S_j\pitchfork S_{j+1}$ and in one point, $S_i\cap S_j=\emptyset$ if $|i-j|\neq 1$. Note that each of $W_n$ is 
a parallelizable symplectic 4-manifold, and their space of 
compatible unitary framings are all connected. It is not hard to 
see that 
\[ 
\mu_2(S_j)=2=y(S_j), \quad j=1,...,n. 
\] 
Moreover, we can orient the pair $S_j,S_{j+1}$ so that their 
intersection is positive. Then by the lagrangian surgery as defined 
in \cite{P2} we can desingularize the double point of 
$S_j\cup S_{j+1}$ to get a smooth Lagrangian sphere. There are 
two different desingularizations. Call the resulting Lagrangian 
spheres as $S'_j$ and $S''_j$. $S'_j$ and $S''_j$ are related by 
a $la$-disk surgery, so $\mu_2(S'_j)=\mu_2(S''_j)$. To 
determine $\mu_2(S'_j)$, recall that as we desingularize a 
Lagrangian Whitney sphere in $\R^4$ the $\mu_2$-index 
is decreased by 2. Then a simple calculation yields 
\[ 
\mu_2(S'_j)=\mu_2(S''_j)=\mu_2(S_j)+\mu_2(S_{j+1})-2=2. 
\]

%
%
\subsection{Surfaces in the plumbing of cotangent bundles} \label{plumb}

We consider the following example to illuminate the effect 
of generalized Dehn twists on the relative $y$-index.

Let $L$ be an orientable compact surface without boundary, and $S$ a $2$-dimensional sphere. Let $W$ be 
the symplectic manifold obtained by plumbing the two symplectic 
manifolds $T^*L$ and $T^*S$ at a single point $p_0\in W$. 
$W$ is symplectically parallelizable, with Lagrangian surfaces 
$L$ and $S$ intersect transversally and at $p_0$.

{\bf Framing $\ff=(J,u,v)$.} \ 
We choose a framing $\ff=(J,u,v)$ for $TW$ so that 
on $T^*S$ it is a slight modification of our earlier construction. 
Recall from Section \ref{T*S} $G_t(\h)\subset SU(2)$, $t\in [0,\frac{\pi}{2}]$, $\h\in [0,\pi]$, and  $S=D\cup_\psi \bD$, $\bD=
\{ (\bx_1,\bx_2)\mid \bx_1^2+\bx_2^2\leq 1\}$
$D=\{ (x_1,x_2)\mid x_1^2+x_2^2\leq 1\}$. We may 
assume that $p_0$ is in the interior of $\bD$. 
We also write $x_1=r\cos \h$, 
$x_2=r\sin\h$, $(r,\h)$ are polar coordinates of $D$. 
Let $p_0=(0,0)\in \bD$ denote the south pole of $S$, and 
$p_\infty:=(0,0)\in D$ the north pole. 
Let $C\subset S$ denote the great circle so that 
$C\cap \bD=\{ \bx_2=0\}$, $C\cap D=\{ x_2=0\}$. 

Consider the smooth surjective map $\rho_D:D\to D$, 
\[ 
\rho_D(r,\h)=(\rho(r),\h), 
\] 
where $\rho:[0,1]\to [0,1]$ 
is a smooth function such that 
\begin{itemize} 
\item $\rho(r)=0$ on $[0,\epsilon)$ and $\rho(r)=1$ on $(1-\epsilon,1]$ for some $\epsilon>0$, 
\item $\frac{d\rho}{dr}\geq 0$. 
\end{itemize} 
For $x\in D$ define 
\begin{equation}  
u_x=G_t(\h)(\pa_{x_1}), \quad v_x:=G_t(\h)(\pa_{x_2}), \quad 
\text{if $F(\rho_D(x))=G_t(\h)$}. 
\end{equation} 
Note that 
\[ 
(u,v)=\begin{cases} (\pa_{y_1},-\pa_{y_2}) & \text{ if $r<\epsilon$ }, \\ 
(\pa_{\bx_1},\pa_{\bx_2}) & \text{ if $r=1$}. 
\end{cases} 
\] 
Extend $u, v$ over $\bar{D}$ by setting $u=\pa_{\bx_1}$ and 
$v=\pa_{\bx_2}$ as before. Then further extend $u,v$ over 
$T^*S$ independent of fiber coordinates or, more precisely, by 
pulling back $u,v$ over $T^*S$ via the canonical projection 
$\pi:T^*S\to S$. Finally extend $J$ and $u,v$ over $T^*L$. 
We denote the resulting framing by $\ff:=(J,u,v)$. 
Let $E'_s$ denote the $S^1$-family of 
$J$-complex line bundles associated to $u,v$. Write 
$u_s:=u\cos s +v\sin s$, $v_s=Ju_s$. 

Let $\Gamma_s(S)$ denote the $E'_s$-locus of $S$. $\Gamma_s(S)$ is the union of $\bD\cup\{ (r,\h)\in D\mid \rho(r)=0 \text { or } 1\}$ and 
a pair of great circles on $S$ intersects orthogonally at the poles $p_0,p_\infty$. With (\ref{E'Es}) we can 
write this pair of great circles as 
$C_s\cup C^\perp_s$, where $C_s$ is the great circle 
tangent to $E'_s$ (and orthogonal to $E'_{s^\perp}$), and 
$C^\perp_s$ the one tangent to $E'_{s^\perp}$ (and orthogonal 
to $E'_s$). Then $C^\perp_s=C_{s^\perp}$. 
In particular, $C_0=C=\{ x_2=0\} \cup \{ \bx_2=0\}$, 
$C^\perp_0=\{ x_1=0\}\cup\{ \bx_1=0\}$.

{\bf $S^1$-action on $T^*S$.} \ 
 Consider an $S^1$-group $\tilde{G}=\{ \tilde{g}_\h\mid \h\in \R/2\pi\Z\}$ acting on $S=D\cup \bD$  by rotating with respect to 
the poles $p_0:=(0,0)\in \bD$ and $p_\infty:=(0,0)\in D$: 
\begin{align*} 
\tilde{g}_\h(\bx_1,\bx_2) & =( \bx_1\cos\h-\bx_2\sin\h, \ 
\bx_1\sin\h +\bx_2\cos\h), \ (\bx_1,\bx_2)\in \bD, \\ 
\tilde{g}_\h(x_1,x_2) & =( x_1\cos\h+x_2\sin\h,- x_1\sin\h +x_2\cos\h), \ (x_1,x_2)\in D. 
\end{align*} 

This group action extends over $T^*S$ symplectically. The group $\tilde{G}$ acts on $T^*S$ Hamiltonianly, preserving 
the complex structure $J$ induced by the standard ones on $T^*D$ and $T^*\bar{D}$ as constructed in Section \ref{T*S}. 
In addition, $\tilde{g}_\h(\Gamma_s(S))=\Gamma_{s+\h}(S)$, 
$\tilde{g}_\h(C_s)=C_{s+\h}$ and $\tilde{g}(C^\perp_s)=C^\perp_{s+\h}$, By (\ref{E'Es}) $E_s$ is tangent to the total space $T^*C_s\subset T^*S$, and $E_{s^\perp}$ is orthogonal to 
$T^*C_s$. Since $\tilde{g}_\h$ maps $T^*C_s$ to $T^*C_{s+\h}$ and $\tilde{g}_\h$ is unitary, $(\tilde{g}_\h)_*$ maps the 
ordered pair $(E_s,E_{s^\perp})|_{T^*C_s}$ to the ordered 
pair $(E_{s+\h},E_{s^\perp+\h})|_{T^*C_{s+\h}}$. This in 
particular implies the following scenario: if $L_\gamma:=Orb_{\tilde{G}}{\gamma}\subset T^*S$ is an embedded connected 
Lagrangian surface and $\gamma\subset T^*C_0$, 
then the union of the $E'_s$-locus $\Gamma_s$ of 
$L_\gamma$, form a 1-dimensional 
smooth foliation of $L_\gamma$. Each of $\Gamma_s$ consists 
of four connected arcs and $\tilde{g}_\h(\Gamma_s)=\Gamma_{s+\h}$. We will utilize this $S^1$-symmetry to facilitate 
the computation of the $y$-index below.

Near $p_0=(0,0)$ $L$ can be identified with the $\by_1\by_2$-plane and $S$ the $\bx_1\bx_2$-plane. 
Let $\Delta\subset L$ be a $\tilde{G}$-invariant disk intersecting  
with $\bar{D}$ at $p_0=(0,0)$. Let $\Delta_0\subset \Delta$ be a smaller $\tilde{G}$-invariant disk. 
We orient $L$ so that $\{ \pa_{\by_1}, \pa_{\by_2}\}$ is a 
positive basis of $T_\Delta L$.

{\bf Positive even Dehn twists.} \  
We may deform $L$ by a $\tilde{G}$-invariant 
Hamiltonian isotopy which is compactly supported in a 
open neighborhood of $p_0\in W$ so that 
$p_0$ and $L\setminus\Delta$ are fixed under the isotopy, 
and  after 
the isotopy $\Delta$ is contained in $T^*\bD$, 
\begin{enumerate} 
\item $\Delta$ contains a smaller disk $\Delta_0=\Delta\cap 
T^*_{p_0}S$, 
\item $\Delta\setminus\Delta_0$ consists of two adjacent 
anular folding domains 
 $U_+=Orb_{\tilde{G}}(\s_+)$ and 
$U_-=Orb_{\tilde{G}}(\s_-)$ so  that 
$\pa \Delta_0\subset \pa U_+$ as shown in Figure \ref{pdehn1}.  By (\ref{mu2Lgamma1}) the PLG-degrees of these three domains are 
\[ 
 d_{U_+}=2, \quad d_{U_-}=-2. 
\] 
\end{enumerate} 

\begin{figure}[th] 
 \begin{center} 
 \includegraphics[scale=0.75]{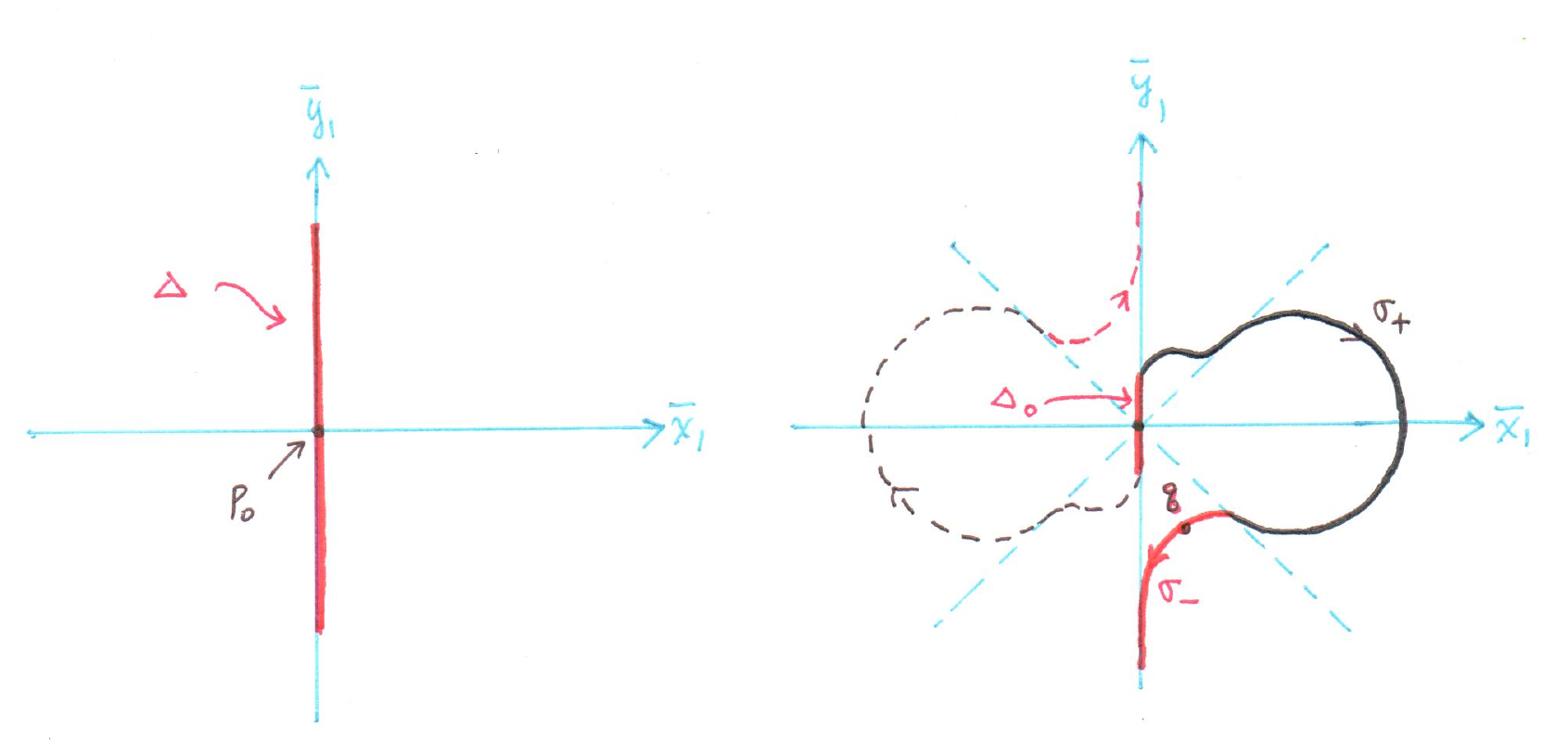} 
\caption{$\Delta$ before and after deformed. \label{pdehn1}} 
\end{center} 
\end{figure} 

The curves  $\s_\pm$ 
are contained in the half plane $\{ \bx_1\geq 0\}$ in the $\bx_1\by_1$ plane, and  are oriented as in Figure \ref{pdehn1}. 
Write $\s:=\s_+\cup \s_-$. Following (\ref{Gamma2}) the oriented Lagrangian 
locus $\check{\Gamma}_0^+\cap (\Delta\setminus\Delta_0)$ is  
\begin{equation} \label{step0} 
-\s \cup \tilde{g}_{\frac{\pi}{2}}(\s) \cup \tilde{g}_\pi (-\s) 
\cup \tilde{g}_{\frac{3\pi}{2}}(\s), 
\end{equation} 
here $-\s$ means $\s$ but with the reversed orientation.

From now on we fix a reference point $q$ in the interior of 
$U_-$ as depicted in the right picture of Figure \ref{pdehn1} so that $g'_L$ is regular 
near $q$. $q$ is contained in a crossing domain of $L$ containing $\Delta$.

\begin{figure}[th] 
 \begin{center} 
 \includegraphics[scale=0.75]{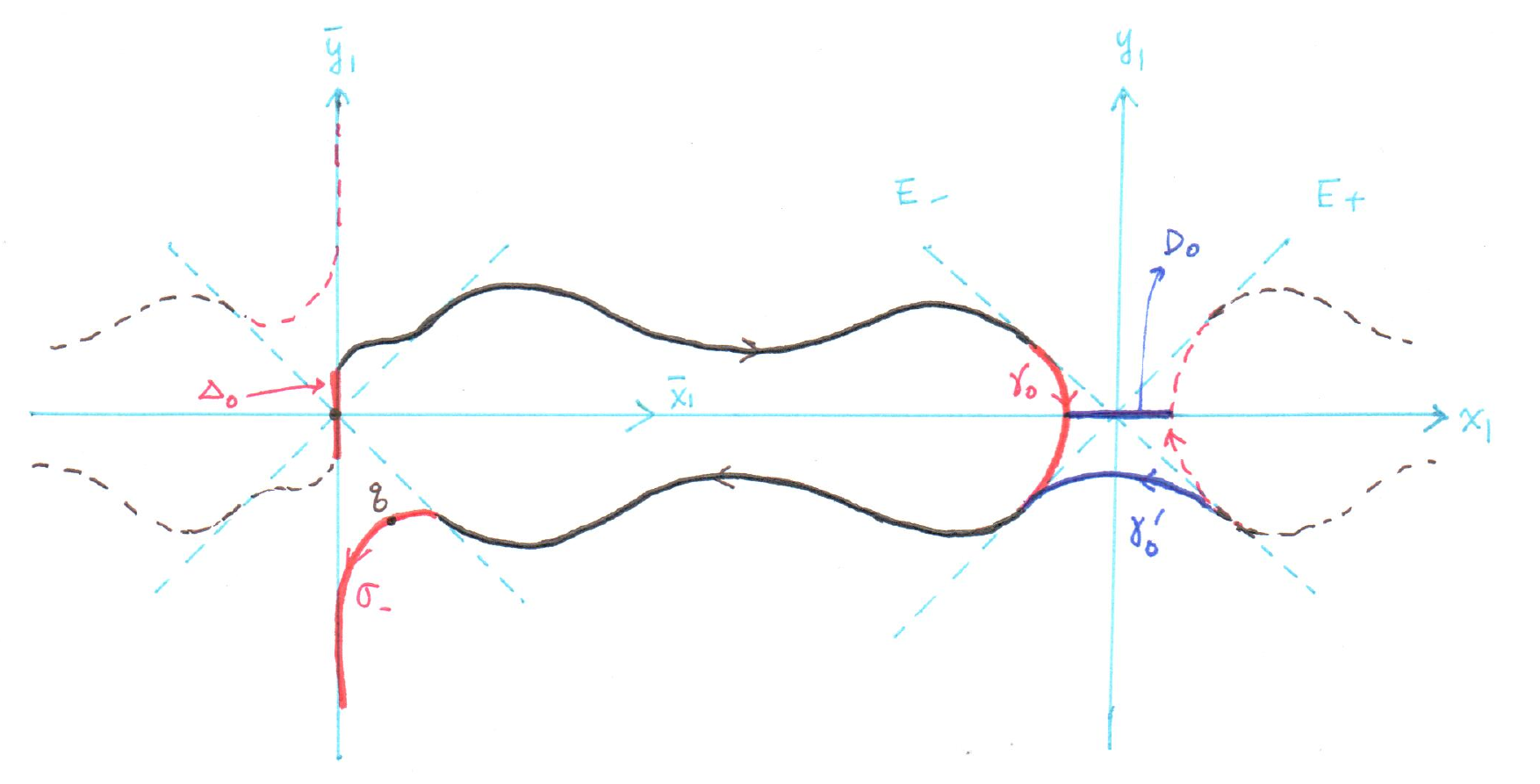} 
\caption{$\Delta$ for "positive" surgery. \label{pdehn2}} 
\end{center} 
\end{figure} 

Next we  apply another $\tilde{G}$-invariant Hamiltonian 
isotopy to $L$, keeping  $L\setminus U_+$ fixed all the time, and turning  
$U_+$ to a new annulus also denoted as $U_+$, so that 
now $U_+$ contains a smaller annulus $U_0=Orb_{\tilde{G}}(\g_0)$ 
as shown in Figure \ref{pdehn2} such that 

\begin{enumerate} 
\item $U_0\subset T^*D_\e$ is a {\em crossing} domain, 
\item $\g_0$ is contained in the half plane $\{ x_1<0\}$ in the 
$x_1y_1$-plane. 
\end{enumerate} 

The loci $\check{\Gamma}^+_s$ together with their orientations 
are preserved under the $\tilde{G}$-invariant Hamiltonian isotopy. 
Then (\ref{step0}) implies that 
\[ 
\check{\Gamma}_0^+\cap U_0=    
(-\g_0)\cup \tilde{g}_{\frac{\pi}{2}}(\g_0) \cup \tilde{g}_\pi (-\g_0) 
\cup \tilde{g}_{\frac{3\pi}{2}}(\g_0).  
\] 
Recall that  $E'_0=\pa_{y_1}\wedge -\pa_{x_1}$ and 
$E'_{\frac{\pi}{2}}=-\pa_{y_2}\wedge \pa_{x_2}$ along $-\gamma_0$. Since $(\Delta\varphi)_{-\gamma_0}=\pi$ 
(compare with Figure \ref{Delta2}(b)) by applying (\ref{mu2Lgamma2}) we have 
\[ 
\alpha_{0,U_0}=4(\Delta\varphi)_{-\gamma_0}=4\pi. 
\] 

Observe that $D_\e\subset S$ contains a  smaller 
disk $D_0$ which is a $la$-disk of $L$ with $\pa D_0$ contained in the interior of $U_0$. 

 Apply $\eta_{D_0}$ to $L$ 
by replacing $U$ by a $\tilde{G}$-invariant Lagrangian 
annulus $U'_0$ contained in a 
symplectic neighborhood of $D_0$ in $T^*S$ with $\pa U'_0=\pa U_0$ to obtain a 
new smooth embedded Lagrangian surface 
\[ 
L^1=(L\setminus U_0)\cup U'_0. 
\] 
Note that 
\[ 
L^1:=\eta_{D_0}(L)=\tau^2_S(L). 
\] 
is also obtained by applying to $L$ a positive double Dehn 
twist with respect to $S$. 

 Write $U'_0=Orb_{\tilde{G}}(\g'_0)$, where $\g'_0$ is with the 
 induced orientation and is contained the 
 half plane $\{ |x_1|<\e, y_1<0\}$ as shown in Figure \ref{pdehn2}. 
 For $L^1$, the Lagrangian locus $\check{\Gamma}^+_0$ 
 restricted to $U_1$ is 
 \[ 
 \check{\Gamma}_0^+\cap U'_0=    
(-\g'_0) \cup \tilde{g}_{\frac{\pi}{2}}(\g'_0) \cup \tilde{g}_\pi (-\g'_0) 
\cup \tilde{g}_{\frac{3\pi}{2}}(\g'_0), 
 \] 
 with $-\g'_0$ denotes $\g'_0$ but with the reversed 
 orientation. We have $(\Delta\varphi)_{-\gamma'_0}=-\pi$ 
and 
\[ 
\alpha_{0,U'_0}=4(\Delta\varphi)_{-\gamma'_0}=-4\pi. 
\] 
Therefore 
\begin{equation} 
\begin{split} 
y(L^1,L,q;\ff)& =y(L^1,q;\ff)-y(L,q;\ff) \\ 
& = \frac{1}{2\pi}(\alpha_{0,U'_0}-\alpha_{0,U_0})\\ 
& = \frac{1}{2\pi}(-4\pi-4\pi) \\ 
& =-4. 
\end{split} 
\end{equation}

Repeat similar Hamiltonian isotopies to $L^1$ so that 
$p_0$ and $L^1\setminus\Delta_0$ are fixed all the time, and the 
resulting $\Delta_0$ contains 
\begin{itemize} 
\item 
a smaller disk $\Delta_1\subset 
T^*_{p_0}S$,  and 
\item 
an annular crossing domain $U_1\subset T^*D_\e$ so that 
for $L^1$, 
the relative $E'_0$-phase along $\check{\Gamma}^+_0\cap U_1$ 
is $\alpha_{0,U_1}=4\pi$, and  
\item there is a $la$-disk $D_1\subset D_\e$ of $L^1$ 
with $\pa D_1$ contained in the the interior of $U_1$. 
\end{itemize} 
Apply $\eta_{D_1}$ to $L^1$ by replacing $U_1$ with 
another annular $U'_1$ with $\pa U_1=\pa U'_1$ we get 
a new Lagrangian surface 
\[ 
L^2=\eta_{D_1}(L^1)=\tau^2_S(L^1)=\tau^4_S(L) 
\] 
which can also be obtained by applying $4$ positive 
generalized Dehn twists to $L$ along $S$. As in the case 
of $U'_0$, the relative $E'_0$-phase along $\check{\Gamma}_0^+\cap U'_1$ 
is 
\[ 
\alpha_{0,U'_1}=-4\pi. 
\] 
So again we have 
\begin{equation} 
\begin{split} 
y(L^2,L^1,q;\ff)& =y(L^2,q,\ff)-y(L^1,q;\ff) \\ 
& = \frac{1}{2\pi}(\alpha_{0,U'_1}-\alpha_{0,U_1})\\ 
& = \frac{1}{2\pi}(-4\pi -4\pi) \\ 
& =-4. 
\end{split} 
\end{equation} 
Also it is easy to see that 
\begin{equation} 
y(L^2,L,q;\ff)=-8. 
\end{equation} 

Repeat this process to $(L^k,\Delta_{k-1},\Delta_k,U_k,D_k)$ to 
get $(L^{k+1}, \Delta_k,\Delta_{k+1}, U_{k+1})$, and so on 
so forth, we obtain an infinite sequence of smoothly isotopic 
Lagrangian surfaces $L^0=L$, $L^1$, $L^2$,..., $L^n$,..., 
with $L^n=\tau^{2n}_S(L)$, and 
\begin{equation} 
y(\tau^{2m}_S(L), \tau^{2n}_S(L), q;\ff)=-4(m-n), \quad m,n\in \N\cup\{ 0\}. 
\end{equation}

{\bf Negative even Dehn twists.} \ 
Proceed to the case of double negative generalized Dehn 
twist. Again 
in $T^*\bar{D}$ we can deform $\Delta\subset L$ as in Figure 
\ref{ndehn1} by  
a different $\tilde{G}$-invariant Hamiltonian isotopy 
 in a neighborhood of $p_0\in W$, keeping $p_0$ and 
 $L\setminus\Delta$ fixed all the time so that after the isotopy, 
 $\Delta$ can be expressed as the union of three 
 consecutive domains 
 \[ 
 \Delta=\Delta_0\cup U_\dagger \cup U_-
 \] 
 where $\Delta_0$ and $U_-$ are as before, $U_\dagger= 
 Orb_{\tilde{G}}(\s_\dagger)$ is a $p$-domain with PLG-degree 
 $d_{U_\dagger}=-2$, $\s_\dagger$ is an oriented curve 
 contained in the $\bx_1\by_1$-plane as shown in Figure \ref{ndehn1}. We still use $q\in U_-$ as the reference point.

 \begin{figure}[th] 
 \begin{center} 
 \includegraphics[scale=0.75]{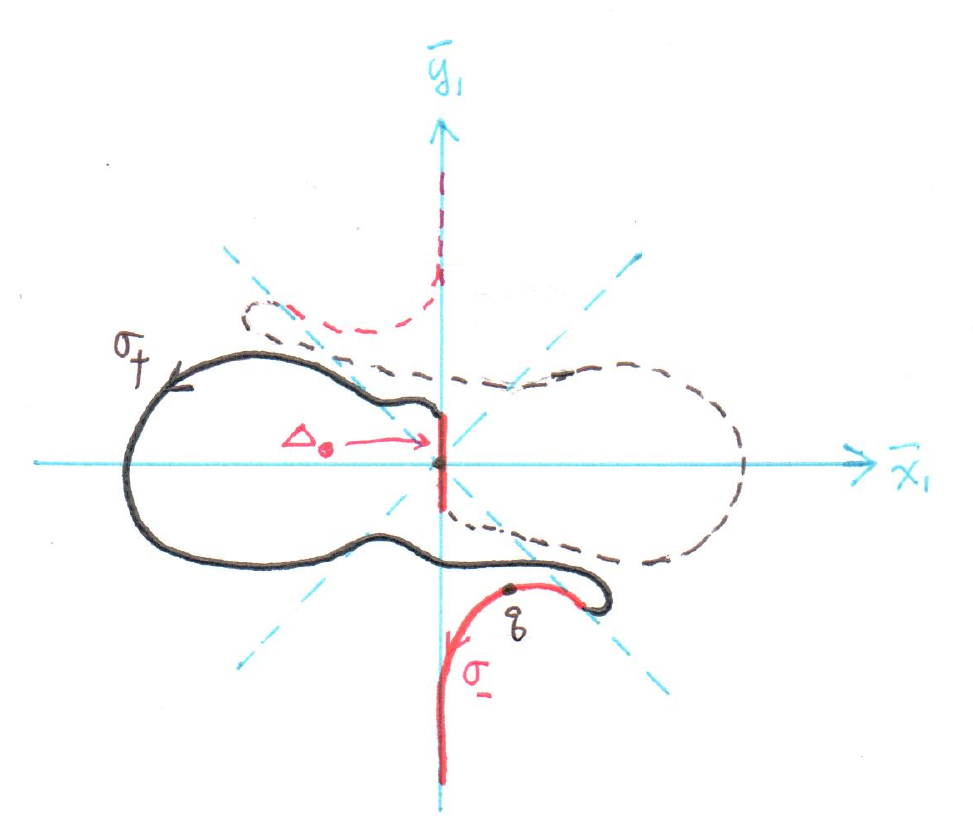} 
\caption{$\Delta$ deformed and before surgery. \label{ndehn1}} 
\end{center} 
\end{figure}

Apply another $\tilde{G}$-invariant Hamiltonian 
isotopy to $L$, keeping  $L\setminus U_\dagger$ fixed all the time, and turning  
$U_\dagger$ to a new annulus also denoted as $U_\dagger$, so that 
now $U_\dagger$ contains $-U_0$ which denotes the crossing domain 
$U_0$ but with the reversed orientation,  as shown in Figure \ref{ndehn2}. 
We can write 
$-U_0=Orb_{\tilde{G}}(-\bar{\g}_0)$ where 
\[ 
-\bar{\g}_0:=\tilde{g}_\pi(-\g)  
\] 
is a curve contained in the half plane $\{ 0<x_1<\e \}$ in the 
$x_1y_1$-plane.

\begin{figure}[th] 
 \begin{center} 
 \includegraphics[scale=0.75]{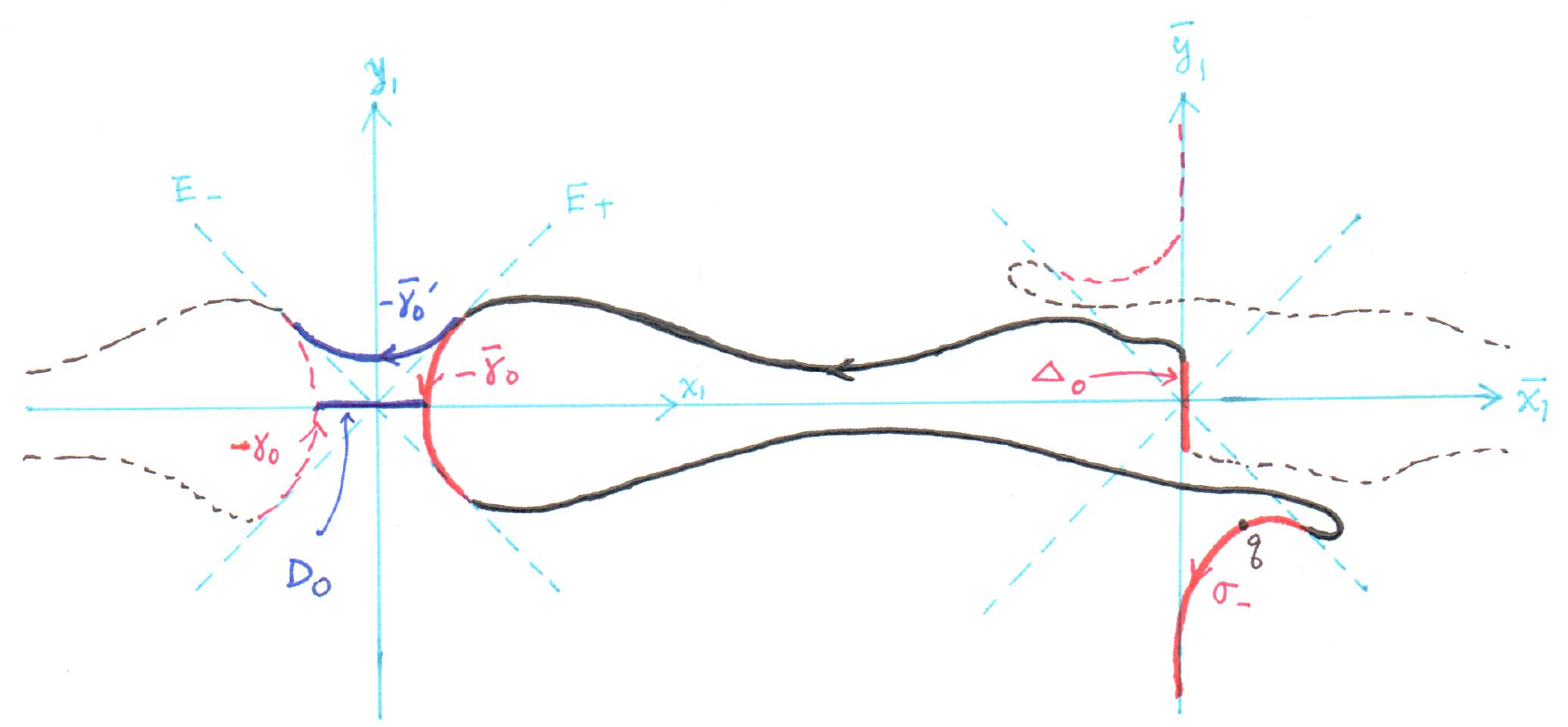} 
\caption{$\Delta$ and $D_0$ for "negative" surgery. \label{ndehn2}} 
\end{center} 
\end{figure}

We have 
\[ 
\check{\Gamma}_0^+\cap -U_0=    
\g_0\cup \tilde{g}_{\frac{\pi}{2}}(-\g_0) \cup \tilde{g}_\pi (\g_0) 
\cup \tilde{g}_{\frac{3\pi}{2}}(-\g_0).  
\] 
Since $(\Delta\varphi)_{\gamma_0}=-\pi$ we have 
\[ 
\alpha_{0,-U_0}=4(\Delta\varphi)_{\gamma_0}=-4\pi. 
\] 

Recall $D_\e\subset S$ contains a smaller 
disk $D_0$ which is a $la$-disk of $L$ with $\pa D_0$ contained in the interior of $U^-_0$. 

 Apply $\eta_{D_0}$ to $L$ 
by replacing $-U_0$ by a $\tilde{G}$-invariant Lagrangian 
annulus $-U'_0$, the annulus $U'_0$ with the reversed orientation, contained in a 
symplectic neighborhood of $D_0$ in $T^*S$ with $\pa (-U'_0)=\pa (-U_0)$ to obtain a 
new smooth embedded Lagrangian surface 
\[ 
L^{-1}:=(L\setminus(-U_0))\cup (-U'_0). 
\] 
Note that 
\[ 
L^{-1}=\eta_{D_0}(L)=\tau^{-2}_S(L). 
\] 
is also obtained by applying to $L$ a negative double Dehn 
twist with respect to $S$. 

Like the case for $-U_0$, we can write $-U'_0=Orb_{\tilde{G}}
(-\g'_0)$, where $-\gamma'_0$ is $\gamma'_0$ but  with the reversed orientation 
 as shown in Figure \ref{ndehn2}. 
 For $L^{-1}$, the Lagrangian locus $\check{\Gamma}^+_0$ 
 restricted to $-U'_0$ is 
 \[ 
 \check{\Gamma}_0^+\cap(-U'_0)=    
\g'_0 \cup \tilde{g}_{\frac{\pi}{2}}(-\g'_0) \cup \tilde{g}_\pi (\g'_0) 
\cup \tilde{g}_{\frac{3\pi}{2}}(-\g'_0).
 \] 
As $(\Delta\varphi)_{\gamma'_0}=\pi$ it follows that 
\[ 
\alpha_{0,-U'_0}=4(\Delta\varphi)_{\gamma'_0}=4\pi. 
\] 
Therefore 
\begin{equation} 
\begin{split} 
y(L^{-1},L,q;\ff)& =y(L^{-1},q,\ff)-y(L,q;\ff) \\ 
& = \frac{1}{2\pi}(\alpha_{0,-U'_0}-\alpha_{0,-U_0})\\ 
& = \frac{1}{2\pi}(4\pi +(-4\pi)) \\ 
& =4. 
\end{split} 
\end{equation}

Apply to $(L^{-1},\Delta_0)$ $\tilde{G}$-invariant Hamiltonian isotopies  similar to those applied to $(L,\Delta)$ we 
get $(L^{-1}, \Delta_0, \Delta_1,-U_1,D_1)$ 
with $\Delta_1\cup (-U_1)\subset \Delta_0$. Then apply $\eta_{D_1}$ to get $L^{-2}=\tau^{-2}_S(L^{-1})=\tau^{-4}_S(L)$, 
and so on so forth,  
 we obtain an infinite sequence $(L^{-n},\Delta_{n-1},\Delta_n,-U_n,D_n)$, $n\in \N$, 
with $L^{-n}=\tau^{-2n}_S(L)$, and 
\begin{equation} 
y(\tau^{-2m}_S(L), \tau^{-2n}_S(L), q;\ff)=4(m-n), \quad m,n\in \N\cup\{ 0\}. 
\end{equation} 

 Summing up the results for positive and negative even 
 Dehn twists, we arrive at the following conclusion:

\begin{prop} 
Assume that an orientable Lagrangian surface $L$ intersects transversally with a Lagrangian 
sphere $S$ and in a point $p$. Let $L^n:=\tau^{2n}_S(L)$
for $n\in \Z$. 
Then 
\[ 
y(L^n,L^m,q;\ff)=4(m-n), m,n\in \Z. 
\] 
\[ 
y(L^n,L,q;\ff)=-4n, \quad n\in\Z. 
\] 
\end{prop}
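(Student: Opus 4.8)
The plan is to assemble the proposition from the two one-parameter computations carried out just above, using the cocycle property of the relative $y$-index. First I would observe that all the surfaces $L^n=\tau^{2n}_S(L)$, $n\in\Z$, can be arranged to share a common reference. The positive construction modifies only the region $U_+$ (and its successive descendants $U_k\subset\Delta_{k-1}$) while keeping $q\in U_-$ fixed and regular, and the negative construction modifies only $U_\dagger$ while again leaving $U_-$ untouched. Hence every $L^n$ coincides with $L$ on $U_-$ with the same induced orientation, $q\in U_-$ is a regular point of $g'_{L^n}$ for all $n$, and $U_-$ lies in the crossing domain $L^n_{i_0}$ used to anchor each point-referenced index $y(L^n,q;\ff)$. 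This is exactly the overlapping hypothesis under which the relative index $y(L^n,L^m,q;\ff)=y(L^n,q;\ff)-y(L^m,q;\ff)$ is defined.

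Next I would record the two computations already completed. The positive even Dehn twist calculation gives $y(\tau^{2m}_S(L),\tau^{2n}_S(L),q;\ff)=-4(m-n)$ for $m,n\ge 0$, and in particular, setting $n=0$, $y(L^k,L,q;\ff)=-4k$ for $k\ge 0$. The negative even Dehn twist calculation gives $y(\tau^{-2m}_S(L),\tau^{-2n}_S(L),q;\ff)=4(m-n)$ for $m,n\ge 0$; setting $n=0$ and writing $k=-m\le 0$ this reads $y(L^k,L,q;\ff)=-4k$ as well. Together these establish the second displayed formula $y(L^n,L,q;\ff)=-4n$ for every $n\in\Z$, since the cases $n>0$, $n=0$ (trivial), and $n<0$ exhaust $\Z$ and agree where they overlap.

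Finally I would upgrade to arbitrary pairs. Because all $L^n$ overlap on $U_-\ni q$ with matching orientation, the relative index is literally a difference of point-anchored indices, so for any $a,b\in\Z$
\[
y(L^a,L^b,q;\ff)=y(L^a,q;\ff)-y(L^b,q;\ff)=y(L^a,L,q;\ff)-y(L^b,L,q;\ff).
\]
Substituting $y(L^n,L,q;\ff)=-4n$ yields $y(L^a,L^b,q;\ff)=-4a-(-4b)=4(b-a)$, which is the first displayed identity with $(n,m)=(a,b)$, and the second identity is the special case $b=0$.

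The main obstacle I anticipate is not the algebra but the consistency bookkeeping that legitimizes the cocycle step across the two families. One must check that the positive and negative constructions can be performed with a single common untouched region $U_-$ and a single reference point $q$ that remains regular for the entire bi-infinite sequence, and that the sign conventions—the orientation of $\check{\Gamma}^+_0$, the choice of reference crossing domain, and the direction of the common orientation on $U_-$—agree between the $n>0$ and $n<0$ sides. This matters precisely because the relative $y$-index is only sign-determined once $q$ is fixed; with a single $q$ in a region common to all $L^n$, the two separately computed one-sided formulas glue into the stated identity over all of $\Z$, including the mixed-sign pairs that neither one-sided computation addresses directly.
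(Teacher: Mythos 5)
Your proposal is correct and follows essentially the same route as the paper: the paper's own argument consists of the two one-sided computations for positive and negative even Dehn twists (each anchored at the same regular point $q\in U_-$, which is left untouched by all the surgeries), followed by the remark ``summing up the results,'' which is exactly the cocycle step $y(L^a,L^b,q;\ff)=y(L^a,q;\ff)-y(L^b,q;\ff)$ that you spell out. Your explicit bookkeeping of the common overlap region and the sign conventions is a faithful (and slightly more careful) rendering of what the paper leaves implicit.
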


%
%
\subsection{Monotone tori in $W_n$} \label{Wn}

Recall for any integer $n\geq 1$ 
the Stein surface $W_n\subset \C^3$ defined by 
the equation 
\[ 
z_1^2+z_2^2=z_3^{n+1}+\frac{1}{2}. 
\] 
$W_n$ is the plumbing of $n$ copies of $T^*S^2$ so that 
the Lagrangian zero-section spheres $S_1$,...,$S_n$ 
form an $A_n$-configuration: 
$S_j\pitchfork S_{j+1}$ and in one point, $S_i\cap S_j=\emptyset$ if $|i-j|\neq 1$. Each of $W_n$ is 
a parallelizable symplectic 4-manifold, and its unitary 
framing is unique up to homotopy.

To facilitate later computation we identify for each $k$ 
$S_k=D_k\cup_{\psi_k}\bD_k$, where $D_k:=\{x^k=(x^k_1,x^k_2)\in \R^2\mid |x^k|\leq 1\}$, 
$\bD_k:=\{\bx^k=(\bx^k_1,\bx^k_2)\in \R^2\mid |\bx^k|\leq 1\}$, and $\psi_k:\bD_k-\{0\}\to D_k-\{ 0\}$  a 
diffeomorphism defined similar to $\psi$ as in (\ref{bDD}). 
Let $T^{*,\e} D_k$ denote the cotangent disk bundle of $D_k$ 
with fiber coordinates $y^k=(y^k_1,y^k_2)$, $|y^k|<\e$. Similarly $T^{*,\e}\bD_k$ denotes the cotangent disk bundle of $\bD_k$ 
with fiber coordinates $\by^k=(\by^k_1,\by^k_2)$, $|\by^k|<\e$. Here $\e>0$ is some fixed small number. 
Then $\psi_k$ induces a symplectic diffeomorphism 
$\Psi_k:T^{*,\e} (\bD\setminus\{ 0\})\to T^{*,\e}(D\setminus\{0\})$. 
We consider for $k=2,3,...n$  plumbings 
\[ 
\chi_k: T^{*,\e}(\{ |\bx^k|<\e\}) \to 
T^{*,\e}(\{ |x^{k-1}|<\e\})
\] 
defined by the  following identifications 
\begin{align*} 
\bx^k_1 \  & \longleftrightarrow \  y^{k-1}_1 \\ 
\bx^k_2 \ & \longleftrightarrow  \  -y^{k-1}_2 \\
\by^k_1 \ & \longleftrightarrow  \  -x^{k-1}_1 \\ 
\by^k_2 \ & \longleftrightarrow  \  x^{k-1}_2 
\end{align*} 
The resulting manifold is $W_n$. 

Identify $T^{*,\e} D_k$ as complex domains in $\C^2$ with 
complex coordinates $z^k_j:=x^k_j+\sqrt{-1}y^k_j$, and 
$T^{*,\e} \bD_k$ as complex domains in $\C^2$ with 
complex coordinates $w^k_j:=\bx^k_j+\sqrt{-1}\by^k_j$. 
Then both $\Psi_k$ and $\chi_k$ are holomorphic and $\tilde{G}$-invariant 
where $\tilde{G}$ is an $S^1$-group acting on $T^{*,\e}\bD_k$ 
as counterclockwise rotations 
\[ 
\begin{pmatrix} \cos\h & -\sin\h \\ \sin\h & \cos\h 
\end{pmatrix} 
\]
with respect to the complex coordinates $(w^k_1,w^k_2)$, and 
acting on $T^{*,\e} D_k$ as clockwise rotations 
\[ 
\begin{pmatrix} \cos\h & \sin\h \\ -\sin\h & \cos\h 
\end{pmatrix} 
\]
with respect to the complex coordinates $(z^k_1,z^k_2)$,

We fix a unitary basis $(u,v)$ on $W_n$ so that on each 
$T^{*,\e}S_k$ it is  the unitary basis 
on $T^*S$ as used in Section \ref{plumb}, 
\[ 
(u,v)=\begin{cases} 
(\pa_{\bx^k_1}, \pa_{\bx^k_2}) & \text{ on $\{ |\bx^k|<\e, \ |\by^k|<\e\}$}, \\ 
(\pa_{y^k_1}, -\pa_{y^k_2}) & \text{ on $\{ |x^k|<\e, \ |y^k|<\e\}$}.  
\end{cases} 
\] 
Let $E'_s$, $s\in \R/\pi\Z$ denote the $S^1$-family of 
complex line bundles associated to $(u,v)$, 
\[ 
E'_s=\begin{cases} 
(\cos s\pa_{\bx^k_1}+\sin s\pa_{\bx^k_2})\wedge 
(\cos s\pa_{\by^k_1}+\sin s\pa_{\by^k_2})  & \text{ on $T^{*,\e}\bD_k$}, \\ 
(\cos s\pa_{y^k_1}-\sin s\pa_{y^k_2})\wedge 
(-\cos s\pa_{x^k_1}+\sin s\pa_{x^k_2})  & \text{ on $T^{*,\e}D_k$}
\end{cases} 
\] 
Let $C^k_0, C^k_{\frac{\pi}{2}}$ be the equators of $S_k$ defined by $C^k_0:=\{ x_2=0\}\cup \{ \bx_2=0\}$, 
$C^k_{\frac{\pi}{2}}:=\{ x_1=0\}\cup\{ \bx_1=0\}$. Then 
similar to the corresponding case in Section \ref{plumb} we have that $E'_0$ is tangent to $\cup_{k=1}^nT^*C^k_0$ and orthogonal to $\cup_{k=1}^nT^*C^k_\frac{\pi}{2}$, $E'_\frac{\pi}{2}$ is tangent to $\cup_{k=1}^nT^*C^k_\frac{\pi}{2}$ and 
orthogonal to $\cup_{k=1}^nT^*C^k_0$.

Let $T_{-1}\subset \{ |\bx^1|<\e, |\by^1|<\e\}\subset T^{*,\e}S_1$ be the (Chekanov) torus defined as the $\tilde{G}$ orbit 
of the circle $\upsilon=\sigma\cup \gamma_{-1}\subset \{ 
\by^1_1>0\}\cap \Sigma_0$ (see Figure \ref{T10}),  
\begin{align} 
\sigma(s) & :=(\bx^1_1=-r\sin s, \ \by^1_1=\sqrt{2}r+r\cos s), \quad \frac{-3\pi}{4}\leq s\leq \frac{3\pi}{4}, \\ 
\gamma_{-1}(s) & :=(\bx^1_1=-r\sin s, \ \by^1_1=\sqrt{2}r+r\cos s), \quad \frac{3\pi}{4}\leq s\leq \frac{5\pi}{4}. 
\end{align}  
Orient $T_{-1}$ so that $\{ \dot{\upsilon}, \tilde{X}(\upsilon)\}$ is a positive basis of $T_\upsilon (T_{-1})$. 
 $U_\sigma:=Orb_{\tilde{G}}(\sigma)$ and $U_{-1}:=Orb_{\tilde{G}}(\gamma_{-1})$ are crossing domains of $T_{-1}$. Fix a regular  interior point $q$ of $U_\sigma$ as the reference point. 

The oriented proper $E'_0$-locus of $T_{-1}$ is 
\[ 
\check{\Gamma}'_0=\upsilon \cup \tilde{g}_{\frac{\pi}{2}}(-\upsilon) \cup \tilde{g}_\pi (\upsilon) \cup \tilde{g}_{\frac{3\pi}{2}}
(-\upsilon). 
\] 
We have 
\[ 
(\Delta\varphi)_\sigma=\pi, \quad (\Delta\varphi)_{\gamma_{-1}}=\pi, 
\] 
\begin{equation} 
y(T_{-1}, q)=\frac{1}{2\pi}(4(\Delta\varphi)_\sigma+4(\Delta\varphi)_{\gamma_{-1}})=\frac{1}{2\pi}(4\pi+4\pi)=4. 
\end{equation} 

\begin{figure}[th] 
 \begin{center} 
 \includegraphics[scale=0.75]{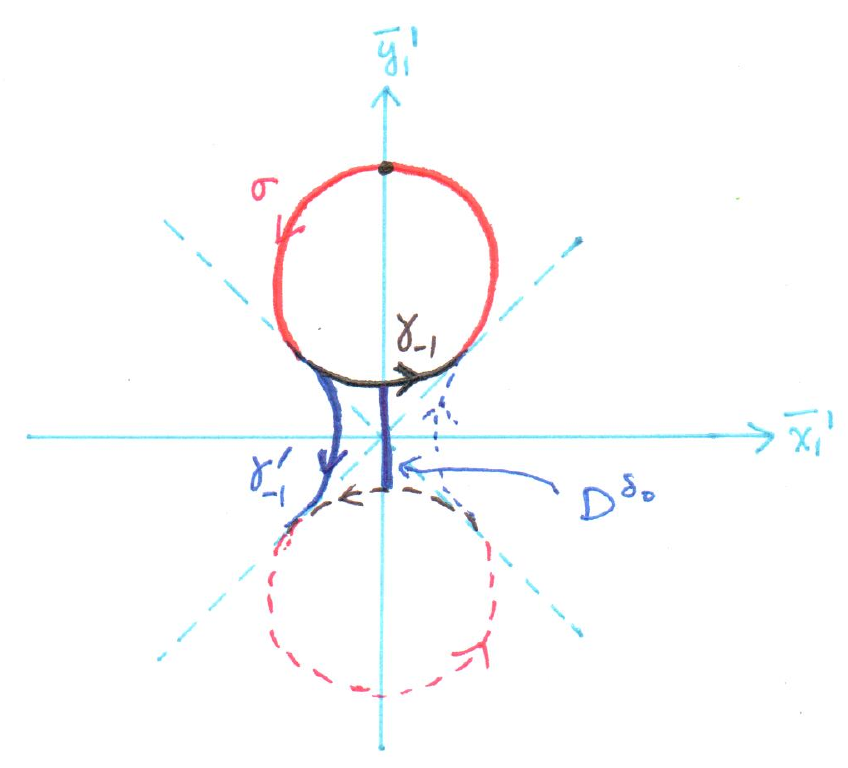} 
\caption{From $T_{-1}$ to $T_0$. \label{T10}} 
\end{center} 
\end{figure}

The disk fiber $T^{*,\e}_{p_0}(\bD_1)$ contains a 
stable 
$la$-disk $D^{\delta_0}$ of $T_{-1}$ with its boundary lies in the interior of 
$U_{-1}$ (see Figure \ref{T10}). 
Let $\gamma'_{-1}:=M(\gamma_{-1})$, and $U'_{-1}:=Orb_{\tilde{G}}(\gamma'_{-1})$. 
Let $T_0=\eta_{D^{\delta_0}}(T_{-1})$ denote the Lagrangian torus 
obtained by replacing $U_{-1}$ by $U'_{-1}$: 
\[ 
T_0=U_\sigma\cup U'_{-1}. 
\] 
Observe that $(\Delta\varphi)_{\gamma'_{-1}}=-\pi$, so  
\begin{equation} 
y(T_0,q)=\frac{1}{2\pi}(4(\Delta\varphi)_{\sigma}+4(\Delta\varphi)_{\gamma'_{-1}})=0, 
\end{equation} 
\begin{equation} 
\begin{split} 
y(T_0,T_{-1},q)  & := y(T_0,q)-y(T_{-1},q) \\ 
& = \frac{1}{2\pi}(4(\Delta\varphi)_{\gamma'_{-1}}-
4(\Delta\varphi)_{\gamma_{-1}}) \\ 
& = -4. 
\end{split} 
\end{equation}

\begin{figure}[th] 
 \begin{center} 
 \includegraphics[scale=0.75]{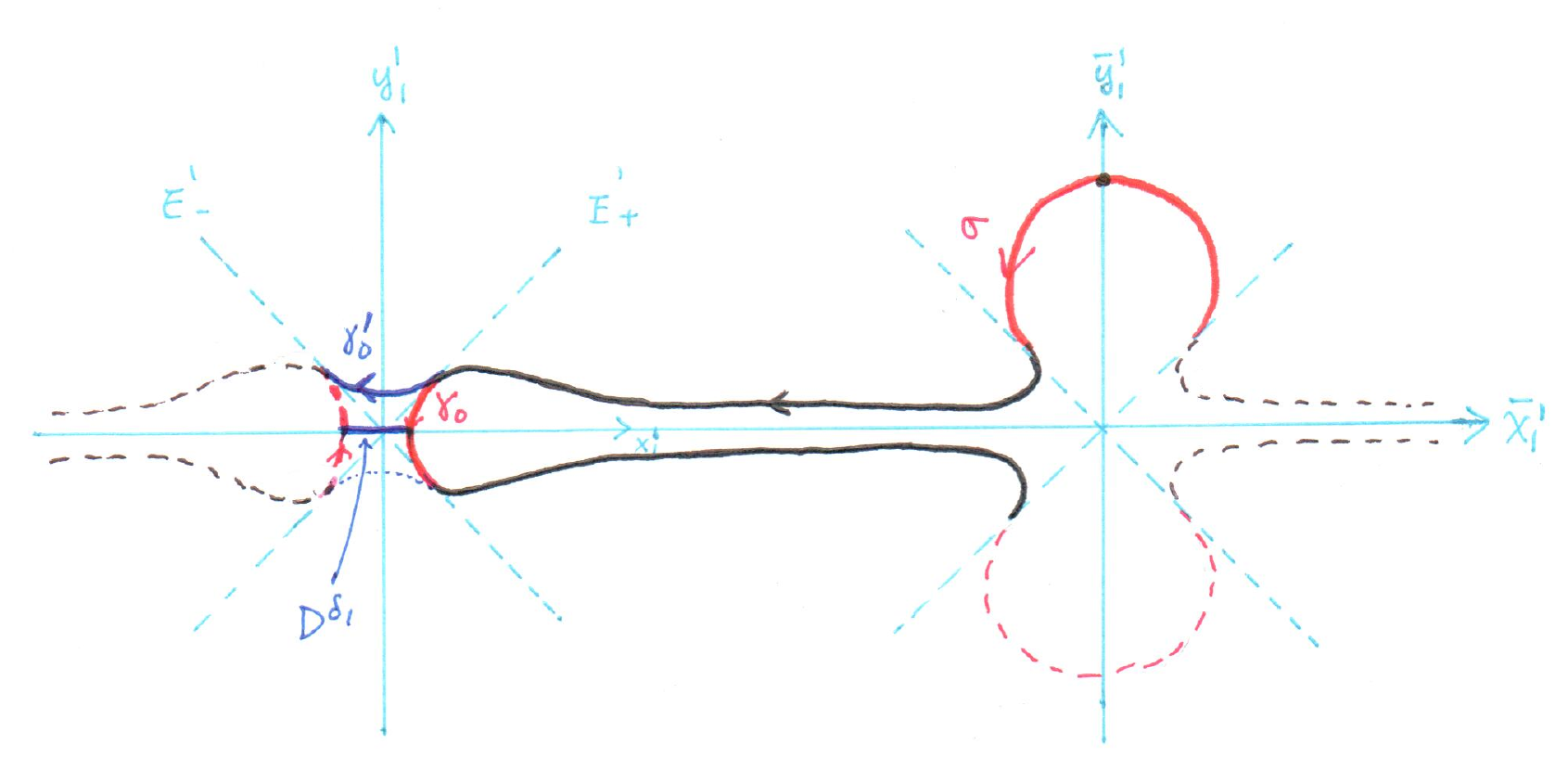} 
\caption{From $T_0$ to $T_1$. \label{T01}} 
\end{center} 
\end{figure} 

With a $\tilde{G}$-invariant Hamiltonian isotopy with 
$U_\sigma$ kept fixed all the time we may deform $U'_{-1}$ 
to a new Lagrangian annulus, also denoted as $U'_{-1}$, 
so that the deformed $\gamma'_{-1}$ contains an arc 
$\gamma_0$ lying in $(T^{*,\e}D_1)\cap\{ x^1_2=0=y^1_2\}$ 
as shown in Figure \ref{T01}, 
so that $U_0:=Orb_{\tilde{G}}(\gamma_0)$ is a crossing domain 
and 
\[ 
(\Delta\varphi)_{\gamma_0}=\pi.  
\] 
For example we may take $\gamma_0$ to be the oriented arc 
\[ 
\gamma_0(s)=(x^1_1=-\sqrt{2}r_1-r_1\cos s, y^1_1=-r_1\sin s), \quad \frac{3\pi}{4}\leq s\leq \frac{5\pi}{4}. 
\]

Observe that $D_1$ contains a stable $la$-disk $D^{\delta_1}$ of $T_0$ 
with its boundary lying in the interior of $U_0$. Let 
$\gamma'_0:=M(\gamma_0)$, then similar to the case of the pair $(\gamma_{-1},\gamma'_{-1}:=M(\gamma_{-1}))$ we have 
\[ 
(\Delta\varphi)_{\gamma'_0}=-\pi=-(\Delta\varphi)_{\gamma_0}. 
\] 
Let $T_1:=\eta_{D^{\delta_1}}$ denote the Lagrangian torus 
obtained from $T_0$ by replacing $U_0$ by $U'_0$: 
\[ 
T_1:=(T_0\setminus U_0)\cup U'_0. 
\] 
Then similar to the case for $T_{-1}$ and $T_0=\eta_{D^{\delta_0}}(T_{-1})$, we have 
\begin{equation} 
y(T_1,T_0,q)=\frac{1}{2\pi}(4(\Delta\varphi)_{\gamma'_0}-4(\Delta\varphi)_{\gamma_0})=\frac{1}{2\pi}(-4\pi-4\pi)=-4, 
\end{equation} 
and 
\begin{equation} 
y(T_1,q)=y(T_1,T_0,q)+y(T_0,q)=-4+0=-4,  
\end{equation} 
\begin{equation} 
y(T_1,T_{-1},q)=-8. 
\end{equation}

\begin{figure}[th] 
 \begin{center} 
 \includegraphics[scale=0.75]{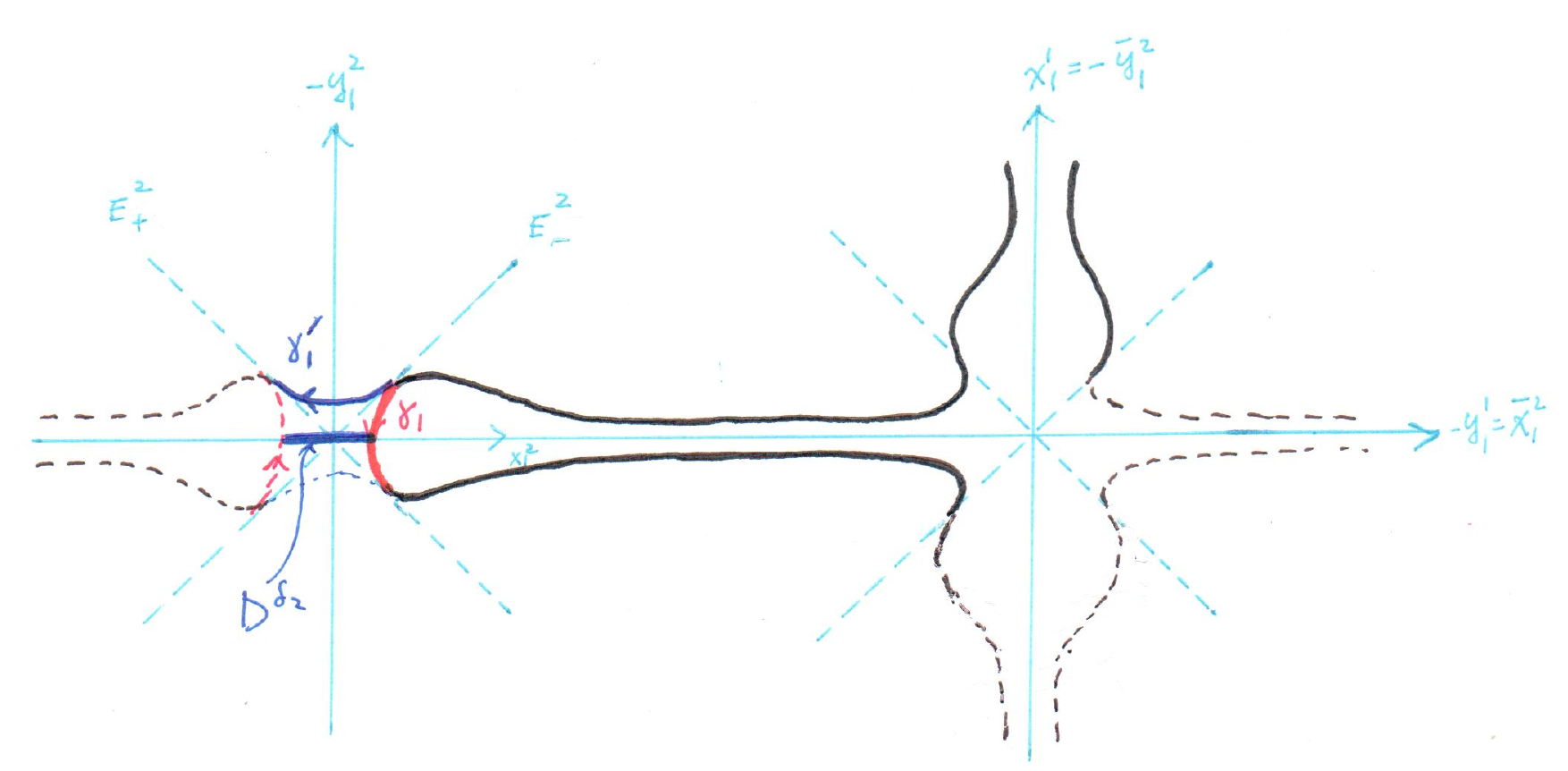} 
\caption{From $T_1$ to $T_2$. \label{T12}} 
\end{center} 
\end{figure}

Now starting from $k=1$ repeat the above procedure 
successively for $k=1,...,n-1$ to get a sequence of monotone 
Lagrangian tori $T_k$ by a sequence of $la$-disk surgeries as 
follows: 
\begin{enumerate} 
\item Hamiltonian isotop $U'_{k-1}$ rel $\pa U'_{k-1}$ of $T_k$ 
in a $\tilde{G}$-invariant fashion so that after the isotopy 
the interior of $U'_{k-1}$ contains a crossing domain 
$U_k:=Orb_{\tilde{G}}(\gamma_k)$ with $(\Delta\varphi)_{\gamma_k}=\pi$. 
\item Apply $la$-disk surgery to $T_k$ to get 
\[ 
T_{k+1}:=(T_k\setminus U_k)\cup U'_k 
\] 
where 
\[ 
 U'_k=Orb_{\tilde{G}}(\gamma'_k), \quad \gamma'_k=M(\gamma_k), 
\] 
and $(\Delta\varphi)_{\gamma'_k}=-\pi$. 
\end{enumerate} 
Note that all of $T_k$ are smoothly isotopic and  contain the domain $U_\sigma$. Then 
\begin{equation} 
y(T_{k+1},T_k,q)  =\frac{1}{2\pi}(4(\Delta\varphi)_{\gamma'_k}-4(\Delta\varphi)_{\gamma_k}) =-4, 
\end{equation} 
hence 
\begin{equation} 
y(T_k,q)=-4k, \quad k=-1,0,1,...,n, 
\end{equation} 
and 
\begin{equation} 
y(T_k,T_j,q)=4(j-k), \quad -1\leq k,j\leq n. 
\end{equation} 
So $T_k$, $k=-1,0,1,...,n$, are pairwise non-Hamiltonian isotopic in $W_n$.

\end{document}